\documentclass[12pt]{amsart}
\usepackage{amsfonts}
\usepackage{mathrsfs}
\usepackage{bbm}%
\usepackage{bm}
\usepackage{bbding}

\usepackage{fontawesome5}
\usepackage[mathcal]{euscript}
\usepackage{graphicx}
\usepackage{amsthm,amscd}
\usepackage{calc}
\usepackage{mathtools}
\usepackage{mathrsfs,dsfont}
\usepackage{CJK,fancyhdr,amscd}
\usepackage[dvipsnames]{xcolor}
\usepackage{anysize}
\usepackage{amsmath,amssymb,amsfonts}
\usepackage{epsfig,enumerate}
\usepackage{latexsym}
\usepackage{indentfirst, latexsym}
\usepackage{graphics}
\usepackage{tikz-cd}
\usepackage[right]{lineno}
\usepackage[all,poly,knot]{xy}
\usepackage{xpatch}
\usepackage[pagebackref]{hyperref}

\hypersetup{
 	colorlinks=true,
 	linkcolor=Blue,
 	filecolor=Yellowblue,
 	citecolor = WildStrawberry,      
 	urlcolor=cyan,
}

\xpatchcmd{\step}{%
	\normalfont\scshape\centering}{%
	\normalfont\scshape}{\typeout{Success}}{\typeout{Failure}}%

\allowdisplaybreaks[2]

\providecommand{\U}[1]{\protect\rule{.1in}{.1in}}

\numberwithin{equation}{section}

\newtheorem{theorem} {Theorem} [section]
\newtheorem{proposition}[theorem]{Proposition}
\newtheorem{corollary}  [theorem]     {Corollary}
\newtheorem{lemma}  [theorem]     {Lemma}
\theoremstyle{definition}
\newtheorem{example}  [theorem]     {Example}

\newtheorem{step}{Step}
\theoremstyle{plain}

\newtheorem{definition}  [theorem]     {Definition}

\newtheorem{conjecture}  [theorem]     {Conjecture}

\newtheorem*{thm A*}{Theorem A}
\newtheorem*{thm B*}{Theorem B}
\newtheorem*{thm C*}{Theorem C}
\newtheorem*{thm D*}{Theorem D}

\theoremstyle{definition}
\newtheorem{remark}  [theorem]     {Remark}

\newcommand{\vol}{\mathrm{vol}}

\newcommand{\btheorem}{\begin{theorem}}
	\newcommand{\etheorem}{\end{theorem}}
\newcommand{\bproposition}{\begin{proposition}}
	\newcommand{\eproposition}{\end{proposition}}
\newcommand{\bdefinition}{\begin{definition}}
	\newcommand{\edefinition}{\end{definition}}
\newcommand{\bcorollary}{\begin{corollary}}
	\newcommand{\ecorollary}{\end{corollary}}
\newcommand{\bproof}{\begin{proof}}
	\newcommand{\eproof}{\end{proof}}
\newcommand{\beq}{\begin{equation}}
	\newcommand{\eeq}{\end{equation}}
\newcommand{\ee}{\end{eqnarray*}}
\newcommand{\be}{\begin{eqnarray*}}

\newcommand{\elemma}{\end{lemma}}
\newcommand{\blemma}{\begin{lemma}}

\newcommand{\bbeta}{\boldsymbol{\beta}}

\newcommand{\R}{\mathbb R}
\newcommand{\Q}{\mathbb Q}

\newcommand{\bd}{\begin{enumerate} }
	\newcommand{\ed}{\end{enumerate}}

\usepackage{adjustbox}
\usepackage{lineno}
\usepackage{float}

\begin{document}

\title{On Pseudo-Effectivity and Volumes of Adjoint Classes in K\"ahler Families with Projective Central Fiber}

\begin{abstract}
This paper is devoted to studying the deformation behavior of pseudo-effective canonical divisors and volumes of adjoint classes in K\"ahler families. Based on recent developments in the K\"ahler minimal model program, for flat families with fiberwise canonical singularities, we establish the global stability of the pseudo-effectivity of canonical divisors and uniruledness, assuming in addition that one fiber is projective, while the same conclusion for K\"ahler threefolds is also true without the projectivity assumption of the central fiber.  For a smooth K\"ahler family whose central fiber is projective with a big adjoint class, we show that its volume remains locally constant. Finally, using the (relative) minimal model program for K\"ahler threefolds, we verify the deformation invariance of volumes of adjoint classes and plurigenera for smooth families of K\"ahler threefolds, thereby confirming Siu’s invariance of plurigenera conjecture in dimension three.
\end{abstract}
	
	\author{Christopher D. Hacon}
	\address{Department of Mathematics, University of Utah, Salt Lake City, UT 84112,
USA}
\email{hacon@math.utah.edu}
 \author{Yi Li}
    \address{Department of Mathematics, Wuhan University, Hubei, Wuhan 430074, China}
    \email{yilimath@whu.edu.cn}
    \author{Sheng Rao}
    \address{Department of Mathematics, Wuhan University, Hubei, Wuhan 430074, China}
    \email{likeanyone@whu.edu.cn}
	\date{\today}
	\thanks{The first author was partially supported by  NSF research grant   DMS-2301374 and
by a grant from the Simons Foundation SFI-MPS-MOV-00006719-07. The second and third authors were partially supported by NSFC (Grant No. 12271412, W2441003) and Hubei Provincial Innovation Research Group Project (Grant No. 2025AFA044).}
    \keywords{K\"ahler minimal model program, volume, pseudo-effectivity}
	
	\maketitle
	\setcounter{tocdepth}{1}
	\tableofcontents

\section{Introduction}\label{Introduction}

One of the central problems in complex algebraic geometry is the classification of compact complex varieties. Plurigenera, volumes, and positivity properties are fundamental invariants associated to a variety and play a crucial role in classification problems. The aim of this paper is to study the deformation behavior of pseudo-effective canonical divisors, plurigenera and volumes of adjoint classes. 

The first part of this paper is devoted to studying the deformation behavior of pseudo-effective canonical divisors. Given a smooth projective family, H. Tsuji \cite{Tsuji02} showed that if the central fiber has a pseudo-effective canonical divisor, then the nearby fibers also have pseudo-effective canonical divisors. It is natural to ask whether similar results hold for K\"ahler families. Recently, W. Ou \cite{Ou25} proved the BDPP conjecture \cite[Conjecture 1.2]{BDPP}, which characterizes K\"ahler uniruledness:
\begin{theorem}[{\cite[Theorem 1.1]{Ou25}}]\label{Theorem Ou25}
Let X be a compact K\"ahler manifold. Then X is uniruled if and only if the canonical line bundle $K_X$ is not pseudo-effective.
\end{theorem}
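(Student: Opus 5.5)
The easy direction is classical. If $X$ is uniruled, there is a covering family of rational curves, and through a general point passes a free rational curve $f\colon \mathbb{P}^1\to X$. Since $f^*T_X$ is nef, we get $K_X\cdot C<0$ for a family of curves $C$ covering $X$. Such curves are movable, so they pair nonnegatively with every pseudo-effective class (Demailly--P\u{a}un/BDPP duality in the easy direction). Hence $K_X$ is not pseudo-effective.

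For the converse, assume $K_X$ is not pseudo-effective. I would use the duality between the pseudo-effective cone and the cone of movable classes on compact K\"ahler manifolds. It suffices to know the weak form: if a class is not pseudo-effective, then it is negative on some movable class $\alpha$. Once we have $K_X\cdot\alpha<0$ for a movable $\alpha$, the plan is to produce rational curves.

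The producing step goes through foliations. Following Campana--P\u{a}un and the K\"ahler generalization of Miyaoka--Mori, consider the Harder--Narasimhan filtration of $T_X$ with respect to $\alpha$. Since $\mu_\alpha(T_X)>0$, the maximal destabilizing subsheaf $\mathcal F$ has positive slope. One then shows that $\mathcal F$ is an algebraically integrable foliation whose leaves are rationally connected. That $\mathcal F$ is a foliation follows from semistability, since the O'Neill tensor $\wedge^2\mathcal F\to T_X/\mathcal F$ vanishes by slope considerations. Algebraicity and rational connectedness of the leaves follow from a positivity criterion that needs a curve-like movable class. This is where the K\"ahler setting is delicate, because $\alpha$ need not be a complete-intersection curve class.

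So I expect the main obstacle to be the criterion for algebraic integrability and rationally connected leaves for a foliation that is positive only against a transcendental movable class. What I would try first is induction on dimension combined with the K\"ahler MMP. Run a $K_X$-MMP with scaling of a K\"ahler class, which is available in the appropriate setting. Non-pseudo-effectivity means the process ends in a Mori fiber space $X'\to Y$ with $\dim Y<\dim X$. The fibers are Fano, hence uniruled by Mori's bend-and-break. Since the birational steps preserve uniruledness, $X$ is uniruled.

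The MMP for general K\"ahler manifolds is not fully established. This is precisely where Ou's argument has to substitute the foliation approach. I would therefore rely on Ou's result (Theorem \ref{Theorem Ou25}) as a black box in the rest of the paper.
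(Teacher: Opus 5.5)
The paper gives no proof of this statement. It is quoted from \cite[Theorem 1.1]{Ou25} and used as an external input, for example in Proposition \ref{nef}, Lemma \ref{BDPPCanonical} and Proposition \ref{pseflocalglobal}. Judged as a proof, your proposal settles only the easy implication. That part is correct: a free rational curve $C$ satisfies $-K_X\cdot C\ge 2$. Members of a covering family pair nonnegatively with any closed positive current in $c_1(K_X)$, since you can restrict local potentials to a general member. Hence $K_X$ is not pseudo-effective.

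For the converse there is a genuine gap, and each route you sketch stops at an input that is not available.
\begin{itemize}
\item The ``weak duality'' (a non-pseudo-effective class is negative on some movable class) is the K\"ahler case of the Boucksom--Demailly--P\u{a}un--Peternell duality between pseudo-effective and movable cones. It is known for projective manifolds, but you give no argument on a compact K\"ahler manifold.
\item Even granting a movable $\alpha$ with $K_X\cdot\alpha<0$, you still have to pass from the $\alpha$-maximal destabilizing foliation to algebraic leaves with rationally connected closures. This is exactly the step you flag and leave open. In the projective case $\alpha$ can be taken to be the class of a covering family of curves, so Miyaoka--Mori or Campana--P\u{a}un type criteria apply; a transcendental $\alpha$ supplies no curves.
\item The MMP route is unavailable. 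A $K_X$-MMP with scaling ending in a Mori fibre space is known for projective varieties (Theorem \ref{Theorem: TransBCHM}) and for K\"ahler threefolds, not for arbitrary compact K\"ahler manifolds.
\item The MMP route is also circular as the paper's tools stand. Proposition \ref{nef} obtains $K$-negative rational curves by combining \cite[Theorem 0.5]{HP24} or \cite[Theorem 1.3]{CH20} with Theorem \ref{Theorem Ou25}, the very statement you want to prove.
\end{itemize}

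Finally, ``rely on Ou's result as a black box'' cites the statement rather than proving it. That citation is exactly what the paper does, and legitimately so. But you should then say plainly that the hard direction is imported, rather than presenting the foliation and MMP sketches as an argument for it.
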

Combining this with Fujiki--Levine's deformation stability of uniruledness \cite{Fujiki,Levine}, one obtains the deformation stability of pseudo-effectivity of canonical divisors for smooth K\"ahler families (cf. \cite[Corollary 5.5]{CP25}). 

Combining recent developments in the transcendental MMP \cite{DH24MMP}, Das--Hacon's transcendental base point free theorem for projective varieties \cite{DH24MMP}, and Koll\'ar's extension of MMP techniques in \cite{DefGeneralType}, we prove the local stability of pseudo-effective canonical divisors for K\"ahler families whose fibers are allowed to have canonical singularities in Theorem \ref{Deformation invariance of pseudo-effectivity}. Then based on Theorem \ref{Deformation invariance of pseudo-effectivity}, and using a relative Douady space argument in Proposition \ref{pseflocalglobal}, we can prove the following global stability result for the pseudo-effectivity of the canonical divisors. 
\begin{theorem}[Theorem \ref{GlobalPseff}]\label{MainPSEF}
Let $f: X \to S$ be a family from a K\"ahler variety $X$ onto a smooth, connected, and relatively compact curve $S$. Assume that $X_0$ is projective, and $X_t$ have canonical singularities for all $t\in S$. Then $K_{X_0}$ is pseudo-effective if and only if $K_{X_t} $ are pseudo-effective for all $t \in S \setminus \{0\}$.
\end{theorem}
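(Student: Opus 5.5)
The plan is to combine a local statement near the projective fiber $X_0$ with a global propagation argument along the curve $S$, using uniruledness as the deformation-stable bridge. Throughout I use that a fiber with canonical singularities has non-pseudo-effective canonical class if and only if it is uniruled. For a resolution $\tilde X_t \to X_t$ we have $K_{\tilde X_t} = \pi^*K_{X_t} + E$ with $E \geq 0$ exceptional, and $\pi_*$ preserves pseudo-effectivity. Hence $K_{X_t}$ is pseudo-effective if and only if $K_{\tilde X_t}$ is, and by Theorem \ref{Theorem Ou25} this happens if and only if $\tilde X_t$, equivalently $X_t$, is not uniruled. This reduces the statement to showing that uniruledness of $X_0$ is equivalent to uniruledness of all $X_t$, $t \neq 0$.

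\textbf{Step 1: the local result near $0$.} I would invoke the local stability result (Theorem \ref{Deformation invariance of pseudo-effectivity}). It says that if $X_0$ is projective with canonical singularities and $K_{X_0}$ is pseudo-effective, then $K_{X_t}$ is pseudo-effective for all $t$ in a neighbourhood $U$ of $0$. Its proof runs the MMP for $X_0$ with scaling, using Das--Hacon's transcendental base point free theorem, and extends the steps to the family in the style of Koll\'ar. For the converse direction I use that uniruledness is an open condition in flat families. The rational curves through a general point of $X_0$ deform to nearby fibers via the relative Douady space, since $X_0$ is a Cartier-type fiber in a flat family over a curve. So if $X_0$ is uniruled, nearby fibers are uniruled.

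\textbf{Step 2: local to global via the relative Douady space.} This is Proposition \ref{pseflocalglobal}. Let $D$ be the relative Douady (Barlet) space of $1$-cycles of $X/S$. Since $X$ is K\"ahler and $S$ is relatively compact, the components of bounded volume are proper over $S$ (Fujiki). The set of $t$ for which $X_t$ is uniruled is then the image of the countably many components of $D$ whose universal family dominates its fiber and whose general member is rational. Each such image is a closed analytic subset of $S$, so it is either all of $S$ or a discrete set of points.

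\textbf{Step 3: conclusion.} If $X_0$ is uniruled, the set $Z$ of uniruled fibers contains the open neighbourhood from Step 1. Hence some dominating component has image of positive dimension, that image is all of $S$, and every $X_t$ is uniruled. Conversely, suppose all $X_t$ with $t \neq 0$ are uniruled. Then $Z$ is uncountable, so by Baire some component has image equal to all of $S$. By properness and closedness of the conditions "dominates the fiber" and "rational", that component also gives a covering family of rational curves on $X_0$. Thus $X_0$ is uniruled. Together with Step 1 this gives the stated equivalence.

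The main obstacle is Step 1: the MMP-based local statement for singular fibers. Its content is that a non-uniruled projective central fiber cannot become uniruled in nearby K\"ahler fibers, and this requires the Kähler MMP machinery. A secondary point is the properness of the relevant Douady components in the singular setting. For that I would pass to a simultaneous resolution of the general fibers over a finite base change.
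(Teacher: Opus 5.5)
The genuine gap is the converse half of your Step 1: the claim that uniruledness is open in flat families because rational curves through a general point of $X_0$ deform into nearby fibres via the relative Douady space, ``since $X_0$ is a Cartier-type fibre''. This is false as stated. Take a smooth plane quartic $F(x,y,z)$, a general quartic form $G(x,y,z,w)$, and $X=\{F+tG=0\}\subset\mathbb{P}^3\times\Delta$. Then $X$ is projective and smooth along the Cartier fibre $X_0$. The fibre $X_0$ is the cone over a plane quartic curve, hence uniruled by the lines through the vertex. But $X_t$ for $0<|t|\ll 1$ are smooth quartic K3 surfaces, which are not uniruled.

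Your argument never uses that $X_0$ is canonical (this vertex has discrepancy $-2$), and it cannot succeed without it. The deformation count that pushes a rational curve out of the central fibre, as in Fujiki--Levine, needs a free rational curve in the smooth locus of $X_0$. A covering family on a singular $X_0$ may instead be forced through $\operatorname{Sing}(X_0)$, exactly as the lines are here.

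This openness is what the ``if'' direction of the theorem requires, and it is where the paper needs the MMP on the projective fibre (Proposition \ref{Def-Close-Pseff}). If $K_{X_0}$ is not pseudo-effective, the $K_{X_0}$-MMP with scaling terminates with a Mori fibre space. Its steps extend over a neighbourhood of $0$ by Proposition \ref{ExtendMMP}; this is where the canonical singularities, Kawamata--Viehweg vanishing and the Koll\'ar--Mori extension of contractions are used. The Mori fibre space extends by Proposition \ref{extFano}, and Propositions \ref{NonPSEFFMFS} and \ref{PushPSEF} show that $K_{X_t}$ is not pseudo-effective for every $t$ near $0$.

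You also have the difficulty the wrong way round. The implication you delegate to the MMP (a non-uniruled $X_0$ has non-uniruled general nearby fibres) is closedness of uniruledness. Your own Douady properness argument in Step 3 proves it as soon as uncountably many fibres are uniruled, with no MMP.

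There is also a logical slip in your reduction. After Lemma \ref{BDPPCanonical}, the statement says that $X_0$ is uniruled if and only if \emph{some} $X_t$ with $t\neq 0$ is uniruled, not all of them. For ``$K_{X_0}$ pseudo-effective $\Rightarrow$ every $K_{X_t}$ pseudo-effective'' you must exclude each individual uniruled fibre $X_{t_1}$, $t_1\neq 0$. The second half of Step 3 assumes that all $X_t$ are uniruled, and Steps 1 and 2 together only show that the uniruled locus is countable. Excluding an isolated uniruled fibre needs a further input, for instance openness of uniruledness at the non-projective fibre $X_{t_1}$, which neither your Step 1 nor the projective MMP supplies. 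As written, you obtain pseudo-effectivity of $K_{X_t}$ only off a countable subset of $S$.
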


With a similar approach to Theorem \ref{MainPSEF} and the characterization of uniruledness for K\"ahler varieties with canonical singularities given in Theorem~\ref{Theorem Ou25}, we prove the global deformation stability of uniruled structures with canonical singularities.
\begin{corollary}[Corollary \ref{DefUniruled}, \cite{DefGeneralType}]\label{MainDefUniruled}
Let $f: X \to S$ be a family from a K\"ahler variety $X$ onto a smooth, connected, and relatively compact curve $S$. Assume that $X_0$ is projective, and $X_t$ have canonical singularities for all $t\in S$. Then $X_0$ is uniruled if and only if $X_t$ are uniruled for all $t \in S \setminus \{0\}$.
\end{corollary}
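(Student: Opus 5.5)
The strategy is to reduce the corollary to Theorem \ref{MainPSEF} through the singular version of Ou's characterization. The key input is the extension of Theorem \ref{Theorem Ou25} to compact K\"ahler varieties with canonical singularities: such an $X_t$ is uniruled if and only if $K_{X_t}$ is not pseudo-effective.

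To obtain this extension, take a resolution $\mu\colon Y_t\to X_t$, which is a compact K\"ahler manifold. Uniruledness is a bimeromorphic invariant, so $X_t$ is uniruled iff $Y_t$ is. Because $X_t$ has canonical singularities, we can write $K_{Y_t}=\mu^*K_{X_t}+E$ with $E\ge 0$ $\mu$-exceptional, and $\mu_*K_{Y_t}=K_{X_t}$.

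\emph{If $K_{X_t}$ is pseudo-effective}, then $\mu^*K_{X_t}+E$ is pseudo-effective. Pullback of a pseudo-effective $(1,1)$-class preserves pseudo-effectivity, and here $K_{X_t}$ is $\mathbb Q$-Cartier, so the class is well defined. Hence $Y_t$ is not uniruled by Theorem \ref{Theorem Ou25}.

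\emph{Conversely}, if $K_{Y_t}$ is pseudo-effective, then its pushforward $K_{X_t}=\mu_*K_{Y_t}$ is pseudo-effective, since pushforwards of closed positive currents are positive.

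Combining the two directions, $X_t$ is uniruled $\iff$ $K_{Y_t}$ is not pseudo-effective $\iff$ $K_{X_t}$ is not pseudo-effective.

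With this equivalence applied fiberwise, the statement becomes a direct translation of Theorem \ref{MainPSEF}. The hypotheses are identical: $X$ is K\"ahler, $S$ is a smooth connected relatively compact curve, $X_0$ is projective, and all fibers have canonical singularities. The chain is
\[
X_0 \text{ uniruled} \iff K_{X_0}\text{ not psef}\iff K_{X_t}\text{ not psef for all } t\neq0 \iff X_t \text{ uniruled for all } t\ne 0.
\]
The middle equivalence is the contrapositive of Theorem \ref{MainPSEF}, applied once the quantifier is handled correctly. Theorem \ref{MainPSEF} is a global dichotomy: either all fibers have pseudo-effective canonical class or none does. So "$K_{X_0}$ not psef" is equivalent to "$K_{X_t}$ not psef for all $t$", and also to "for some $t$".

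The main obstacle is the fiberwise singular version of Ou's theorem, specifically making sense of pullback and pseudo-effectivity of $K_{X_t}$ as a class in Bott--Chern cohomology on a singular K\"ahler space. This works because canonical singularities are rational and $K_{X_t}$ is $\mathbb Q$-Cartier, so $c_1(K_{X_t})$ lies in $H^{1,1}_{BC}(X_t)$ and the pullback of a closed positive current with local potentials is again closed positive.

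Alternatively, the direction "$X_0$ uniruled $\Rightarrow$ $X_t$ uniruled" could be proved by Fujiki--Levine style deformation of rational curves, via a relative Douady space argument as in Proposition \ref{pseflocalglobal}. Routing everything through Theorem \ref{MainPSEF} avoids this and handles both directions at once.
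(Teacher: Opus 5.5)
Your converse direction ($X_t$ uniruled for all $t\neq 0$ $\Rightarrow$ $X_0$ uniruled) is correct and is exactly the paper's argument. Lemma~\ref{BDPPCanonical} turns uniruledness of a canonical fiber into non-pseudo-effectivity of its canonical class. The \emph{only if} half of Theorem~\ref{MainPSEF} then forces $K_{X_0}$ to be non-pseudo-effective.

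The forward direction has a genuine gap, because Theorem~\ref{MainPSEF} is not a global dichotomy. It says $K_{X_0}$ is pseudo-effective iff $K_{X_t}$ is pseudo-effective for \emph{every} $t\neq 0$. Its contrapositive therefore only says: if $K_{X_0}$ is not pseudo-effective, then $K_{X_t}$ fails to be pseudo-effective for \emph{some} $t\neq 0$. Nothing in the statement excludes $K_{X_{t_1}}$ being non-pseudo-effective while $K_{X_{t_2}}$ is pseudo-effective. To exclude this you would have to re-center the theorem at $t_2$, which requires $X_{t_2}$ to be projective, and that is not assumed. Applying the theorem to ever smaller discs around $0$ only produces a sequence of non-pseudo-effective fibers accumulating at $0$. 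So your chain gives uniruledness of some fibers near $0$, not of every $X_t$.

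What is missing is essentially the argument you set aside in your last paragraph.

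\emph{Local step.} The paper first gets uniruledness on a whole neighborhood of $0$. Since $K_{X_0}$ is not pseudo-effective, the $K_{X_0}$-MMP ends with a Mori fiber space. By Propositions~\ref{ExtendMMP} and~\ref{extFano} this extends to a relative Mori fiber space over a neighborhood $U$ of $0$, so every $X_t$ with $t\in U$ is uniruled. Equivalently, you could use the contrapositive of Proposition~\ref{Def-Close-Pseff}, which is strictly stronger than the \emph{if} half of Theorem~\ref{MainPSEF}.

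\emph{Global step.} The paper then globalizes with the relative Douady space argument of Proposition~\ref{pseflocalglobal}:
\begin{enumerate}
\item Uncountably many uniruled fibers force a single component of $\mathcal{D}(X/S)$ to carry covering families of rational curves over a Zariski dense set of $t$.
\item The universal family over (the closure of) this locus then maps onto $X$.
\item Its members are limits of rational curves, so every fiber $X_t$ is covered by rational curves and is uniruled.
\end{enumerate}
Uncountability is what makes the pigeonhole over the countably many Douady components work. This is why the neighborhood statement is needed, and not merely the existence of one, or a sequence of, non-pseudo-effective fibers.
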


Note that if the MMP and the transcendental base point free conjecture hold on the central fiber, then the same proofs of Theorem~\ref{MainPSEF} and Corollary~\ref{MainDefUniruled} apply. Consequently, the same conclusions of Theorem~\ref{MainPSEF} and Corollary~\ref{MainDefUniruled} for K\"ahler families of threefolds remain valid without assuming that the central fiber is projective.
It is reasonable to propose the more general:
\begin{conjecture}Let $f: X \to S$ be a family from a K\"ahler variety $X$ onto a smooth, connected, and relatively compact curve $S$. Assume that $X_t$ have canonical singularities for all $t\in S$. Then $K_{X_0}$ is pseudo-effective if and only if $K_{X_t} $ are pseudo-effective for all $t \in S \setminus \{0\}$, and $X_0$ is uniruled if and only if $X_t$ are uniruled for all $t \in S \setminus \{0\}$.
\end{conjecture}

Next, we turn our attention to the deformation invariance problem for plurigenera and volumes of adjoint classes. There is a long history of results related to the deformation invariance of plurigenera for projective or Moishezon morphisms, see \cite{Siu98,Kawamata99,Kawa99,Siu02,Tsuji02,Naka04,Taka06,Paun07,BP12,HMX13,FS20,RT1,RT2,LW26} etc. In \cite{Siu98,Siu02}, Siu proved the deformation invariance of plurigenera for smooth projective families of manifolds using Ohsawa--Takegoshi extension techniques. 
Later, Kawamata \cite{Kawa99} and Nakayama \cite{Naka04} gave algebraic proofs of a singular version of Siu's theorem for projective families of varieties of general type with canonical singularities and terminal singularities, respectively. 
Takayama \cite{Taka06} proved the invariance of plurigenera for Moishezon morphisms whose fibers have canonical singularities using a modified version of the Ohsawa--Takegoshi extension theorem. Rao--Tsai \cite{RT1,RT2} developed a bimeromorphic embedding theorem, and using this, they proved that smooth fiberwise Moishezon families and flat Moishezon families with uncountably many fibers of general type with canonical singularities both satisfy the deformation invariance of plurigenera. A  (possibly incomplete) introduction on the progress of the deformation invariance of plurigenera can be found in \cite[Introduction]{RT2}.

Nevertheless, the problem is widely open for smooth K\"ahler families, as conjectured by Siu. See the recent progress in \cite{CP23Pluri}. 
\begin{conjecture}[{\cite[Conjecture 0.4]{Siu02}, \cite[Conjecture 1.4]{CP23Pluri}}]\label{SiuConj}
Let $\pi: X \rightarrow \Delta$ be a smooth family of compact K\"ahler manifolds over the unit disk in $\mathbb{C}$ with fiber $X_t$. Then for any positive integer $m\ge 1$, the $m$-genus $
P_m\left(X_t\right):=\operatorname{dim} H^0\left(X_t, m K_{X_t}\right)
$ is independent of $t\in \Delta$.
\end{conjecture}

The cohomological methods used in the projective setting do not directly apply to transcendental classes on a K\"ahler variety. However, thanks to the recent developments in \cite{DH24MMP,DHY23}, we are able to treat transcendental classes in a way similar to boundary divisors. Using these techniques, we establish the deformation invariance of the volumes of adjoint classes under the assumption that the central fiber is projective with a big adjoint class.
\begin{theorem}[{Theorem \ref{InvarVol}}]\label{0InvarVol}
Let $f: X \to S$ be a smooth family from a K\"ahler manifold $X$ onto a smooth, connected, and relatively compact curve $S$. For a point $0\in S$, assume that $(X,B + \bbeta)$ is a generalized K\"ahler pair such that the boundary divisor $B$ does not contain the fiber $X_0$ and let $\omega$ be a K\"ahler class on $X$ with $\omega_{X_t}:=\omega|_{X_t}$ its restriction to the fiber $X_t$. 

Assume that there exists $\delta>0$ such that $K_{X_0} + B_0 +  \boldsymbol{\beta}_{X_0}+\delta {\omega}_{X_0}$ is big and $(X_0, B_0+  { \boldsymbol{\beta}}_{X_0}+ \delta\boldsymbol{\omega}_{X_0})$ is projective, with canonical singularities, $\lfloor{ B_0\rfloor} = 0$ and $N(K_{X_0 }+B_0 + \bbeta_{X_0})\wedge B_0  = 0$. Then the volume  function 
$$t \longmapsto \mathrm{vol}(K_{X_t}+B_t +  \boldsymbol{\beta}_{X_t} +\delta{\omega}_{X_t})$$ 
is constant on a Euclidean open neighborhood $U \subset S$ of $0$.
\end{theorem}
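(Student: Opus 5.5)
We split the claim into lower and upper semicontinuity of $t\mapsto \vol(K_{X_t}+B_t+\bbeta_{X_t}+\delta\omega_{X_t})$ near $0$, and show the two inequalities separately.

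\textbf{Upper semicontinuity.} The inequality $\limsup_{t\to 0}\vol(\cdot_{X_t})\le \vol(\cdot_{X_0})$ should hold for any smooth family with a closed $(1,1)$ class, by a compactness argument for currents. Write $\alpha_t$ for the restricted class. Take positive currents $T_t\in\alpha_t$ with analytic singularities that almost compute the volume (Boucksom). Their masses are uniformly bounded, since they are controlled by $\alpha_t\cdot\omega_{X_t}^{n-1}$, which is constant in $t$. A weak limit along the family then gives a positive current on $X_0$ in $\alpha_0$. Comparing non-pluripolar masses is delicate, because the Monge--Amp\`ere mass is not continuous under weak limits. So instead I would argue via bigness on the total space: use a Kähler form and Demailly regularization to show that a limit of Kähler currents has at least the limiting mass. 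In the paper's framework it may be simpler to derive this from the MMP picture below, since the family on $X$ will be identified with an ample model.

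\textbf{Lower semicontinuity via the MMP.} This is the core of the proof. Since $X_0$ with $(B_0+\bbeta_{X_0}+\delta\omega_{X_0})$ is a projective generalized pair with canonical singularities and big adjoint class, the transcendental MMP of Das--Hacon applies:
\begin{itemize}
\item we may run an MMP with scaling on $K_{X_0}+B_0+\bbeta_{X_0}+\delta\omega_{X_0}$, relatively over $S$ near $0$;
\item by the transcendental base point free theorem, it terminates with a good minimal model and an ample model $Z_0$.
\end{itemize}
Following Koll\'ar's extension technique, we lift each step to the total space over a neighborhood $U$ of $0$, via a relative MMP for $K_X+B+\bbeta+\delta\omega+X_0$ over $U$ restricted by adjunction. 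Each extremal contraction of $X_0$ should extend to $X$, because $H^1$-vanishing, or the rigidity of extremal rays under small deformation, lets the contraction deform. The hypotheses $\lfloor B_0\rfloor=0$ and $N(K_{X_0}+B_0+\bbeta_{X_0})\wedge B_0=0$ ensure two things:
\begin{itemize}
\item the steps do not contract components of $B_0$ badly;
\item the restriction of the MMP of $X$ to $X_0$ is exactly an MMP of $X_0$, with no extra divisorial contractions inside the boundary.
\end{itemize}
In the end we obtain $X\dashrightarrow X'$ over $U$, with $X'_0$ the minimal model of $X_0$, and the nef adjoint class on $X'$ restricting to nef classes on $X'_t$ after shrinking $U$. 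Nefness is an open condition here because the class is semiample (pulled back from an ample class on the relative ample model $Z\to U$).

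\textbf{Conclusion and main obstacle.} On $X'$, the volume of a nef class is its top self-intersection, which is locally constant in the flat family. Since each $X_t\dashrightarrow X'_t$ is a birational contraction that is negative for the adjoint class, volumes are preserved, so $\vol(\cdot_{X_t})=(\cdot_{X'_t})^n$ is constant on $U$. The main obstacle is the extension step: showing that every flip and divisorial contraction of $X_0$ extends over a neighborhood in the Kähler total space. Fibers $X_t$ may be non-projective, so algebraic deformation arguments are unavailable. I would first try Koll\'ar's method of running the relative MMP on $(X,X_0+B+\bbeta+\delta\omega)$ over $U$, using inversion of adjunction to keep $X_0$ plt. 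This needs the relative Kähler MMP over a disc with projective central fiber, which I expect to be available from the Das--Hacon machinery after shrinking $U$.
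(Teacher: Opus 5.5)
Your strategy is the paper's. You run the $(K_{X_0}+B_0+\bbeta_{X_0}+\delta\omega_{X_0})$-MMP with scaling on $X_0$ via Das--Hacon, extend it over a neighborhood $U$ of $0$ by Koll\'ar's method (Proposition~\ref{ExtendMMP}), spread nefness of the minimal model via the transcendental base point free theorem (Proposition~\ref{Extbigandnef}), and compare volumes through the negative maps $X_t\dashrightarrow X'_t$.

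The semicontinuity split is superfluous, because the MMP gives equality directly. Your sketch of upper semicontinuity is only heuristic in any case; the statement itself is Boucksom's Proposition~\ref{uscvolume}.

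Two smaller corrections:
\begin{itemize}
\item No relative K\"ahler MMP over $U$ is needed, and none is available in general. The paper extends one step at a time. Each contraction is deformed using Kawamata--Viehweg vanishing and Koll\'ar--Mori (11.4), and each flip is built as a log canonical model locally over the base of the contraction, where everything is projective.
\item The hypothesis $N(K_{X_0}+B_0+\bbeta_{X_0})\wedge B_0=0$ enters after noting $N(K_{X_0}+B_0+\bbeta_{X_0}+\delta\omega_{X_0})\le N(K_{X_0}+B_0+\bbeta_{X_0})$. Its role is to keep every intermediate pair canonical (Lemma~\ref{singular}). This is what the discrepancy argument identifying the restricted log canonical model with the flip of $X_0$ requires.
\end{itemize}

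The step that genuinely needs an argument, and which you only assert, is ``its top self-intersection, which is locally constant in the flat family.'' The paper singles this out as the nontrivial point. $X'$ is singular and $K_{X'}+B'+\bbeta_{X'}+\delta\omega_{X'}$ is a transcendental class, so neither constancy of Hilbert polynomials nor the smooth Stokes argument of Lemma~\ref{constIntersection} applies.

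The paper's fix uses the smoothness of $f$, which your argument never invokes at this stage:
\begin{itemize}
\item Because $X\dashrightarrow X'$ is a bimeromorphic contraction, $X'\dashrightarrow X$ is an isomorphism off a subset of codimension $\ge 2$ in $X'$.
\item Hence the locus $Z\subset X'$ where $f'\colon X'\to U$ fails to be a submersion has codimension $\ge 2$.
\item For a real path $\gamma$ from $0$ to $s$, $f'^{-1}(\gamma)\setminus Z$ is then a manifold with boundary $(X'_0\cup X'_s)\setminus Z$.
\item Stolzenberg's singular Stokes formula (Proposition~\ref{stokes}) then gives equality of the top self-intersections on $X'_0$ and $X'_s$.
\end{itemize}
Alternatively, you could show that the fibers of the proper flat map $X'\to U$ define the same class in $H_{2m}(X',\mathbb{R})$, where $m=\dim_{\mathbb{C}}(X/S)$. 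Some such argument has to be supplied.
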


We also obtain the following variant under the assumption that $(X,B)$ is log smooth over $S$, without assuming that $N(K_{X_0} + B_0 + \bbeta_{X_0})\wedge B_0  = 0$. 
\begin{theorem}[Theorem \ref{InvarVol2}]\label{0InvarVol2}
Let $f:X\to S$ be a smooth family from a K\"ahler manifold $X$ onto a smooth, connected, relatively compact curve $S$. Assume that $(X,B+\bbeta)$ is a generalized klt pair such that $(X,B)$ is log smooth over $S$ and $\bbeta =\overline \beta $ where $\beta$ is nef over $S$, and $X_0$ is projective. If $K_{X_0}+B_0+\bbeta _{X_0}$ is big, then the volume function
\[t\longmapsto {\rm vol}(K_{X_t}+B_t+\bbeta _{X_t})\]
is constant on some Euclidean neighborhood $U\subset S$ of $0$.
\end{theorem}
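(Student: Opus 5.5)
We will deduce Theorem \ref{0InvarVol2} from Theorem \ref{0InvarVol} by a perturbation argument, and then remove the perturbation by continuity of volume. The two hypotheses of Theorem \ref{0InvarVol} missing here are the auxiliary term $\delta\omega$ and the condition $N(K_{X_0}+B_0+\bbeta_{X_0})\wedge B_0=0$. The log smoothness of $(X,B)$ over $S$ and the nefness of $\beta$ over $S$ are what let us arrange both.

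\emph{Step 1: reduce to $\lfloor B\rfloor=0$ and remove the components of $B$ lying in the negative part.} Since $(X,B+\bbeta)$ is generalized klt, every coefficient of $B$ is less than $1$. Because $(X,B)$ is log smooth over $S$, each component $B^i$ of $B$ is smooth over $S$, so the fibers $B^i_t$ vary smoothly.

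On $X_0$ we run a $(K_{X_0}+B_0+\bbeta_{X_0})$-MMP with scaling, using the projective MMP for generalized pairs, which applies because $X_0$ is projective and the class is big. Following Koll\'ar's extension of MMP techniques to families \cite{DefGeneralType}, we would rather use the standard trick of replacing $B$ by
\[
B' = B - \sum_i \min\{\operatorname{mult}_{B^i_0} N_\sigma(K_{X_0}+B_0+\bbeta_{X_0}),\ \operatorname{coeff}_{B^i}B\}\, B^i .
\]
For $t$ near $0$, the volumes of $K_{X_t}+B_t+\bbeta_{X_t}$ and $K_{X_t}+B'_t+\bbeta_{X_t}$ should agree. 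On $X_0$ this holds because removing part of the negative part does not change the volume. On $X_t$ we still need a semicontinuity statement for the multiplicity of $N_\sigma$ along the smooth family $B^i_t$. After this replacement, the pair $(X,B')$ is still log smooth, and $N(K_{X_0}+B'_0+\bbeta_{X_0})\wedge B'_0=0$ holds.

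\emph{Step 2: perturb and apply Theorem \ref{0InvarVol}.} Fix a K\"ahler class $\omega$ on $X$. Since $\beta$ is nef over $S$, the class $\beta+\delta\omega$ is relatively K\"ahler for every $\delta>0$. So $(X,B'+\overline{\beta+\delta\omega})$ is still generalized klt, and its restriction to $X_0$ is projective with canonical singularities: $X_0$ is smooth, and $(X_0,B'_0)$ is snc with coefficients less than $1$. We apply Theorem \ref{0InvarVol} for each small rational $\delta$. This gives that $t\mapsto\vol(K_{X_t}+B'_t+\bbeta_{X_t}+\delta\omega_{X_t})$ is constant on a neighborhood $U_\delta$ of $0$.

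\emph{Step 3: let $\delta\to0$ with a uniform neighborhood.} This is the main obstacle, because the neighborhood $U_\delta$ may shrink as $\delta\to0$. We handle it with two semicontinuity statements.

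\begin{enumerate}
\item[(a)] \emph{Upper semicontinuity.} By Theorem \ref{MainPSEF}-type arguments, or by directly using upper semicontinuity of volumes of restricted classes in K\"ahler families via Demailly--Boucksom, the function $t\mapsto\vol(K_{X_t}+B_t+\bbeta_{X_t})$ is upper semicontinuous. Hence near $0$ we get $\vol_t\le\vol_0$.
\item[(b)] \emph{Lower bound.} For each $t\in U_\delta$ we have
\[
\vol_t \;\ge\; \vol(\ldots+\delta\omega)_t - C\delta \;=\; \vol(\ldots+\delta\omega)_0 - C\delta \;\ge\; \vol_0 - C\delta .
\]
The first inequality holds because the volume is differentiable and Lipschitz in the big cone, with a constant $C$ uniform in $t$ by the uniform bounds on intersection numbers of $\omega$ in a smooth family.
\end{enumerate}

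This lower bound only holds on $U_\delta$, so we still need to fix the neighborhoods. To get a fixed neighborhood, we instead use the structure of the proof of Theorem \ref{0InvarVol}. There, the neighborhood comes from extending a minimal model of $X_0$ over $S$, and for a fixed $\delta_0$ the MMP with scaling of $\omega$ on $X_0$ involves only finitely many steps for $\delta\in(0,\delta_0]$. So finitely many neighborhoods suffice, and $U=\bigcap U_{\delta_j}$ works for all small $\delta$.

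Finally, continuity of the volume in $\delta$ on each fiber gives that $\vol_t=\vol_0$ on $U$.
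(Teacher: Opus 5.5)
Your proposal has a genuine gap exactly where you pass from the modified boundary $B'$ back to $B$ on the fibres $X_t$, $t\neq0$.

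Steps 2--3 can at best show that $t\mapsto\vol(K_{X_t}+B'_t+\bbeta_{X_t})$ is constant near $0$. (Once you note that the MMP with scaling is finite and ends with a minimal model of $K_{X_0}+B'_0+\bbeta_{X_0}$ itself, the perturbation by $\delta\omega$ is superfluous. The paper simply argues as in Theorem \ref{InvarVol} directly for $K_{X_0}+B'_0+\bbeta_{X_0}$.) Since $B'\le B$, and by Proposition \ref{VolZariski} on $X_0$, this yields $\vol(K_{X_t}+B_t+\bbeta_{X_t})\ge v_0:=\vol(K_{X_0}+B_0+\bbeta_{X_0})$.

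The reverse inequality is the whole difficulty, and you do not prove it. In Step 1 you explicitly leave it open. Step 3(a) does not supply it either: Proposition \ref{uscvolume} only gives $\limsup_{t\to0}\vol(K_{X_t}+B_t+\bbeta_{X_t})\le v_0$. That does not rule out $\vol(K_{X_{t_i}}+B_{t_i}+\bbeta_{X_{t_i}})>v_0$ along a sequence $t_i\to0$, so your ``hence near $0$ we get $\vol_t\le\vol_0$'' is not a valid deduction. Nor is there a general semicontinuity theorem for the multiplicity of the negative part along $B^i_t$ in the direction you need. Here that inequality is essentially equivalent to the theorem itself.

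The missing idea is the paper's interpolation and log-concavity argument. Put $B^\varepsilon=B'+\varepsilon(B-B')$. The extension $X\dashrightarrow X'/U$ of the $(K_{X_0}+B'_0+\bbeta_{X_0})$-minimal model is also $(K_X+B^\varepsilon+\bbeta_X)$-negative for $0\le\varepsilon\ll1$. So on the same $U$ one has $\vol(K_{X_t}+B^\varepsilon_t+\bbeta_{X_t})=\vol(K_{X_0}+B^\varepsilon_0+\bbeta_{X_0})=v_0$, the last equality again by Proposition \ref{VolZariski}. If the volume jumped along $t_i\to0$, Proposition \ref{r-logc} with $m=\dim_{\mathbb C}(X/S)$ would give
\[
\vol(K_{X_{t_i}}+B^\varepsilon_{t_i}+\bbeta_{X_{t_i}})^{1/m}\ge(1-\varepsilon)\vol(K_{X_{t_i}}+B'_{t_i}+\bbeta_{X_{t_i}})^{1/m}+\varepsilon\vol(K_{X_{t_i}}+B_{t_i}+\bbeta_{X_{t_i}})^{1/m}>v_0^{1/m},
\]
since the first volume on the right equals $v_0$ and the second exceeds it. This is a contradiction.

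Separately, your claim that $(X_0,B'_0)$ is canonical because it is snc with coefficients $<1$ is false. If two components with coefficients $b_1+b_2>1$ meet, blowing up their intersection gives discrepancy $1-b_1-b_2<0$. Canonicity is a hypothesis of both Theorem \ref{InvarVol} and Proposition \ref{ExtendMMP}. The paper handles this in two steps:
first, an \'etale base change makes the strata of $B$ have connected fibres over $S$, which is also what makes $\mathrm{mult}_{B^i_0}$ well defined;
second, it blows up strata, $\nu\colon\tilde X\to X$, writing $K_{\tilde X}+\tilde B=\nu^*(K_X+B)+E$ with $\tilde B,E\ge0$ and $\tilde B\wedge E=0$.
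This preserves log smoothness and the volumes, and makes the components of $\tilde B$ disjoint, so that $(\tilde X,\tilde B)$ is terminal.
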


Since the (relative) minimal model program and the abundance conjecture are completely established for K\"ahler threefolds by the series of works \cite{HP16,CHP16,CHP23Erratum,DHY23,DH24,DHP24,DO24,DO24II,DH20}, etc, we prove the deformation invariance of volumes (as well as plurigenera) for smooth families of K\"ahler threefolds without any additional assumption on $X_0$. In particular, this partially resolves Conjecture \ref{SiuConj} in the case of threefolds.
\begin{theorem}[Theorem \ref{DefVol3fold}]\label{0DefVol3fold}
Let $f: X \to S$ be a smooth family of compact K\"ahler threefolds over a smooth, connected, and relatively compact curve $S$. Then for any integer $m\ge 1$, the $m$-genus $P_m(X_t)$ is constant for any $t\in S$. Furthermore, assume that there exists a K\"ahler class $\omega_X$ on $X$ such that $\omega_{X_t} :  =  \omega_X |_{X_t}$. Then for every $\delta\ge 0$, the function $$t \longmapsto \operatorname{vol}(K_{X_t}+\delta\,\omega_{X_t})$$is constant for any $t\in S$.
\end{theorem}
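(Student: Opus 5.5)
The plan is to prove the two assertions separately, using the relative MMP and abundance for Kähler threefolds. The guiding principle is that volumes and plurigenera can be read off a relative minimal model over $S$ (or a neighbourhood of $t$). Since both $P_m(X_t)$ and the volume are upper semicontinuous type invariants in one direction, the real task is to show the values cannot jump. I would do this locally around each point $0\in S$ and conclude by connectedness of $S$.

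\textbf{Step 1: the case where $K_{X_0}$ is not pseudo-effective.} By Theorem~\ref{Theorem Ou25} and its threefold version (the threefold analogue of Theorem~\ref{MainPSEF} noted after Corollary~\ref{MainDefUniruled}), all fibers $X_t$ are then uniruled. Hence $P_m(X_t)=0$ for all $m\ge 1$ and all $t$. For $\delta>0$ I would run the relative $(K_X+\delta\omega)$-MMP over $S$, after shrinking $S$ around $0$; this is available for Kähler threefolds by \cite{DHP24,DH24}. The result is either a relative Mori fibre space for $K_X+\delta\omega$, or a relative minimal model $X'$ on which $K_{X'}+\delta\omega'$ is nef over $S$.

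\textbf{Step 2: the main case, including $K_{X_0}$ pseudo-effective.} I would take the relative minimal model $\phi\colon X\dashrightarrow X'$ over $S$, obtained via the $K_X+\delta\omega$ (respectively $K_X$) MMP with scaling. The crucial points are:
\begin{itemize}
\item[(a)] each step of the MMP restricts on the fibers (for $t$ near $0$) to a composition of steps of the $K_{X_t}+\delta\omega_t$ MMP, or to isomorphisms;
\item[(b)] $X'_t$ is then a minimal model of $(X_t,\delta\omega_t)$, and $X'\to S$ is flat with fibers having terminal (respectively klt) singularities.
\end{itemize}
Granting (a) and (b), for $K_{X'}+\delta\omega'$ nef over $S$ we have
\[\operatorname{vol}(K_{X_t}+\delta\omega_t)=(K_{X'_t}+\delta\omega'_t)^3 .\]
This is an intersection number of classes that are flat in $t$, so it is locally constant. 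For plurigenera, abundance gives semiampleness of $K_{X'}$ over $S$. Then $f'_*\mathcal O(mK_{X'})$ satisfies base change: $R^if'_*\mathcal O(mK_{X'})$ is torsion free by Kollár-type vanishing for the relative Iitaka fibration. Together with $H^0(X_t,mK_{X_t})=H^0(X'_t,mK_{X'_t})$, this yields constancy of $P_m$.

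\textbf{Step 3: the main obstacle, point (a).} The difficulty is that the central fiber of a flip or divisorial contraction of $X$ might not be an MMP step of $X_0$: a divisor could be contained in $X_0$, or the exceptional locus could meet fibers in an unexpected way. I would handle this following Kollár's argument in \cite{DefGeneralType}. Since the fibers are irreducible and $X_0$ is a Cartier divisor with $X_0\sim_S 0$, a contracted divisor cannot equal a fiber component. Inversion of adjunction then shows that $(X',X'_0)$ is plt, so $X'_0$ is normal with canonical singularities. Using $K_{X'}|_{X'_0}=K_{X'_0}$, the pullback comparison
\[\phi^*(K_{X'}+\delta\omega')+E=K_X+\delta\omega, \qquad E\ge 0 \text{ exceptional},\]
restricts to $X_0$. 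Since $E$ does not contain $X_0$, this gives the restricted Zariski decomposition. It follows that the volume on $X_0$ equals the top self-intersection on $X'_0$, and similarly for nearby $t$. This argument gives (a) and (b) in the weaker form actually needed, and it is the place where the threefold MMP inputs are essential.

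Finally, $\delta=0$ follows by continuity of the volume in $\delta$.
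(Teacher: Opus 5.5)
\textbf{Comparison.} Your plurigenera argument is essentially the paper's. Both use the relative $K_X$-MMP of \cite{DHP24} plus abundance for K\"ahler threefolds, fed into Nakayama's invariance argument, which the paper simply quotes as \cite[Theorem 11]{Naka86}. (Relative semi-ampleness of $K_{X'}$ needs a Kawamata-type relative statement, not just fibrewise abundance.)

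For the volume with $\delta>0$ you take a genuinely different route. The paper works from the fibre up. It fixes $t_0$, runs the generalized $(K_{X_{t_0}}+\delta\omega_{X_{t_0}})$-MMP on the threefold $X_{t_0}$, extends it by Koll\'ar's method (Remark \ref{Remark:ExtMMP}), and gets relative nefness of the output from the transcendental base point free theorem (Remark \ref{Extbigandnef3fold}). It then has to split into the cases $\delta>\tau$, $\delta<\tau$, $\delta=\tau$, the last via Lemmas \ref{l-bir} and \ref{Lemma: PSEFandMFS}. Constancy of the top self-intersection comes from the singular Stokes formula, and $\delta=0$ is deduced from the plurigenera.

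You work from the total space down. Relative nefness (or a relative Mori fibre space) comes for free, and no case split by $\tau$ is needed. Your deduction of $\delta=0$ from continuity of $\vol$ (Proposition \ref{ContVol}) is also cleaner than the paper's. The price is threefold:
\begin{enumerate}
\item The relative $(K_X+\delta\omega)$-MMP on the fourfold $X$ must be justified. \cite{DH24} is a threefold result, and the paper invokes \cite{DHP24} only for the $K_X$-MMP. You should realize your MMP as the initial segment (scaling constants $\lambda_i>\delta$) of the relative $K_X$-MMP with scaling of $\omega$.
\item Compatibility with the fibres is no longer automatic; this is the gap below.
\item ``Intersection numbers of flat classes are constant'' is precisely what the paper needs Proposition \ref{stokes} for, since $X'\to S$ is not submersive. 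Invoke it, or argue that $[X'_t]$ is constant in $H_6(X'_U,\R)$.
\end{enumerate}

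\textbf{The gap.} It is in Step 3. That $E$ does not contain $\tilde X_0$ only gives $E|_{\tilde X_0}\ge 0$. The restricted equation then only yields $\vol(K_{X_0}+\delta\omega_{X_0})\ge ((K_{X'}+\delta\omega')|_{X'_0})^3$, which upper semicontinuity already provides, so a jump at $0$ is not excluded.

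What you need is that $E|_{\tilde X_0}$ is exceptional over $X'_0$, i.e.\ that $X_0\dashrightarrow X'_0$ extracts no divisor. This is not formal: a $q$-exceptional divisor $F$ on the common resolution $W$ whose image is a surface inside $X'_0$ restricts to a divisor dominating a prime divisor of $X'_0$.

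It can be closed by a discrepancy comparison.
\begin{enumerate}
\item The class $\delta(q^*\omega'-p^*\omega)$ is represented by a $q$-exceptional divisor $\delta G$ with $G\ge 0$, by the negativity lemma.
\item Using $p^*X_t=q^*X'_t$, one gets $E+\delta G=\sum_{F\ne\tilde X_t}\big(a(F,X',X'_t)-a(F,X,X_t)\big)F$.
\item Take $W$ a log resolution of $(X,X_t)$ and $(X',X'_t)$. Suppose a component $\Gamma$ of $F\cap\tilde X_t$ is the strict transform of a prime divisor of $X'_t$ but is exceptional over $X_t$.
\item Adjunction, with $X'_t$ normal (your plt statement) and Cartier, gives $a(F,X',X'_t)=0$.
\item On the other side, $a(F,X,X_t)=a(\Gamma,X_t)\ge 1$ because $X_t$ is smooth.
\item Hence the coefficient of $\Gamma$ in $(E+\delta G)|_{\tilde X_t}$ is negative, a contradiction.
\end{enumerate}
This works for every $t$, since all $X_t$ are smooth. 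It is also exactly what your identity $H^0(X_t,mK_{X_t})=H^0(X'_t,mK_{X'_t})$ and the Mori fibre space outcome of Step 1 rely on.

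A minor correction: plt-ness of $(X',X'_0)$ does not come from inversion of adjunction. It comes from plt-ness of the snc pair $(X,X_0)$, preserved by the $(K_X+X_0+\delta\omega)$-negative map since $X_0\sim_S 0$.
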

Let us highlight some key points of our proofs. To study the deformation stability of pseudo-effectivity of canonical divisors in Theorem \ref{MainPSEF}, we will make use of some MMP arguments. 
A key complication is that, without the projectivity assumption of the family, it is not a priori clear whether the classical MMP framework can be employed here. However, because we assume that the central fiber is projective, thanks to \cite{DH24MMP}, it is possible to run the MMP on the central fiber. We then use the extension technique from \cite[Theorem 2]{DefGeneralType} as in Theorem \ref{ExtendMMP} to extend the MMP to nearby fibers. Note that the transcendental base point free Theorem \ref{DHTransbpf} plays an important role in guaranteeing the termination of the MMP for nearby fibers, provided termination holds for the central fiber.

In order to prove the deformation invariance of the volumes in Theorem \ref{0InvarVol}, we make use of a MMP type argument via \cite{DH24MMP} as in the proof of Theorem \ref{MainPSEF}. Since we assume that the central fiber has a big adjoint class, the MMP with scaling on the central fiber terminates. By base point freeness, the MMP also terminates on nearby fibers. Hence, without loss of generality, we may assume that all fibers have nef adjoint class, so that the volume can be computed via top intersection numbers. In the case of divisors, deformation invariance of the volume follows from the constancy of the Hilbert polynomial in flat families; however, we do not have a good substitute for this in the case of transcendental classes. Fortunately, after running the MMP, the singularities are still concentrated on a codimension 2 subset, and this allows us to apply a singular version of Stokes’ formula to deduce the constancy of the volume for adjoint classes. Using a similar argument, we can prove the deformation invariance of the volume of adjoint classes for smooth families of K\"ahler threefolds as in Theorem \ref{0DefVol3fold}.

In this paper, a \emph{family} $\pi: X\to S$ will refer to a proper surjective morphism from a normal complex analytic space
to a smooth connected curve whose fibers $X_t:=\pi^{-1}(t)$ $(t\in S)$ are connected (possibly reducible) complex analytic spaces, as in \cite{CRC25}. In particular, we call $\pi$ a \emph{smooth family} if it is a flat \emph{submersion}  (i.e., the relative cotangent sheaf $\Omega^1_{X|S}$ is locally
free of rank $\dim X -\dim S$). Note that in a smooth family, both the total space and each fiber are smooth. We also assume that the general fiber of a family is irreducible. The “general fiber” of a family refers to the one over a nonempty analytic Zariski open subset of the base. 

\textbf{Organization of the paper.} 
The paper is organized as follows. Section \ref{Preliminaries} establishes the foundations by reviewing generalized pairs, positivity notions such as nef, big, and pseudo-effective classes, the volume of cohomology classes, and basic results on bigness and pseudo-effectivity in families. Section \ref{Section: SingandPositivty} introduces singularities and positivity of line bundles in families, including deformation of singularities, relative ampleness, relative pseudo-effectivity, and criteria for global generation and semi-ampleness. Section \ref{Section: ExtMMP} develops the main tools needed for the main results, namely the extension of the analytic minimal model program to nearby fibers and the deformation openness of nef and big adjoint classes. Section \ref{Section: Deformation pseff} proves the global stability of  pseudo-effectivity of canonical divisors and uniruledness in K\"ahler families with one projective fiber. Section \ref{Section: DeformationVol} establishes invariance of the volumes of adjoint classes, first when the projective central fiber has a big adjoint class and then, in the case of K\"ahler threefolds without projectivity assumption.\\
\textbf{Acknowledgment.} 
The authors would like to express their gratitude to Professor Mihai P\u{a}un for informing us of the singular version of Stokes’ formula, Professors J\'anos Koll\'ar, Wenhao Ou and Omprokash Das for answering our questions and pointing out several errors in our preliminary version, and Professor Song Sun for bringing Junsheng Zhang’s preprint \cite{JunSheng} to our attention. The last two authors also thank Professors Junyan Cao and Xiaojun Wu for a useful discussion on pseudo-effectivity and uniruledness during the conference ``Families of K\"ahler spaces" at CIRM of France in last April.
\section{Preliminaries: generalized K\"ahler pairs and positivities of classes}\label{Preliminaries}
We first introduce some preparatory notions and results to be used in the proofs of our main theorems in Sections \ref{Section: Deformation pseff} and \ref{Section: DeformationVol}.
\subsection{Generalized pairs}
Generalized pairs were first introduced in \cite{BZ16} in the study of the effective Iitaka fibration conjecture and later generalized to the K\"ahler setting in \cite{DHY23}. With this additional flexibility, we can run the transcendental minimal model program for generalized K\"ahler pairs. Let us first recall some basic definitions. 

Let $X$ be a \emph{complex analytic variety}, i.e., an irreducible reduced complex space. Let $\mathcal{H}_X$ be the sheaf of real parts of holomorphic functions multiplied with $\sqrt{-1}$. A $(1,1)$-form (resp. $(1,1)$-current) with local potentials on $X$ is a global section of the quotient sheaf $\mathcal{A}_X^0 / \mathcal{H}_X$ (resp. $\mathcal{D}_X / \mathcal{H}_X$), where $\mathcal{A}_X^0$ is the sheaf of smooth real valued functions and $\mathcal{D}_X$ is the sheaf of  distributions. We define the \emph{Bott--Chern cohomology} to be $$H^{1,1}_{\rm{BC}}(X,\mathbb{R}) : = H^1(X,\mathcal{H}_X).$$

Given a closed $(1,1)$-form $\alpha$ with local potentials (resp.\ a closed $(1,1)$-current $T$ with local potentials), we denote by $[\alpha]$ (resp.\ $[T]$) the associated Bott--Chern class. Without causing confusion, given an $\mathbb{R}$-Cartier divisor $D$, we will also denote by $[D]$ the associated cohomology class in $H^{1,1}_{\mathrm{BC}}(X,\mathbb{R})$. When there is no danger of ambiguity, we will sometimes write a class $\alpha \in H^{1,1}_{\mathrm{BC}}(X,\mathbb{R})$ without explicitly specifying its representative.

A complex analytic variety $X$ is a \emph{K\"ahler variety} if there exists a K\"ahler form $\omega$ on $X$, i.e., a positive closed real $(1,1)$-form $\omega \in \mathcal{A}_{\mathbb{R}}^{1,1}(X)$ such that for every singular point $x \in X$ there exists an open neighborhood $U \subseteq X$ of $x$ and a closed embedding $\iota_U: U \hookrightarrow V$ into an open subset $V \subseteq \mathbb{C}^N$, and a strictly plurisubharmonic $C^{\infty}$ function $f: V \rightarrow \mathbb{R}$ with $\left.\omega\right|_{U \cap X_{\mathrm{sm}}}=\left.(i \partial \bar{\partial} f)\right|_{U \cap X_{\mathrm{sm}}}$.

Let $f: X \rightarrow S$ be a proper morphism of normal K\"ahler varieties with relatively compact $S$, and $\beta$  a closed $(1,1)$-current with local potentials. We say that the class $[\beta] \in H_{\mathrm{BC}}^{1,1}(X,\mathbb{R})$ is \emph{relatively K\"ahler} (also referred to as \emph{$f$-K\"ahler} or \emph{K\"ahler over $S$}) if $\left[\beta+f^* \omega_S\right] \in H_{\mathrm{BC}}^{1,1}(X,\mathbb{R})$ is a K\"ahler class for some K\"ahler form $\omega_S$ on $S$.

Let $X$, $Y$ and $S$ be complex analytic varieties. A mapping $\varphi$ of $X$ into the power set of $Y$ over $S$ is called a \emph{meromorphic $S$-mapping} of $X$ into $Y$ (we shall write $\varphi: X \dashrightarrow Y/S$), if $\varphi$ satisfies the following conditions:\\
(1) The graph $G_{\varphi}=\{(x, y) \in X \times_S Y \mid y \in \varphi(x)\}$ of $\varphi$ is an irreducible analytic subset in ${X} \times_S {Y}$;\\
(2) The projection map ${p}_{{X}}: {G}_{\varphi} \to {X}$ is a proper modification. \\
A meromorphic $S$-mapping $\varphi: {X} \dashrightarrow {Y}/S$ of complex varieties is called a \emph{bimeromorphic $S$-mapping} if ${p}_{{Y}}: {G}_{\varphi} \to {Y}$ is also a proper modification. 

In what follows, without causing any confusion, we will use the terms `birational' and `bimeromorphic' interchangeably in the analytic setting.

A compact complex analytic variety is said to be in the \emph{Fujiki class $\mathcal{C}$} if it is bimeromorphic to a compact K\"ahler manifold. A compact complex analytic variety is said to be a \emph{Moishezon variety} if it is bimeromorphic to a projective manifold. 

Let $X$ be a normal complex analytic variety. A sheaf $\mathcal{F}$ on $X$ is called a \emph{divisorial sheaf} if $\mathcal{F}$ is a reflexive sheaf of rank~$1$. The set of all divisorial sheaves, denoted by $W(X)$, forms a group under the reflexive tensor product (that is, for $\mathcal{F}, \mathcal{G} \in W(X)$, we define their \emph{reflexive tensor product} by
$\mathcal{F} \,\hat{\otimes}\, \mathcal{G} := (\mathcal{F} \otimes \mathcal{G})^{**} \in W(X)$). A normal complex analytic variety $X$ is said to be \emph{$\mathbb{Q}$-factorial} if every divisorial sheaf is $\mathbb{Q}$-Cartier; that is, for every $\mathcal{F} \in W(X)$ there exists $m \in \mathbb{N}$ such that the reflexive power $\mathcal{F}^{[m]}$ is locally free.

As explained in \cite[II.2, p. 54]{Naka04}, for an $n$-dimensional normal complex analytic variety $X$, the \emph{canonical sheaf} $\omega_X$ is the unique reflexive sheaf whose restriction to $X_{\text {reg }}$ is isomorphic to the sheaf $\Omega_{X_{\text {reg }}}^n$ of germs of holomorphic differential $n$-forms. A non-zero meromorphic $n$-form $\eta$ on $X_{\text {reg }}$ is regarded as a meromorphic section of $\omega_X$. The associated divisor $\operatorname{div}(\eta)$ is called the \emph{canonical divisor} of $X$ and is denoted by $K_X$ even though it depends on the choice of $\eta$. Some complex analytic varieties do not admit any non-zero meromorphic section of $\omega_X$. However, we use the symbol $K_X$ as a formal divisor class with an isomorphism $\mathcal{O}_X(K_X)\cong \omega_X$ and call it the \emph{canonical divisor} of $X$. 

\begin{definition}[{Generalized pair, \cite[Definition 2.7]{DHY23}}]
Let $f: X \rightarrow S$ be a proper surjective morphism from a normal complex variety $X$ to a relatively compact complex variety $S$, $\nu: X^{\prime} \rightarrow X$ a resolution, and  $B^{\prime}$ an $\mathbb{R}$-divisor on $X^{\prime}$ with simple normal crossing support such that $B:=\nu_* B^{\prime} \ge 0$, and $ \boldsymbol{\beta}$ a closed b-$(1,1)$ current. We say that $(X, B+ { \boldsymbol{ \beta}})$ is a \emph{generalized pair} if\\
\emph{(1)} $ { \boldsymbol{\beta}}$ is a positive closed b-$(1,1)$ current that descends to $X^{\prime}$,\\
\emph{(2)} $\left[ { \boldsymbol{\beta}}_{X^{\prime}}\right] \in H_{\mathrm{BC}}^{1,1}\left(X^{\prime},\mathbb{R}\right)$ is nef over $S$, and\\
\emph{(3)} $\left[K_{X^{\prime}}+B^{\prime}+ { \boldsymbol{\beta}}_{X^{\prime}}\right]=\nu^* \gamma$ for some $\gamma \in H_{\mathrm{BC}}^{1,1}(X,\mathbb{R})$.\\
If additionally $X$ is a K\"ahler variety, then we call $(X, B + \boldsymbol{\beta})$ a \emph{generalized K\"ahler pair}. Here a \emph{b-$(1,1)$ current} is a collection of closed bi-degree $(1,1)$ currents $\boldsymbol{\beta}_{X^{\prime}}$ on all proper bimeromorphic models $X^{\prime} \rightarrow X$ such that if $p: X_1 \rightarrow X_2$ is a bimeromorphic morphism of proper models of $X$, then $p_* \boldsymbol{\beta}_{X_1}=\boldsymbol{\beta}_{X_2}$. We say that $\bbeta$ \emph{descends to $X'$} or equivalently $\bbeta =\overline { \bbeta _{X'}}$ if on any higher model $\pi:X''\to X'$ we have $\bbeta_{X''}=\pi^* \bbeta_{X'}$. 
\end{definition}
Following the usual convention, we will sometimes denote by $\boldsymbol{\beta}_X$ the trace of the b-$(1,1)$ current $\boldsymbol{\beta}$ on $X$, by $\boldsymbol{\beta}_{X'}$ its trace on $X'$, and by $\boldsymbol{\beta}$ the actual b-$(1,1)$-current.

\begin{definition}[Generalized discrepancies]\label{logdiskrep}
Let $\left({X}, {B}+ { \boldsymbol{\beta}}\right)$ be a generalized pair. Let $E$ be a prime divisor on some birational model of ${X}$. We define the {generalized  discrepancy} of $E$ with respect to the above generalized pair as follows. Let $f:X'\to X$ be a log resolution, such that:\\ 
\emph{(a)} $B'$ is a $\mathbb{Q}$-divisor with simple normal crossing support;\\ 
\emph{(b)} $ { \boldsymbol{\beta}}$ is the positive b-(1,1) current that descends to $X'$ so that  $\bbeta _{X'}$ is nef and $\bbeta =\overline { \bbeta _{X'}}$;\\ 
\emph{(c)} $[K_{X'}+ B'+  { {\bbeta}}_{X'}]  =f^* \gamma$ for some $\gamma\in H^{1,1}_{\rm{BC}}(X, \mathbb{R})$. We can then write
$$
{K}_{{X}'}+{B}'+ { {\bbeta}}_{X'}=f^*\left({K}_{{X}}+{B}+ { {\bbeta}_X}\right).
$$
The \emph{generalized discrepancy} $a(X,B, { {\boldsymbol{\beta}}} , E)$ of $E$ is defined to be $-\text{coeff}_E(B')$.
\end{definition}

Using generalized discrepancies, we define various types of singularities for generalized pairs:

A generalized  pair $(X,B +  { { \boldsymbol{\beta}}} )$ is said to have \emph{canonical singularities} if 
the generalized discrepancy for any exceptional divisors satisfies $a(X,B ,  { \boldsymbol{\beta}} , E) \ge 0$. 

A generalized pair $(X,B +  { \boldsymbol{\beta}} )$ is said to have \emph{generalized klt singularities} (resp. \emph{generalized lc singularities}) or $(X,B +  { \boldsymbol{\beta}} )$ is said to be \emph{gklt} (resp. \emph{glc}) if 
the generalized discrepancy of any prime divisors $E$ over $X$ is $a(X,B ,  { \boldsymbol{\beta}} ,E)>-1$ (resp. $a(X,B, { \boldsymbol{\beta}};E)\ge -1$).

A generalized lc pair $(X,B +  { \boldsymbol{\beta}} )$ is said to have \emph{generalized dlt singularities} or $(X,B +  { \boldsymbol{\beta}} )$ is said to be \emph{gdlt} if there is an open subset $U \subset X$ such that $\left(U,\left.(B+\boldsymbol{\beta})\right|_U\right)$ is a log resolution (of itself) and $-1 \leq a(P, X, B+\boldsymbol{\beta}) \leq 0$ for any prime divisor $P$ on $U$ and $-1<a(P, X, B+\boldsymbol{\beta}) \leq 0$ for any prime divisor $P$ over $X$ with center contained in $X \backslash U$.

When ${\boldsymbol{\beta}}$ descends to $X$ (i.e., $\boldsymbol{\beta}_X$ is nef and $\bbeta =\overline {\boldsymbol{\beta}_X} $), the term $ { \boldsymbol{\beta}}$ in the generalized pair $(X,B + \boldsymbol{\beta})$ does not contribute to the singularities. 
\begin{proposition}[{\cite[Lemma 2.13]{DHY23}}]
Let $(X,B+  { \boldsymbol{\beta}})$ be a generalized pair. Suppose that $ { \boldsymbol{\beta}}$ descends to $X$ and $K_X+B$ is $\mathbb{Q}$-Cartier, then $(X,B)$ is klt (resp. dlt, lc) if and only if $(X,B+ { \boldsymbol{\beta}})$ is gklt (resp. gdlt, glc).
\end{proposition}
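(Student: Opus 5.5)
The plan is to compare discrepancies on a single common log resolution. The point is that the nef term $\boldsymbol{\beta}$, pulled back from $X$, shifts both sides of the ramification formula by the same amount.

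Fix a log resolution $f\colon X'\to X$ of $(X,B)$. We may take it high enough that the original data defining the generalized pair are dominated by it. Since $\boldsymbol{\beta}$ descends to $X$, we have $\boldsymbol{\beta}=\overline{\boldsymbol{\beta}_X}$ and $\boldsymbol{\beta}_X$ is nef. Hence $\boldsymbol{\beta}_{X'}=f^*\boldsymbol{\beta}_X$ as currents (and as classes), and condition (b) of Definition~\ref{logdiskrep} holds on $X'$. Because $K_X+B$ is $\mathbb{Q}$-Cartier, the usual pullback $f^*(K_X+B)$ makes sense. Write
\[
K_{X'}+B'_{\rm usual}=f^*(K_X+B), \qquad f_*B'_{\rm usual}=B,
\]
so that the ordinary discrepancies are $a(E,X,B)=-\mathrm{coeff}_E(B'_{\rm usual})$. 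Adding $\boldsymbol{\beta}_{X'}=f^*\boldsymbol{\beta}_X$ to both sides gives
\[
K_{X'}+B'_{\rm usual}+\boldsymbol{\beta}_{X'}=f^*(K_X+B+\boldsymbol{\beta}_X).
\]
So condition (c) holds with $\gamma=[K_X+B+\boldsymbol{\beta}_X]$.

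Next we identify the two boundaries. The generalized boundary $B'$ in Definition~\ref{logdiskrep} is determined by the requirements that $f_*B'=B$ and that $[K_{X'}+B'+\boldsymbol{\beta}_{X'}]$ is a pullback of a class from $X$. Subtracting the two relations, $B'-B'_{\rm usual}$ is an $f$-exceptional divisor whose class is the pullback of a class on $X$. By the negativity lemma (applied on $X'$ over $X$, in its analytic form), such a divisor is zero. Thus $B'=B'_{\rm usual}$, and $a(X,B,\boldsymbol{\beta},E)=a(E,X,B)$ for every prime divisor $E$ on $X'$. Every divisor over $X$ appears on some such resolution, and discrepancies do not depend on the chosen model, so the two notions of discrepancy coincide for all $E$ over $X$.

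Once the discrepancies agree, the klt and lc equivalences are immediate from the definitions. For dlt, the open set $U$ in the gdlt definition has the same meaning as in the classical definition, since $\boldsymbol{\beta}|_U$ plays no role in the log resolution condition. The inequalities on discrepancies then transfer verbatim.

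The main obstacle is the identification $B'=B'_{\rm usual}$. This needs the negativity lemma, or more simply the injectivity of $f^*$ on Bott--Chern classes together with the fact that a nonzero $f$-exceptional divisor is never numerically trivial over $X$. It also needs care that $\boldsymbol{\beta}_X$ is a genuine current with local potentials, so that $f^*\boldsymbol{\beta}_X$ is defined. I would cite this step from \cite[Lemma 2.13]{DHY23} if the direct argument proves delicate in the singular analytic setting.
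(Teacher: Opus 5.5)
Your argument is correct and is essentially an explicit version of what the paper relies on. The paper cites \cite[Lemma 2.13]{DHY23} for the implication from gklt (resp.\ gdlt, glc) to klt (resp.\ dlt, lc), and \cite[Remark 2.6.(a)]{DHY23} together with Definition~\ref{logdiskrep} for the converse; the content behind both is your observation that $\boldsymbol{\beta}_{X'}=f^*\boldsymbol{\beta}_X$ cancels from the ramification formula, so $B'$ is the usual log pullback of $B$ (unique by the negativity lemma) and the generalized and classical discrepancies coincide, which gives both directions at once.
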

\begin{proof}
The `if part' has been proved in \cite[Lemma 2.13]{DHY23}. The converse direction follows by 
\cite[Remark 2.6.(a)]{DHY23} combined with the Definition \ref{logdiskrep}.
\end{proof}

When $ { \boldsymbol{\beta}}$ does not descend to $X$, it will contribute to the singularities (cf. \cite[Remark 4.2]{BZ16}).


\subsection{Nef, big and pseudo-effective classes} In this subsection, we will introduce nef, big and pseudo-effective Bott--Chern cohomology classes on a normal compact complex analytic variety and their basic properties. 

\begin{definition}[Metrically nef, quasi-nef and algebraically nef classes]
Let $X$ be a normal compact complex analytic variety in the Fujiki class $\mathcal{C}$, and let $\alpha = [\theta] \in H_{\mathrm{BC}}^{1,1}(X,\mathbb{R})$ be a class represented by a form $\theta$ with local potentials. Then \\
\emph{(1)} We say that the class $\alpha$ is \emph{(metrically) nef} if for some positive smooth $(1,1)$-form $\omega$ on $X$ and for every $\varepsilon>0$ there exists $f_\varepsilon \in \mathcal{A}^0(X)$ such that $$ \theta+i \partial \overline{\partial} f_\varepsilon \ge-\varepsilon \omega .$$
\emph{(2)} We say that the class $\alpha$ is \emph{quasi-nef} if there exists a K\"ahler modification $\nu \colon \widehat{X} \to X$ such that $\nu^*\alpha$ is a metrically nef class on $\widehat{X}$.\\
\emph{(3)} We say that the class $\alpha$ is \emph{algebraically nef} if for all irreducible curves $C \subset X$, we have  $\alpha\cdot C  \ge 0$.\\
\emph{(4)} Let $f:X\to S$ be a proper surjective morphism of normal relatively compact complex analytic varieties, and  $\alpha \in H^{1,1}_{\rm{BC}}(X,\mathbb{R})$ a Bott--Chern (1,1)-class on $X$. We say that $\alpha$ is \emph{$f$-metrically nef (resp. $f$-algebraically nef)} if $\alpha|_{X_s}$ is metrically nef (resp. algebraically nef) for any $s\in S$.
\end{definition}

Using the cone theorem for K\"ahler varieties \cite{CH20, HP24}, we first establish the equivalence of several notions of nefness for adjoint classes on K\"ahler varieties (cf. \cite[\S\ 2]{Naka86}).

\begin{proposition}\label{nef}
Let $X$ be a normal compact K\"ahler variety, such that $(X, B + \boldsymbol{\beta}_X)$ is a generalized klt pair, and $\alpha \in H^{1,1}_{\mathrm{BC}}(X, \mathbb{R})$. Then the class $\alpha$ is metrically nef if and only if it is quasi-nef. Moreover, if we further assume that $\alpha = K_X + B + \boldsymbol{\beta}_X$, then $\alpha$ is algebraically nef if and only if it is metrically nef.
\end{proposition}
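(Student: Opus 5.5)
Two equivalences need proof: metrically nef $\Leftrightarrow$ quasi-nef for an arbitrary class $\alpha$, and algebraically nef $\Leftrightarrow$ metrically nef for the adjoint class $\alpha = K_X+B+\boldsymbol{\beta}_X$.

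\textbf{Metrically nef $\Rightarrow$ quasi-nef.} Take $\nu=\mathrm{id}$, or better a resolution $\nu\colon \widehat X\to X$ that is a Kähler modification. Pulling back the inequality $\theta+i\partial\bar\partial f_\varepsilon\ge -\varepsilon\omega$ gives
\[
\nu^*\theta+i\partial\bar\partial(f_\varepsilon\circ\nu)\ge -\varepsilon\,\nu^*\omega .
\]
Since $\nu^*\omega\le C\widehat\omega$ for a Kähler form $\widehat\omega$ on $\widehat X$, the class $\nu^*\alpha$ is metrically nef.

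\textbf{Quasi-nef $\Rightarrow$ metrically nef.} Let $\nu\colon\widehat X\to X$ be a Kähler modification with $\nu^*\alpha$ nef. The plan is the standard push-forward argument (as in Păun's or Demailly's characterization of nefness, cf.\ \cite[\S 2]{Naka86}):
\begin{itemize}
\item[(1)] For each $\varepsilon>0$, pick a smooth representative $\widehat\theta_\varepsilon=\nu^*\theta+i\partial\bar\partial g_\varepsilon\ge -\varepsilon\widehat\omega$.
\item[(2)] Compare $\widehat\omega$ with $\nu^*\omega$ plus a term supported near the exceptional locus. On $\widehat X$ write $\widehat\omega\le C\nu^*\omega+i\partial\bar\partial\psi$ locally, with $\psi$ quasi-psh having analytic singularities along $\mathrm{Exc}(\nu)$; this uses that $-E$ is $\nu$-ample for a suitable exceptional divisor $E$.
\item[(3)] Push forward the potentials and regularize, using that the fibers of $\nu$ are compact and connected, so $\nu$-plurisubharmonic data descend.
\end{itemize}
An alternative, which I expect to be cleaner, uses the characterization that $\alpha$ is metrically nef iff for every irreducible analytic subvariety $V\subset X$ the restriction $\alpha|_V$ is pseudo-effective with the correct Lelong-number behaviour (the Demailly--Păun/Păun criterion for singular spaces). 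Since each $V$ is the image of a subvariety $\widehat V\subset\widehat X$, pushing forward the nef class $\nu^*\alpha|_{\widehat V}$ yields this. I expect this descent step to be the main technical obstacle, because metric nefness is not obviously preserved under pushforward along a modification with positive-dimensional fibers. I would cite Păun's result that a class whose pullback to a modification is nef is itself nef on a Kähler space.

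\textbf{Adjoint classes.} Metrically nef $\Rightarrow$ algebraically nef is immediate. For any irreducible curve $C$, integrate $\theta+i\partial\bar\partial f_\varepsilon\ge-\varepsilon\omega$ over the normalization of $C$ to get $\alpha\cdot C\ge -\varepsilon\,\omega\cdot C$, then let $\varepsilon\to 0$.

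For the converse, assume $\alpha=K_X+B+\boldsymbol{\beta}_X$ is algebraically nef. Use the cone theorem for generalized klt Kähler pairs \cite{CH20,HP24}, which gives
\[
\overline{NA}(X)=\overline{NA}(X)_{\alpha\ge0}+\sum_i\mathbb{R}_{\ge0}[\Gamma_i],
\]
with $\Gamma_i$ rational curves. Algebraic nefness gives $\alpha\cdot\Gamma_i\ge 0$, so $\alpha$ is nonnegative on the whole cone $\overline{NA}(X)$ of positive closed $(n-1,n-1)$-currents. The duality statement for Kähler spaces (Demailly--Păun, extended to singular Kähler spaces in \cite{HP24}) then says that the nef cone is dual to $\overline{NA}(X)$, so $\alpha$ is metrically nef.

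The delicate point is ensuring the cone theorem applies in the needed form, namely that the $\alpha$-negative part of $\overline{NA}(X)$ is generated by curves and not merely by limits of currents. This is exactly the content of \cite{CH20,HP24} for klt (or generalized klt) pairs. If needed, I would reduce to a $\mathbb{Q}$-factorial model, or treat the generalized term $\boldsymbol{\beta}$ by perturbing with a small Kähler class, using that $\alpha+\epsilon\omega$ is algebraically nef and applying the cone theorem to it before letting $\epsilon\to 0$.
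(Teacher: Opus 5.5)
Your handling of the first equivalence is the paper's. Pullback gives one direction, and for quasi-nef $\Rightarrow$ metrically nef you end up citing P\u{a}un's theorem \cite[Theorem 1]{Paunnef} (a class is nef once its pullback to a modification is nef), which is exactly what the paper does. Your sketched steps (1)--(3) are not needed; note also that ``$-E$ is $\nu$-ample'' requires $\nu$ to be projective, which a K\"ahler modification need not be.

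The direction ``metrically nef $\Rightarrow$ algebraically nef'' is fine. For the converse in the adjoint case, your mechanism (the negative part of the cone is spanned by rational curves, then duality) is also the paper's. The paper phrases it as: if $K_X+B+\boldsymbol{\beta}_X$ is not metrically nef, there is a $(K_X+B+\boldsymbol{\beta}_X)$-negative rational curve.

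\textbf{The gap} is that you treat the cone theorems of \cite{CH20,HP24} as unconditional. In arbitrary dimension they were only proved conditionally: they need the adjoint class to be pseudo-effective, or they assume the BDPP characterization of uniruledness. This is why the paper invokes them ``together with Theorem~\ref{Theorem Ou25}''.

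The uncovered case is exactly when $\alpha=K_X+B+\boldsymbol{\beta}_X$ is not pseudo-effective. Then $\alpha$ is certainly not nef, so you must produce a curve $C$ with $\alpha\cdot C<0$. Algebraic nefness on a non-projective K\"ahler variety gives no a priori pseudo-effectivity that would exclude this case: on a torus without curves, even $-\omega$ is algebraically nef.

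Your fallback of perturbing by $\epsilon\omega$ does not help. The class $\alpha+\epsilon\omega$ stays non-pseudo-effective for all small $\epsilon$, so no cone theorem applies to it either, and letting $\epsilon\to0$ cannot manufacture a negative curve.

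What is missing is Ou's theorem, which supplies the BDPP input these cone theorems rely on. For instance, suppose $\alpha$ is not pseudo-effective and let $\nu\colon X'\to X$ be a resolution. Then $K_{X'}$ is not pseudo-effective, because $\nu_*B'=B\ge 0$, $\boldsymbol{\beta}_{X'}$ is nef, and push-forward preserves pseudo-effectivity. Hence $X'$ is uniruled by Theorem~\ref{Theorem Ou25}. Without this ingredient your argument only proves the equivalence when the adjoint class is already known to be pseudo-effective.
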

    
\begin{proof}
By definition, $\alpha$ is quasi-nef if and only if $f^* \alpha$ is metrically nef on a resolution $f: \widehat{X} \to X$. On the other hand, by \cite[Theorem 1]{Paunnef},  $\alpha$ is metrically nef if and only if $f^* \alpha$ is metrically nef. Hence clearly $\alpha$ is metrically nef if and only if it is quasi-nef. 

Assume now that $\alpha = K_X + B + \bbeta_X$. By \cite[Theorem 0.5]{HP24} (or alternatively \cite[Theorem 1.3]{CH20}), together with Theorem~\ref{Theorem Ou25}, if $K_X + B + \bbeta_X$ is not metrically nef, then there exists a $(K_X + B + \bbeta_X)$-negative rational curve $C$ such that $
(K_X + B + \bbeta_X)\cdot C < 0$. 
This implies that $K_X + B + \bbeta_X$ is not algebraically nef. 
\end{proof}

\begin{definition}[Pseudo-effective and big classes]
On a normal compact complex analytic variety $X$, we say that:\\  
\emph{(1)} A cohomology class $\alpha \in H^{1,1}_{\rm{BC}}(X,\mathbb{R})$ is \emph{pseudo-effective} if there exists a closed positive (1,1)-current $T\ge 0$, such that $T$ represents the class $\alpha$.\\
\emph{(2)} A cohomology class $\alpha \in H^{1,1}_{\rm{BC}}(X,\mathbb{R})$ is \emph{big} if there exists a closed positive (1,1)-current $T$ to represent the class $\alpha$,  such that $T\ge \delta \omega$ for some smooth Hermitian metric $\omega$ and some number $\delta >0$. 
\end{definition}

If we perturb a pseudo-effective class in the K\"ahler direction, we obtain a big class. 
\begin{proposition}\label{bigandpseff}
Let $X$ be a normal compact K\"ahler variety. A class $\alpha \in H^{1,1}_{\rm{BC}}(X,\mathbb{R})$ is pseudo-effective if and only if there exists a K\"ahler class $\omega$ such that $\alpha + \varepsilon \omega$ is big for any $\varepsilon >0$. 
\end{proposition}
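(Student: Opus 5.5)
We prove the two implications separately, using only the definitions of pseudo-effective and big classes via positive currents and the fact that a Kähler form is strictly positive.

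\emph{The easy direction.} Suppose $\alpha$ is pseudo-effective, represented by a closed positive $(1,1)$-current $T \ge 0$ with local potentials. Fix any Kähler form $\omega$ on $X$ (it exists since $X$ is Kähler) and write $\omega$ also for its class. For each $\varepsilon>0$ the current $T+\varepsilon\omega$ represents $\alpha+\varepsilon\omega$ and satisfies $T+\varepsilon\omega\ge \varepsilon\omega$. So it remains to compare $\omega$ with a smooth Hermitian metric $h$ on $X$. On the singular locus, "smooth Hermitian metric" is understood via local embeddings $X\supset U\hookrightarrow V\subset\mathbb{C}^N$. Since $\omega$ is locally the restriction of $i\partial\bar\partial f$ with $f$ strictly psh on $V$, we have $i\partial\bar\partial f\ge c\,\omega_{\mathrm{eucl}}$ on compact subsets of $V$. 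Compactness of $X$ then gives a constant $c>0$ with $\omega\ge c\,h$. Hence $\alpha+\varepsilon\omega$ is big for every $\varepsilon>0$.

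\emph{The converse.} Suppose $\alpha+\varepsilon\omega$ is big for every $\varepsilon>0$, say represented by positive currents $T_\varepsilon\ge \delta_\varepsilon h\ge 0$. Take $\varepsilon=1/k$, and write $T_k=\theta+\tfrac1k\omega+dd^c\varphi_k$ for a fixed smooth representative $\theta$ of $\alpha$. The masses of the $T_k$ against $\omega^{n-1}$ are cohomological, equal to $(\alpha+\tfrac1k\omega)\cdot\omega^{n-1}$, and hence uniformly bounded. By weak compactness of positive currents of bounded mass, a subsequence converges weakly to a positive current $T$. The limit $T$ is closed and lies in the class $\alpha$. To see this, normalize the potentials by $\sup\varphi_k=0$ and use compactness of $\theta'$-psh functions with $\theta'=\theta+\omega$ in $L^1$. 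Then $\varphi_k\to\varphi$ in $L^1$, and $T=\theta+dd^c\varphi\ge0$.

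\emph{Main obstacle.} The delicate step is this compactness on a possibly singular normal variety, and making sure the limit still has local potentials, so that it defines a Bott--Chern class. I would first pass to a resolution $\nu:\widehat X\to X$. There I apply the usual compactness of quasi-psh functions to $\nu^*\varphi_k$, which are $\nu^*\theta'$-psh. The limit $\widehat\varphi$ is constant along the fibers of $\nu$, being a limit of such functions, so it descends to a function $\varphi$ on $X$. The function $\varphi$ is $\theta'$-psh on the normal space $X$, by the Grauert–Remmert extension of psh functions across the singular set. Thus $T=\theta+dd^c\varphi$ is a closed positive current with local potentials representing $\alpha$, and $\alpha$ is pseudo-effective.
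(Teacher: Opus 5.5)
The paper states this proposition without proof and treats it as a standard fact, so there is no argument of the authors to compare against. Your proof is the standard one and is essentially correct. The forward direction is fine; you could even take $h=\omega$, since a K\"ahler form on $X$ is itself a smooth Hermitian metric in the sense of local embeddings. In the converse, the global writing $T_k=\theta+\tfrac1k\omega+dd^c\varphi_k$ is legitimate because the sheaf of distributions is fine. The compactness of $\nu^*(\theta+\omega)$-psh functions normalized by $\sup=0$ on the compact manifold $\widehat X$, followed by passing to the weak limit in $\nu^*\theta+\tfrac1k\nu^*\omega+dd^c\nu^*\varphi_k\ge 0$, correctly produces a $\nu^*\theta$-psh limit $\widehat\varphi$ (after usc regularization). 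Note that this is $\nu^*\theta$-psh, not merely $\nu^*\theta'$-psh as your last paragraph says; it is $\theta$-psh that you need for $T=\theta+dd^c\varphi\ge 0$. The mass bound and the weak compactness of currents are then redundant.

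The one step whose stated justification does not work is the descent. An $L^1$-limit carries no pointwise information on the exceptional fibres of $\nu$, which have measure zero, so ``constant along the fibres, being a limit of such functions'' is not a valid reason. There are two repairs.

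\begin{enumerate}
\item Show that every $\nu^*\theta$-psh function is constant on each fibre $\nu^{-1}(x)$. Near $x$ write $\theta=dd^c g$ with $g$ smooth. Then $\widehat\varphi+g\circ\nu$ is psh near the compact fibre, which is connected by Zariski's main theorem since $X$ is normal. Hence $\widehat\varphi+g\circ\nu$ is constant on the fibre (possibly $-\infty$), while $g\circ\nu$ is constant there.
\item More simply, drop fibre-constancy altogether. Define $\varphi$ only on the dense Zariski open set over which $\nu$ is an isomorphism. There it is $\theta$-psh and bounded above, because $\widehat\varphi$ is usc on the compact $\widehat X$. Then extend it across the nowhere dense analytic complement by the Grauert--Remmert theorem, which you already invoke.
\end{enumerate}

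With either repair, $T=\theta+dd^c\varphi$ is a closed positive current with local potentials representing $\alpha$, as you claim.
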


\subsection{Lelong numbers and restricted non-K\"ahler locus}
Let us introduce the definitions of Lelong numbers and restricted non-K\"ahler locus on normal complex analytic varieties. 
\begin{definition}[{Lelong numbers, \cite[Définition 2]{Demailly82}}]
Let X be a normal complex $n$-dimensional analytic variety, and ${T} \geq 0$ a closed positive $(1,1)$-current on X. Let ${x} \in {X}$ be an arbitrary point. Consider an embedding
$$
({X}, {x}) \hookrightarrow\left(\mathbb{C}^{{N}}, 0\right).
$$
The coordinate functions $\left({z}_{{i}}\right)_{{i}=1,\cdots, {N}}$ on $\mathbb{C}^{{N}}$ restricted to X induce a generating system $\left({g}_{{i}}\right)_{{i}=1,\cdots, {N}}$ of the maximal ideal of the ring $\mathcal{O}_{{X}, {x}}$. The function $\varphi:=\sum_{{i}}\left|{g}_{{i}}\right|$ is defined on some small open subset $U$ containing $x$. We then define the \emph{Lelong number} of $T$ at $x$ to be
$$
\nu({T},{x}):=\lim _{{r} \rightarrow 0} \frac{1}{(2 \pi {r})^{2 {n}-2}} \int_{\varphi<{r}} {~T} \wedge(\sqrt{-1} \partial \bar{\partial} \varphi)^{{n}-1}.$$
\end{definition}

\begin{definition}[{Restricted non-K\"ahler locus, \cite[Definition 4.18]{HP24}}]
Let $X$ be a normal complex analytic variety and $T$ a closed positive $(1,1)$-current on $X$ with local potentials. For each $c>0$ we define the \emph{Lelong level set} as 
$$
E_c:=\{x \in X: \nu(T, x)\ge c\},
$$
where $\nu(T, x)$ denotes the Lelong number of $T$ at $x \in X$.
This is an analytic subset of $X$. We define
$$
E_{+}(T):=\bigcup_{c \in \mathbb{R}^{+}} E_c .
$$
The \emph{restricted non-K\"ahler locus} of a class $\alpha\in H^{1,1}_{\rm{BC}}(X,\R)$ is the set
$$
E_{n K}^{a s}(\alpha):=\bigcap_{T \in \alpha} E_{+}(T),
$$
where $T$ above is assumed to be a K\"ahler current with weak analytic singularities, and $E_{+}(T) \subset X$ is the (analytic) subset of $X$ for which the Lelong numbers of $T$ are strictly positive. 
\end{definition}

\begin{lemma}\label{nonKahlerlocus}
Let $\alpha \in H^{1,1}_{\rm{BC}}(X,\mathbb{R})$ on a normal compact complex analytic variety $X$. Then $E_{nK}^{as}(\alpha) = \emptyset $ if and only if $\alpha$ is K\"ahler class; $E_{nK}^{as}(\alpha) = X$ if and only if $\alpha$ is not a big class.
\end{lemma}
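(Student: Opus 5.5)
The two equivalences in Lemma~\ref{nonKahlerlocus} are proved separately, in each case by reading off the definition of $E_{nK}^{as}(\alpha)$ as an intersection of sets $E_+(T)$, where $T$ runs over K\"ahler currents with weak analytic singularities in $\alpha$. (Implicitly $X$ is taken in class $\mathcal{C}$ or K\"ahler, as elsewhere in this section.)

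\emph{Second equivalence.} Suppose first that $\alpha$ is not big. Then $\alpha$ contains no K\"ahler current at all. The intersection defining $E_{nK}^{as}(\alpha)$ is therefore indexed by the empty family, and by the usual convention it equals $X$.

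Conversely, suppose $\alpha$ is big, and fix a K\"ahler current $T\in\alpha$. Demailly's regularization on a resolution, or directly on $X$ through local embeddings as in \cite{HP24}, produces a K\"ahler current $T'\in\alpha$ with analytic singularities. Its Lelong numbers are positive only on a proper analytic subset $Z\subsetneq X$, namely the polar set of the analytic weight. Hence $E_{nK}^{as}(\alpha)\subset E_+(T')=Z\neq X$.

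\emph{First equivalence.} If $\alpha$ is K\"ahler, pick a K\"ahler form $\omega\in\alpha$. It is a K\"ahler current with smooth local potentials, so all its Lelong numbers vanish. Thus $E_+(\omega)=\emptyset$, and $E_{nK}^{as}(\alpha)=\emptyset$.

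The converse is the substantial direction. Assume $E_{nK}^{as}(\alpha)=\emptyset$. The plan is:
\begin{enumerate}[(i)]
\item By the second equivalence, $\alpha$ is big.
\item By compactness and Noetherianity of analytic sets, there are finitely many K\"ahler currents $T_1,\dots,T_k\in\alpha$ with analytic singularities such that $\bigcap_i E_+(T_i)=\emptyset$.
\item Glue their local potentials with a regularized maximum $\max_\varepsilon(\varphi_1,\dots,\varphi_k)$, after adding constants so that near every point the maximum is attained by a potential that is bounded there.
\item Since each $T_i\ge \delta\omega$, the resulting current $T\in\alpha$ satisfies $T\ge\delta'\omega$, and it has locally bounded, in fact continuous, potentials.
\item A K\"ahler current with continuous potentials on a compact space can be smoothed to a K\"ahler form in the same class, by Richberg-type regularization of strictly psh continuous functions, which works on normal complex spaces via the local embedding definition of K\"ahler forms.
\end{enumerate}
This shows $\alpha$ is K\"ahler.

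I expect the main obstacle to be step (iii) together with the final smoothing: making the regularized maximum well defined in Bott--Chern cohomology on a singular space, and applying Richberg regularization there. I would handle both by working with quasi-psh potentials relative to a fixed smooth representative $\theta$ of $\alpha$ and taking the maximum globally. Alternatively, the whole equivalence is essentially \cite[Theorem 3.17 / Definition 4.18 ff.]{HP24} together with Boucksom's characterization that the non-K\"ahler locus is empty exactly for K\"ahler classes, and one could cite that directly.
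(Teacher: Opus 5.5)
The paper gives no proof of Lemma~\ref{nonKahlerlocus}. It is stated right after the definition of $E^{as}_{nK}$ taken from \cite{HP24} and is used as a known fact; on manifolds it is Boucksom's characterization of the non-K\"ahler locus in \cite{Boucksom04}. So there is no in-paper argument to compare with. Your sketch is essentially Boucksom's proof transported to normal spaces, and it is correct in outline:
\begin{itemize}
\item the second equivalence is the empty-intersection convention plus the existence of one K\"ahler current with (weak) analytic singularities in a big class;
\item the hard half of the first equivalence is a Noetherian reduction to finitely many currents, followed by a regularized maximum and a smoothing step.
\end{itemize}
Compared with a bare citation, your route makes explicit that the only genuinely singular-space inputs are regularization to (weak) analytic singularities and Richberg-type smoothing through local embeddings.

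A few things to tighten.

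\emph{Step (iii).} The added constants are neither needed nor able to force anything. Near any $x$, some $\varphi_i$ is finite and locally bounded there (since $x\notin E_+(T_i)$), and all $\varphi_i$ are bounded above. By upper semicontinuity, the $\varphi_j$ with $\varphi_j(x)=-\infty$ drop out of $\max_\varepsilon$ near $x$. The positivity $\theta+dd^c\max_\varepsilon(\varphi_i)\ge \min_i\delta_i\,\omega$ holds because $\max_\varepsilon$ is convex, nondecreasing in each variable, and commutes with adding constants.

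\emph{Step (iv).} The claim ``in fact continuous'' is exactly where the meaning of ``weak analytic singularities'' enters. It holds if each $\varphi_i$ is continuous off its polar set. That is true for analytic singularities with smooth remainder. It is also true when $\pi^*T_i$ has analytic singularities on a resolution $\pi$, because a function whose pull-back by a proper surjection is continuous is itself continuous. It fails if merely bounded remainders are allowed. In that case Richberg does not apply, and you would need a Demailly-type regularization on $X$ instead.

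\emph{The big direction via a resolution.} If you regularize on a resolution $\pi\colon\widetilde X\to X$, note that $\pi^*T$ is not a K\"ahler current. You must regularize a K\"ahler current in the big class $\pi^*\alpha$ and push it forward. One choice is $\pi^*T+\delta\epsilon\,dd^c\log|s_E|^2$, modulo a smooth $dd^c$-term, with $E$ exceptional and $\pi^*\omega-\epsilon E$ K\"ahler. The push-forward has vanishing Lelong numbers off the image of its polar set, a proper analytic subset, and that is all you need.
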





\subsection{Volume of cohomology classes}
Compared with the projective setting, a complex analytic variety may admit only a few divisors but many cohomology classes in $H_{\rm{BC}}^{1,1}(X,\mathbb{R})$. While the notion of a global section does not apply to these classes, we can still define their volume. In this subsection, we briefly recall some basic facts about the volume of a cohomology class. 

\begin{definition}[Volume of a cohomology class]\label{Def:Vol}
For $\alpha \in H^{1,1}_{\rm{BC}}(X, \mathbb{R})$ on a normal compact K\"ahler variety $X$, if $\alpha$ is a pseudo-effective cohomology class, we define the \emph{volume} of $\alpha$ by pulling back via a resolution $\pi: X'\to X$ 
$$
\mathrm{vol}  (\alpha):=\sup _{T\in \pi^*(\alpha)} \int_{X'} T_{\rm{ac}}^n =  \sup _{T\in \pi^*(\alpha) }\int_{X' \setminus \operatorname{Sing}(T)}T^n,
$$
for $T$ ranging over all the closed positive (1,1)-currents in $\pi^*(\alpha)$; otherwise, we set $\mathrm{vol}(\alpha)=0$. Here $T_{\mathrm{ac}}$ denotes the absolutely continuous part of the current $T$, and $\operatorname{Sing}(T)$ denotes the singular locus of $T$.
\end{definition}

\begin{remark}
Since the volume is a birational invariant for smooth varieties (cf. \cite[Example 2.2.49]{Laza04} and \cite{Boucksom02}), the volume of a cohomology class on a normal compact K\"ahler variety is independent of the choice of resolution.
\end{remark}
\begin{remark}
Given a pseudo-effective line bundle $L$ on a compact K\"ahler manifold $X$, \cite[Theorem 1.2]{Boucksom02} showed that the usual definition of the volume of a line bundle,
$$
\vol(L) := \limsup_{k \to +\infty} \frac{n!}{k^n} h^0(X, kL),
$$
coincides with Definition~\ref{Def:Vol}.
\end{remark}
The volume function is continuous on the Bott–Chern cohomology space of a normal compact K\"ahler variety.
\begin{proposition}[{\cite[Corollary 4.11]{Boucksom02}}]\label{ContVol}
Let $X$ be a normal compact K\"ahler variety. Then the volume function $$\vol : H^{1,1}_{\rm{BC}}(X,\mathbb{R})\to \mathbb{R}$$is continuous. 
\end{proposition}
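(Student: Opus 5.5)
The plan is to reduce the statement to Boucksom's continuity theorem on a compact K\"ahler manifold. We do this through a resolution, exactly as in Definition~\ref{Def:Vol}. Fix a resolution $\pi\colon X'\to X$ which is a K\"ahler modification, so that $X'$ is a compact K\"ahler manifold; this is possible since $X$ is a normal compact K\"ahler variety. Pull-back of forms with local potentials descends to Bott--Chern cohomology and gives an $\mathbb{R}$-linear map
\[
\pi^*\colon H^{1,1}_{\rm BC}(X,\mathbb{R})\to H^{1,1}_{\rm BC}(X',\mathbb{R}).
\]
Since both spaces are finite dimensional (as $X$ is compact and in the Fujiki class $\mathcal{C}$), this linear map is automatically continuous.

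The next step is to check that $\mathrm{vol}_X(\alpha)=\mathrm{vol}_{X'}(\pi^*\alpha)$ for \emph{every} $\alpha$, including the non-pseudo-effective ones. For pseudo-effective $\alpha$ this is the definition. If $\alpha$ is not pseudo-effective, we need $\pi^*\alpha$ to be non-pseudo-effective as well, since then both sides are $0$. Equivalently, we show that $\pi^*\alpha$ pseudo-effective implies $\alpha$ pseudo-effective. Given $T\ge0$ in $\pi^*\alpha$, write $T=\pi^*\theta+i\partial\bar\partial\varphi$ with $\theta$ a smooth representative of $\alpha$ with local potentials. Then $\varphi$ is $\pi^*\theta$-psh and is constant along the connected fibres of $\pi$, since $\pi$ has connected fibres by normality and $\varphi$ is quasi-psh and bounded above on compact fibres. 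Hence $\varphi$ descends to a $\theta$-psh function on $X\setminus\pi(\mathrm{Exc})$, which is bounded above near the exceptional image. By the Grauert--Remmert extension of psh functions across analytic subsets on normal spaces, it extends to a $\theta$-psh function $\psi$ on $X$. Then $\theta+i\partial\bar\partial\psi$ is a closed positive current with local potentials representing $\alpha$, so $\alpha$ is pseudo-effective. This step, making sure the local-potential formalism on the singular space $X$ is respected by the descent, is where I expect the only real care to be needed.

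With this identity, $\mathrm{vol}_X=\mathrm{vol}_{X'}\circ\pi^*$ is a composition of a continuous linear map with $\mathrm{vol}_{X'}$. It remains to recall why $\mathrm{vol}_{X'}$ is continuous on $H^{1,1}_{\rm BC}(X',\mathbb{R})=H^{1,1}(X',\mathbb{R})$, which is \cite[Corollary 4.11]{Boucksom02}. We treat the interior and the boundary of the pseudo-effective cone separately.
\begin{itemize}
\item On the open big cone, $\mathrm{vol}$ is continuous. The reason is that $\mathrm{vol}$ is computed by the non-pluripolar Monge--Amp\`ere mass of currents with minimal singularities, or via Fujita-type approximation by K\"ahler classes on modifications. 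Moreover $\mathrm{vol}(\alpha+\varepsilon\omega)$ is monotone in $\varepsilon$ and homogeneous of degree $n$.
\item At a pseudo-effective but non-big class $\alpha$, we have $\mathrm{vol}(\alpha)=0$. The bound $\mathrm{vol}(\alpha')\le\mathrm{vol}(\alpha+\varepsilon\omega)$ for $\alpha'$ near $\alpha$, combined with $\mathrm{vol}(\alpha+\varepsilon\omega)\to\mathrm{vol}(\alpha)=0$ as $\varepsilon\to0$, gives continuity there; cf.\ Proposition~\ref{bigandpseff}.
\item Off the closed pseudo-effective cone, $\mathrm{vol}$ vanishes identically, and this set is open.
\end{itemize}
Combining these gives the continuity of $\mathrm{vol}_X$, independently of the chosen resolution by the preceding remark.
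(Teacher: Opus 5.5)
Your proposal is correct and follows the same route as the paper, which gives no argument beyond invoking \cite[Corollary 4.11]{Boucksom02}. Your reduction through a K\"ahler resolution $\pi$ is exactly what the paper leaves implicit in passing from manifolds to its Definition~\ref{Def:Vol}: it includes the check that $\pi^*\alpha$ pseudo-effective forces $\alpha$ pseudo-effective, so that $\mathrm{vol}_X=\mathrm{vol}_{X'}\circ\pi^*$ also holds off the pseudo-effective cone. That descent step needs only that $\varphi$ is bounded above on the compact $X'$, plus Grauert--Remmert extension on the normal space $X$; fibre-constancy is not required.

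One caution: your bullet-point recap is not itself a proof of Boucksom's theorem. The limit $\mathrm{vol}(\alpha+\varepsilon\omega)\to 0$ for a pseudo-effective non-big $\alpha$ is the substantive content at the boundary of the big cone, and it does not follow from Proposition~\ref{bigandpseff}. So at that point you genuinely rely on the citation.
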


For a smooth family of K\"ahler manifolds, the volume function is upper semi-continuous.
\begin{proposition}[{\cite[Proposition 4.13]{Boucksom02}}]\label{uscvolume}
Let ${X} \rightarrow S$ be a smooth family of K\"ahler manifolds. If $\alpha \in H^{1,1}_{\rm{BC}}(X_0, \mathbb{R})$ is a pseudo-effective class on the central fiber $X_0$, then one has
$$
\mathrm{vol}(\alpha) \ge \limsup _{k \rightarrow+\infty} \mathrm{vol}\left(\alpha_k\right)
$$
for every sequence $\alpha_k \in H^{1,1}_{\rm{BC}}\left(X_{t_k}, \mathbb{R}\right)$ converging to $\alpha$ with $(t_k \rightarrow 0)$.
\end{proposition}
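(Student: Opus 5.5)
We would follow the strategy of \cite{Boucksom02}: realize all fibers on one fixed differentiable manifold, pick near-optimal currents on the nearby fibers, extract a weak limit on $X_0$, and then show that the absolutely continuous Monge--Amp\`ere mass cannot jump up in the limit.

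\textbf{Step 1 (fixing the underlying manifold).} After shrinking $S$ around $0$, Ehresmann's theorem gives a $C^\infty$ trivialization $X\simeq X_0\times S$ over a small disc. Thus every $X_t$ is the fixed manifold $M=X_0$ endowed with a complex structure $J_t$ depending smoothly on $t$. Fix a hermitian metric $\omega$ on $M$ that is positive for all $J_t$ with $t$ near $0$.

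\textbf{Step 2 (choosing representatives).} The convergence $\alpha_k\to\alpha$ is understood through the Gauss--Manin identification $H^2(X_{t_k},\R)\simeq H^2(M,\R)$. We choose smooth closed real $2$-forms $\theta_k$ representing $\alpha_k$ that are of type $(1,1)$ for $J_{t_k}$. These can be chosen so that $\theta_k\to\theta$ in $C^\infty$, where $\theta$ is a $J_0$-$(1,1)$ representative of $\alpha$. Passing to a subsequence, we may assume $\mathrm{vol}(\alpha_k)\to\limsup$, and that each $\alpha_k$ is pseudo-effective, since otherwise the inequality is trivial. Pick closed positive $J_{t_k}$-$(1,1)$ currents $T_k\in\alpha_k$ with $\int (T_k)_{\rm ac}^n\ge \mathrm{vol}(\alpha_k)-1/k$.

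\textbf{Step 3 (weak limit).} The masses $\int T_k\wedge\omega^{n-1}$ depend only on the classes $\alpha_k$, so they are uniformly bounded. Hence a subsequence converges weakly to a closed real current $T$ on $M$. Since $J_{t_k}\to J_0$ smoothly, positivity and the $(1,1)$-condition pass to the limit, so $T$ is a closed positive $J_0$-$(1,1)$-current. Its class is $\lim[\theta_k]=\alpha$. In particular $\alpha$ is pseudo-effective and $\mathrm{vol}(\alpha)\ge\int T_{\rm ac}^n$. It therefore suffices to prove
\[
\limsup_k\int_M (T_k)_{\rm ac}^n\le \int_M T_{\rm ac}^n .
\]

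\textbf{Step 4 (main obstacle: semicontinuity of the ac Monge--Amp\`ere mass).} This step is delicate because Monge--Amp\`ere masses are not continuous under weak convergence, and taking ac parts is not linear in any continuous way. We would work locally on small coordinate balls, writing the currents as hermitian matrix-valued measures with respect to $\omega$, the structures $J_{t_k}$ being uniformly close to $J_0$.

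The tool would be the concave, $1$-homogeneous integrand $\Phi(A)=\det(A)^{1/n}$ on positive hermitian matrices. For a concave, $1$-homogeneous, upper semicontinuous integrand, the functional
\[
\mu\longmapsto\int \Phi\!\left(\frac{d\mu_{\rm ac}}{dV}\right)dV
\]
is weakly upper semicontinuous on positive matrix measures. This is a Reshetnyak/Goffman--Serrin type statement; it can be proved by mollification, since Jensen's inequality gives $\Phi(\rho_\varepsilon*A)\ge \rho_\varepsilon*\Phi(A)$, and the singular part contributes nothing.

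To pass from $\det^{1/n}$ to $\det$, we would partition $M$ into many small cells on which the densities are nearly constant. On each cell we compare $\int\det$ with $|{\rm cell}|^{1-n}\bigl(\int\det^{1/n}\bigr)^n$, using Hölder's inequality in one direction. Density of step functions controls the error in the other direction for the fixed limit $T$. The uniform mass bound controls the error for the $T_k$. The small distortion coming from $J_{t_k}\ne J_0$ produces factors tending to $1$.

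If this measure-theoretic route proves too lossy, the fallback is Boucksom's original approach. There one first replaces the $T_k$ by currents with analytic singularities via Demailly regularization, at a loss of $\varepsilon$ in volume. One then bounds $\int (T_k)_{\rm ac}^n$ by $\int_M (\theta_k+\varepsilon\omega)\wedge\cdots$-type mixed quantities, which are continuous in $k$, and lets $\varepsilon\to0$.
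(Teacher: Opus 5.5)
The paper gives no argument of its own here (it simply quotes Boucksom), so your proposal has to stand on its own, and it has a genuine gap in Steps 3--4.

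The inequality you reduce to, $\limsup_k\int_M (T_k)_{\mathrm{ac}}^n\le\int_M T_{\mathrm{ac}}^n$, is false for arbitrary near-maximizers. This already happens in a constant family and for a K\"ahler class. On $\mathbb{P}^n$ take $\alpha_k=\alpha=c_1(\mathcal{O}(1))$ and $T_k=g_k^*\omega_{\mathrm{FS}}=dd^c\log(k^2|z_0|^2+|z_1|^2+\cdots+|z_n|^2)$ with $g_k=\mathrm{diag}(k,1,\dots,1)$. These are smooth K\"ahler forms in $\alpha$, hence exact maximizers with $\int (T_k)^n_{\mathrm{ac}}=1=\mathrm{vol}(\alpha)$. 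Yet they converge weakly to the current of integration $[H]$, $H=\{z_0=0\}$, whose absolutely continuous part vanishes. The Monge--Amp\`ere mass of the maximizers concentrates on the singular part of the limit, and nothing in your construction rules this out.

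Accordingly, the lemma in Step 4 is false. For $n=1$ one has $\Phi(A)=A$, and the functional is $\mu\mapsto\mu_{\mathrm{ac}}(M)$, which is not weakly upper semicontinuous (take smooth approximations of a Dirac mass). The mollification argument breaks because the singular part of the limit \emph{does} contribute: $\int\Phi(\rho_\varepsilon*\mu)\,dV$ tends to the full Reshetnyak functional, which contains the term $\int\Phi(d\mu_s/d|\mu_s|)\,d|\mu_s|$. Even when that term is zero (for $n\ge 2$ in the example, $[H]$ has rank one), the passage from $\det^{1/n}$ back to $\det$ fails. H\"older gives $\int_Q\det^{1/n}\le|Q|^{1-1/n}\bigl(\int_Q\det\bigr)^{1/n}$, which is a \emph{lower} bound for $\int_Q\det(T_k)$. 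A uniform trace-mass bound does not prevent $\det(T_k)$ from concentrating inside the cells.

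The fallback is not an argument either. A bound like $\int(T_k)^n_{\mathrm{ac}}\le(\alpha_k+\varepsilon\omega)^n$ fails for non-nef classes. On the blow-up of $\mathbb{P}^2$ at a point, $T=\pi^*\omega_{\mathrm{FS}}+a[E]$ with $a>0$ has $\int T_{\mathrm{ac}}^2=1>1-a^2=\alpha^2$. In general, intersection numbers that vary continuously in the family do not recover $\mathrm{vol}(\alpha)$.

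The missing idea is to choose maximizers whose Monge--Amp\`ere measures have uniformly bounded densities, so that no concentration is possible. One way to do this uses tools postdating Boucksom's paper:
\begin{enumerate}
\item After adding $\varepsilon\omega$ and using continuity of $\mathrm{vol}$, assume $\alpha$ is big.
\item Take the envelopes $V_k=\sup\{\varphi\le0:\theta_k+dd^c_{J_{t_k}}\varphi\ge0\}$, and $V_0$ likewise.
\item By Berman--Demailly and Boucksom--Eyssidieux--Guedj--Zeriahi, $\mathrm{vol}(\alpha_k)=\int_{\{V_k=0\}}\theta_k^n$ for big $\alpha_k$ (the others have volume $0$), with $\theta_k^n\ge0$ a.e.\ on $\{V_k=0\}$. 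Likewise $\mathrm{vol}(\alpha)=\int_{\{V_0=0\}}\theta^n$.
\item Pass to a subsequence with $V_k\to\psi$ in $L^1$; then $\psi\le V_0$. A Hartogs lemma uniform in $J_t$ shows that every point lying in $\{V_k=0\}$ for infinitely many $k$ lies in $\{V_0=0\}$.
\item Reverse Fatou with the uniformly convergent densities $\theta_k^n$ then gives $\limsup_k\mathrm{vol}(\alpha_k)\le\mathrm{vol}(\alpha)$.
\end{enumerate}
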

\begin{remark}
Recently, \cite{Jiao} proved the upper semi-continuity of volume of divisors for a projective morphism when fibers are irreducible and reduced, by use of a Fujita approximation technique. 
\end{remark}
Volume function satisfies some special properties for big classes and nef classes.

\begin{lemma}[{\cite[Theorems 4.1, 4.7]{Boucksom02}}]\label{BigVol}
For a normal compact K\"ahler variety $X$, one has: \\
(a) A class $\alpha \in H^{1,1}_{\mathrm{BC}}(X, \mathbb{R})$ is big if and only if $\mathrm{vol}(\alpha)>0$;\\
(b) For a nef class $\beta\in H^{1,1}_{\mathrm{BC}}(X,\mathbb{R})$, $\mathrm{vol}(\beta) =\beta^n.$
\end{lemma}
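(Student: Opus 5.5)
Both parts are reduced to the smooth case by pulling back to a resolution $\pi\colon X'\to X$. By Definition~\ref{Def:Vol}, $\mathrm{vol}(\alpha)=\mathrm{vol}(\pi^*\alpha)$. A class on $X$ is big iff its pullback is big: a K\"ahler current $T\ge\delta\omega$ on $X$ pulls back to a positive current dominating $\delta\pi^*\omega$, which is not K\"ahler but is big; conversely, the pushforward of a K\"ahler current is a K\"ahler current. Pulled-back nef classes stay nef, and $(\pi^*\beta)^n=\beta^n$ by the projection formula. So it suffices to work on a compact K\"ahler manifold.

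\textbf{Part (a).} If $\alpha$ is big, choose $T\in\alpha$ with $T\ge\delta\omega$. Demailly regularization gives a current $T'\in\alpha$ with analytic singularities and $T'\ge\frac{\delta}{2}\omega$. Off the analytic set $\mathrm{Sing}(T')$, which has measure zero, $T'^n\ge(\delta/2)^n\omega^n$. Hence $\mathrm{vol}(\alpha)\ge(\delta/2)^n\int_X\omega^n>0$.

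For the converse, suppose $\mathrm{vol}(\alpha)>0$. This is Boucksom's criterion, and I would follow his argument. Take $T\ge0$ in $\alpha$ with $\int T_{\rm ac}^n>0$. Then apply the Demailly--P\u{a}un mass criterion, or rather its volume refinement: solve a Monge--Amp\`ere equation in the classes $\alpha+\varepsilon\omega$ to produce a K\"ahler current in $\alpha$. Concretely, if $\alpha$ is not big, then for a suitable family of currents one shows $\int T_{\rm ac}^n=0$. I expect this direction to be the main obstacle. In practice I would cite \cite[Theorem 4.7]{Boucksom02} rather than redo the mass-concentration argument.

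\textbf{Part (b).} Let $\beta$ be nef. First, $\beta+\varepsilon\omega$ is K\"ahler for every $\varepsilon>0$, so it has smooth representatives $\theta_\varepsilon>0$, and $\mathrm{vol}(\beta+\varepsilon\omega)\ge\int\theta_\varepsilon^n=(\beta+\varepsilon\omega)^n$.

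For the reverse inequality, let $T$ be any positive current in a K\"ahler class $\gamma$. By regularization with analytic singularities and arbitrarily small loss, we may assume $T$ has analytic singularities. Resolve those singularities via $\mu$, so that $\mu^*T=[D]+R$ with $D\ge0$ effective and $R$ smooth semipositive. Then
\[
\int T_{\rm ac}^n=\int R^n=(\mu^*\gamma-D)^n\le(\mu^*\gamma)^n=\gamma^n .
\]
The inequality holds because $\mu^*\gamma$ is nef, $\mu^*\gamma-D$ is nef (represented by $R$), and the intermediate terms $(\mu^*\gamma)^k(\mu^*\gamma-D)^{n-1-k}\cdot D$ are nonnegative.

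Thus $\mathrm{vol}(\beta+\varepsilon\omega)=(\beta+\varepsilon\omega)^n$. Letting $\varepsilon\to0$ and using continuity of the volume (Proposition~\ref{ContVol}) together with continuity of intersection numbers gives $\mathrm{vol}(\beta)=\beta^n$.
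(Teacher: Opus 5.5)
Your proposal is correct and follows essentially the paper's route. The paper gives no argument beyond citing Boucksom's Theorems 4.1 and 4.7 on a resolution, where the volume is defined; your sketch makes that reduction explicit, cites the same source for the hard direction of (a), and reproduces the standard resolution-of-analytic-singularities argument for (b). The one step to cite explicitly is the passage to currents with analytic singularities in (b): it needs the known fact that the absolutely continuous parts of Demailly's regularizing currents converge almost everywhere to $T_{\mathrm{ac}}$, so that Fatou gives $\int T_{\mathrm{ac}}^n\le\liminf_k\int (T_k+\varepsilon_k\omega)_{\mathrm{ac}}^n$, since weak convergence alone gives no such control.
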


Given a smooth family, the top intersection number of a cohomology class is constant under deformations.
\begin{lemma}\label{constIntersection}
Let $f: X \to S$ be a smooth family of compact complex manifolds over a smooth connected curve $S$, with $n = \dim_{\mathbb{C}}(X/S)$. Suppose that $\alpha \in H^{1,1}_{\mathrm{BC}}(X, \mathbb{R})$. Then the top intersection number
$$
s \longmapsto \int_{X_s} (\alpha|_{X_s})^n
$$
is constant in the family.
\end{lemma}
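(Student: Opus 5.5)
The plan is to reduce the statement to the topological invariance of the de Rham class restricted to fibers. By Ehresmann's theorem, since $f$ is a proper submersion, $f$ is a locally trivial $C^\infty$ fibration. Because $S$ is connected, it suffices to show that the function $s\mapsto \int_{X_s}(\alpha|_{X_s})^n$ is locally constant.

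First fix a point $s_0\in S$ and a small contractible coordinate disk $D\ni s_0$. Over $D$ there is a diffeomorphism $\Phi: X_{s_0}\times D \to f^{-1}(D)$ commuting with the projections. Let $\iota_s: X_s\hookrightarrow X$ denote the inclusion. Choose a closed real $(1,1)$-form $\theta$ on $X$ representing $\alpha$; a Bott--Chern class on a manifold has a smooth representative. Then $\theta|_{X_s}=\iota_s^*\theta$ represents $\alpha|_{X_s}$, and
$$\int_{X_s}(\alpha|_{X_s})^n=\int_{X_s}\iota_s^*(\theta^n),$$
since the top intersection number depends only on the de Rham class. Indeed, Bott--Chern cohomology maps to de Rham cohomology, and changing $\theta$ by $i\partial\bar\partial u$ changes $\theta^n$ by an exact form.

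Next, transport everything to $X_{s_0}$ via $\Phi_s:=\Phi(\cdot,s)$. The maps $\iota_s\circ\Phi_s$ and $\iota_{s_0}\circ\Phi_{s_0}$ from $X_{s_0}$ to $X$ are homotopic through the path in $D$ joining $s$ and $s_0$. Hence the closed $2n$-forms $(\iota_s\circ\Phi_s)^*\theta^n$ and $(\iota_{s_0}\circ\Phi_{s_0})^*\theta^n$ are cohomologous on $X_{s_0}$. Since $X_{s_0}$ is compact without boundary, their integrals agree by Stokes' theorem. Each $\Phi_s$ is an orientation-preserving diffeomorphism onto $X_s$, because the orientations come from the varying complex structures and vary continuously. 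Therefore
$$\int_{X_s}\iota_s^*\theta^n=\int_{X_{s_0}}\iota_{s_0}^*\theta^n .$$

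An alternative route avoids homotopy invariance. Take a path $\gamma$ in $D$ from $s_0$ to $s$ and set $M=f^{-1}(\gamma)$, a compact manifold with boundary $X_s\sqcup(-X_{s_0})$. Applying Stokes' theorem to the closed form $\theta^n$ on $M$ gives the same equality.

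There is no serious obstacle here. The only points needing care are the following:
\begin{enumerate}
\item the existence of a smooth global representative $\theta$ of $\alpha$;
\item the fact that the intersection number depends only on the de Rham class;
\item the orientation compatibility of the trivialization.
\end{enumerate}
All three are standard. Local constancy together with the connectedness of $S$ then gives global constancy.
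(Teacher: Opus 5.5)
Your proof is correct. Your ``alternative route'' is exactly the paper's proof. The paper joins two arbitrary points $s_1,s_2\in S$ by a smooth real arc $\gamma$, notes that $f^{-1}(\gamma([0,1]))$ is a compact manifold with boundary $X_{s_1}\cup X_{s_2}$ because $f$ is a submersion, and applies Stokes' theorem to the closed form $\alpha^n$. This is done globally, so no separate local-to-global step is needed.

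Your main route via Ehresmann's theorem and homotopy invariance of de Rham cohomology is genuinely different. It proves slightly more: $s\mapsto \alpha|_{X_s}\in H^2(X_s,\mathbb{R})$ is a flat section of the local system $R^2f_*\mathbb{R}$, so every intersection number of restrictions of global classes is locally constant. It also makes the orientation bookkeeping explicit, which the paper leaves implicit in the boundary orientation of $f^{-1}(\gamma)$.

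The advantage of the paper's Stokes formulation is that it never needs the fibers to be diffeomorphic. This is why it is the argument the paper later upgrades, via the singular Stokes formula (Proposition~\ref{stokes}), to the proof of Theorem~\ref{InvarVol}. There the total space $X'$ is obtained by running the MMP, $f'$ fails to be a submersion along a set of codimension at least $2$, and Ehresmann's theorem is unavailable.
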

\begin{proof}
Since $S$ is connected, given any two points $s_1, s_2 \in S$, there exists a smooth real curve $\gamma : [0,1] \to S$ with $\gamma(0) = s_1$, $\gamma(1) = s_2$. Since $f:X\to S$ is a smooth morphism, the constant rank theorem implies that the inverse image $\overline{M} = f^{-1}(\gamma([0,1]))$ is a smooth manifold with boundary $\partial \overline{M} = X_{s_1} \cup X_{s_2}$. Thus, Stokes' theorem gives rise to  
$$
\int_{X_{s_1}}  (\alpha|_{X_{s_1}})^n - \int_{X_{s_2}} (\alpha|_{X_{s_2}})^n = \int_{\overline{M}} d(\alpha^n) = 0.
$$
That is, $s \longmapsto \int_{X_s} (\alpha|_{X_s})^n$ is constant in the family.
\end{proof}


Given a pseudo-effective class on a compact complex manifold or more generally a normal compact complex analytic variety, we can take its Boucksom--Zariski decomposition. 
\begin{definition}[Minimal multiplicity]
Let $X$ be a compact complex manifold and $\alpha \in H^{1,1}_{\mathrm{BC}}(X, \mathbb{R})$ a pseudo-effective class. The \emph{minimal multiplicity} of $\alpha$ at a point $x \in X$ is defined by
\[
\nu(\alpha, x) := \sup_{\varepsilon > 0} \nu\!\left(T_{\min,\varepsilon}, x\right),
\]
where $T_{\min,\varepsilon}$ denotes a current with minimal singularities in the set
\[
\alpha[-\varepsilon \omega] := \{\text{closed almost positive } (1,1)\text{-currents } T \mid T \text{ represents } \alpha - \varepsilon \omega,\ T \ge -\varepsilon \omega \,\},
\]and $\nu(T_{\min, \varepsilon},x)$ is the Lelong number of $T_{\min,\varepsilon}$ at $x$. 
If $D$ is a prime divisor on $X$, the \emph{generic minimal multiplicity} of $\alpha$ along $D$ is defined as
\[
\nu(\alpha, D) := \inf \{\, \nu(\alpha, x) \mid x \in D\}.
\]
\end{definition}

\begin{definition}[{Boucksom--Zariski decomposition, \cite[Definition 3.7]{Boucksom04}}]
Let $X$ be a compact complex manifold and $\alpha\in H^{1,1}_{\rm{BC}}(X,\mathbb{R})$ a pseudo-effective class. Then we define the \emph{negative part} of $\alpha$ as $$N(\alpha) = \sum  \nu (\alpha , D)\cdot D,$$
where $D$ runs through all the divisors on $X$,  
and $Z(\alpha) = \alpha  - [N(\alpha)].$ 
Then we say that the decomposition 
$$\alpha = Z(\alpha)+ [N(\alpha)]$$ 
is the \emph{Boucksom--Zariski decomposition}. 
\end{definition}
\begin{remark}\label{NegZariski}
Given a normal compact complex analytic variety $X$, let $f \colon Y \to X$ be a resolution of singularities of $X$. For a pseudo-effective class $\alpha\in H^{1,1}_{\rm{BC}}(X,\mathbb{R})$, we define the \emph{negative part} of $\alpha$ by
$$
N(\alpha) := f_*\bigl(N(f^*\alpha)\bigr).
$$
By \cite[Lemma A.7]{DHY23}, this definition is independent of the choice of resolution, and hence $N(\alpha)$ on a normal complex analytic variety is well defined.
\end{remark}
We then describe the behavior of the volume of a big class under the Boucksom--Zariski decomposition:
\begin{proposition}[{\cite[Proposition 3.20]{Boucksom04}}]\label{VolZariski}
Let $X$ be a compact K\"ahler manifold. 
If $\alpha \in H^{1,1}_{\mathrm{BC}}(X, \mathbb{R})$ is big and $E \ge 0$ is a divisor on $X$ such that 
$\mathrm{supp}(E) \subset \mathrm{supp}(N(\alpha))$, then
\[
\mathrm{vol}(\alpha) = \mathrm{vol}(Z(\alpha) + [E]) = \mathrm{vol}(Z(\alpha)),
\]
where $N(\alpha)$ is the negative part of the Boucksom--Zariski decomposition of $\alpha$, 
and $Z(\alpha) := \alpha - [N(\alpha)]$ is the positive part.
\end{proposition}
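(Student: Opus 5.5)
The statement is Boucksom's Proposition 3.20, \cite[Proposition 3.20]{Boucksom04}. I would prove it in two steps: first show $\mathrm{vol}(\alpha)=\mathrm{vol}(Z(\alpha))$, then sandwich $Z(\alpha)+[E]$ between $Z(\alpha)$ and a class with the same volume. Throughout I use that the volume of a big class is computed by currents with minimal singularities in $\alpha$ (after regularizing to analytic singularities), and that $\mathrm{vol}$ is monotone under adding effective divisor classes: if $T\in\beta$ is positive, then $T+[E]\in\beta+[E]$ has the same absolutely continuous part.

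\textbf{Step 1: $\mathrm{vol}(\alpha)=\mathrm{vol}(Z(\alpha))$.}

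The inequality $\mathrm{vol}(Z(\alpha))\le\mathrm{vol}(\alpha)$ follows from monotonicity, since $\alpha=Z(\alpha)+[N(\alpha)]$.

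For the reverse inequality, take a positive current $T\in\alpha$ with analytic singularities and $\int T_{\rm ac}^n$ close to $\mathrm{vol}(\alpha)$.
\begin{enumerate}
\item By Siu decomposition, $T=R+\sum_D \nu(T,D)\,D$, where $R$ has no divisorial part.
\item For big $\alpha$, Boucksom shows $\nu(T,D)\ge\nu(\alpha,D)$ up to an $\varepsilon$ that vanishes as we approach minimal singularities with K\"ahler currents $T_\varepsilon\ge-\varepsilon\omega$.
\item Hence $T-[N(\alpha)]$ is (almost) positive, lies in $Z(\alpha)$, and has the same absolutely continuous part as $T$.
\item Letting $\varepsilon\to0$ and using continuity of $\mathrm{vol}$ (Proposition \ref{ContVol}) gives $\mathrm{vol}(\alpha)\le\mathrm{vol}(Z(\alpha))$.
\end{enumerate}

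\textbf{Step 2: the class $Z(\alpha)+[E]$.}

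Monotonicity gives $\mathrm{vol}(Z(\alpha))\le\mathrm{vol}(Z(\alpha)+[E])$.

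For the upper bound, use the support condition. Since $\mathrm{supp}(E)\subset\mathrm{supp}(N(\alpha))$ and $N(\alpha)$ has only finitely many components, there is $c>0$ with $cE\le N(\alpha)$. Assume first $c\ge1$, so $E\le N(\alpha)$. Then $Z(\alpha)+[E]\le\alpha$, i.e. $\alpha=(Z(\alpha)+[E])+[N(\alpha)-E]$ with $N(\alpha)-E\ge0$. Monotonicity gives $\mathrm{vol}(Z(\alpha)+[E])\le\mathrm{vol}(\alpha)=\mathrm{vol}(Z(\alpha))$, which proves the claim in this case.

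\textbf{Main obstacle: arbitrary coefficients of $E$.}

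When $E$ is not dominated by $N(\alpha)$, I would show that $N(Z(\alpha)+[E])\ge E$. Equivalently, every positive current in $Z(\alpha)+[E]$ has generic Lelong number at least $\mathrm{coeff}_D E$ along each component $D$ of $E$. Given that, the same Siu decomposition argument as in Step 1 reduces $\mathrm{vol}(Z(\alpha)+[E])$ to $\mathrm{vol}(Z(\alpha))$.

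To prove $N(Z(\alpha)+[E])\ge E$, I would use Boucksom's result that the components of $N(\alpha)$ have linearly independent classes forming an exceptional family. Suppose $S\in Z(\alpha)+[E]$ had Lelong numbers $<\mathrm{coeff}_D E$ along some components. Then, for small $t>0$, $S+t\,N(\alpha)-tE$ would give a current in $(1-t)Z(\alpha)+t\alpha$ whose divisorial part along those components is strictly smaller than what is forced on $\alpha$. This would contradict exceptionality, via the fact that an exceptional effective divisor $F$ satisfies $N(Z+[F])=F$ for $Z$ nef in codimension one.

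Making this convexity and exceptionality argument precise is the step I expect to be the main difficulty. The key ingredient is Boucksom's theorem that $Z(\alpha)$ is modified nef and $N(\alpha)$ is exceptional, so that $N(Z(\alpha)+[F])=F$ for every effective $F$ supported on $N(\alpha)$.
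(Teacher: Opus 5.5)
The paper gives no proof of this statement; it only quotes Boucksom. So I judged your argument on its own terms.

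\textbf{What works.} Step 1 and the dominated case $E\le N(\alpha)$ are correct. In Step 1 the $\varepsilon$ and the appeal to continuity of $\mathrm{vol}$ are unnecessary. A positive $T\in\alpha$ lies in $\alpha[-\varepsilon\omega]$ for every $\varepsilon>0$, so $\nu(T,D)\ge\nu(\alpha,D)$ exactly. Hence $T-[N(\alpha)]$ is a genuinely positive current in $Z(\alpha)$ with the same absolutely continuous part.

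\textbf{The gap.} The case of arbitrary coefficients is left open, and the route you sketch for it does not work. First, the bookkeeping is off: $S+tN(\alpha)-tE$ lies in $Z(\alpha)+t[N(\alpha)]+(1-t)[E]$, not in $(1-t)Z(\alpha)+t\alpha$, and $S-t[E]$ need not be positive.

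More importantly, the fact you fall back on is false. It is not true that $N(Z+[F])=F$ whenever $Z$ is modified nef and $F$ is an exceptional effective divisor. On the blow-up of $\mathbb{P}^2$ at a point, let $H$ be the pulled-back line class and $E$ the exceptional curve. Then $Z=3H-E$ is ample and $E$ is exceptional. But $Z+[E]=3H$ is nef, so $N(Z+[E])=0$. On surfaces, what is missing is the orthogonality $Z\cdot D_i=0$.

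So modified nefness of $Z(\alpha)$ and exceptionality of $N(\alpha)$ cannot by themselves give $N(Z(\alpha)+[E])\ge E$. You must use that $Z(\alpha)$ is the Zariski projection of a class whose negative part already contains $\mathrm{supp}(E)$.

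\textbf{Two ways to close it.} Either of the following finishes the proof using only your dominated case.

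\emph{Via Lelong numbers.} Write $N(\alpha)=\sum a_iD_i$ and $E=\sum e_iD_i$, and choose $\lambda\ge 1$ with $E\le\lambda N(\alpha)$. You need three properties: $\nu(\cdot,D)$ is positively homogeneous, it is subadditive on the pseudo-effective cone, and $\nu(Z(\alpha),D)=0$ for every $D$. Apply them to the decomposition $\lambda\alpha=(Z(\alpha)+[E])+\bigl((\lambda-1)Z(\alpha)+[\lambda N(\alpha)-E]\bigr)$. This gives $\lambda a_i\le\nu(Z(\alpha)+[E],D_i)+\lambda a_i-e_i$, that is, $\nu(Z(\alpha)+[E],D_i)\ge e_i$. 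Hence every positive current in $Z(\alpha)+[E]$ dominates the current of integration over $E$. Subtracting it gives $\mathrm{vol}(Z(\alpha)+[E])\le\mathrm{vol}(Z(\alpha))$.

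\emph{Via volumes only.} Set $g(t):=\mathrm{vol}(Z(\alpha)+t[E])^{1/n}$. This function is nondecreasing. It is concave on $[0,\infty)$ by Proposition~\ref{r-logc} and homogeneity; all these classes are big because $Z(\alpha)$ is, by your Step 1. Your dominated case shows $g$ is constant on $[0,c]$. Concavity then forces $g(t)\le g(0)$ for every $t>c$, so $g(1)=g(0)$.
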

Let us end this subsection with some volume inequalities.
\begin{lemma}\label{vollemma}
Let $X$ be a compact K\"ahler manifold of dimension $n$ and $\omega \in H^{1,1}_{\mathrm{BC}}(X, \mathbb{R})$ a K\"ahler class on $X$. If $\alpha \in H^{1,1}_{\mathrm{BC}}(X, \mathbb{R})$ is a pseudo-effective class, then for any $\varepsilon >0$, the volume satisfies the inequality $$\vol (\alpha + \varepsilon \omega) \ge \varepsilon^n  \int_X\omega^n.$$
\end{lemma}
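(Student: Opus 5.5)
The idea is to compare $\alpha+\varepsilon\omega$ with $\varepsilon\omega$ through a positive current in $\alpha+\varepsilon\omega$ whose absolutely continuous part dominates $\varepsilon\omega$ pointwise. Fix a smooth K\"ahler form, still denoted $\omega$, in the class $\omega$. Since $\alpha$ is pseudo-effective, it contains a closed positive $(1,1)$-current $T\ge 0$. Then $S:=T+\varepsilon\omega$ is a closed positive current in $\alpha+\varepsilon\omega$ with $S\ge\varepsilon\omega$. In particular $\alpha+\varepsilon\omega$ is big, by the definition of big classes, and its volume is given by the supremum in Definition \ref{Def:Vol}; we may take the resolution there to be the identity, since $X$ is smooth.

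The naive estimate fails. If we bound the volume from below by $\int_X S_{\rm ac}^n$, we would like to say that $S_{\rm ac}=T_{\rm ac}+\varepsilon\omega\ge\varepsilon\omega$ as positive $(1,1)$-forms almost everywhere, so that $S_{\rm ac}^n\ge\varepsilon^n\omega^n$ pointwise. The inequality of top powers does follow from the form inequality, because wedge powers of positive $(1,1)$-forms are monotone. The difficulty is the meaning of $T_{\rm ac}^n$ for a general current, which is only defined through the Lebesgue decomposition of coefficients. To avoid this, I will use a more regular representative.

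By Demailly's regularization theorem, we can approximate $T$ by currents $T_k\in\alpha$ with analytic singularities, smooth off an analytic set $Z_k$, with $T_k\ge-\delta_k\omega$ and $\delta_k\to0$. Then $S_k:=T_k+\varepsilon\omega$ lies in $\alpha+\varepsilon\omega$ and satisfies $S_k\ge(\varepsilon-\delta_k)\omega\ge0$ once $k$ is large, so it is positive. On $X\setminus Z_k$ it is a smooth form dominating $(\varepsilon-\delta_k)\omega$, and $Z_k$ has measure zero. Therefore
\[
\mathrm{vol}(\alpha+\varepsilon\omega)\ge\int_{X\setminus Z_k}S_k^n\ge(\varepsilon-\delta_k)^n\int_X\omega^n .
\]
Letting $k\to\infty$ gives the claim.

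As an alternative, one could apply the monotonicity of volume to $\alpha+\varepsilon\omega$ versus $\varepsilon\omega$, together with Lemma \ref{BigVol}(b) for the nef class $\varepsilon\omega$, which gives $\mathrm{vol}(\varepsilon\omega)=\varepsilon^n\int_X\omega^n$. The monotonicity itself, namely that adding a pseudo-effective class does not decrease volume, is proved by exactly the same current argument.

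The main obstacle is the regularity issue described above. The regularization step handles it, and the only care needed there is choosing $\delta_k<\varepsilon$ so that $S_k$ is a genuinely positive current.
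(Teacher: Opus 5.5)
Your proposal is correct and follows the paper's proof: take a positive current in $\alpha$, use Demailly regularization to get currents $T_k$ with analytic singularities and $T_k \ge -\delta_k\omega$, note that $T_k+\varepsilon\omega \ge (\varepsilon-\delta_k)\omega$ off a measure-zero set, integrate, and let $k\to\infty$. Your remark that $\theta \ge c\,\omega$ implies $\theta^n \ge c^n\omega^n$ pointwise spells out a step the paper leaves implicit.
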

\begin{proof}
Let $T \in \alpha$ be a closed positive $(1,1)$-current on $X$. 
By Demailly's regularization theorem (\cite[Proposition (18.7)]{Demailly12}), 
there exists a sequence $\{T_k\}_{k \in \mathbb{N}}$ of closed $(1,1)$-currents 
with analytic singularities in the same cohomology class $\alpha$, weakly converging to $T$, such that 
\[
T_k + \varepsilon_k \omega \ge 0, \qquad \varepsilon_k \downarrow 0.
\]

In particular, for large $k$ we have $\varepsilon_k<\varepsilon$, so $T_k + \varepsilon\omega \ge (\varepsilon-\varepsilon_k)\omega$. Set $\delta_k=\varepsilon-\varepsilon_k>0$ and then $\delta_k \to \varepsilon$. By the definition of volume, we have $$\vol(\alpha + \varepsilon \omega) = \sup_{T_{\varepsilon} \in \alpha + \varepsilon \omega}\int _X (T_{\varepsilon})_{ac}^n \ge \limsup_{k\to \infty} \int_X (T_k + \varepsilon \omega)^n_{ac} \ge  (\limsup_{k\to \infty} \delta_k^n) \int_X\omega^n = \varepsilon^n \int_X\omega^n.$$
\end{proof}

The volume function satisfies certain log concavity properties.
\begin{proposition}\label{r-logc}
Let $X$ be a compact K\"ahler manifold of complex dimension $n$. If $\alpha, \alpha ' \in H^{1,1}_{\rm{BC}}(X, \mathbb{R})$ are big classes, then the volume satisfies the log concavity property $$\vol(\alpha + \alpha ')^{1/n} \ge \vol (\alpha)^{1/n} + \vol (\alpha ')^{1/n}.$$
\end{proposition}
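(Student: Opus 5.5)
We prove the inequality by reducing it to the Brunn--Minkowski-type inequality for positive products of big classes, as developed by Boucksom (and Boucksom--Demailly--P\u{a}un--Peternell). The reduction rests on the description of the volume of a big class as the top self-intersection of its movable (positive) product, $\vol(\alpha)=\langle \alpha^n\rangle$.

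First step: approximation. Since $\alpha$ and $\alpha'$ are big, they lie in the interior of the pseudo-effective cone. By Demailly's regularization with analytic singularities and Boucksom's approximation (the Fujita-type approximation for transcendental classes), for every $\varepsilon>0$ we choose a modification $\mu_\varepsilon\colon X_\varepsilon\to X$ and decompositions
\[
\mu_\varepsilon^*\alpha=\gamma_\varepsilon+[E_\varepsilon],\qquad \mu_\varepsilon^*\alpha'=\gamma'_\varepsilon+[E'_\varepsilon].
\]
Here $\gamma_\varepsilon,\gamma'_\varepsilon$ are K\"ahler (or nef and big) classes on $X_\varepsilon$ and $E_\varepsilon,E'_\varepsilon\ge 0$ are effective $\mathbb{R}$-divisors. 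The modification is taken as a common resolution of the singularities of K\"ahler currents $T_\varepsilon\in\alpha$ and $T'_\varepsilon\in\alpha'$ with analytic singularities. These currents are chosen so that $\int (T_\varepsilon)_{ac}^n\ge \vol(\alpha)-\varepsilon$, and similarly for $\alpha'$. We then have $\gamma_\varepsilon^n\ge \vol(\alpha)-\varepsilon$ and $\gamma_\varepsilon'^n\ge\vol(\alpha')-\varepsilon$. The volume does not change under pullback (by the birational invariance remark), and a nef class has volume equal to its top power by Lemma \ref{BigVol}(b).

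Second step: comparison. The sum $\mu_\varepsilon^*(\alpha+\alpha')=(\gamma_\varepsilon+\gamma'_\varepsilon)+[E_\varepsilon+E'_\varepsilon]$ is a nef class plus an effective divisor. Hence any positive current in $\gamma_\varepsilon+\gamma'_\varepsilon$ plus the integration current of the divisor is a positive current in $\mu_\varepsilon^*(\alpha+\alpha')$. Its absolutely continuous part dominates that of the first current. Therefore
\[
\vol(\alpha+\alpha')\ge \vol(\gamma_\varepsilon+\gamma'_\varepsilon)=(\gamma_\varepsilon+\gamma'_\varepsilon)^n.
\]

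Third step: nef Brunn--Minkowski. For nef classes $\gamma,\gamma'$ on a compact K\"ahler manifold, we expand
\[
(\gamma+\gamma')^n=\sum_k\binom nk\gamma^k\cdot\gamma'^{n-k}
\]
and apply the Khovanskii--Teissier inequalities $\gamma^k\cdot\gamma'^{n-k}\ge (\gamma^n)^{k/n}(\gamma'^n)^{(n-k)/n}$. These are valid for K\"ahler classes by Demailly's mixed Hodge--Riemann/Calabi--Yau argument (Gromov's inequality), and they extend to nef classes by continuity. This yields
\[
(\gamma+\gamma')^n\ge\bigl((\gamma^n)^{1/n}+(\gamma'^n)^{1/n}\bigr)^n.
\]
Combining this with the first two steps and letting $\varepsilon\to0$ proves the claim.

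The main obstacle is the first step: producing a modification on which the absolutely continuous volume is realized, up to $\varepsilon$, by a nef class. This is Boucksom's theorem that $\vol(\alpha)=\sup\int (T_{ac})^n$ can be computed using currents with analytic singularities, followed by resolution to make the singular part divisorial. I would cite \cite{Boucksom02} for this step rather than reprove it.
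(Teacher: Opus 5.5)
Your proposal is correct and follows essentially the same route as the paper: Boucksom's analytic Fujita approximation on a modification $X_\varepsilon$, the Khovanskii--Teissier inequalities for nef (or K\"ahler) classes, and then letting $\varepsilon\to 0$. The only difference is that you explicitly justify the comparison $\vol(\alpha+\alpha')\ge(\gamma_\varepsilon+\gamma'_\varepsilon)^n$ via the nef-plus-effective decomposition, whereas the paper simply takes this as part of the approximation statement; your opening remark about the movable product $\langle\alpha^n\rangle$ is never actually used.
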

\begin{proof}
Note that the log concavity of the volume is well known for line bundles and $\R$-divisors (cf. \cite[Remark 2.2.50]{Laza04} and also \cite[Corollary 4.12]{LM09}) and also for nef classes by the Khovanskii–-Teissier inequalities proved in \cite{Dem93}. The general case then follows easily from the analytic version of Fujita’s
approximation theorem  \cite[Theorem 1.4]{Boucksom02}. Given big classes $\alpha,\alpha' \in H^{1,1}_{\rm{BC}}(X, \mathbb{R})$ and $\varepsilon >0$, there exists a resolution $\nu : X_\varepsilon \to X$ and K\"ahler classes $\omega ,\omega ' \in H^{1,1}_{\rm{BC}}(X_{\varepsilon}, \mathbb{R})$ on $X_\varepsilon$ such that
$${\rm vol}(\omega )\leq {\rm vol}(\alpha)\leq {\rm vol}(\omega )+\varepsilon ,$$ $${\rm vol}(\omega') \leq {\rm vol}(\alpha ')\leq {\rm vol}(\omega ') +\varepsilon,$$ $${\rm vol}(\omega +\omega')\leq{\rm vol}(\alpha+\alpha ').$$
But then
$$({\rm vol}(\alpha )-\varepsilon)^{1/n}+({\rm vol}(\alpha')-\varepsilon)^{1/n}\le {\rm vol}(\omega )^{1/n} +{\rm vol}(\omega')^{1/n}\le {\rm vol}(\omega +\omega')^{1/n}\le {\rm vol}(\alpha+\alpha')^{1/n},$$
and taking the limit as $\varepsilon \to 0$ yields ${\rm vol}(\alpha)^{1/n}+{\rm vol}(\alpha')^{1/n} \le {\rm vol}(\alpha+\alpha')^{1/n}$.
\end{proof}

\subsection{Bigness and pseudo-effectivity in families} 
We start this subsection with: 
\begin{definition}[Relatively big and pseudo-effective classes]\label{pseffclass}
Let $f:X\to S$ be a proper surjective morphism from a normal complex analytic variety $X$ onto a relatively compact base $S$. Then we say that:\\
\emph{(1)} A cohomology class $\alpha\in H^{1,1}_{\rm{BC}}(X,\mathbb{R})$ is \emph{$f$-pseudo-effective over $S$} or \emph{relatively pseudo-effective over $S$} if there is a complement $V$ of a countable union of Zariski closed proper subsets of $S$ such that $\alpha_s:=\alpha|_{X_s}$ is pseudo-effective for any $s\in V$.\\
\emph{(2)} A cohomology class $\alpha\in H^{1,1}_{\rm{BC}}(X,\mathbb{R})$ is \emph{$f$-big over $S$} or \emph{relatively big over $S$} if there is a non-empty Zariski open subset $U$ of $S$ such that $\alpha_s$ is big for any $s\in U$.  
\end{definition}

The following proposition shows that a relatively pseudo-effective class is pseudo-effective on general fibers.
\begin{proposition}\label{genericpseff}
Let $f:X\to S$ be a family from a normal K\"ahler variety $X$ with a K\"ahler class $\omega$ onto a relatively compact base $S$. A class $\alpha \in H^{1,1}_{\rm{BC}}(X, \mathbb{R})$ is $f$-pseudo-effective over $S$ if and only if there exists a non-empty Zariski open subset $U \subset S$ such that $\alpha_s$ is pseudo-effective for any $s\in U$. 
\end{proposition}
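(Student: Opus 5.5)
The ``if'' direction is immediate. The complement of finitely many points of $S$ (a Zariski closed proper subset) is in particular a complement of a countable union of Zariski closed proper subsets. So the hypothesis gives Definition \ref{pseffclass}(1) directly.

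For the ``only if'' direction, let $V = S \setminus \bigcup_{i} Z_i$ with each $Z_i \subsetneq S$ Zariski closed, such that $\alpha_s$ is pseudo-effective for all $s \in V$. Since $S$ is a smooth connected curve (or, in general, by relative compactness we can work on a relatively compact base), each $Z_i$ is a discrete set. Let $S^\circ \subset S$ be the nonempty Zariski open set over which $f$ is smooth; its complement is a proper analytic subset. The goal is to show that $\alpha_s$ is pseudo-effective for every $s \in S^\circ$; we then take $U = S^\circ$.

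The key step is closedness of pseudo-effectivity along limits in a family. Fix $s_0 \in S^\circ$. Since $V$ is dense (its complement is countable), we can choose $s_k \in V$ with $s_k \to s_0$. By Proposition \ref{bigandpseff}, it suffices to show that $\alpha_{s_0} + \varepsilon \omega_{s_0}$ is big for each $\varepsilon > 0$, where $\omega_s = \omega|_{X_s}$. The class $\alpha_{s_k}$ is pseudo-effective, so Lemma \ref{vollemma} gives
\[
\mathrm{vol}(\alpha_{s_k} + \varepsilon \omega_{s_k}) \ge \varepsilon^n \int_{X_{s_k}} \omega_{s_k}^n = \varepsilon^n \int_{X_{s_0}} \omega_{s_0}^n > 0,
\]
where the equality uses Lemma \ref{constIntersection} on the smooth family $f^{-1}(S^\circ) \to S^\circ$. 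The classes $\alpha_{s_k} + \varepsilon\omega_{s_k}$ converge to $\alpha_{s_0} + \varepsilon\omega_{s_0}$, since they are restrictions of a single global class. Upper semicontinuity of the volume (Proposition \ref{uscvolume}) then gives
\[
\mathrm{vol}(\alpha_{s_0} + \varepsilon\omega_{s_0}) \ge \limsup_k \mathrm{vol}(\alpha_{s_k} + \varepsilon\omega_{s_k}) \ge \varepsilon^n \int_{X_{s_0}} \omega_{s_0}^n > 0.
\]
So $\alpha_{s_0} + \varepsilon\omega_{s_0}$ is big by Lemma \ref{BigVol}(a), and hence $\alpha_{s_0}$ is pseudo-effective.

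The main obstacle is applying Proposition \ref{uscvolume}, which is stated for smooth families of Kähler manifolds. This is why we restrict to $S^\circ$, where the fibers are smooth compact Kähler manifolds (restricting $\omega$ gives a Kähler form). We also need to identify the convergence $\alpha_{s_k} \to \alpha_{s_0}$ via the local $C^\infty$ trivialization (Ehresmann). If generic smoothness fails, for instance because the total space has singularities whose locus dominates $S$, we would instead pass to a resolution $\mu\colon X' \to X$. Its general fibers are resolutions of the general fibers of $f$, and pseudo-effectivity is invariant under pullback and pushforward by bimeromorphic maps. We would then run the same argument for $\mu^*\alpha$ and $\mu^*\omega + (\text{small exceptional correction})$, using a Kähler class on $X'$ in place of $\omega$.
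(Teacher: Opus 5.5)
\textbf{Gap: Proposition~\ref{uscvolume} is applied outside its hypothesis.} Your route is the paper's: pass to a resolution so that the family is smooth over a Zariski open set, then combine Lemma~\ref{vollemma}, Lemma~\ref{constIntersection} and Boucksom's upper semicontinuity. But Proposition~\ref{uscvolume} only gives $\mathrm{vol}(\alpha)\ge\limsup_k\mathrm{vol}(\alpha_k)$ when the limit class $\alpha$ on $X_{s_0}$ is already known to be pseudo-effective. You apply it to $\alpha_{s_0}+\varepsilon\omega_{s_0}$ for every $\varepsilon>0$, and its pseudo-effectivity is exactly what is in question. If $\varepsilon$ lies below the pseudo-effective threshold of $\alpha_{s_0}$, then $\mathrm{vol}(\alpha_{s_0}+\varepsilon\omega_{s_0})=0$ by the convention in Definition~\ref{Def:Vol}. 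In that case your inequality $\mathrm{vol}(\alpha_{s_0}+\varepsilon\omega_{s_0})\ge\varepsilon^n\int_{X_{s_0}}\omega_{s_0}^n$ is equivalent to the statement you want to prove, not a consequence of the cited result. You named smoothness of the fibres as the main obstacle, but this hypothesis is the one that actually needs handling.

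\textbf{Repair.} The fix is the threshold argument the paper uses. Set $\tau=\inf\{t\ge 0\mid \alpha_{s_0}+t\omega_{s_0}\text{ is pseudo-effective}\}$, which is finite. Since the pseudo-effective cone is closed, $\alpha_{s_0}+\tau\omega_{s_0}$ is pseudo-effective. So Proposition~\ref{uscvolume} legitimately applies to it, and your estimate along $s_k\to s_0$ gives $\mathrm{vol}(\alpha_{s_0}+\tau\omega_{s_0})\ge\tau^n\int_{X_{s_0}}\omega_{s_0}^n$. If $\tau>0$, this class would be big. Because bigness is open in the $\omega_{s_0}$-direction (subtract $\eta\omega_{s_0}$ from a K\"ahler current), $\alpha_{s_0}+(\tau-\eta)\omega_{s_0}$ would remain big for small $\eta>0$, contradicting the definition of $\tau$. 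Hence $\tau=0$, and $\alpha_{s_0}$ is pseudo-effective, again by closedness. The paper phrases this as: for $\tau>0$ the class is pseudo-effective but not big, so $0=\mathrm{vol}\ge\tau^d v$.

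Equivalently, your computation shows that the set of $\varepsilon>0$ for which $\alpha_{s_0}+\varepsilon\omega_{s_0}$ is pseudo-effective is open. It is also closed and contains all large $\varepsilon$, so it is all of $(0,\infty)$. With this modification, the rest of your argument matches the paper: the density of $V$, the restriction to the smooth locus, and the reduction via a resolution using any K\"ahler class on $X'$.
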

\begin{proof}
After shrinking $S$ and replacing $X$ by a resolution,  we may assume that $X\to S$ is smooth. By assumption $\alpha _s$ is pseudo-effective for any very general $s\in S$.
By Lemma \ref{vollemma}, $$\vol(\alpha _s+\varepsilon \omega _s)  \ge \varepsilon ^d \cdot v \text{ for very general } s\in S,$$ where $d=\dim (X/S)$ and $v=\int _{X_s}\omega _s^d$. Note that $v= \int_{X_s} \omega_s^d>0$ is a constant independent of $s\in S$ by Lemma \ref{constIntersection}. 

Let $X_{s_0}$ be a special fiber, and define the pseudo-effective threshold
\[
\tau_{s_0} = \inf \{ t \ge 0 \mid \alpha_{s_0} + t \omega_{s_0} \text{ is pseudo-effective} \}.
\]
Then $\alpha_{s_0} + \tau_{s_0} \omega_{s_0}$ is pseudo-effective but not big; hence $
\vol(\alpha_{s_0} + \tau_{s_0} \omega_{s_0}) = 0$. 
By Boucksom's upper semicontinuity Proposition~\ref{uscvolume},
\[
0 = \vol(\alpha_{s_0} + \tau_{s_0} \omega_{s_0})
\ge \limsup_{k \to \infty} \vol(\alpha_{s_k} + \tau_{s_0} \omega_{s_k})
\ge \tau_{s_0}^d \, v.
\]
Therefore $\tau_{s_0} = 0$, and in particular $\alpha_{s_0}$ is pseudo-effective.
\end{proof}

Before proceeding, let us first recall the definition of the pullback of a closed positive $(1,1)$-current with local potentials. 

\begin{definition}[{Pull back of closed positive $(1,1)$-current with local potential, \cite[Section 3.1]{Meo96}}]\label{Def: pullcurrent}
Let $f: X \rightarrow Y$ be a  surjective morphism between complex analytic varieties. For a closed positive $(1,1)$-current $T$ with local potentials on $Y$, we define the pull back $f^* T$ as follows:

Let $\Omega$ be an open set in $Y$ on which $T_{\Omega}=i \partial \bar{\partial} u$ with $u$ a plurisubharmonic potential function in $\Omega$. If the plurisubharmonic function $u \circ f$ is not identically $-\infty$ on each connected component of $f^{-1}(\Omega)$, we then define $$\left.\left(f^* T\right)\right|_{f^{-1}(\Omega)}=i \partial \bar{\partial}(u \circ f).$$If $\Omega^{\prime}$ is another open set in $Y$ with a similar representation $T_{\Omega^{\prime}}=i \partial \bar{\partial} u^{\prime}$, the function $u-u^{\prime}$ is pluriharmonic on $\Omega \cap \Omega^{\prime}$, and the same holds for $u \circ f-u^{\prime} \circ f$ on $f^{-1}(\Omega) \cap f^{-1}\left(\Omega^{\prime}\right)$, so the above definition is independent of the choice of $\Omega$ and the potential $u$.
\end{definition}
\begin{remark}\label{Rmk: Globalpull}
Note that the closed positive current $T$ admits a global representative 
$T = \theta + i \partial \bar{\partial} \varphi$, where $\theta$ is a smooth $(1,1)$-form and $\varphi$ is an almost plurisubharmonic function on $Y$, obtained via a partition of unity argument (cf.~\cite[Proposition 3.1.1]{Meo96}). Hence the pullback $f^*T$ can be written as 
$f^*T = f^*\theta + i \partial \bar{\partial} f^*\varphi$, and therefore $[f^*T] = f^*[T]$.
\end{remark}

We can now describe the behavior of pseudo-effective and big classes under bimeromorphic change.

\begin{proposition}\label{Prop: PullPSEf}
Let $f: Y \rightarrow X/S$ be a proper bimeromorphic morphism of normal complex analytic varieties over a relatively compact base $S$. If $\alpha \in H^{1,1}_{\rm BC}(X,\mathbb{R})$ and ${{\beta}} \in H^{1,1}_{\rm BC}(Y,\mathbb{R})$ are relatively pseudo-effective over $S$, then so are the classes ${{\beta}}' = f_*{{\beta}}$ and $\alpha' = f^*\alpha$.
\end{proposition}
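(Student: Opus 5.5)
The plan is to reduce everything to fiberwise statements via Definition \ref{pseffclass} and Proposition \ref{genericpseff}. By Proposition \ref{genericpseff}, it suffices to find a non-empty Zariski open subset $U\subset S$ such that the restriction of the relevant class to $X_s$, respectively $Y_s$, is pseudo-effective for every $s\in U$. Since $f$ is proper and bimeromorphic, there is a proper analytic subset $Z\subset X$ over whose complement $f$ is an isomorphism. Consequently, over a non-empty Zariski open $U_0\subset S$ (after removing the images of components of $Z$ and of $f^{-1}(Z)$ that contain entire fibers), the restriction $f_s\colon Y_s\to X_s$ is again a proper bimeromorphic morphism of the (general, irreducible) fibers. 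On $U_0$ we have the compatibilities $(f^*\alpha)|_{Y_s}=f_s^*(\alpha|_{X_s})$ and $(f_*\beta)|_{X_s}=f_{s*}(\beta|_{Y_s})$. Pullback commutes with restriction tautologically. For pushforward, I would use that $f_*\beta$ is defined via the bimeromorphic identification (push forward a current representative), and that this is compatible with restriction to general fibers, which are not contained in the exceptional loci.

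For $\alpha'=f^*\alpha$: by Proposition \ref{genericpseff}, after shrinking $U_0$ we may take $T_s\ge 0$ a closed positive current in $\alpha|_{X_s}$ for all $s\in U_0$. The pullback $f_s^*T_s$ from Definition \ref{Def: pullcurrent} is a closed positive $(1,1)$-current. It is well defined because $f_s$ is surjective and bimeromorphic, so local potentials $u\circ f_s$ are not identically $-\infty$ on any component: $u\not\equiv-\infty$ on a dense open set over which $f_s$ is an isomorphism. By Remark \ref{Rmk: Globalpull}, $[f_s^*T_s]=f_s^*[T_s]$, so $f^*\alpha$ is pseudo-effective on $Y_s$ for $s\in U_0$.

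For $\beta'=f_*\beta$: take closed positive currents $R_s\in\beta|_{Y_s}$ for $s$ in a Zariski open set. The direct image $f_{s*}R_s$ of a positive closed current under a proper map is positive and closed. The main obstacle is checking that it has local potentials on the possibly singular normal fiber $X_s$ and represents $f_{s*}[R_s]$ in Bott--Chern cohomology. I would handle this by writing $R_s=\theta+i\partial\bar\partial\varphi$ as in Remark \ref{Rmk: Globalpull}, with $\theta$ smooth representing $\beta|_{Y_s}$. The class $f_{s*}[\theta]$ is represented by a smooth form $\eta$ on $X_s$ with local potentials, so $f_{s*}\theta-\eta=i\partial\bar\partial g$ for a current $g$. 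Then $f_{s*}R_s=\eta+i\partial\bar\partial(g+f_{s*}\varphi)$. This is a positive current, so $g+f_{s*}\varphi$ is locally quasi-psh off the codimension-$2$ exceptional image; by normality of $X_s$ (Riemann extension for psh functions across codimension $\ge 2$ analytic sets, as in \cite[Lemma A.7]{DHY23}) it extends to give local potentials. Hence $f_*\beta$ restricted to $X_s$ is pseudo-effective for all $s$ in a Zariski open set, and Proposition \ref{genericpseff} gives relative pseudo-effectivity.
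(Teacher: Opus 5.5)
Your proposal is correct and follows the same route as the paper: pass to a general fiber on which $f_s$ is bimeromorphic (the paper simply cites \cite[Corollary 3.2]{CRC25} for this), pull back local psh potentials for $f^*\alpha$, and push forward a closed positive current for $f_*\beta$, where your extra argument that $f_{s*}R_s$ has local potentials and represents $f_{s*}(\beta|_{Y_s})$ fills in a point the paper passes over (the paper only checks positivity of the pushforward against positive test forms). One small fix: Proposition~\ref{genericpseff} needs a K\"ahler total space, which this statement does not assume, but you never need it, since intersecting the very general set from Definition~\ref{pseffclass} with your Zariski open $U_0$ already suffices.
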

\begin{proof}
First, by Definition \ref{pseffclass}, to check relative pseudo-effectivity, it is sufficient to check it on the general fiber. By \cite[Corollary 3.2]{CRC25}, $f_s : Y_s \to X_s$ is bimeromorphic on the general fiber. Hence, in what follows, without loss of generality, we can reduce the problem to the absolute case and assume that $S$ is a point. 

We then show that if $\alpha \in H^{1,1}_{\rm BC}(X, \mathbb{R})$ is pseudo-effective, then so is $f^*\alpha$. Let $T_\alpha$ be a closed positive $(1,1)$-current such that $[T_\alpha] = \alpha$. By Definition \ref{Def: pullcurrent}, we can find an open cover $\{\Omega_i\}$ of $X$ such that
\[
T_\alpha|_{\Omega_i} = i \partial \bar{\partial} u_i,
\]
for some plurisubharmonic function $u_i$, and
\[
f^*T_\alpha \big|_{f^{-1}(\Omega_i)} = i \partial \bar{\partial} (u_i \circ f).
\]
By \cite[Proposition~1.1.8]{Demailly12}, the pullback of a plurisubharmonic function is again plurisubharmonic, so $f^*T_\alpha$ is a positive current. By Remark \ref{Rmk: Globalpull}, $[f^* T_{\alpha}] = f^* \alpha$. Hence $f^* \alpha$ is a pseudo-effective class.


We end our proof by showing that if $\beta \in H^{1,1}_{\rm BC}(X,\mathbb{R})$ is pseudo-effective, then so is $f_* \beta$. First, there exists a closed positive $(1,1)$-current $T_{\beta}$ such that $[T_{\beta}] = \beta$. By definition, a closed $(1,1)$-current is positive if and only if it is positive on every smooth positive $(n-1,n-1)$-form $\varphi$. Moreover, since $\varphi$ is positive, the pullback $f^*\varphi$ is semi-positive. Hence
\[
\langle f_* T_{\beta}, \varphi \rangle = \langle T_{\beta}, f^*\varphi \rangle \ge 0,
\]
for every smooth positive $(n-1,n-1)$-form $\varphi$. Thus, the pushforward of a pseudo-effective class is pseudo-effective.   
\end{proof}
\begin{proposition}\label{Prop: PullBig}
Let $f: Y\rightarrow X/S$ be a proper bimeromorphic morphism of normal K\"ahler varieties over a relatively compact base $S$. If $\alpha \in H^{1,1}_{\rm BC}(X,\mathbb{R})$ and ${{\beta}} \in H^{1,1}_{\rm BC}(Y,\mathbb{R})$ are relatively big over $S$, then so are the classes ${{\beta}}' = f_*{{\beta}}$ and $\alpha' = f^*\alpha$.
\end{proposition}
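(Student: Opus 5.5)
Following the proof of Proposition \ref{Prop: PullPSEf}, we first reduce to the absolute case. Relative bigness only concerns fibers over a non-empty Zariski open subset of $S$. By \cite[Corollary 3.2]{CRC25}, the restriction $f_s\colon Y_s\to X_s$ is bimeromorphic for general $s$. Since $Y_s, X_s$ are compact K\"ahler spaces (restrictions of K\"ahler forms), it suffices to intersect the two Zariski open sets where $\alpha_s$, resp. $\beta_s$, is big with the locus where $f_s$ is bimeromorphic. We may then assume $S$ is a point, so $f\colon Y\to X$ is a bimeromorphic morphism of compact normal K\"ahler varieties.

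For the pushforward, let $T_\beta\ge \delta\omega_Y$ be a K\"ahler current in $\beta$, with $\omega_Y$ a K\"ahler form on $Y$. Pick a K\"ahler form $\omega_X$ on $X$; after rescaling, $\omega_Y - c f^*\omega_X\ge 0$ for some $c>0$, since $f^*\omega_X$ is a smooth semipositive form on compact $Y$. Then $T_\beta\ge \delta c\, f^*\omega_X$. Pushing forward as in the previous proof, pairing with positive $(n-1,n-1)$-forms $\varphi$, gives $f_*T_\beta\ge \delta c\, f_*f^*\omega_X=\delta c\,\omega_X$, because $f$ is an isomorphism off an analytic set of codimension $\ge 1$ and currents of order zero do not charge it in this comparison. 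More carefully, $f_*T_\beta-\delta c\,\omega_X=f_*(T_\beta-\delta c f^*\omega_X)$ is positive. Hence $f_*\beta$ is big. As a safeguard, one could alternatively argue via volumes: $\vol$ is bimeromorphically invariant on resolutions, and positivity of the volume characterizes bigness by Lemma \ref{BigVol}(a).

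For the pullback, this is the main point, since $f^*T_\alpha$ is not bounded below by a K\"ahler form along the exceptional locus. I would use the volume. Take a resolution $\pi\colon Y'\to Y$, so that $f\circ\pi$ is a resolution of $X$. By Definition \ref{Def:Vol} and the remark on independence of the resolution, $\vol(f^*\alpha)$ equals the supremum over positive currents in $\pi^*f^*\alpha=(f\circ\pi)^*\alpha$, which is exactly $\vol(\alpha)$. Since $\alpha$ is big, Lemma \ref{BigVol}(a) gives $\vol(\alpha)>0$, hence $\vol(f^*\alpha)>0$, and applying Lemma \ref{BigVol}(a) again on $Y$ shows $f^*\alpha$ is big.

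A direct alternative argument would be the following. Pick an $f$-exceptional effective divisor $E$ with $-E$ $f$-ample, which exists after passing to a projective bimeromorphic model dominating $Y$. Then $f^*T_\alpha+\epsilon[E]$-type corrections yield a K\"ahler current. I expect the volume route to be cleaner. The only delicate point is ensuring Lemma \ref{BigVol}(a) applies on the singular $Y$, which holds because it is stated for normal compact K\"ahler varieties.
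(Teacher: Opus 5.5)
Your proposal is correct and follows essentially the same route as the paper. You reduce to the absolute case via fiberwise bimeromorphy on general fibers \cite[Corollary 3.2]{CRC25}, get bigness of $f^*\alpha$ from $\vol(f^*\alpha)=\vol(\alpha)>0$ and Lemma~\ref{BigVol}(a), and get bigness of $f_*\beta$ by bounding $f^*\omega_X\le C\,\omega_Y$ on compact $Y$ and pushing forward a K\"ahler current $T_\beta$ using $f_*f^*\omega_X=\omega_X$ (your side remarks on a volume safeguard and an exceptional-divisor correction are not needed).
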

\begin{proof}
Similar to Proposition~\ref{Prop: PullPSEf}, we may reduce to the absolute setting and assume that $S$ is a point. In particular, both $X$ and $Y$ are compact.

We first show that if $\alpha \in H^{1,1}_{\mathrm{BC}}(X, \mathbb{R})$ is big, then $f^{*}\alpha$ is big. Since
$$
\mathrm{vol}(f^{*}\alpha) = \mathrm{vol}(\alpha) > 0,
$$
Proposition~\ref{BigVol} implies that $f^{*}\alpha$ is big.

We next prove that if $\beta \in H^{1,1}_{\mathrm{BC}}(Y, \mathbb{R})$ is big, then $\beta' = f_{*}\beta$ is big. Let $T_{\beta}$ be a K\"ahler current representing $\beta$. By definition, there exists a smooth Hermitian form $\omega_{Y}$ with
$$
T_{\beta} \ge \omega_{Y}.
$$
Let $\omega_{X}$ be a smooth Hermitian form on $X$. Since $Y$ is compact, there exists a constant $C > 0$ such that
$$
f^{*}\omega_{X} \leq C\, \omega_{Y}.
$$
It follows that
$$
f_{*}T_{\beta} \ge f_{*}\omega_{Y} \ge C^{-1} f_{*}f^{*}\omega_{X}
  = C^{-1}\, \omega_{X}.
$$
Thus $f_{*}T_{\beta}$ is again a K\"ahler current. Because $[f_{*}T_{\beta}] = f_{*}\beta$, the class $f_{*}\beta$ is big.
\end{proof}

We say that a bimeromorphic contraction $ \phi : X \dashrightarrow Y $ is a \emph{$(K_X + B +  { {\bbeta}})$-non-positive contraction} if under a common resolution $ p : W \to X $ and $ q : W \to Y $, the divisor $ E $ in
\[
p^*(K_X + B +  { {\bbeta}_X}) = q^*(K_Y + B_Y +  { {\bbeta}}_Y) + E
\]
is an effective $ q $-exceptional divisor (that is, $ q_* E = 0 $).

A $(K_X+B+ \bbeta_X)$-non-positive contraction preserves pseudo-effectivity (resp. bigness) of adjoint classes. 
\begin{proposition}\label{PushPSEF}
Let $(X,B + \bbeta_X)$ be a generalized K\"ahler pair. Given a $(K_X+B+ \bbeta_X)$-non-positive contraction (e.g. a sequence of steps of the $(K_X+B + \bbeta_X)$-MMP) over the relatively compact $S$
\[
\phi : (X, B +  { \boldsymbol{\beta}_X}) 
\dashrightarrow 
(X', B' +  { \boldsymbol{\beta}}_{X'})/S,
\]
the adjoint class $ K_{X} + B +  { {\bbeta}_X} $ is relatively pseudo-effective (resp.\ big) over $ S $ if and only if $ K_{X'} + B' +  { {\bbeta}}_{X'}$ is relatively pseudo-effective (resp.\ big) over $ S $.
\end{proposition}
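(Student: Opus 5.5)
We work on a common resolution $p\colon W\to X$, $q\colon W\to X'$ over $S$, and compare the two adjoint classes through the identity defining a non-positive contraction:
\[
p^*(K_X+B+\bbeta_X)=q^*(K_{X'}+B'+\bbeta_{X'})+E,\qquad E\ge 0,\ q_*E=0 .
\]
By Definition \ref{pseffclass} and Proposition \ref{genericpseff}, relative pseudo-effectivity and relative bigness are both detected on the fibers over a non-empty Zariski open subset of $S$. By \cite[Corollary 3.2]{CRC25}, $p$ and $q$ restrict to bimeromorphic morphisms on general fibers. We first restrict the identity to such a general fiber $W_s$; since $E$ is effective and does not contain general fibers, $E|_{W_s}$ is effective and $q_s$-exceptional. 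After this step we may argue in the absolute case, writing $\alpha=K_X+B+\bbeta_X$ and $\alpha'=K_{X'}+B'+\bbeta_{X'}$.

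For the direction from $\alpha$ to $\alpha'$: if $\alpha$ is pseudo-effective (resp. big), then Proposition \ref{Prop: PullPSEf} (resp. Proposition \ref{Prop: PullBig}) shows that $p^*\alpha$ is pseudo-effective (resp. big) on $W$. Pushing forward by $q$ and using $q_*q^*\alpha'=\alpha'$ and $q_*E=0$ gives $q_*p^*\alpha=\alpha'$. A second application of the same propositions, now for the push-forward $q_*$, shows that $\alpha'$ is pseudo-effective (resp. big).

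For the converse, suppose $\alpha'$ is pseudo-effective (resp. big). Then $q^*\alpha'$ is pseudo-effective (resp. big) by the pullback part of the same propositions. Since $[E]$ is represented by the positive current of integration on $E$, the sum $q^*\alpha'+[E]=p^*\alpha$ is pseudo-effective (resp. a K\"ahler current plus a positive current, hence big). Pushing forward by $p$ and using $p_*p^*\alpha=\alpha$ gives the claim.

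The main obstacle is the restriction to general fibers in the relative setting. Two points need care. First, the identity $p^*\alpha=q^*\alpha'+E$ must restrict correctly to $W_s$; this holds because restriction of Bott--Chern classes commutes with pullback. Second, the push-forward identities $q_{s*}q_s^*=\mathrm{id}$ and $p_{s*}p_s^*=\mathrm{id}$ must hold on the normal fibers $X_s$ and $X'_s$, which are normal for general $s$. A further subtlety arises if bigness is meant in the sense of the fibers only, with $S$ not reduced to a point: the set of fibers over which the statements hold is then a Zariski open subset, and intersecting the finitely many such open sets keeps it non-empty.
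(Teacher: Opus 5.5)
Your proposal is correct and follows essentially the same route as the paper: take a common resolution, write $p^*(K_X+B+\bbeta_X)\equiv_S q^*(K_{X'}+B'+\bbeta_{X'})+E$ with $E\ge 0$ and $q_*E=0$, push forward by $q$ using Propositions \ref{Prop: PullPSEf} and \ref{Prop: PullBig} for one direction, and use the effectivity of $E$ plus push-forward by $p$ for the other. The only difference is that you make the reduction to general fibers explicit at the start, whereas the paper leaves it inside the relative statements of those two propositions, which themselves reduce to general fibers via \cite[Corollary 3.2]{CRC25}.
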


\begin{proof}
Assume that $K_{X}+ B+ {\bbeta}_X$ is relatively pseudo-effective (resp.\ big) over $S$. 
Let $p:Y \to X$ and $q:Y \to X'$ be a common resolution of $\phi$. 
Since $\phi$ is $(K_{X}+B+{\bbeta}_X)$-non-positive over $S$,  $$p^*(K_{X}+ B+{\bbeta}_X) \equiv_S 
q^*(K_{X'}+B'+ {\bbeta}_{X'}) + E,$$
with $E \ge 0$ effective and $q$-exceptional. 
Since we already showed in Propositions \ref{Prop: PullPSEf} and \ref{Prop: PullBig} that pullback and pushforward under bimeromorphic morphisms preserve pseudo-effectivity (resp.\ bigness), it follows that
\[
q_*\bigl(p^*(K_{X}+B + {\bbeta}_X)\bigr) 
\equiv_S 
q_*\bigl(q^*(K_{X'}+B'+ {\bbeta}_{X'}) + E\bigr) 
= K_{X'}+ B'+ {\bbeta}_{X'}
\]
is relatively pseudo-effective (resp.\ big) over $S$. 

Conversely, if $K_{X'}+B'+ {\bbeta}_{X'}$ is relatively pseudo-effective (resp.\ big) over $S$, then by the effectiveness of $E$ and $p^*(K_{X}+ B+{\bbeta}_X) \equiv_S 
q^*(K_{X'}+B'+ {\bbeta}_{X'}) + E$, it is easy to see that 
$p^*(K_{X}+ B+ {\bbeta}_X)$ is relatively pseudo-effective (resp.\ big) over $S$, and therefore so is 
$K_{X}+ B+ {\bbeta}_X$.
\end{proof}

\begin{remark}
Note that for a bimeromorphic contraction morphism, the pseudo-effectivity of $ f_*  \alpha $  does not necessarily imply that of $  { {\alpha}} $. Consider the blow-up $ f : X \to Y $ of a smooth projective variety $ Y $. Let $ D \ge 0 $ be an effective divisor on $ X $ and $ E \ge 0 $ an exceptional divisor. For $ m \gg 1 $, the divisor $ D - mE $ is not pseudo-effective, but
\[
f_*(D - mE) = f_* D \ge 0
\]
is pseudo-effective.
\end{remark}

\begin{proposition}\label{AbsRelativepseff}
Let $f:X\to S$ be a family from a normal K\"ahler variety $X$ to a complex analytic base $S$, and $\alpha \in H^{1,1}_{\rm{BC}}(X,\mathbb{R})$. Let $W$ be a compact subset of $S$ and $X_W = f^{-1}(W)$. If $\alpha|_{X_W} \in H^{1,1}_{\rm{BC}}(X_W,\mathbb{R})$ is absolute big (resp. pseudo-effective), then it is relatively big (resp. pseudo-effective) over $W$. 
\end{proposition}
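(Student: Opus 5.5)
The plan is to restrict a global positive representative of $\alpha|_{X_W}$ to the fibers and check positivity there. For fibers off a small exceptional set this is automatic; for bigness, a compactness argument supplies a uniform lower bound.

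\emph{Pseudo-effective case.} Choose a closed positive $(1,1)$-current $T$ with local potentials on a neighborhood of $X_W$ representing $\alpha|_{X_W}$. Locally $T=i\partial\bar\partial u$ with $u$ plurisubharmonic. The locus $P=\{u=-\infty\}$ is pluripolar and has measure zero. By Fubini, applied to the fibration $f$ near $X_W$ (after shrinking to where $f$ is a submersion on the smooth locus), for all $s\in W$ outside a set of measure zero the restriction $u|_{X_s}$ is not identically $-\infty$ on any component. More precisely, I would use that the set of $s$ for which $u|_{X_s}\equiv-\infty$ on some component is contained in $f(E)$, where $E$ ranges over the analytic subsets among the Lelong level sets $E_c$ that contain a whole fiber component. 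These are analytic by Siu's theorem, and a fiber component is contained in some $E_c$ exactly when $u$ is identically $-\infty$ there. This gives a countable union of proper Zariski closed subsets of $S$; here I would use that the general fiber is irreducible, and that if all fibers lay in some $E_c$, then $E_c$ would be all of $X$, which is impossible.

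For the remaining $s$, Definition~\ref{Def: pullcurrent} applied to the inclusion $X_s\hookrightarrow X$ gives a positive current $T|_{X_s}$. By Remark~\ref{Rmk: Globalpull}, this current represents $\alpha_s$. This is exactly Definition~\ref{pseffclass}(1). Strictly, the definition uses the slightly stronger inclusion, but Proposition~\ref{genericpseff} upgrades it in any case.

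\emph{Big case.} Take a Kähler current $T\ge \delta\omega$ representing $\alpha|_{X_W}$, with $\omega$ a Hermitian (for instance Kähler) form near the compact set $X_W$. By Demailly regularization, I may take $T$ with analytic singularities. Its polar set $Z$ is then a proper analytic subset not containing $X_W$. Define $U$ as the set of points $s$ in the interior of $W$ for which $X_s\not\subset Z$; its complement is the proper analytic set $\{s: X_s\subset Z\}$, so $U$ is nonempty Zariski open. For $s\in U$, the restriction $T|_{X_s}=\theta|_{X_s}+i\partial\bar\partial\varphi|_{X_s}$ is well defined because $\varphi|_{X_s}\not\equiv-\infty$. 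It satisfies $T|_{X_s}\ge\delta\,\omega|_{X_s}$, since restriction preserves the inequality $T-\delta\omega\ge0$ by the pseudo-effective argument above. So $\alpha_s$ is big.

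The main obstacle I expect is the restriction step: showing that the set of bad fibers is contained in a \emph{Zariski closed proper} subset, rather than merely a measure-zero set. The analytic-singularities reduction for bigness, and Siu's analyticity of the Lelong level sets for pseudo-effectivity, are what should make this work.
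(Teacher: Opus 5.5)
Your big case is correct and is essentially the paper's argument. The paper works with the restricted non-K\"ahler locus $E_{nK}^{as}(\alpha|_{X_W})$, the intersection of the loci $E_+(T)$ over K\"ahler currents $T$ with analytic singularities. The inclusion $E_{nK}^{as}(\alpha|_{X_s})\subset E_{nK}^{as}(\alpha|_{X_W})\cap X_s$ that it invokes is exactly your restriction of one such current to the fibers not contained in its polar set. Take $\omega$ K\"ahler, so that $T-\delta\omega$ has psh local potentials.

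The pseudo-effective case, however, has a genuine gap. Your claim that a fiber component on which $u\equiv-\infty$ must lie in some Lelong level set $E_c$ with $c>0$ is false. Siu's theorem controls where Lelong numbers are positive, not where $u=-\infty$. For instance, $u=-\sqrt{-\log|t|}$ pulled back to $X=\Delta\times Y$ is $-\infty$ on $X_0$, although all its Lelong numbers vanish.

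Worse, the set of bad fibers can be uncountable. Let $K\subset\Delta$ be an uncountable compact polar (Cantor) set. By Evans' theorem there is a probability measure $\mu$ on $K$ whose potential $v(t)=\int_K\log|t-a|\,d\mu(a)$ is $-\infty$ on all of $K$. Then $T=i\partial\bar\partial(v\circ f)$ is a closed positive current on $X=\Delta\times Y$ whose potential is identically $-\infty$ on every fiber over $K$, so it restricts to none of them. Meanwhile $E_+(T)$ consists only of the fibers over the countably many atoms of $\mu$. Since a countable union of proper analytic subsets of a curve is countable, the bad fibers of a fixed positive current need not lie in any such union. Fubini only gives you a null set of bad fibers, which does not meet Definition~\ref{pseffclass}(1). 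Invoking Proposition~\ref{genericpseff} does not help, since its hypothesis is already that definition.

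So you must change the current. The clean way, which the paper uses and which you already have the tools for, is to reduce to the big case:
\begin{itemize}
\item For every $k$, the class $\alpha|_{X_W}+\frac1k\omega$ is big.
\item Your big-case argument then gives nonempty Zariski open sets $U_k$ such that $\alpha_s+\frac1k\omega_s$ is big for $s\in U_k$.
\item For $s\in\bigcap_kU_k$, which is the complement of countably many proper analytic subsets, $\alpha_s$ is a limit of pseudo-effective classes.
\item It is therefore pseudo-effective, because the pseudo-effective cone of a compact fiber is closed.
\end{itemize}
Equivalently, you could regularize $T$ by Demailly into currents with analytic singularities and loss of positivity $-\varepsilon_k\omega$, and restrict those instead.
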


\begin{proof}
We first prove that if $\alpha|_{X_W}$ is absolute big then it is also relatively big over $W$. By Lemma \ref{nonKahlerlocus}, the restricted non-K\"ahler locus of $\alpha$ is a proper subvariety of $X_W$. 
In particular, there exists a non-empty Zariski open subset $U\subset W$ such that the fibers $X_s$ for $s\in U$ are not contained in $E_{nK}^{as}(\alpha|_{X_W})$. Since $$E_{nK}^{as}(\alpha|_{X_s}) \subset E_{nK}^{as}(\alpha|_{X_W})\cap X_s,$$ $E_{nK}^{as}(\alpha|_{X_s})$ does not contain the fiber $X_s$ for $s\in U$. Hence, by Lemma \ref{nonKahlerlocus}, the class $\alpha|_{X_s}$ is big for $s\in U$, i.e., $\alpha|_{X_W}$ is $f$-big over $W$. 

Given a pseudo-effective class $\alpha|_{X_W}$ on $X_W$, Proposition \ref{bigandpseff} shows that there exists a K\"ahler class $\omega$ on $X_W$ such that $\alpha|_{X_W}  + \varepsilon \omega$ is (absolute) big for any $\varepsilon >0$. Hence, by what we just proved above, $\alpha|_{X_W} + \varepsilon \omega$ is also $f$-big over $W$ for any $\varepsilon >0$. Thus, $\alpha$ is $f$-pseudo-effective over $W$. 
\end{proof}

\begin{proposition}[{\cite[Lemma 5.1]{DH24}}]\label{KVVanishing}
Let $g: X \rightarrow S$ be a proper surjective morphism from a generalized K\"ahler pair $(X, B+ {\boldsymbol{\beta}})$ with generalized klt singularities onto a relatively compact complex analytic variety $S$. If $D$ is a $\mathbb{Q}$-Cartier $\mathbb{Z}$-divisor on $X$ such that $D-\left(K_X+B+ { {\bbeta}}_X\right)$ is relatively nef and big over $S$, then $R^i g_* \mathcal{O}_X(D)=0$ for all $i>0$.
\end{proposition}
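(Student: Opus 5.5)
We reduce to the relative Kawamata--Viehweg vanishing for generalized klt pairs on a resolution, where the transcendental class is handled in the same way as the boundary. The plan is as follows.

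\emph{Step 1: pass to a log resolution.} Take a log resolution $\nu\colon X'\to X$ on which $\bbeta$ descends, so that $\bbeta_{X'}$ is nef over $S$. Write
\[
K_{X'}+B'+\bbeta_{X'}=\nu^*(K_X+B+\bbeta_X),
\]
where $B'$ has snc support and all coefficients $<1$, by the gklt assumption. Set $L:=D-(K_X+B+\bbeta_X)$, which is nef and big over $S$. Since $D$ is $\mathbb{Q}$-Cartier, $\nu^*D$ is a $\mathbb{Q}$-divisor. Define the $\mathbb{Z}$-divisor
\[
D':=\lceil \nu^*D-B'\rceil .
\]
Then
\[
D'-(K_{X'}+\{-\nu^*D+B'\}+\bbeta_{X'})\equiv \nu^*L,
\]
with fractional part $\Delta':=\{B'-\nu^*D\}$ snc with coefficients in $[0,1)$. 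Perturbing the support of $\nu^*D$ into the snc configuration is standard.

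\emph{Step 2: make the class relatively Kähler.} Since $\nu^*L$ is only nef and big, use a Kodaira-type lemma for transcendental classes. Bigness over $S$ gives a Kähler current, and after a further blow-up it gives a decomposition
\[
\nu^*L+\bbeta_{X'}\equiv A+F,
\]
where $A$ is relatively Kähler and $F\ge0$ is a small snc $\mathbb{R}$-divisor. Nefness then lets us replace this by
\[
(1-\epsilon)(\nu^*L+\bbeta_{X'})+\epsilon(A+F).
\]
Absorbing $\epsilon F$ into $\Delta'$ keeps the coefficients $<1$. We are then reduced to the smooth statement
\[
R^ih'_*\mathcal{O}_{X'}(K_{X'}+M)=0,
\]
where $M-\Delta''$ is relatively Kähler and $(X',\Delta'')$ is klt snc.

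\emph{Step 3: prove the smooth vanishing.} This is the transcendental relative Kawamata--Viehweg (Nadel) vanishing on a Kähler manifold. Represent the class by a smooth positive form plus the singular weight of $\Delta''$, which has multiplier ideal trivial because the pair is klt. Apply Nadel/Demailly relative vanishing, proved via local $L^2$ estimates over Stein opens of $S$.

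\emph{Step 4: descend to $X$.} Use the Leray spectral sequence together with $R^i\nu_*\mathcal{O}_{X'}(D')=0$ for $i>0$. The latter is the same vanishing applied to $\nu$, since $\nu^*L$ is $\nu$-trivial, hence $\nu$-nef, and $\nu$-big because $\nu$ is bimeromorphic. We also need $\nu_*\mathcal{O}_{X'}(D')=\mathcal{O}_X(D)$, which follows because $\lceil -B'\rceil$ is effective and $\nu$-exceptional; here the gklt condition is used, as the discrepancy coefficients $<1$ give exactly this. Together these give $R^ig_*\mathcal{O}_X(D)=0$.

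The main obstacle is Step 2 combined with Step 3: producing a genuine positive-curvature representative of a relatively nef and big transcendental class, uniformly over $S$, so that Nadel vanishing applies relatively. I would handle it as in \cite{DHY23,DH24}: shrink $S$ to a relatively compact Stein neighbourhood and add $h^*\omega_S$. This makes the class absolutely modified nef and big, after which Demailly regularization of a Kähler current with analytic singularities supplies the needed metric.
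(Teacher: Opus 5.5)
Your Steps 1 and 4 match the paper's proof. Your $D'=\lceil\nu^*D-B'\rceil$ is the paper's $\lceil\psi^*D\rceil-E$, its fractional part $\{B'-\nu^*D\}$ is the paper's $B^*$, and the descent is the same Leray argument combined with $\psi_*\mathcal{O}(\lceil\psi^*D\rceil-E)=\mathcal{O}_X(D)$.

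\textbf{The gap is Step 2.} In this paper ``nef and big over $S$'' is a fiberwise condition: bigness on the fibers over a nonempty Zariski open subset of $S$. Nothing you cite turns this into a relative K\"ahler current on $h^{-1}(V)$, and still less into a decomposition $A+F$ with $A$ relatively K\"ahler.
\begin{enumerate}
\item Demailly regularization and Boucksom's Kodaira-type lemma are statements on compact K\"ahler manifolds. The same holds for the Demailly--P\u{a}un criterion, which says that a nef class with positive top self-intersection is big.
\item Your remedy, adding $h^*\omega_S$ on the non-compact $h^{-1}(V)$ to get an ``absolutely modified nef and big'' class, has no volume-type criterion behind it. So it does not produce the K\"ahler current you need.
\item Consequently Step 3, which requires a genuinely positive singular metric, has no input.
\item A smaller point: once you blow up further, you cannot simply absorb $\epsilon F$ into $\Delta'$ on $X'$. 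The rounding has to be redone on the new model and its pushforward checked again.
\end{enumerate}

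\textbf{The missing observation} is the heart of the paper's argument: the class you are trying to make positive is not transcendental at all. We have $\bbeta_{X'}+\nu^*L\equiv \nu^*D-(K_{X'}+B')$, which is the class of an $\mathbb{R}$-divisor on the smooth space $X'$, nef over $S$ and big on general fibers.
\begin{enumerate}
\item After multiplying and perturbing to get a relatively big line bundle, \cite[Lemma 2.18]{DH24} gives a projective modification $\mu\colon X''\to X'$ with $g''\colon X''\to S$ projective.
\item Do your round-up directly on $X''$. The divisor $\lceil\psi^*D\rceil-E-(K_{X''}+B^*)$ is numerically equivalent to $\bbeta_{X''}+\psi^*L$.
\item This divisor is nef over $S$, since fiberwise metric nefness gives non-negative degree on vertical curves. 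It is also big over $S$ in the algebraic sense.
\item The classical Kawamata--Viehweg vanishing for projective morphisms then gives both $R^pg''_*=0$ and $R^p\psi_*=0$, and your Step 4 concludes.
\end{enumerate}
With this observation you could also repair your Step 2, using the algebraic relative Kodaira lemma on $X''$. At that point, however, the analytic Nadel vanishing of Step 3 is no longer needed.
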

\begin{proof}
The proof is based on \cite[Lemma 5.1]{DH24} with a minor change. Let $\nu:(X',B'+{\boldsymbol{\beta}}_{X'})\to (X,B+{\boldsymbol{\beta}}_X)$ be a log resolution such that ${\boldsymbol{\beta}}_{X'}$ is nef over $S$, with 
$$
K_{X'}+B'+\boldsymbol{\beta}_{X'}=\nu^*(K_X+B+\boldsymbol{\beta}_X).
$$
By Propositions \ref{Prop: PullBig} and \ref{nef}, the divisor
$$
 \nu^*(D) - (K_{X'}+B')=\boldsymbol{\beta}_{X'} + \nu^*(D - (K_X+B+\boldsymbol{\beta}_X)) 
$$
is a big and nef over $S$. Since $X'$ is smooth, there exists some $m\in \mathbb{N}_{+}$ such that $m(\nu^*(D) - (K_{X'}+B'))$ is a relatively big line bundle over $S$. By \cite[Lemma 2.18]{DH24}, there exists a projective modification $\mu:X'' \to X'$ such that $g'':X'' \to S$ is projective.

Then the proof follows along the same lines as \cite[Lemma~5.1]{DH24}. 
Let $\psi = \mu \circ \nu$. We have the following commutative diagram:
\begin{center}
\begin{tikzcd}[ampersand replacement=\&]
	{X''} \& {X'} \& X \\
	\&\& S
	\arrow["\mu", from=1-1, to=1-2]
	\arrow["{g''}"', from=1-1, to=2-3]
	\arrow["\nu", from=1-2, to=1-3]
	\arrow["{g'}"{description}, from=1-2, to=2-3]
	\arrow["g", from=1-3, to=2-3]
\end{tikzcd}
\end{center}
such that $\boldsymbol{\beta}_{X''} = \mu^*\boldsymbol{\beta}_{X'}$ is nef over $S$, and
\[
K_{X''} + B'' + \boldsymbol{\beta}_{X''}
= \psi^*\bigl(K_X + B + \boldsymbol{\beta}_X\bigr).
\]
We then set 
$$
E := \left\lfloor B'' + \{-\psi^*D\} \right\rfloor, \quad B^* := B'' + \{-\psi^*D\} - E = \{B'' + \{-\psi^*D\}\},
$$ 
where $\{\cdot\},\left\lfloor\cdot\right\rfloor,\left\lceil\cdot\right\rceil $ denote the fractional, round down, and round up part of a divisor, respectively. Hence, 
$$
\left\lceil\psi^*D\right\rceil - E - (K_{X''}+B^*) \equiv \boldsymbol{\beta}_{X''} + \psi^*\left(D - (K_X+B+\boldsymbol{\beta}_X)\right).
$$
By assumption, $D-(K_X +B + \boldsymbol{\beta}_X)$ is nef and big over $S$, hence $\left\lceil\psi^*D\right\rceil - E - (K_{X''}+B^*)$ is also nef and big over $S$. 

Since $g'':X'' \to S$ is projective, we may apply the standard version of the Kawamata--Viehweg vanishing theorem, obtaining
$$
R^p(g'')_* \mathcal{O}_{X''}(\left\lceil\psi^*D\right\rceil - E) = 0, \quad 
R^p \psi_* \mathcal{O}_{X''}(\left\lceil\psi^*D\right\rceil - E) = 0 \quad \text{for any } p > 0.
$$
By the same argument as in \cite[Lemma 5.1]{DH24}, we can prove that $$
\psi_* \mathcal{O}_{X''}(\left\lceil\psi^*D\right\rceil - E) = \mathcal{O}_X(D).
$$ Therefore, a standard Leray spectral sequence argument yields
$$
R^p g_* \mathcal{O}_X(D) 
= R^p g_*(\psi_* \mathcal{O}_{X''}(\left\lceil\psi^*D\right\rceil - E)) 
= R^p g''_* \mathcal{O}_{X''}(\left\lceil\psi^*D\right\rceil - E) 
= 0
$$
for all $p > 0$.
\end{proof}

\subsection{Uniruled varieties}We next introduce some basic results about uniruled varieties.
\begin{definition}[Uniruled varieties]\label{Def: Uniruled}
A complex analytic variety $X$ is called \emph{uniruled} if for every general point $x\in X$ there is a non-constant morphism $f:  \mathbb{P}^1\to X$ such that $f(0) = x$.
\end{definition}

\begin{remark}\label{charaterization-uniruled}
With a slight modification of the proof of \cite[Proposition II.5.4]{MP97}, if $X$ is a compact complex manifold, then the following are equivalent.\\
(1) $X$ is uniruled (in the sense of Definition \ref{Def: Uniruled});\\
(2) There exists a complex analytic variety $Y$ and a dominant meromorphic map $$\Phi: \mathbb{P}^1 \times Y  \dashrightarrow X,$$that is not constant on $\mathbb{P}^1$;\\
(3) There exists a free rational curve $f: \mathbb{P}^1 \to X$;\\
(4) For a general point $x\in X$, there exists a free rational curve $f: \mathbb{P}^1 \to X$ with $f(\infty) = x$. 

Obviously, uniruledness is a bimeromorphically invariant property. If, moreover, $X$ is K\"ahler, then uniruledness is also equivalent to $K_X$ being non–pseudo-effective by Theorem~\ref{Theorem Ou25}.
\end{remark}

Any uniruled variety with canonical singularities is covered by family of $K_X$-negative curves.
Recall that a normal complex variety $M$ is said to have
\emph{(only) canonical singularities} if the canonical divisor $K_M$
is a {$\mathbb{Q}$-Cartier divisor}, i.e., some integer multiple of
$K_M$ is a Cartier divisor, and if the discrepancy divisor
$K_{\tilde{M}}-\mu^*K_M$ is effective for a (or every) resolution of singularities
$\mu:\tilde{M}\rightarrow M$. 
\begin{lemma}\label{Lemma: NegCurve}
Let $X$ be a uniruled variety with canonical singularities. Then $X$ is covered by a family of rational curves $\{{C}_\lambda\}_{\lambda \in \Lambda}$ such that $K_X\cdot {C}_\lambda <0$. 
\end{lemma}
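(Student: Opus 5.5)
We pass to a resolution, use uniruledness there to get a covering family of free rational curves, and push them down to $X$; the only real issue is computing $K_X$ on the pushed-down curves. (For non-compact $X$ the same argument runs on a relatively compact open set.)

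Let $\mu:\tilde X\to X$ be a resolution of singularities. We may assume $\mu$ is an isomorphism over $X_{\rm reg}$ and that the exceptional locus $\mathrm{Exc}(\mu)$ is a proper analytic subset of $\tilde X$. Uniruledness is a bimeromorphic invariant by Remark~\ref{charaterization-uniruled}, so $\tilde X$ is uniruled. By item (4) of that remark, through a general point $\tilde x\in\tilde X$ there passes a free rational curve $f:\mathbb P^1\to\tilde X$.

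Freeness means $f^*T_{\tilde X}$ is globally generated. Hence $\deg f^*T_{\tilde X}\ge 2$, since the summand $T_{\mathbb P^1}=\mathcal O(2)$ injects into it, which gives
\[
K_{\tilde X}\cdot f_*[\mathbb P^1]\le -2<0 .
\]
Freeness also implies that deformations of $f$ sweep out a dense open subset of $\tilde X$. So we obtain a covering family $\{\tilde C_\lambda\}_{\lambda\in\Lambda}$, given by an irreducible component of the Douady space or space of morphisms, and the general member is not contained in $\mathrm{Exc}(\mu)$.

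Set $C_\lambda:=\mu(\tilde C_\lambda)$. Since the $\tilde C_\lambda$ cover $\tilde X$ and $\mu$ is surjective, the $C_\lambda$ cover $X$. For general $\lambda$ the curve $\tilde C_\lambda$ meets $\mathrm{Exc}(\mu)$ in finitely many points, so $\mu|_{\tilde C_\lambda}$ is birational onto its image and $C_\lambda$ is a rational curve. It remains to compute $K_X\cdot C_\lambda$.

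Because $X$ has canonical singularities, $K_X$ is $\mathbb Q$-Cartier and
\[
K_{\tilde X}=\mu^*K_X+E,\qquad E\ge 0 \text{ $\mu$-exceptional}.
\]
By the projection formula,
\[
K_X\cdot C_\lambda=\mu^*K_X\cdot\tilde C_\lambda=K_{\tilde X}\cdot\tilde C_\lambda-E\cdot\tilde C_\lambda .
\]
Since $\tilde C_\lambda\not\subset\operatorname{Supp}E$ and $E$ is effective, $E\cdot\tilde C_\lambda\ge 0$. Therefore $K_X\cdot C_\lambda\le K_{\tilde X}\cdot\tilde C_\lambda<0$.

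This step uses canonicity essentially. For worse singularities $E$ would have negative coefficients, and the inequality could fail.

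The main technical point is producing a genuine analytic family on a possibly non-projective $\tilde X$ whose general member avoids $\mathrm{Exc}(\mu)$. I would handle it by taking the irreducible component of $\mathrm{Hom}(\mathbb P^1,\tilde X)$ containing $[f]$. This space is a complex space locally of finite type, and its evaluation map is submersive at free morphisms. The evaluation map is therefore dominant, and the curves lying entirely in $\mathrm{Exc}(\mu)$ correspond to a proper closed subset of the component. Restricting to a relatively compact open piece of this component gives the family $\Lambda$. If desired, one can instead take its image in the Douady space of $X$, which makes $\{C_\lambda\}$ an analytic family of curves on $X$ itself.
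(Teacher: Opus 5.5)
Your proof is correct, and its skeleton is the paper's. Both use $K_{\tilde X}=\mu^*K_X+E$ with $E\ge 0$ to get $K_X\cdot C_\lambda\le K_{\tilde X}\cdot\tilde C_\lambda$ for members not contained in $\operatorname{Supp}E$. You push curves down from $\tilde X$, while the paper takes strict transforms of a covering family on $X$; this difference is immaterial.

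The genuine difference is in the smooth step.
\begin{itemize}
\item The paper takes any dominating family $\mathcal C\to T$ of rational curves and cuts down to $\dim\mathcal C=\dim X$, so that $g\colon\mathcal C\to X$ is generically finite. It then reads off $-2=\deg K_{\mathcal C}|_{C_\lambda}=K_X\cdot C_\lambda+R\cdot C_\lambda\ge K_X\cdot C_\lambda$ from the ramification formula $K_{\mathcal C}=g^*K_X+R$. No deformation theory of morphisms is needed.
\item You instead use item (4) of Remark~\ref{charaterization-uniruled} to get a free curve, and then the splitting type of $f^*T_{\tilde X}$. This is the standard Mori-theoretic route and yields the same bound. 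It also makes the construction of the covering family more explicit than the paper does: smoothness of $\mathrm{Hom}(\mathbb P^1,\tilde X)$ at $[f]$, submersive evaluation, and a general member avoiding $\operatorname{Exc}(\mu)$.
\item The cost is that your route relies on the existence of free curves on a possibly non-algebraic manifold. The paper only asserts this, and its proof rests on the same generic submersivity of the evaluation map that underlies the ramification argument.
\end{itemize}

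A minor wording point: $T_{\mathbb P^1}$ is a subsheaf of $f^*T_{\tilde X}$, not a summand. The conclusion survives, because an injection $\mathcal O(2)\hookrightarrow\bigoplus_i\mathcal O(a_i)$ with all $a_i\ge 0$ forces some $a_i\ge 2$.
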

\begin{proof}
Since $X$ is uniruled, by Definition \ref{Def: Uniruled}, $X$ is covered by rational curves $\{{C}_\lambda\}_{\lambda \in \Lambda}$. Since $X$ admits canonical singularities, let $f:X'\to X$ be a resolution then $$ K_{X'}=f^*K_X+E,$$for some effective, $f$-exceptional divisor $E\ge 0$. Let ${C}_\lambda' = f^{-1}_* ({C}_\lambda)$ be the strict transform of a general member in the family of curves $\{{C}_\lambda\}_{\lambda \in \Lambda}$, then $$K_X\cdot {C}_\lambda=(K_{X'}-E)\cdot {C}'_\lambda\leq K_{X'}\cdot {C}'_\lambda$$ and so it suffices to show that $K_{X'}\cdot {C}'_\lambda<0$; in particular, we may assume that $X$ is smooth.

Then the rest of the proof follows from \cite{Kollar96}. Since $X$ is a smooth uniruled variety, there is a family of rational curves $\mathcal C\to T$ and a dominant holomorphic map $g:\mathcal C \to X$. Replacing $T$ by an open subset, we may assume that $T$ is isomorphic to a disc. Cutting by hyperplanes, we may assume that $\dim \mathcal C=\dim X$ and hence $g$ is a generically finite holomorphic map of complex manifolds. We may then write $K_{\mathcal C}=g^*K_X+R$ where $R$ is an effective divisor. We then have \[-2={\rm deg}(K_{\mathbb P ^1})={\rm deg}(K_{\mathcal C}|_{ C _\lambda})=K_X\cdot  C _\lambda+R\cdot  C _\lambda\ge K_X\cdot C _\lambda\] and hence $K_X\cdot  C _\lambda<0$.
\end{proof}



We can easily generalize the remarkable Theorem \ref{Theorem Ou25} of Ou to the singular setting. 
\begin{lemma}\label{BDPPCanonical}
For a normal compact $\mathbb{Q}$-Gorenstein K\"ahler variety $X$, we have:\\
\emph{(1)} If $X$ is not uniruled, then $K_X$ is pseudo-effective;\\
\emph{(2)} If $X$ admits canonical singularities and $K_X$ is pseudo-effective, then $X$ is not uniruled. 
\end{lemma}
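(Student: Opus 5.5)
For both parts we pass to a resolution $f\colon X'\to X$ and apply Theorem~\ref{Theorem Ou25} on the compact K\"ahler manifold $X'$. We may take $X'$ K\"ahler by resolving via a sequence of blow-ups, i.e.\ a projective bimeromorphic morphism onto the K\"ahler space $X$. The comparison between $K_{X'}$ and $K_X$ then goes through the pushforward and pullback statements of Proposition~\ref{Prop: PullPSEf}, and uniruledness is a bimeromorphic invariant by Remark~\ref{charaterization-uniruled}.

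\textbf{Part (1).} Suppose $X$ is not uniruled. Then $X'$ is not uniruled either, so Theorem~\ref{Theorem Ou25} shows that $K_{X'}$ is pseudo-effective. Since $X$ is normal and $f$ is an isomorphism in codimension one over $X$, we have $f_*K_{X'}=K_X$ as Weil divisor classes. Because $K_X$ is $\mathbb{Q}$-Cartier, $f_*[K_{X'}]$ is a genuine class in $H^{1,1}_{\rm BC}(X,\R)$, namely $[K_X]$. We would check this by pushing forward a positive current $T\in[K_{X'}]$: the current $f_*T$ is closed and positive, and it agrees with a representative of $K_X$ off the codimension $\ge 2$ set $f(\mathrm{Exc}(f))$. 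By normality together with the extension of local potentials across codimension $\ge2$ sets (psh extension on normal spaces), $f_*T$ has local potentials and represents $[K_X]$. Proposition~\ref{Prop: PullPSEf} then gives that $K_X$ is pseudo-effective. The point needing care is exactly this identification $f_*[K_{X'}]=[K_X]$ in Bott--Chern cohomology on the singular space. I would handle it by writing $K_{X'}=f^*K_X+E$ with $E$ an $f$-exceptional $\mathbb{Q}$-divisor (not necessarily effective), so that $f_*[K_{X'}]=[K_X]+f_*[E]=[K_X]$.

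\textbf{Part (2).} Assume $X$ has canonical singularities and $K_X$ is pseudo-effective. Then $K_{X'}=f^*K_X+E$ with $E\ge0$ exceptional. By Proposition~\ref{Prop: PullPSEf}, $f^*K_X$ is pseudo-effective, so $K_{X'}$ is a sum of a pseudo-effective class and an effective divisor, hence pseudo-effective. Theorem~\ref{Theorem Ou25} now says $X'$ is not uniruled, and by bimeromorphic invariance neither is $X$.

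An alternative route for (2) uses Lemma~\ref{Lemma: NegCurve}: if $X$ were uniruled, it would be covered by rational curves $C_\lambda$ with $K_X\cdot C_\lambda<0$. A pseudo-effective class has nonnegative degree on a covering family of curves, since a general member avoids the non-nef locus of a positive current, which is a countable union of proper analytic subsets. This gives a contradiction. I would present the resolution argument as the main proof, since it only uses results already stated, and mention this alternative in a remark.

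The only delicate point overall is the Bott--Chern bookkeeping of $K_X$ on the singular $X$ in part (1); everything else is formal.
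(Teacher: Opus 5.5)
Your proposal is correct and follows the paper's proof essentially verbatim. In (1) you pass to a K\"ahler resolution, apply Ou's theorem and push forward via Proposition~\ref{Prop: PullPSEf}; in (2) you use $K_{X'}=f^*K_X+E$ with $E\ge 0$ together with bimeromorphic invariance of uniruledness. Your extra justification that $f_*[K_{X'}]=[K_X]$ in $H^{1,1}_{\rm BC}(X,\R)$ (via $f_*[E]=0$ and psh extension across codimension $\ge 2$ on the normal space $X$) makes explicit a point the paper leaves implicit.
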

\begin{proof}
(1) Let $\pi: Y \to X$ be a resolution. If $X$ is not uniruled, then neither is $Y$. Since $Y$ is smooth and K\"ahler, Theorem \ref{Theorem Ou25} implies that $K_Y$ is pseudo-effective. By Proposition \ref{Prop: PullPSEf}, $K_X$ is pseudo-effective. 

(2) If $X$ has canonical singularities and $\pi: Y \to X$ is a resolution, then $K_Y=\pi^*K_X  + E$ with the exceptional divisor $E \ge 0$. Thus, if $K_X$ is pseudo-effective, then $K_Y$ is also pseudo-effective. By \cite[Theorem 1.1]{Ou25}, $Y$ is not uniruled and thus, $X$ is not uniruled by bimeromorphic invariance of uniruledness, Remark \ref{charaterization-uniruled}.
\end{proof}

\begin{remark}
If $X$ is log terminal, the pseudo-effectivity of $K_X$ does not necessarily imply that $X$ is not uniruled as shown in the following two counterexamples. As the first one, in \cite[Section 5]{k-bmy}, Koll\'ar constructed a hypersurface $$H\left(a_1, \ldots, a_n\right):=\left(x_1^{a_1} x_2+x_2^{a_2} x_3+\cdots+x_{n-1}^{a_{n-1}} x_n+x_n^{a_n} x_1=0\right) \subset \mathbb{P}\left(w_1, \ldots, w_n\right)$$ with quotient singularities (hence log terminal), which is rational and $K_{H(a_1,\ldots ,a_n)}$ is ample for a certain choice of $(a_1, \ldots , a_n)$. The second one was given by Totaro in \cite[Example on p. 245]{Totaro10}, and is a rational Calabi--Yau quotient surface as $Y  = E \times E / (\mathbb{Z}/3)$ where $\mathbb{Z}/3$ acts by $(\zeta ,\zeta )$ on $E\times E $ with $\zeta$ a third root of the unity. Note that $Y$ is rational, since it can be obtained by blowing up 12 points on $\mathbb{P}^2$ in the Hesse configuration and then blowing down 9 lines.
\end{remark}

\subsection{Pseudo-effective threshold}
We end this section by introducing the pseudo-effective threshold and several basic facts about it, which will play an important role in the proof of our main Theorem \ref{Deformation invariance of pseudo-effectivity}.

\begin{definition}[Pseudo-effective threshold]
Let $X$ be a normal compact K\"ahler variety, and $\omega \in H^{1,1}_{\rm{BC}}(X, \mathbb{R})$ a K\"ahler class on $X$. We define the \emph{pseudo-effective threshold of} $\alpha \in H^{1,1}_{\rm{BC}}(X,\mathbb{R})$ to be $$\tau(X, \alpha; \omega):=\inf \left\{t \in \mathbb{R}_{\ge 0} \mid \alpha+t \omega \text { is pseudo-effective}\right\}.$$
\end{definition}
Similarly, we can define the relative pseudo-effective threshold. 
\begin{definition}[Relative pseudo-effective threshold]
Let $f:X\to S$ be a proper surjective morphism with connected fibers from a normal K\"ahler variety onto a relatively compact base $S$,  
and $\omega \in H^{1,1}_{\rm{BC}}(X, \mathbb{R})$ a K\"ahler class on $X$. 
We define the \emph{relative pseudo-effective threshold} of $\alpha\in H^{1,1}_{\rm{BC}}(X,\mathbb{R})$ over $S$ to be $$\tau(X/S,\alpha ;\omega) := \inf\{t\in \mathbb{R}_{\ge 0}\mid \alpha + t \omega \text{ is }f\text{-pseudo-effective over }S\}.$$
For a line bundle $L$ on $X$, set $\tau(X/S, L ; \omega) :=  \tau(X/S, c_1(L) ; \omega)$. 

Moreover, we define
\[
\tau'(X/S, \alpha; \omega) := \inf \{ t \in \mathbb{R}_{\ge 0} \mid \alpha + t \omega \text{ is } f\text{-big over } S \}.
\] 
\end{definition}

\begin{lemma}
Let $f:X\to S$ be a proper surjective morphism with connected fibers from a normal K\"ahler variety onto a relatively compact base $S$, and $\omega\in H^{1,1}_{\rm{BC}}(X, \R)$ a K\"ahler class on $X$. Then  $$\tau(X/S, \alpha; \omega)=\tau'(X/S, \alpha; \omega).$$
\end{lemma}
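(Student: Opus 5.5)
Write $\tau:=\tau(X/S,\alpha;\omega)$ and $\tau':=\tau'(X/S,\alpha;\omega)$. The plan is to prove the two inequalities $\tau\le\tau'$ and $\tau'\le\tau$ separately, reducing everything to fiberwise statements via Definition~\ref{pseffclass} and Proposition~\ref{genericpseff}.

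\textbf{The inequality $\tau\le\tau'$.} Suppose $\alpha+t\omega$ is $f$-big over $S$. Then there is a non-empty Zariski open $U\subset S$ such that $(\alpha+t\omega)|_{X_s}$ is big for every $s\in U$. A big class is in particular pseudo-effective. The complement of $U$ is a proper Zariski closed subset, hence a countable union of such. So $\alpha+t\omega$ is $f$-pseudo-effective over $S$. Taking the infimum over all such $t$ gives $\tau\le\tau'$.

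\textbf{The inequality $\tau'\le\tau$.} I will show that for every $t>\tau$ the class $\alpha+t\omega$ is $f$-big; then $\tau'\le t$ for all $t>\tau$, hence $\tau'\le\tau$.

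1. Fix $t>\tau$. I first choose $t_0$ with $\tau\le t_0<t$ such that $\alpha+t_0\omega$ is $f$-pseudo-effective. This uses monotonicity: if $\alpha+t_1\omega$ is $f$-pseudo-effective and $t_2\ge t_1$, then $\alpha+t_2\omega$ is too. Indeed, on each of the relevant fibers,
\[
(\alpha+t_2\omega)|_{X_s}=(\alpha+t_1\omega)|_{X_s}+(t_2-t_1)\,\omega|_{X_s},
\]
which is a pseudo-effective class plus a K\"ahler class, hence pseudo-effective. So any $t_0$ from the set defining $\tau$ lying in $[\tau,t)$ works.

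2. By Proposition~\ref{genericpseff}, there is a non-empty Zariski open $U\subset S$ such that $(\alpha+t_0\omega)|_{X_s}$ is pseudo-effective for every $s\in U$. If that proposition needs smoothness, I apply it after shrinking $S$ and passing to a resolution, then use Proposition~\ref{Prop: PullPSEf} to return to $X$.

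3. For $s\in U$, set $\varepsilon:=t-t_0>0$. Then
\[
(\alpha+t\omega)|_{X_s}=(\alpha+t_0\omega)|_{X_s}+\varepsilon\,\omega|_{X_s},
\]
and $\omega|_{X_s}$ is K\"ahler on $X_s$. By Proposition~\ref{bigandpseff}, a pseudo-effective class plus a positive multiple of a K\"ahler class is big. Concretely, if $T\ge 0$ represents the pseudo-effective part, then $T+\varepsilon\omega|_{X_s}\ge\varepsilon\omega|_{X_s}$ is a K\"ahler current. Hence $\alpha+t\omega$ is big on every fiber over $U$, i.e.\ $f$-big.

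\textbf{Main obstacle.} The only non-formal point is the mismatch between the two definitions. Relative pseudo-effectivity only asks for pseudo-effectivity on very general fibers, while relative bigness asks for a Zariski open set of fibers. Proposition~\ref{genericpseff} bridges this gap, so I invoke it at step~2 as indicated.

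A secondary subtlety is possible reducibility or singularity of special fibers, where "K\"ahler current on $X_s$" might need care. This is avoided because only general fibers in $U$ matter, and those can be taken irreducible and normal (smooth after the resolution).
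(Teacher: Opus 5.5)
Your proof is correct and follows essentially the same route as the paper: both upgrade very-general pseudo-effectivity to pseudo-effectivity over a Zariski open set via Proposition~\ref{genericpseff}, and then add a positive multiple of $\omega$ to obtain bigness on those fibers. The differences are cosmetic. You choose $t_0<t$ in the defining set rather than first showing, by a limiting argument, that $\alpha+\tau\omega$ is itself $f$-pseudo-effective. You exhibit a K\"ahler current directly instead of using the volume bound of Lemma~\ref{vollemma} together with Lemma~\ref{BigVol}. You also write out the easy inequality $\tau\le\tau'$, which the paper leaves unstated.
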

\begin{proof} Set $\tau=\tau(X/S, \alpha; \omega)$ and $\tau'= \tau'(X/S, \alpha; \omega)$. We only need to prove $\tau' \le \tau$. By definition of $\tau$, there exists a non increasing sequence $t_i$ with $\lim_{i \to \infty} t_i =  \tau$ in $\mathbb{R}_{\ge 0}$ and a countable union $Z_i$ of proper Zariski closed subsets of $S$ such that $\alpha_s + t_i \omega_s$ is pseudo-effective for every $s\in V_i : = S \setminus Z_i$. Therefore $\alpha_s + \tau \omega_s$ is pseudo-effective for every $s \in V_\infty = \bigcap_{i =1}^\infty V_i$ (note that $V_\infty$ is also a complement of countable union of proper Zariski closed subsets of $S$). By Proposition \ref{genericpseff}, there exists a dense Zariski open subset $V$ such that $\alpha_s + \tau \omega_s$ is pseudo-effective for $s\in V$. 

Passing to a resolution and further shrinking $V$ we may assume that $f$ is smooth over $V$. Thus, Lemma \ref{vollemma} implies that for any $s\in V$ and any $t> \tau$, $$\vol(\alpha_s + t\omega_s) \ge (t-\tau)^n v>0,$$ with $n = \dim_{\mathbb{C}} (X/S)$ and $v =\int _{X_s} \omega_s^n$. Therefore, by Proposition \ref{BigVol}, for any $t> \tau$, $\alpha + t\omega$ is $f$-big, and thus $\tau' \le  \tau$.
\end{proof}

\begin{lemma}\label{nonpseffcriterion}
Let $f:X\to S$ be a proper surjective morphism with connected fibers from a normal K\"ahler variety onto a relatively compact base $S$, and $\omega \in H^{1,1}_{\rm{BC}}(X,\mathbb{R})$ a K\"ahler class on $X$. Then $\alpha$ is not $f$-pseudo-effective over $S$ if and only if the pseudo-effective threshold $\tau(X/S,\alpha ; \omega)> 0$.
\end{lemma}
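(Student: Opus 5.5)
The plan is to prove both implications directly from the definition of $\tau := \tau(X/S,\alpha;\omega)$. The only substantive input is Proposition \ref{genericpseff}: the set of $t$ for which $\alpha + t\omega$ is $f$-pseudo-effective is closed under taking limits from above.

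\emph{($\Leftarrow$).} Suppose $\alpha$ is $f$-pseudo-effective over $S$. Then $t=0$ lies in the set $\{t\in\mathbb{R}_{\ge 0}\mid \alpha+t\omega \text{ is } f\text{-pseudo-effective}\}$, so the infimum of this set is $0$, i.e.\ $\tau=0$. Taking the contrapositive, $\tau>0$ forces $\alpha$ to be not $f$-pseudo-effective.

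\emph{($\Rightarrow$).} Suppose $\tau=0$; I will show $\alpha$ is $f$-pseudo-effective. I first note the set above is nonempty: since $\omega$ is K\"ahler on $X$, its restriction $\omega_s$ is K\"ahler on each fiber, and on a compact fiber $\alpha_s + t\omega_s$ is K\"ahler for $t\gg 0$. Since $\tau=0$, I choose $t_i\downarrow 0$ with each $\alpha+t_i\omega$ $f$-pseudo-effective. This gives countable unions $Z_i$ of proper Zariski closed subsets of $S$ such that $\alpha_s+t_i\omega_s$ is pseudo-effective for every $s\in S\setminus Z_i$. Setting $Z=\bigcup_i Z_i$, which is still a countable union of proper Zariski closed subsets, every $s\notin Z$ has $\alpha_s+t_i\omega_s$ pseudo-effective for all $i$. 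Because the pseudo-effective cone of the compact fiber is closed (weak compactness of positive currents with bounded mass, after passing to a resolution of $X_s$ if necessary), letting $i\to\infty$ shows $\alpha_s$ is pseudo-effective for $s\notin Z$. This is exactly the definition of $f$-pseudo-effectivity over $S$, with $V=S\setminus Z$. Alternatively, one can invoke Proposition \ref{genericpseff} to upgrade $V$ to a Zariski open set, exactly as in the proof of the preceding lemma.

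The only delicate point I expect is the closedness of the fiberwise pseudo-effective cone on a possibly singular fiber. I would handle it by pulling back to a resolution and using Proposition \ref{Prop: PullPSEf}, and, if needed, by shrinking to the smooth locus of $f$ via Proposition \ref{genericpseff}. This is the same step already used implicitly in the proof of $\tau=\tau'$.
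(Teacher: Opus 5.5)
Your proof is correct and essentially the paper's. The ($\Leftarrow$) direction is identical. For ($\Rightarrow$), the paper uses $\tau=\tau'=0$ to get that $\alpha+\varepsilon\omega$ is $f$-big for all $\varepsilon>0$, and then applies Proposition \ref{bigandpseff}; you skip $\tau'$ and use the countable intersection together with closedness of the fiberwise pseudo-effective cone, which is exactly the first step of the paper's proof that $\tau=\tau'$.

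One small correction: the substantive input in your argument is that closedness, not Proposition \ref{genericpseff}. That proposition only enters your optional upgrade to a Zariski open set.
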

\begin{proof}
Assume that $\alpha$ is $f$-pseudo-effective over $S$. Then we have $\tau(X/S, \alpha; \omega) = 0$ by definition. 
Conversely, if $\tau(X/S, \alpha; \omega) = 0$, then for any $\varepsilon > 0$, the class $\alpha + \varepsilon \omega$ is $f$-big. By Proposition \ref{bigandpseff}, $\alpha$ is $f$-pseudo-effective over $S$. 
\end{proof}

\begin{remark}\label{r-pseft} If $S$ is relatively compact, then the relative pseudo-effective threshold
$$\tau(X,\alpha,S;\omega) = \inf\{ t\in \mathbb{R}_{\ge 0}\mid  \alpha + t \omega \text{ is }f\text{-pseudo-effective over some neighborhood } U\subset S \}$$ 
is well defined, i.e.,\ $\tau (X, \alpha ,S ;\omega)<+\infty$. This follows since for $t\gg 0$, 
$\alpha+t\omega$ is K\"ahler (and hence pseudo-effective) over $S$. Thus, $\tau (X,\alpha ,S;\omega)<+\infty$.
\end{remark}

\begin{lemma}\label{Lemma: relativebig}
Let $f:X\to S$ be a proper surjective morphism from a normal K\"ahler variety onto a relatively compact base $S$. If $\alpha \in H^{1,1}_{\rm{BC}}(X, \R)$ is relatively big over $S$, then $-\alpha$ is not relatively big over $S$. 
\end{lemma}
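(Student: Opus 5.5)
The plan is to reduce to a single fiber and argue by contradiction there. By Definition \ref{pseffclass}, if both $\alpha$ and $-\alpha$ were relatively big over $S$, we would get non-empty Zariski open subsets $U_1, U_2 \subset S$ with $\alpha_s$ big for $s \in U_1$ and $-\alpha_s$ big for $s\in U_2$. Since $S$ is irreducible (a connected base), $U_1\cap U_2$ is a non-empty Zariski open set. After shrinking it further, we may assume in addition that $X_s$ is irreducible, so it is a compact variety in class $\mathcal{C}$. So it suffices to show that on a compact K\"ahler (or class $\mathcal{C}$) variety $Y=X_s$, a class $\beta$ with both $\beta$ and $-\beta$ big cannot exist.

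For the absolute statement, I would pass to a resolution $\pi\colon Y'\to Y$ with $Y'$ a compact K\"ahler manifold, which is possible since $X_s$ is bimeromorphic to a K\"ahler manifold. By Proposition \ref{Prop: PullBig}, both $\pi^*\beta$ and $-\pi^*\beta$ are big. Pick K\"ahler currents $T_1\in \pi^*\beta$ and $T_2\in -\pi^*\beta$ with $T_i\ge \delta\omega$. Then $T_1+T_2$ is a closed positive current in the zero class with $T_1+T_2\ge 2\delta\omega$. Fix a K\"ahler form $\omega$ on $Y'$ and pair with $\omega^{n-1}$, where $n=\dim Y'$. Because the class is zero, the pairing gives $\int_{Y'}(T_1+T_2)\wedge\omega^{n-1}=0$, since the integral depends only on the cohomology class for closed $\omega^{n-1}$. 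On the other hand, the lower bound gives $\int_{Y'}(T_1+T_2)\wedge\omega^{n-1}\ge 2\delta\int_{Y'}\omega^n>0$. This is a contradiction.

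An alternative route uses volumes. Proposition \ref{r-logc} gives $0=\vol(0)^{1/n}\ge \vol(\beta)^{1/n}+\vol(-\beta)^{1/n}>0$, again a contradiction. However, this route requires accepting $\vol(0)=0$ and applying log-concavity with a non-big sum, so the pairing argument is cleaner.

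The main subtlety is the reduction step. The fibers must be compact, reduced and irreducible with a K\"ahler resolution so that the pairing is meaningful. This holds on a Zariski open set because the general fiber is irreducible by convention, and $X_s$ inherits a K\"ahler form from $X$. Also, the restriction of the Bott--Chern class to $X_s$ must be well defined, which is implicit in Definition \ref{pseffclass}. The remaining steps are routine.
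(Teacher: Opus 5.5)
Your argument is correct and is essentially the paper's: on a general fiber where both $\alpha$ and $-\alpha$ are big, the sum of the two K\"ahler currents is a closed positive current in the zero class that dominates a positive multiple of a Hermitian form, which is absurd. The only variation is in the last step. The paper concludes $T_+ + T_- = 0$ because a global psh potential on the compact fiber must be constant, while you pass to a resolution and pair with $\omega^{n-1}$; you also make explicit the intersection $U_1\cap U_2$, which the paper leaves implicit.
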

\begin{proof}
To see this, assume by contradiction that  $-\alpha$ is big over a general fiber $X_s$. 
Then there exist K\"ahler currents
\[
T_+ \ge \varepsilon_+ \omega',\quad T_- \ge \varepsilon_- \omega'
\]
for some K\"ahler metric $\omega'$ on $X_s$ and $\varepsilon_+, \varepsilon_->0$, such that $[T_+]=\alpha|_{X_s},\ [T_-]=-\alpha|_{X_s}$.

Since $[T_+ + T_-] = 0$ and $T_+  +T_- \ge 0$, we have $T_+ + T_-= 0$ (because a plurisubharmonic function on a compact complex variety is constant). This is impossible since then $$0 =T_+ + T_- \ge (\varepsilon_+ + \varepsilon_-) \omega'.$$
\end{proof}

\begin{proposition}\label{NonPSEFFMFS}
Let $f \colon X \to S$ be a proper surjective morphism from a generalized K\"ahler pair $(X, B+\boldsymbol{\beta})$ to a relatively compact base $S$.  
If $-(K_X+B+\boldsymbol{\beta}_X)$ is relatively K\"ahler over $S$, then the adjoint class
$K_X+B+\boldsymbol{\beta}_X$ is not relatively pseudo-effective over $S$.  
Moreover, if $S$ is compact, then $K_X+B+\boldsymbol{\beta}_X$ is not (absolutely) pseudo-effective on $X$.
\end{proposition}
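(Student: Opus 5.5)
The plan is to reduce the statement to Lemma~\ref{Lemma: relativebig}, using the relation between pseudo-effectivity and bigness in Proposition~\ref{bigandpseff} and in Definition~\ref{pseffclass}. Write $\alpha := K_X+B+\boldsymbol{\beta}_X$. Since $-\alpha$ is relatively K\"ahler over $S$, there is a K\"ahler form $\omega_S$ on $S$ with $-\alpha + f^*\omega_S$ a K\"ahler class on $X$. Restricting to a fiber kills $f^*\omega_S$, so $-\alpha|_{X_s}$ is a K\"ahler class on $X_s$ for every $s\in S$. In particular $-\alpha|_{X_s}$ is big for all $s$, so $-\alpha$ is relatively big over $S$.

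Suppose, for contradiction, that $\alpha$ is relatively pseudo-effective over $S$. By Proposition~\ref{genericpseff} there is a nonempty Zariski open $U\subset S$ such that $\alpha_s$ is pseudo-effective for all $s\in U$. Fix $s\in U$ with $X_s$ irreducible (the general fiber is irreducible by convention) and set $\omega_s := -\alpha_s$, which is K\"ahler.

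The key step is the identity
\[
\alpha_s + \tfrac12\omega_s = -\tfrac12\omega_s .
\]
The left-hand side is big, being a pseudo-effective class plus a K\"ahler class (Proposition~\ref{bigandpseff}). The right-hand side is the negative of a K\"ahler class, hence of a big class. Doing this for every $s\in U$ would make both $\alpha+\tfrac12(-\alpha)$ and its negative relatively big, contradicting Lemma~\ref{Lemma: relativebig}.

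The contradiction can also be read off a single fiber. A positive current $T\in\alpha_s$ paired with $\omega_s^{n-1}$ gives
\[
0\le \int_{X_s} T\wedge\omega_s^{n-1} = -\int_{X_s}\omega_s^n < 0 ,
\]
which is absurd. I will use this direct computation as a backup to the lemma. The main subtlety is that $X_s$ may be singular; this is handled either by pulling back to a resolution via Proposition~\ref{Prop: PullPSEf}, or by arguing purely through Lemma~\ref{Lemma: relativebig}. The pseudo-effective class is then $\alpha_s$ itself, and the contradiction uses bigness of $\alpha_s-\varepsilon\alpha_s=(1-\varepsilon)\alpha_s$ together with bigness of $-\alpha_s$, where $0<\varepsilon<1$.

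For the compact case, suppose $\alpha$ is absolutely pseudo-effective on $X$. Proposition~\ref{AbsRelativepseff}, applied with $W=S$, shows that $\alpha$ is relatively pseudo-effective over $S$, which contradicts the first part. I do not expect a real obstacle. The only care needed is the passage from relative to fiberwise statements, which Proposition~\ref{genericpseff} and the K\"ahler restriction handle.
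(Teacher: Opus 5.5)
Your proof is correct and essentially follows the paper's route. Both arguments obtain the contradiction from Lemma~\ref{Lemma: relativebig} on a general fiber and treat the compact case via Proposition~\ref{AbsRelativepseff}. The one difference is the perturbation: the paper adds a small multiple of an ambient K\"ahler class and then passes through the pseudo-effective threshold (Lemma~\ref{nonpseffcriterion}), whereas you add the fiberwise K\"ahler class $-\alpha_s$ itself, or use the direct pairing with $\omega_s^{n-1}$. Your version is slightly more direct, and it makes Proposition~\ref{genericpseff} unnecessary, since any fiber provided by Definition~\ref{pseffclass} already suffices.
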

\begin{proof}
Let $\omega$ be a K\"ahler class on $X$. Since $S$ is relatively compact, 
$-(K_X+B+{{\bbeta}_X}+\varepsilon\omega)$ is still relatively K\"ahler over $S$ for sufficiently small $0<\varepsilon\ll 1$. By Lemma \ref{Lemma: relativebig}, 
$K_X+B+{{\bbeta}_X}+\varepsilon\omega$ is not relatively big over $S$.

Therefore the pseudo-effective threshold is $\tau(X,B+ { \boldsymbol{\beta}_X},S;A)>0$. By Lemma \ref{nonpseffcriterion}, $K_X+B +  { {\bbeta}_X}$ is not relatively pseudo-effective over $S$. When $S$ is compact, Proposition \ref{AbsRelativepseff} implies that $K_X+B+\bbeta_X$ is not absolutely pseudo-effective on $X$ either. 
\end{proof} 
\begin{lemma}\label{Lemma: PSEFandMFS}
Let $X$ be a compact K\"ahler threefold with the K\"ahler class $\omega$, and $(X,B+\bbeta)$ a generalized klt pair. Assume that $K_X+B+\bbeta_X$ is not pseudo-effective and that the $(K_X+B+\bbeta_X)$-MMP with scaling of $\omega$, $$
(X,B+\bbeta_X) \dashrightarrow (X_1,B_1+\bbeta_{X_{1}}) \dashrightarrow  \cdots 
\dashrightarrow (X_N,B_N+\bbeta_{X_{N}}) = (X',B'+\bbeta_{X'})
$$ terminates with a Mori fibre space $f:X' \to Z'$.
Set 
$$
\tau := \inf \bigl\{ t \ge 0 \,\big|\, K_X + B + \bbeta_X + t\omega \text{ is pseudo-effective}\bigr\}(>0).
$$
Then the following hold:\\
(a) The Mori fibre space $X'\to Z'$ is $(K_{X'}+B'+ \bbeta_{X'} + \tau \omega')$-trivial;\\
(b) Conversely, if the Mori fibre space $X'\to Z'$ is $(K_{X'}+B'+ \bbeta_{X'} + \delta \omega')$-trivial for some $\delta>0$, then $\delta = \tau$ (i.e, $\delta$ is the pseudo-effective threshold $\tau$). 
\end{lemma}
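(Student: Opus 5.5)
The plan is to track the scaling parameters along the MMP. In the $(K_X+B+\bbeta_X)$-MMP with scaling of $\omega$, each step $i$ comes with a threshold
\[
\lambda_i:=\inf\{t\ge 0 \mid K_{X_i}+B_i+\bbeta_{X_i}+t\omega_i \text{ is nef}\},
\]
where $\omega_i$ is the pushforward of $\omega$. The thresholds are non-increasing, and $K_{X_i}+B_i+\bbeta_{X_i}+\lambda_i\omega_i$ is nef and trivial on the ray contracted at step $i$. Each step is then $(K_{X_i}+B_i+\bbeta_{X_i}+\lambda_i\omega_i)$-trivial. Since $\lambda_{i}\ge\lambda_{i+1}$, each map $X\dashrightarrow X_i$ is $(K_X+B+\bbeta_X+t\omega)$-non-positive for every $t\le \lambda_{i-1}$: the discrepancy contributions of $\omega$ have the same sign as those of the adjoint class on the contracted rays, because $K+t\omega$ is negative on them whenever $t<\lambda$.

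At the last step, the Mori fibre contraction $f\colon X'\to Z'$ is the contraction of a $(K_{X'}+B'+\bbeta_{X'})$-negative extremal ray $R$ with $(K_{X'}+B'+\bbeta_{X'}+\lambda_N\omega')\cdot R=0$. Hence $f$ is $(K_{X'}+B'+\bbeta_{X'}+\lambda_N\omega')$-trivial. So for (a) it suffices to show $\lambda_N=\tau$. I would prove two inequalities.

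For $\lambda_N\le\tau$: if $t<\lambda_N$, then $K_{X'}+B'+\bbeta_{X'}+t\omega'$ is negative on $R$, and hence anti-ample over $Z'$ (relatively Kähler, since $\rho(X'/Z')=1$). By Proposition~\ref{NonPSEFFMFS}, applied to the generalized pair $(X',B'+\bbeta'+t\omega')$ over $Z'$, it is not pseudo-effective over $Z'$, and hence not pseudo-effective on $X'$, by Proposition~\ref{AbsRelativepseff} in contrapositive form. Proposition~\ref{PushPSEF} gives that $K_X+B+\bbeta_X+t\omega$ is not pseudo-effective, using that the MMP is non-positive for this adjoint class because $t<\lambda_N\le\lambda_i$ (with $\omega$ absorbed into $\bbeta$ as a nef term). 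Hence $t\le\tau$.

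For $\lambda_N\ge\tau$: the class $K_{X'}+B'+\bbeta_{X'}+\lambda_N\omega'$ is nef, hence pseudo-effective. The MMP is non-positive for $K+\lambda_N\omega$, since all $\lambda_i\ge\lambda_N$. By Proposition~\ref{PushPSEF}, $K_X+B+\bbeta_X+\lambda_N\omega$ is pseudo-effective, so $\tau\le\lambda_N$.

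The subtle point is non-positivity when $t=\lambda_i$ exactly, where the step is only trivial. There the discrepancy is $0$, which is still effective, so this is fine.

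For (b), suppose $f$ is $(K_{X'}+B'+\bbeta_{X'}+\delta\omega')$-trivial. Then the ray $R$ satisfies $(K'+\delta\omega')\cdot R=0$, and by (a) also $(K'+\tau\omega')\cdot R=0$. Subtracting gives $(\delta-\tau)\,\omega'\cdot R=0$. Now $\omega'\cdot R>0$ because $K'\cdot R<0$, so $\delta=\tau$. Here $\omega'$ is the pushforward, and $\omega'\cdot C=-K'\cdot C/\lambda_N>0$ for a curve $C$ in $R$, which requires $\lambda_N=\tau>0$.

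The main obstacle is justifying that $\omega_i$ behaves like a nef b-class, so that it can be absorbed into $\bbeta$ in Propositions~\ref{PushPSEF} and~\ref{NonPSEFFMFS}. I would handle this by taking $\bbeta+t\omega$ as the generalized pair's nef part, which is legitimate since $\omega$ is Kähler on $X$ and hence descends to $X$.
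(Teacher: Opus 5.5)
Your proposal is correct and follows essentially the same route as the paper: you introduce the scaling thresholds $\lambda_i$ and prove $\lambda_N=\tau$ by two inequalities (nefness of $K_{X'}+B'+\bbeta_{X'}+\lambda_N\omega'$ for one direction, negativity on the contracted ray $R$ for the other), then get (b) from $(\delta-\tau)\,\omega'\cdot R=0$. Your version is slightly more careful than the paper's, since you justify pulling pseudo-effectivity back from $X'$ to $X$ via the $(K_X+B+\bbeta_X+\lambda_N\omega)$-non-positivity of the MMP and explain why $\omega'\cdot R>0$, both of which the paper leaves implicit.
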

\begin{proof}[Proof of (a)]
Running the $(K_X+B+\bbeta_X)$-MMP with scaling of $\omega$, we obtain a sequence
$$
(X,B+\bbeta_X) \dashrightarrow (X_1,B_1+\bbeta_{X_{1}}) \dashrightarrow  \cdots 
\dashrightarrow (X_N,B_N+\bbeta_{X_{N}}) = (X',B'+\bbeta_{X'}).
$$
Here we denote by $\omega_i$ the push forward of $\omega$ on $X_i$ and  
$$
\lambda_i := \inf \left\{ t \ge 0 \,\middle|\, K_{X_i}+B_i+\bbeta_{X_i}+t\omega_i \text{ is nef} \right\},
$$
where $\{\lambda_i\}$ is a non increasing sequence bounded below by $\tau$
$$
\lambda_1 \ge \lambda_2 \ge \cdots \ge \lambda_i \ge \cdots \ge \lambda_N \ge \tau>0.
$$

Next, we prove that $\lambda_N = \tau$. Since $K_{X_N}+B_N+\bbeta _{X_N}+\lambda _N\omega _N$ is nef, then $\lambda _N\geq \tau$, and it suffices to show that $\lambda_N \le \tau$.  
Since the MMP terminates with a $(K_{X'}+B'+\bbeta_{X'}+\lambda_N\omega')$-trivial Mori fibre space which is $(K_{X'}+B'+\bbeta_{X'})$-negative.  
If $\tau < \lambda_N$, then for the extremal ray $R$ contracted by the Mori fibre space
$X' \to Z'$, we have
$$
\bigl(K_{X'}+B'+\bbeta_{X'}+\tau\omega'\bigr)\cdot R < 0.
$$
This contradicts the pseudo-effectivity of
$K_{X'}+B'+\bbeta_{X'}+\tau\omega'$.  
Hence $\lambda_N = \tau$. 
\end{proof}

\begin{proof}[Proof of (b)]
Conversely, assume that the MMP ends with a $(K_{X'}+ B' + \bbeta_{X'}+ \delta \omega')$-trivial Mori fibre space.

We already showed in (a) that the Mori fibre space is $(K_{X'}+ B' + \bbeta_{X'} + \tau \omega')$-trivial and $(K_{X'}+ B' + \bbeta_{X'})$-negative. Thus
$$
\omega' \cdot R > 0, \quad (\delta - \tau)\,\omega' \cdot R = 0,
$$
and the only possible case is $\delta = \tau$.
\end{proof}
The following proposition was inspired by \cite[Lemma 2.35]{BAB}.

\begin{proposition}\label{pseffthreshold}
Let $X \to S$ be a family from a  K\"ahler variety $X$ onto a smooth, connected, and relatively compact curve $S$ such that $X_t$ admits canonical singularities for $t\in S$, and $\omega$ a K\"ahler class on $X$. 

If $K_X$ is not relatively pseudo-effective over an  open neighborhood $U \subset S$ of $0$, then $(K_X  + \delta \omega)$ is not relatively pseudo-effective over $U$ for some constant $\delta >0$ independent of the choice of $U$.
\end{proposition}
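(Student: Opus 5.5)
We reduce the statement to a uniform lower bound on intersection numbers of $K_X$ with rational curves covering general fibers, in the spirit of \cite[Lemma 2.35]{BAB}. If $K_X$ is not relatively pseudo-effective over $U$, then by Proposition \ref{genericpseff} there is a nonempty Zariski open subset $V\subset U$ such that $K_{X_t}$ is not pseudo-effective for $t\in V$. Since $X_t$ has canonical singularities, Lemma \ref{BDPPCanonical} shows that $X_t$ is uniruled for $t\in V$. Lemma \ref{Lemma: NegCurve} then gives a covering family of rational curves $\{C_\lambda\}$ on $X_t$ with $K_{X_t}\cdot C_\lambda<0$. As $K_{X_t}$ is $\mathbb{Q}$-Cartier with Cartier index $r$, we get $K_{X_t}\cdot C_\lambda\le -1/r$. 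We want the index $r$ to be uniform near $0$; after shrinking $S$ around the relatively compact region, the Cartier index of $K_{X_t}$ should be locally bounded, and we take $r$ to be that bound.

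The key step is to bound $\omega\cdot C_\lambda$ uniformly in $t$ from above. We would use bend-and-break. Choose the covering family with minimal $\omega$-degree among curves through a general point of $X_t$. We may pass to a resolution, which only increases $-K$ by the canonical discrepancy argument of Lemma \ref{Lemma: NegCurve}. Mori's bend-and-break (Kollár's version) then gives a rational curve through a general point with $-K_{X_t}\cdot C\le n+1$. To control the $\omega$-degree instead, we would use the relative Douady/Barlet space of $X/S$. Its components are proper over $S$ by Fujiki's theorem, since $X$ is Kähler and $S$ is relatively compact. Only finitely many components dominate $S$ near $\overline U$. The covering families of rational curves on very general fibers must come from finitely many such components, so $\omega\cdot C\le M$ for a constant $M$ depending only on $X\to S$ and not on $U$.

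Given these bounds, set $\delta:=1/(2rM)$. For $t$ general in $U$ we get $(K_{X_t}+\delta\omega_t)\cdot C_\lambda\le -1/r+1/(2r)<0$ on a covering family. A class that is negative on a covering (movable) family of curves cannot be pseudo-effective, because a positive current restricted to a general member of a covering family has nonnegative degree. Hence $K_X+\delta\omega$ is not $f$-pseudo-effective over $U$.

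The main obstacle is the uniform $\omega$-degree bound $M$. Bend-and-break naturally bounds the $-K$-degree, not the $\omega$-degree, on non-projective fibers. We would handle this with the properness and finiteness of components of the relative cycle space over relatively compact $S$ (Fujiki). Alternatively, we would invoke the cone theorem \cite{HP24,CH20} fiberwise to obtain curves with $-K\cdot C\le 2n$ spanning the relevant negative rays.
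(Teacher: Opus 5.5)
Your overall skeleton matches the paper's proof. The gap is at the step you yourself flag: the uniform bound $\omega\cdot C\le M$. Your justification is that only finitely many components of the relative Douady space dominate $S$ near $\overline U$, so the covering families on very general fibers come from finitely many of them. That claim is false. For the product family $X=\mathbb{P}^2\times S$, the components parametrizing plane curves of degree $d$ give infinitely many components, all dominating $S$, and each contains covering families of $K$-negative rational curves. The Bishop--Fujiki finiteness only holds for components of bounded $\omega$-degree, so invoking it presupposes the bound you are trying to prove. Bend-and-break and the cone theorem bound $-K_{X_t}\cdot C$, not $\omega\cdot C$, so neither alternative closes the gap.

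There is also a smaller misstep at the start. Negating Proposition~\ref{genericpseff} does not give a Zariski open $V$ on which every fiber is non-pseudo-effective. It only says that the set $B\subset U$ of such $t$ is not contained in a countable union of proper analytic subsets, i.e.\ $B$ is uncountable.

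The missing idea is a pigeonhole argument. This is how the paper proceeds, and it makes any a priori degree bound unnecessary.
\begin{enumerate}
\item Each $X_t$ with $t\in B$ carries a covering family of $K_{X_t}$-negative rational curves, by Lemmata~\ref{BDPPCanonical} and \ref{Lemma: NegCurve}.
\item The relative Douady space has only countably many components. So a single component $D$ contains these families for uncountably many $t\in B$.
\item Since $\omega$ is closed, $\omega\cdot C=K$ is constant on $D$.
\item Let $R$ be the Cartier index of $K_X$ on the total space, rather than a fiberwise index that you only assert is ``locally bounded''. Then $K_{X_t}\cdot C=K_X\cdot C\le -1/R$.
\item With $\delta=1/(RK+1)$, the class $K_{X_t}+\delta\omega_t$ is negative on a covering family for uncountably many $t$. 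Hence $K_X+\delta\omega$ is not relatively pseudo-effective over $U$.
\end{enumerate}

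Your appeal to Fujiki's properness is genuinely useful, but at a different point. If $D\to S$ is proper, its universal family dominates $X$, as in the proof of Proposition~\ref{pseflocalglobal}. Then the same $D$, and hence the same $\delta$, works over every smaller neighborhood of $0$. That is what ``independent of $U$'' actually requires, and the paper's proof does not spell it out.
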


\begin{proof}
Since $K_{X}$ is not relatively pseudo-effective over $U$, there is an uncountable set $B\subset U$, such that $K_{X_t}$ are not pseudo-effective for $t\in B \subset U$. By Lemmata \ref{Lemma: NegCurve} and \ref{BDPPCanonical}, the fibers $X_t$ over $t\in B$ are covered by $K_{X_t}$-negative curves $\{C_{t,s}\}_{s\in S_t}$, where $S_t$ denotes the covering family. 

Since the number of irreducible components of the relative Douady space is countable, there exists a component $D$ of the relative Douady space containing $\{C_{t,s}\}$ for $t\in B' \subset B$ where $B'$ is an uncountable subset of $S$. Therefore there is a covering family of curves $\{C'_{t,s}\}$ (parameterized by the component $D$) that covers the fiber for each $t\in B'$, such that for the K\"ahler class $\omega$ on $X$, $C'_{t,s}\cdot \omega =K>0$ is fixed. Since $X_t$ admits canonical singularities for $t\in S$, $K_X$ is $\Q$-Cartier. Let $R$ be the Cartier index of $K_X$, then $K_{X_t}\cdot C'_{t,s}\leq - 1/R$. So $$(K_{X_t}+\frac{1}{RK+1}\omega_t)\cdot C'_{t,s}<0$$ and thus $K_{X_t}+\frac{1}{RK+1}\omega_t$ is not pseudo-effective for fibers $X_t$ at $t\in B'\subset U$. In particular, $K_X+\frac{1}{RK+1} \omega $ is not relatively pseudo-effective over $U$. 
\end{proof}


\section{Singularities and positivities of line bundles in  families}\label{Section: SingandPositivty}
In this section, we study the deformation behavior of various kinds of singularities and positivities in K\"ahler families. 
\subsection{Deformation of singularities}
We first study the behavior of singularities under deformation, generalizing several results from \cite{FH11}.

\begin{proposition}[{\cite[Proposition 3.5]{FH11}}]\label{DefCanSing}
Let $f: X \to S$ be a flat proper morphism from a generalized K\"ahler pair $(X, B+\boldsymbol{\beta})$ onto a smooth, connected curve $S$. Fix a point $0 \in S$ and assume that the boundary divisor $B$ does not contain the fiber $X_0$, and that the pair $(X_0, B_0+\boldsymbol{\beta}_0)$ has canonical singularities with $\lfloor B_0 \rfloor = 0$, where $(K_X + B +X_0 + \boldsymbol{\beta}_X)|_{X_0} = K_{X_0} + B_0 + \boldsymbol{\beta}_0$. Then, after shrinking $S$, the pair $(X, B+\boldsymbol{\beta})$ has canonical singularities, and $(X_t, B_t+\boldsymbol{\beta}_t)$ has canonical singularities for any $t \in S$.
\end{proposition}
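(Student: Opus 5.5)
The plan is to follow the strategy of \cite[Proposition 3.5]{FH11}: use inversion of adjunction to pass from the central fiber to the total space, and then generic smoothness of a log resolution to pass back to the nearby fibers. The generalized pair structure enters only through a log resolution on which $\boldsymbol{\beta}$ descends.

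\textbf{Setup.} Fix a log resolution $\nu\colon X'\to X$ on which $\boldsymbol{\beta}$ descends, so that $\boldsymbol{\beta}_{X'}$ is nef over $S$. Choose it so that the strict transform $X_0'$ of $X_0$, the exceptional divisors and $\operatorname{Supp}\nu^{-1}_*B$ together form a simple normal crossing divisor. Write
\[
K_{X'}+B'+X_0'+\boldsymbol{\beta}_{X'}=\nu^*(K_X+B+X_0+\boldsymbol{\beta}_X)+\sum_E a_E E .
\]
Since $X_0$ is Cartier and $\nu^*X_0=X_0'+\sum m_E E$ with $m_E\ge 1$ whenever $\nu(E)\subset X_0$, the discrepancies of $(X,B+\boldsymbol{\beta})$ and of $(X,B+X_0+\boldsymbol{\beta})$ differ exactly by the $m_E$.

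\textbf{Step 1: plt near $X_0$.} I will show that $(X,B+X_0+\boldsymbol{\beta})$ is generalized plt near $X_0$. The central fiber $X_0$ is normal, since its generalized pair is canonical. Hence generalized inversion of adjunction applies. I would redo its proof via Kawamata--Viehweg vanishing (Proposition \ref{KVVanishing}) for the restriction sequence of $\mathcal O_{X'}(\lceil -A\rceil)$ to $X_0'$, where $A$ collects the discrepancies. Concretely: the pushforward of $\mathcal O_{X'}(\lceil -A\rceil)$ surjects onto $\nu_*\mathcal O_{X_0'}(\lceil -A|_{X_0'}\rceil)=\mathcal O_{X_0}$ by the connectedness lemma. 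Any divisor with discrepancy $\le -1$ whose center meets $X_0$ would then produce a divisor on $X_0'$ with discrepancy $\le -1$ for $(X_0,B_0+\boldsymbol{\beta}_0)$. This contradicts the klt part of the hypothesis.

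\textbf{Step 2: canonical on the total space.} Next I upgrade Step 1 to canonicity of $(X,B+\boldsymbol{\beta})$ near $X_0$. For an exceptional $E$ with center meeting $X_0$, there are two cases.
\begin{itemize}
\item If the center of $E$ lies inside $X_0$, then $a(E,X,B+\boldsymbol{\beta})=a(E,X,B+X_0+\boldsymbol{\beta})+m_E>-1+1=0$.
\item If the center of $E$ is not contained in $X_0$, then $E$ restricts to divisors over $X_0$, and the coefficients agree. Canonicity of $(X_0,B_0+\boldsymbol{\beta}_0)$ then gives $a_E\ge 0$. This is the standard argument of cutting by the Cartier divisor $X_0$ on a resolution.
\end{itemize}
The non-canonical locus is a closed analytic subset; a discrepancy computation on the fixed resolution shows this. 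That subset misses $X_0$, so shrinking $S$ (using properness of $f$) removes it.

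\textbf{Step 3: nearby fibers.} Shrinking $S$ further, generic smoothness makes $\nu$ restricted over $t\neq 0$ a log resolution of $(X_t,B_t+\boldsymbol{\beta}_t)$ on which $\boldsymbol{\beta}$ still descends, with $\boldsymbol{\beta}_{X'_t}$ nef. By adjunction for the Cartier fiber $X_t$, the discrepancies of the components of $E\cap X'_t$ equal $a_E$. These are $\ge 0$ for exceptional $E$, and $>-1$ in general; the bound $\lfloor B_t\rfloor=0$ comes from $\lfloor B_0\rfloor=0$ and $B\not\supset X_0$ after shrinking. This gives canonicity of every fiber, with the shrunken $S$ still a neighborhood of $0$.

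\textbf{Main obstacle.} I expect the main difficulty to be Step 1: generalized inversion of adjunction in the non-projective Kähler setting. The vanishing must be applied relatively over a small neighborhood of a point in $X$. I would localize, use Proposition \ref{KVVanishing} over the Stein base $X$ itself, where $\nu$ is projective locally after the modification of \cite[Lemma 2.18]{DH24}, and absorb $\boldsymbol{\beta}_{X'}$ as a relatively nef and big perturbation.
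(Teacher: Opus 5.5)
The gap is in the second case of your Step 2. Your Step 1 agrees with the paper, which simply cites \cite[Theorem 0.1]{HP24} to get that $(X,B+X_0+\boldsymbol{\beta})$ is generalized plt near $X_0$, so you need not reprove it. Your first case of Step 2 (centers inside $X_0$, using $m_E\ge 1$) is also correct.

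On a log resolution $\nu\colon X'\to X$ one has $\nu^{-1}(X_0)=X_0'\cup\bigcup_{\nu(F)\subset X_0}F$. An exceptional divisor $E$ whose center $C$ meets $X_0$, without being contained in it, need not meet the strict transform $X_0'$ at all. The set $E\cap\nu^{-1}(X_0)$ can lie entirely inside the divisors $F$ lying over $X_0$. For example, take $X=\mathbb{C}^3$, $X_0=\{t=0\}$, and $C$ the $t$-axis; blow up the origin and then the strict transform of $C$. The divisor over $C$ is disjoint from the strict transform of $X_0$.

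When this happens there is no divisor over $X_0$ to restrict to, adjunction gives nothing, and canonicity of $(X_0,B_0+\boldsymbol{\beta}_0)$ says nothing about $a_E$. ``Cutting by a Cartier divisor on a resolution'' works for a \emph{general} member of a free linear system, whose pullback equals its strict transform. $X_0$ is a special fiber, and this is exactly the non-formal step in passing from plt to canonical.

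The missing idea is the extraction the paper uses, following \cite[Proposition 3.5]{FH11}, via \cite[Corollary 1.4.3]{BirkarCHM}.
\begin{enumerate}
\item Since $(X,B+\boldsymbol{\beta})$ is generalized klt near $X_0$, work locally over $X$: replace $\boldsymbol{\beta}$ by a numerically equivalent $\mathbb{R}$-divisor as in the proof of Proposition \ref{ExtendMMP} and use \cite{FujinoBCHM}. This gives a projective bimeromorphic $g\colon Y\to X$, with $Y$ $\mathbb{Q}$-factorial, whose exceptional divisors are exactly the $E$ with $a(E,X,B+\boldsymbol{\beta})<0$.
\item By your first case, none of these divisors is centered in $X_0$. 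Hence $g^*X_0=Y_0$ is the strict transform and $g^{-1}(X_0)=Y_0$ as sets. So every such $E$ whose center meets $X_0$ does meet $Y_0$.
\item The pair $(Y,Y_0+B_Y+\boldsymbol{\beta})$ is crepant to a plt pair and has $B_Y\ge 0$, so $Y_0$ is normal. Adjunction gives $K_{Y_0}+B_{Y_0}+\boldsymbol{\beta}_{Y_0}=g_0^*(K_{X_0}+B_0+\boldsymbol{\beta}_0)$.
\item The components of $E|_{Y_0}$ are $g_0$-exceptional, because $\dim(C\cap X_0)\le\dim X_0-2$. They appear in $B_{Y_0}$ with positive coefficient, which contradicts canonicity of $(X_0,B_0+\boldsymbol{\beta}_0)$.
\end{enumerate}

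There is also a smaller omission in Step 3. Canonicity of $(X_t,B_t+\boldsymbol{\beta}_t)$ requires controlling divisors exceptional over $X_t'$ as well, i.e.\ $b_i+b_j\le 1$ for meeting components of $B_t'$. This follows once $(X,B+\boldsymbol{\beta})$ is canonical over the shrunken base. Indeed, blowing up a codimension-two stratum of the snc divisor on $X'$ produces a divisor exceptional over $X$, with discrepancy $1-b_i-b_j$.
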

\begin{proof}
By assumption, the pair $(X_0, B_0+  { \boldsymbol{\beta}}_0)$ is canonical and in particular, $\lfloor B_0\rfloor = 0$. By \cite[Theorem 0.1]{HP24}, the pair $(X,B+X_0+  { \boldsymbol{\beta}})$ is generalized plt near $X_0$.  
By the arguments of \cite[Corollary 1.4.3]{BirkarCHM}, it follows that $(X, B+ { \boldsymbol{\beta}})$ has canonical singularities, and $\left(X_t, B_t+ { \boldsymbol{\beta}}_t\right)$ has canonical singularities for any $t \in S$. 
\end{proof}

\begin{proposition}\label{DefKLT}
Let $f: X \rightarrow S$ be a flat proper morphism from 
generalized K\"ahler pair $(X, B+ { \boldsymbol{\beta}})$ onto a smooth, connected curve $S$. Fix a point $0\in S$ and assume that the boundary divisor $B$ does not contain the fiber $X_0$, and that the pair $(X_0, B_0+ { \boldsymbol{\beta}}_0)$ is generalized klt. Then, after shrinking $S$, the pair $(X, B+ { \boldsymbol{\beta}})$ is generalized klt and  $(X_t, B_t+\left. { \boldsymbol{\beta}}_t\right)$ has generalized klt singularities for any $t\in S$.
\end{proposition}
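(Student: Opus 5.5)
We follow the pattern of the proof of Proposition \ref{DefCanSing}, with inversion of adjunction for generalized pairs replacing the canonical-specific step. Since $B$ does not contain $X_0$, and $X_0$ is a Cartier divisor (the pullback of the point $0\in S$, with $S$ a smooth curve), adjunction gives
\[
(K_X+B+X_0+\bbeta_X)|_{X_0}=K_{X_0}+B_0+\bbeta_0 .
\]
The hypothesis that $(X_0,B_0+\bbeta_0)$ is generalized klt therefore lets us apply the generalized inversion of adjunction \cite[Theorem 0.1]{HP24}. The conclusion is that $(X,B+X_0+\bbeta)$ is generalized plt in a neighbourhood of $X_0$. Because the non-gklt locus of $(X,B+\bbeta)$ is a closed analytic subset and $f$ is proper, shrinking $S$ around $0$ then makes $(X,B+X_0+\bbeta)$ gplt over all of $S$. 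In particular $K_X+B+\bbeta_X$ is $\mathbb{R}$-Cartier there, which follows from condition (3) of a generalized pair.

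Next we drop $X_0$. Since $X_0\ge 0$, generalized discrepancies of $(X,B+\bbeta)$ are at least those of $(X,B+X_0+\bbeta)$. The only divisor over $X$ on which $(X,B+X_0+\bbeta)$ has discrepancy $-1$ is $X_0$ itself, and $X_0$ has coefficient $0$ in $B$ by assumption. Hence $(X,B+\bbeta)$ is gklt near $X_0$, and after shrinking, over all of $S$.

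It remains to treat the fibers $X_t$ for $t\neq 0$. Fix a log resolution $\nu\colon X'\to X$ on which $\bbeta$ descends, with
\[
K_{X'}+B'+\bbeta_{X'}=\nu^*(K_X+B+\bbeta_X),
\]
where $B'$ has snc support and coefficients $<1$. By generic smoothness, after shrinking $S$ to $S\setminus\{0\}$ near $0$ (i.e. a punctured disc together with $0$), the morphism $X'\to S$ is smooth and $(X',\operatorname{Supp}B'+\operatorname{Exc}(\nu))$ is log smooth over $S\setminus\{0\}$, except possibly over a discrete set of points, which we exclude. For $t\neq0$ we have $X'_t\to X_t$ a log resolution. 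Restricting the formula above gives
\[
K_{X'_t}+B'_t+\bbeta_{X'_t}=\nu_t^*(K_{X_t}+B_t+\bbeta_{X_t}),
\]
since $X'_t$ and $X_t$ are Cartier fibers and hence have trivial normal bundles. The coefficients of $B'_t$ are those of the horizontal components of $B'$, so they are $<1$, and $\bbeta_{X'_t}$ is nef. Thus $(X_t,B_t+\bbeta_t)$ is gklt. For $t=0$ it is the hypothesis.

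The main obstacle is the first step: the generalized inversion of adjunction in the analytic (K\"ahler) setting. We must check that the statement in \cite{HP24} covers the generalized plt/klt situation with the b-current $\bbeta$, rather than only the gdlt or lc case. We would also verify that normality of $X_0$, which is needed for adjunction, follows from it being gklt, as implicitly assumed. If only an lc version is available, we would first obtain that $(X,B+X_0+\bbeta)$ is glc near $X_0$. We would then argue that any glc center of that pair other than $X_0$ would, after adjunction, give a non-klt center of $(X_0,B_0+\bbeta_0)$, which is impossible.
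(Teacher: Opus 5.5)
Your proposal is correct and follows the same route as the paper. Generalized inversion of adjunction \cite[Theorem 0.1]{HP24} shows that $(X,B+X_0+\bbeta)$ is gplt near $X_0$, so $(X,B+\bbeta)$ is gklt after shrinking $S$, and adjunction along the Cartier fibers $X_t$ then gives gklt fibers. The only difference is that you write out the nearby-fiber step explicitly, using a log resolution on which $\bbeta$ descends and generic smoothness away from a discrete set of critical values, whereas the paper compresses it into ``arguing as above'' with reference to \cite[Corollary 1.4.3]{BirkarCHM}.
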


\begin{proof}
The proof is similar to the canonical singularities case. Since $(X_0,B_0 +  { \boldsymbol{\beta}}_0)$ is generalized klt, and $(X,B+X_0+ { \boldsymbol{\beta}})$ is a generalized pair. By inversion of adjunction \cite[Theorem 0.1]{HP24}, we know that $(X,B+X_0+  { \boldsymbol{\beta}})$ is generalized plt. Arguing as above, $(X,B+X_t+  { \boldsymbol{\beta}})$ is glc and since $X_t$ is Cartier, the pair $(X_t, B_t +  { \boldsymbol{\beta}}_t)$ is generalized klt for $t\in S$.
\end{proof}

\begin{remark}
By \cite{Kawamata99} it can happen that $(X_0, S_0)$ admits klt singularities but $(X,S)$ does not have klt singularities as $K_X+S$ is not $\mathbb R$-Cartier.  In other words, the $\mathbb R$-Cartier property is not well behaved for families of klt pairs. 
\end{remark}

\subsection{Relative ampleness between families}
Given a proper family $f : X \to S$ and a line bundle $ L \in \mathrm{Pic}(X) $, 
if $ L_0 = L|_{X_0} $ is ample on $ X_0 $, then there exists a Zariski open subset of $ S $ 
such that $ L_t = L|_{X_t} $ is ample on $ X_t $ for any $t$ in this subset (cf. \cite[Theorem 1.2.17]{Laza04}). 
This result easily generalizes to the relative setting (Proposition \ref{DefAmple}).
 
\begin{lemma}\label{ResEmbed}
Let $p:X \to S$ and $g:Z \to  Y$ be proper morphisms of complex analytic varieties where $p$ is flat, and let $\Phi: X\to Z$ be a morphism such that the diagram commutes 
\begin{center}
\begin{tikzcd}[ampersand replacement=\&]
	X \&\& Z \\
	\& Y \\
	\& S.
	\arrow["\Phi", from=1-1, to=1-3]
	\arrow["f", from=1-1, to=2-2]
	\arrow["p"', from=1-1, to=3-2]
	\arrow["g"', from=1-3, to=2-2]
	\arrow["q", from=1-3, to=3-2]
	\arrow["r"{description}, from=2-2, to=3-2]
\end{tikzcd}
\end{center}
Fix a point $0\in S$ and assume that the restriction of the diagram $\Phi_0: X_0 \rightarrow Z_0/Y_0$ over $0\in S$ is a closed immersion (resp. an isomorphism). Then $\Phi:X\to Z /Y$ is a closed immersion (resp. an isomorphism) over some open neighborhood $U\subset S$ of $0$. 
\end{lemma}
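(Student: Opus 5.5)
Since $p$, $q$, $g$ are proper, the question is local on $Z$ near $Z_0$. The plan is to prove the closed-immersion statement first and then deduce the isomorphism statement from it.

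\textbf{Closed immersion.} We show that $\Phi$ is a closed immersion over a neighborhood of $0$.
\begin{enumerate}
\item $\Phi$ is proper, because $p = q\circ\Phi$ is proper and $q$ is separated.
\item The set of points of $Z$ where the fiber of $\Phi$ has positive dimension, or has length at least $2$, is closed in $Z$ (upper semicontinuity of fiber dimension for proper maps).
\item This set does not meet $Z_0$, since $\Phi_0$ is injective with finite fibers. As $q$ is proper, after shrinking $S$ to a neighborhood $U$ of $0$ we may assume $\Phi$ is finite.
\item Then $\Phi_*\mathcal{O}_X$ is a coherent $\mathcal{O}_Z$-module. Let $\mathcal{C}$ be the cokernel of $\mathcal{O}_Z\to\Phi_*\mathcal{O}_X$.
\item Since $\Phi$ is finite, base change gives $(\Phi_*\mathcal{O}_X)\otimes\mathcal{O}_{Z_0}=\Phi_{0*}\mathcal{O}_{X_0}$. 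Hence $\mathcal{C}\otimes \mathcal{O}_{Z_0}$ is the cokernel of $\mathcal{O}_{Z_0}\to\Phi_{0*}\mathcal{O}_{X_0}$, which vanishes because $\Phi_0$ is a closed immersion.
\item By Nakayama's lemma, $\mathcal{C}=0$ near $Z_0$. The support of $\mathcal{C}$ is closed, so by properness of $q$ we may shrink $U$ so that $\mathcal{C}=0$ over $U$.
\end{enumerate}
Thus $\mathcal{O}_Z\to\Phi_*\mathcal{O}_X$ is surjective over $U$, i.e.\ $\Phi$ is a closed immersion over $U$. Flatness of $p$ is not needed in this step.

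\textbf{Isomorphism.} Assume now that $\Phi_0$ is an isomorphism. By the first part, $\Phi$ is a closed immersion near $0$, so $X\cong V(\mathcal{I})\subset Z$ for a coherent ideal sheaf $\mathcal{I}$. We must show $\mathcal{I}=0$ near $Z_0$.
\begin{enumerate}
\item Since $\Phi_0$ is an isomorphism, $Z_0 = X_0$ as analytic spaces. Let $t$ be a local coordinate on $S$ at $0$.
\item Consider the exact sequence $0\to\mathcal{I}\to\mathcal{O}_Z\to\mathcal{O}_X\to 0$ and tensor it with $\mathcal{O}_S/(t)$.
\item Because $p$ is flat, $\mathrm{Tor}_1(\mathcal{O}_X,\mathcal{O}_S/t)=0$. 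Hence $0\to \mathcal{I}/t\mathcal{I}\to\mathcal{O}_{Z_0}\to\mathcal{O}_{X_0}\to0$ is exact.
\item The last map is an isomorphism, so $\mathcal{I}/t\mathcal{I}=0$.
\item By Nakayama's lemma, $\mathcal{I}=0$ near $Z_0$. Shrinking $U$ once more by properness of $q$, $\Phi$ is an isomorphism over $U$.
\end{enumerate}

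\textbf{Main obstacle.} The delicate point is in the closed-immersion step: shrinking $Z$ so that $\Phi$ becomes finite, and then passing from the fiberwise statement to the family. This requires that base change for the finite morphism $\Phi$ commutes with restriction to $Z_0$, and that closed subsets of $Z$ disjoint from $Z_0$ can be avoided by shrinking $S$. The latter uses properness of $q$ essentially.

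The only essential use of flatness is the Tor vanishing in the isomorphism case. Without it the statement fails; for example, $X$ could be the reduced point over $0$ inside $Z = S$.
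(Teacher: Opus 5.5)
The paper states Lemma~\ref{ResEmbed} without proof; it is invoked directly in the proof of Proposition~\ref{DefAmple}. So there is no argument of the authors to compare against. Your proof is the standard one and is correct in substance:
\begin{itemize}
\item properness, together with the fact that the fibers over $Z_0$ are single reduced points, gives finiteness of $\Phi$ near $Z_0$;
\item base change for the finite map $\Phi$ and Nakayama kill $\operatorname{coker}(\mathcal{O}_Z\to\Phi_*\mathcal{O}_X)$, without using flatness;
\item in the isomorphism case, $t$-torsion-freeness of $\mathcal{O}_X$ (flatness over the curve) gives $\mathcal{I}/t\mathcal{I}=0$, hence $\mathcal{I}=0$ near $Z_0$.
\end{itemize}
Your example showing that flatness cannot be dropped is also correct.

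One point needs repair. Properness of $q=r\circ g$ is not a hypothesis: only $p$ and $g$ are assumed proper, not $r$. Yet every shrinking step in your argument invokes it.

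In the closed-immersion part this is easily avoided. The positive-dimensional-fiber locus and $\operatorname{Supp}\mathcal{C}$ are closed subsets of $\Phi(X)$. So their $q$-images are $p$-images of closed subsets of $X$, which are closed in $S$ because $p$ is proper, and they do not contain $0$.

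In the isomorphism part this trick fails, since $\operatorname{Supp}\mathcal{I}\supseteq Z\setminus\Phi(X)$. Instead, use that $Z$ is a variety (irreducible and reduced, by the paper's convention):
\begin{itemize}
\item $\Phi(X)$ is a closed analytic subset by Remmert's theorem;
\item it contains the nonempty open set where $\mathcal{I}=0$, so $\Phi(X)=Z$;
\item hence $\Phi(p^{-1}(U))=q^{-1}(U)$, and a surjective closed immersion into a reduced space is an isomorphism.
\end{itemize}
Alternatively, in the application in Proposition~\ref{DefAmple} the map $f$ is surjective, so properness of $p$ already forces $r$, and hence $q$, to be proper.

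Finally, the ``length at least~$2$'' clause in your step~2 is not justified by semicontinuity of fiber dimension. It is, however, superfluous: finiteness together with your steps 4--6 already yields the closed immersion.
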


\begin{proposition}\label{DefAmple}
Let $f: X \to Y/S$ be a proper surjective contraction morphism of complex analytic varieties over a smooth connected curve $S$. For each $t \in S$, denote by
\[
f_t : X_t \to Y_t
\]
the induced proper contraction on the fiber over $t$. 

(1) If $L$ is a $\mathbb{Q}$-line bundle on $X$ such that $L|_{X_0}$ is $f_0$-ample, then there exists a Zariski open subset $U \subset S$ around $0$ such that $L|_{X_t}$ is $f_t$-ample for any $t \in U$.

(2) If $\alpha\in H^{1,1}_{\rm{BC}}(X, \R)$ is a class on $X$ such that $\alpha|_{X_0}$ is $f_0$-K\"ahler, then there exists a (Euclidean) open subset $U\subset S$ around $0$ such that $\alpha|_{X_t}$ is $f_t$-K\"ahler for any $t\in U$. 
\end{proposition}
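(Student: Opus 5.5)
For part (1) I would reduce to the classical embedding argument combined with Lemma~\ref{ResEmbed}. Replacing $L$ by a multiple, assume $L$ is a line bundle with $L_0$ $f_0$-ample. The first step is to shrink $Y$ around the compact set $Y_0$ (keeping $S$ a neighbourhood of $0$) and choose $m\gg 0$ so that on $X_0$ the natural map $f_0^*f_{0*}L_0^{m}\to L_0^m$ is surjective, and the induced morphism $X_0\to \mathbb{P}_{Y_0}(f_{0*}L_0^m)$ is a closed immersion. I would also want $R^1f_{0*}(L_0^m)=0$; this is available by Serre vanishing for $f_0$-ample bundles, and after shrinking $Y$ we may take $Y$ Stein over a neighbourhood of $Y_0$.

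Next, I would use flatness of $X\to S$ (assumed here, as in the setting of families) together with the base change theorem for the proper morphism $f$ and the vanishing of $R^1$ on $X_0$. These give, over a neighbourhood of $Y_0$, that $f_*L^m$ is compatible with restriction to $Y_0$, i.e.\ $f_*L^m\otimes\mathcal{O}_{Y_0}\to f_{0*}L_0^m$ is surjective. In particular the relative evaluation map $f^*f_*L^m\to L^m$ is surjective along $X_0$, and hence on a neighbourhood of $X_0$. Since $f$ is proper, that neighbourhood contains $f^{-1}(V)$ for some neighbourhood $V$ of $Y_0$, which in turn contains $X_U$ for a small neighbourhood $U$ of $0$.

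On $X_U$ this yields a morphism $\Phi: X_U\to Z:=\mathbb{P}_{Y_U}(f_*L^m)$ over $Y_U$ whose restriction to the fibre over $0$ is the closed immersion from the first step (after identifying $Z_0$ with a projective bundle containing $\mathbb{P}_{Y_0}(f_{0*}L_0^m)$). Lemma~\ref{ResEmbed} then gives that $\Phi$ is a closed immersion over a neighbourhood $U$ of $0$. Since $L^m=\Phi^*\mathcal{O}(1)$, each $L_t$ is $f_t$-ample. Because ampleness is an open condition in $t$ and the failure locus is analytic (constructible), the Euclidean neighbourhood can be upgraded to a Zariski open set by the usual argument. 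I expect this base change/surjectivity step to be the main technical obstacle, since $Y$ need not be flat over $S$ and Grauert's theorem must be applied carefully to the composite $X\to S$.

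For part (2), Kähler classes are not line bundles, so I would argue via openness of the Kähler cone in families instead. The plan is to take a representative $\theta$ of $\alpha$ and use the $f_0$-Kähler hypothesis: there is $\omega_{Y}$ Kähler near $Y_0$ such that $\theta|_{X_0}+f_0^*\omega_Y$ is cohomologous to a Kähler form $\eta_0$ on $X_0$. Next, I would extend the $\partial\bar\partial$-correction $\eta_0-\theta|_{X_0}-f_0^*\omega_Y=i\partial\bar\partial u_0$ to a smooth function $u$ on a neighbourhood of $X_0$; this extension is possible locally in embeddings and can be glued by a partition of unity. Then $\theta+f^*\omega_Y+i\partial\bar\partial u$ is strictly positive on $X_0$ in the sense of local embeddings, and since strict positivity is an open condition it remains positive on $X_t$ for $t$ near $0$ by compactness of $X_0$. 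Here only a Euclidean neighbourhood can be expected.
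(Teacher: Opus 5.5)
Your part (1) follows the paper's proof. The ingredients are the same: Serre vanishing on $X_0$, vanishing of $R^1f_*L^{\otimes m}$ near $Y_0$, surjectivity of $f_*L^{\otimes m}\to (f_0)_*(L^{\otimes m}|_{X_0})$ and of $f^*f_*L^{\otimes m}\to L^{\otimes m}$ near $X_0$, the map to $\mathbb{P}_Y(f_*L^{\otimes m})$, and Lemma~\ref{ResEmbed}.

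The step you single out as the main obstacle needs no base change theorem; none applies directly to $f$, since $X$ is not flat over $Y$. The paper closes it with Nakayama:
\begin{itemize}
\item As $X$ is integral and dominates $S$, $X_0$ is the Cartier divisor cut out by $t$, so $0\to L^{\otimes m}\xrightarrow{\cdot t}L^{\otimes m}\to L^{\otimes m}|_{X_0}\to 0$ is exact.
\item Pushing forward and using $R^1(f_0)_*(L^{\otimes m}|_{X_0})=0$, multiplication by $t$ is surjective on the coherent sheaf $R^1f_*L^{\otimes m}$ near $Y_0$.
\item Since $t\in\mathfrak m_y$ for $y\in Y_0$, Nakayama gives $R^1f_*L^{\otimes m}=0$ near $Y_0$.
\item The same sequence then yields $f_*L^{\otimes m}\otimes\mathcal O_{Y_0}\cong (f_0)_*(L^{\otimes m}|_{X_0})$, which is what you need.
\end{itemize}

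For part (2), the paper only says that positive forms stay positive on nearby fibres, and you spell this out. But the mechanism you describe fails at singular points of $X_0$. Knowing that $\eta:=\theta+f^*\omega_Y+i\partial\bar\partial u$ restricts to a K\"ahler form on $X_0$ means only that a local ambient potential $\psi$ of $\eta$ agrees on $X_0$ with some strictly plurisubharmonic $\Phi$. It does not make $\psi$ itself strictly plurisubharmonic, so openness cannot be invoked. The difference $\psi-\Phi$ vanishes on $X_0$ only to order zero, and may have indefinite Levi form on the Zariski tangent space at a singular point.

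A local model shows this happens. Take $X=\mathbb C^2$, $t=z_1z_2$, and $\psi=|z_1|^2+|z_2|^2+M\,\mathrm{Re}(z_1\bar z_2)$ with $M>2$.
\begin{itemize}
\item Since $z_1\bar z_2=0$ on $X_0$, $i\partial\bar\partial\psi$ is K\"ahler on the nodal curve $X_0$.
\item On $X_t=\{(s,t/s)\}$, the Levi form of $\psi$ on the tangent vector $(1,w)$, $w=-t/s^2$, is $1+|w|^2+M\,\mathrm{Re}\,w$.
\item This equals $1-M^2/4<0$ where $s^2=2t/M$, i.e.\ within distance $O(|t|^{1/2})$ of the node, for every small $t\neq 0$.
\end{itemize}
Gluing local extensions of $u_0$ by a partition of unity can produce exactly such a term, since two local extensions may differ by $M\,\mathrm{Re}(z_1\bar z_2)$.

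The statement is still true, but you need a better representative:
\begin{itemize}
\item Write $\theta+f^*\omega_Y=i\partial\bar\partial\psi_j$ on $X\cap V_j$.
\item Pick ambient strictly plurisubharmonic $\Phi_j$ with $\Phi_j|_{X_0}=(\psi_j+u_0)|_{X_0}$, after subtracting real parts of holomorphic functions.
\item Set $u_j:=(\Phi_j-\psi_j)|_X$, so that $\theta+f^*\omega_Y+i\partial\bar\partial u_j$ is K\"ahler on $X\cap V_j$.
\item Since $u_j-u_k$ vanishes on $X_0$, it is uniformly small on $\{|t|<\delta\}$.
\item Take a regularized maximum of the $u_j+\varepsilon\rho_j$, with cut-offs $\rho_j$ equal to $0$ on a shrunken cover of $X_0$ and $-1$ near $\partial V_j$. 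This needs only $C^0$-closeness.
\end{itemize}
The result is a representative that is K\"ahler on a neighbourhood of $X_0$, hence on $X_t$ for small $|t|$. Separately, extending the K\"ahler form from $Y_0$ to a neighbourhood uses that $Y$ is K\"ahler near $Y_0$; for instance, take a large multiple of a K\"ahler form on $Y$.
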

\begin{proof}
Since $L_0 = L|_{X_0}$ is $f_0$-ample,  there exists some $m\ge 0$ such that
$$\rho_m : f^* f_* (L^{\otimes m})\to L^{\otimes m}$$is surjective near $X_0$. To see this, consider the exact sequence 
\[
0 \to L^{\otimes m} \otimes \mathcal{I}_{X_0} \to L^{\otimes m} \to L^{\otimes m}|_{X_0} \to 0.
\]
Since $L|_{X_0}$ is $f_0$-ample, Serre’s vanishing theorem gives 
\[
R^i (f_0)_* \bigl(L^{\otimes m}|_{X_0}\bigr) = 0 \quad \text{for } i>0 \text{ and } m \gg 0.
\]
Therefore,
\[
\psi: R^1 f_* (L^{\otimes m}) \otimes \mathcal{I}_{\{0\}} 
\cong R^1 f_*\bigl(L^{\otimes m} \otimes \mathcal{I}_{X_0}\bigr) 
\longrightarrow R^1 f_* (L^{\otimes m})
\]
is surjective for $m \gg 0$, where the first isomorphism follows from the projection formula together with the fact that $\mathcal{I}_{X_0}=f^*\mathcal{I}_{\{0\}}$ and $\mathcal{I}_{\{0\}}$ are invertible on the smooth connected curve $S$. Taking the stalk at $0$ and tensor with $\kappa(0)\cong \mathbb{C}$, we obtain a surjective map
\[
\psi(0): \;  R^1 f_* (L^{\otimes m})_0 \otimes \bigl(\mathfrak{m}_0 / \mathfrak{m}_0^2\bigr)
\longrightarrow R^1 f_* (L^{\otimes m})_0 \otimes \kappa(0).
\]
Note that this is the zero map, since the composition in the short exact sequence 
\[
0\to \mathfrak{m}_{0} /\mathfrak{m}_{0}^2 \to \mathcal{O}_{T,0} /\mathfrak{m}_{0}^2 \to \mathcal{O}_{T,0} /\mathfrak{m}_{0} = \kappa (0) \to 0
\] 
is zero. Therefore, $R^1 f_* (L^{\otimes m})_0 \otimes \kappa(0) = 0$ as well. By Nakayama’s lemma, we conclude that 
\[
R^1 f_* (L^{\otimes m})_0 = 0.
\]
Since $R^1 f_* (L^{\otimes m})$ is coherent, its support is a closed analytic subset. Therefore, $R^1 f_* (L^{\otimes m}) = 0$ in a neighborhood of $0$. Consequently,
\[
f_* L^{\otimes m} \longrightarrow (f_0)_* \bigl(L^{\otimes m}|_{X_0}\bigr)
\]
is surjective. Since $L^{\otimes m}|_{X_0}$ is $f_0$-globally generated for $m\gg 0$, i.e., the map
\[
f_0^* (f_0)_* (L^{\otimes m} |_{X_0} ) \to L^{\otimes m}|_{X_0}
\]
is surjective for $m\gg 0$, the morphism
\[
\rho_m: f^* f_* L^{\otimes m} \longrightarrow L^{\otimes m}, \quad \text{for } m\gg 0
\]
is surjective in a neighborhood of $X_0$.

Therefore, after shrinking $S$, it will induce the commutative diagram  
\begin{center}
\begin{tikzcd}[ampersand replacement=\&]
	X \&\& {\mathbb{P}_Y(f_* L^{\otimes m})} \\
	\& Y \\
	\& S.
	\arrow["\Phi", from=1-1, to=1-3]
	\arrow["f"{description}, from=1-1, to=2-2]
	\arrow["g"', from=1-1, to=3-2]
	\arrow[from=1-3, to=2-2]
	\arrow[from=1-3, to=3-2]
	\arrow["h"{description}, from=2-2, to=3-2]
\end{tikzcd}
\end{center}
Since $L|_{X_0}$ is $f_0$-ample, the restriction to the fiber over $0 \in S$ induces an embedding
$$\Phi_0 = \Phi|_{X_0}: X_0 \hookrightarrow \mathbb{P}_{Y_0}((f_0)_* (L^{\otimes m}|_{X_0}))/Y_0.$$
By Lemma \ref{ResEmbed}, after further shrinking the base $S$, $\Phi:X \hookrightarrow \mathbb{P}_{Y}(f_* L^{\otimes m})/Y$ is still a closed embedding.

The second part of the proposition is clear, since positive form is still positive when restricted to the nearby fibers. 
\end{proof}
\subsection{Relative pseudo-effectivity of line bundles in projective families}
Using a recent result of \cite{Jiao}, we prove that the global stability of pseudo-effectivity of a line bundle over a projective family with irreducible and reduced fibers, if this line bundle is pseudo-effective on a Zariski dense set of fibers. 

\begin{proposition}\label{p-psefrest}
Let $f:X\to S$ be a projective flat morphism of complex analytic spaces with reduced and irreducible fibers. If $L$ is a line bundle on $X$ whose restriction to a Zariski  dense set of fibers is pseudo-effective, then $L$ is $f$-pseudo-effective over $S$, and moreover $L|_{X_s}$ is pseudo-effective for every $s \in S$. 
\end{proposition}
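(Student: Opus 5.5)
The plan is to reduce the statement to upper semicontinuity of volumes in projective families, which is the result of \cite{Jiao} quoted in the remark after Proposition~\ref{uscvolume}. Since $f$ is projective, after shrinking $S$ around any given point we may fix an $f$-ample line bundle $A$ on $X$. For an integer $k\ge 1$ consider the $\mathbb{Q}$-line bundle $L_k:=L+\frac1k A$, or, to stay with honest line bundles, $M_k:=kL+A$. If $L|_{X_s}$ is pseudo-effective, then $M_k|_{X_s}$ is big for every $k$. By Lemma~\ref{vollemma}, or by its algebraic analogue $\mathrm{vol}(kL+A)\ge \mathrm{vol}(A)$ for $L$ pseudo-effective, we get the uniform lower bound
\[
\mathrm{vol}(M_k|_{X_s})\ \ge\ \mathrm{vol}(A|_{X_s})=(A|_{X_s})^n
\]
for all $s$ in the given Zariski dense set $D$. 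The right-hand side is independent of $s$, because $A$ is relatively ample and $f$ is flat, so it is the leading coefficient of the Hilbert polynomial of $A|_{X_s}$.

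Now fix an arbitrary point $s_0\in S$. Since $D$ is Zariski dense, $s_0$ lies in its closure; when $S$ is a curve, $D$ is infinite near $s_0$, and in general we will choose a sequence $s_k\to s_0$ in $D$ (if $s_0$ were not in the Euclidean closure of $D$, we would argue on the irreducible component through $s_0$ and use density again). Apply Jiao's upper semicontinuity of volume for projective morphisms with reduced and irreducible fibers to the line bundle $M_k$. This gives
\[
\mathrm{vol}(M_k|_{X_{s_0}})\ \ge\ \limsup_j \mathrm{vol}(M_k|_{X_{s_j}})\ \ge\ (A|_{X_{s_0}})^n>0 .
\]
Hence $kL+A$ is big on $X_{s_0}$ for every $k$, so $L+\frac1kA$ is big on $X_{s_0}$. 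Since the pseudo-effective cone is closed, letting $k\to\infty$ shows that $L|_{X_{s_0}}$ is pseudo-effective. This settles the second assertion. The first then follows from Definition~\ref{pseffclass}, since pseudo-effectivity on every fibre is stronger than on a very general one. On singular irreducible fibres, volume and pseudo-effectivity can be computed on a resolution, so we may freely pass to one.

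The main obstacle is the density and approximation step. Upper semicontinuity gives information only along sequences converging to $s_0$, so we need $s_0$ to be a limit of points of $D$. A Zariski dense set need not be Euclidean dense in a neighborhood of every point. I would handle this by first noting, from upper semicontinuity itself, that the set $\{s : \mathrm{vol}(M_k|_{X_s})\ge c\}$ is Euclidean closed; I expect it is in fact Zariski closed, by Jiao's result in its constructible form, or because volumes of $kL+A$ are computed by semicontinuous $h^0$ asymptotics. If so, it is a Zariski closed set containing $D$, hence equal to all of $S$, and no sequence argument is needed. Verifying that this set is analytic (Zariski closed) is the step I would check most carefully, falling back on the curve case, where Zariski dense just means infinite, if necessary.
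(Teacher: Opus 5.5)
You have correctly located the weak point, but as written it is a genuine gap, and it affects both assertions, not just the passage to special fibres. The sequential argument only reaches points of the Euclidean closure $\overline{D}$. A Zariski dense set need not accumulate at a given $s_0$: on a curve it can be a sequence accumulating at a single point. Note also that in the analytic category an infinite closed discrete subset of a curve is a proper analytic subset, so ``Zariski dense'' does not mean ``infinite''. Your suggestion to argue on the component through $s_0$ does not produce the required sequence. Moreover, bigness of $kL+A$ at the limit points of $D$ says nothing about general fibres, because semicontinuity runs the wrong way: volumes can only jump \emph{up} at special points. So the sequential argument does not even give $f$-pseudo-effectivity.

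The repair you propose, Zariski closedness of $\{s:\mathrm{vol}(M_k|_{X_s})\ge c\}$, is indeed the crux, but neither justification you offer works as stated. Upper semicontinuity of each $s\mapsto h^0(X_s,mM_k|_{X_s})$ does not survive the limit in $m$. It only shows that the locus where the volume exceeds its generic value lies in a countable union $\bigcup_m Z_m$ of proper analytic subsets, and a countable Zariski dense $D$ can lie inside such a union.

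What is needed is Jiao's Zariski upper semicontinuity, which concerns projective morphisms of schemes. To apply it to an analytic family you must first algebraize, and this is what the paper does. By Lemma~\ref{Hilbertscheme} (flag Hilbert scheme), $(X,L,A)\to S$ is pulled back from $(X',L',A')\to S'$ over a projective scheme $S'$ along a dominant $\phi$, with $\phi(D)$ dense in $S'$. Jiao's theorem then gives, at the generic point $\eta$,
$\mathrm{vol}((L'+\varepsilon A')|_{X'_\eta})=\inf_{s\in\phi(D)}\mathrm{vol}((L'+\varepsilon A')|_{X'_s})\ge\varepsilon^n v>0$, so $L+\varepsilon A$ is $f$-big. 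Bigness on general fibres then propagates to every fibre by \cite[Proposition~4.1]{RT1}; here semicontinuity of $h^0$ goes in the right direction. Letting $\varepsilon\to 0$ finishes. Once this algebraization is in place, your closing deduction (a Zariski closed set containing $D$ is all of $S$) would also work.
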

\begin{proof}
Let $ A $ be a relatively ample line bundle on $ X $. Assume that there exists a dense subset $ B \subset S $ such that $ L|_{X_s} $ is pseudo-effective for any $ s \in B $. Then, for any $ \varepsilon > 0 $, 
\[
\mathrm{vol}\big((L + \varepsilon A)|_{X_s}\big) \ge  \varepsilon ^nv \quad \text{for all } s \in B,
\] where $n=\dim_{\mathbb{C}} (X/S)$ and $v=(A|_{X_s})^n$ which is independent of $s\in S$.
By Lemma \ref{Hilbertscheme}, the family $f:(X,L,A)\to S$ is the pull back of some projective family $(X',L',A') \to S'$ via a dominant morphism $\phi:S\to S'$ such that $S'$ is a projective scheme. Let $\eta$ be the generic point of $S'$ and denote the generic fiber of $X' \to S'$ by $X'_{\eta}$. Since $\phi:S\to S'$ is a dominant morphism, the image $B' = \phi(B)$ is dense in $S'$. By \cite[Theorem 1.1]{Jiao},
\[
\mathrm{vol}\big((L'+\varepsilon A')|_{X'_\eta}\big) = \inf_{s\in B'} \mathrm{vol}\big((L'+\varepsilon A')|_{X'_s}\big) \ge \varepsilon ^nv >0.
\]
In particular, for every $\varepsilon > 0$, the divisor $L' + \varepsilon A'$ is big on the generic fiber. It follows that $L'$ is relatively pseudo-effective over $S'$, and hence $L$ is relatively pseudo-effective over $S$.

For any $\varepsilon > 0$, the line bundle $L + \varepsilon A$ is relatively big over $S$. That is, there exists a dense Zariski open subset $U_{\varepsilon} \subseteq S$ such that $(L+\varepsilon A)|_{X_s}$ is big for every $s \in U_{\varepsilon}$. By \cite[Proposition~4.1]{RT1}, the restrictions $(L+\varepsilon A)|_{X_s}$ are big for all $s \in S$. Since $\varepsilon > 0$ is arbitrary, it follows that $L|_{X_s}$ is pseudo-effective for every $s \in S$.
\end{proof}



\begin{lemma}\label{Hilbertscheme}
If $ f : X \to S $ is a projective flat morphism of complex analytic spaces with reduced and irreducible fibers, together with a line bundle $ L $ on $ X $, then there exists a projective family $f' : X' \to S'$ over some projective scheme $ S' $, equipped with a line bundle $ L' $ on $ X' $ and a dominant morphism $ \phi : S \to S' $, such that $f : (X, L) \to S$ is the pullback of
\[
f' : (X', L') \to S'
\]
via $ \phi : S \to S' $.
\end{lemma}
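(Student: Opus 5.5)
The plan is to realize $(X,L)\to S$ as a pullback from a Hilbert scheme (or Hilbert scheme of pairs), using the relative projective embedding. Since $f$ is projective, after possibly shrinking $S$ to a relatively compact open subset (which is harmless for the intended application in Proposition~\ref{p-psefrest}, where only local statements over $S$ are used), we fix an $f$-very ample line bundle $A$ with $R^if_*A^{\otimes m}=0$ for $i>0$ and $m\ge 1$. Then $f_*A$ is locally free of constant rank $N+1$ by flatness and cohomology and base change. Trivializing $f_*A$ over a Stein cover, or shrinking $S$ further so that $f_*A\cong\mathcal O_S^{N+1}$, we obtain an $S$-embedding $X\hookrightarrow \mathbb P^N\times S$. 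The fibers $X_s$ then form a flat family of closed subschemes of $\mathbb P^N$ with Hilbert polynomial $P$, constant by flatness. By the universal property of the Hilbert scheme $\mathrm{Hilb}_P(\mathbb P^N)$, which is a projective scheme and whose analytification represents the analytic Douady functor of subspaces of $\mathbb P^N$, there is a holomorphic map $\phi_0:S\to \mathrm{Hilb}_P(\mathbb P^N)$ with $X\cong S\times_{\mathrm{Hilb}}\mathcal U$, where $\mathcal U$ is the universal family.

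The line bundle $L$ is handled by twisting so that it becomes very ample as well. Replace $L$ by $L\otimes A^{\otimes k}$ with $k\gg0$ so that both $L_k:=L\otimes A^k$ and $A$ are relatively very ample with vanishing higher direct images. Recording $L$ is then equivalent to recording the pair $(L_k, A)$, since $L=L_k\otimes A^{-k}$. Embed $X$ into $\mathbb P^{N}\times\mathbb P^{M}\times S$ via $(A,L_k)$, and use the Hilbert scheme $H$ of $\mathbb P^N\times\mathbb P^M$ with the appropriate bigraded Hilbert polynomial. On the universal family $\mathcal U'\to H$ we set $L'=\mathcal O(-k,1)|_{\mathcal U'}$ (so that it restricts to $A^{-k}\otimes L_k=L$), and the classifying map $\phi:S\to H$ satisfies $\phi^*(\mathcal U',L')\cong(X,L)$, up to twisting $L$ by a line bundle pulled back from $S$. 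That twist is trivial after shrinking $S$ to be Stein and contractible, or it can be absorbed because it changes no fiberwise restriction. The reducedness and irreducibility of fibers form an open (constructible) condition on $H$ that is only needed on the image.

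Finally, let $S'$ be the Zariski closure of $\phi(S)$ in $H$, taken with its reduced structure; this is a closed subscheme of a projective scheme, hence projective. Restricting the universal family and $L'$ to $S'$ gives the projective family $f':(X',L')\to S'$, and $\phi:S\to S'$ is dominant by construction. If $S$ is irreducible, then $S'$ is irreducible, which is what is needed to speak of the generic point $\eta$ in Proposition~\ref{p-psefrest}.

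The main obstacle is making the analytic classifying map legitimate globally. The relative embedding into a fixed $\mathbb P^N\times S$ requires trivializing $f_*A$ and $f_*L_k$, and the line bundle $L$ is only determined up to $\mathrm{Pic}(S)$. The plan is to handle this by working locally on $S$, over relatively compact Stein contractible opens, which suffices since all applications are local. It also uses the fact that the Douady space of $\mathbb P^N$ is the analytification of the Hilbert scheme, so that the analytic family is pulled back along a holomorphic map into a projective scheme.
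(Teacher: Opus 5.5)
Your proposal is correct in the local form you state. It follows the same overall strategy as the paper: twist $L$ by a large multiple of a relatively very ample $A$, classify the family by a Hilbert-type scheme, then untwist. The difference is the device you use to encode $L$.

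The paper chooses a single global section $\sigma$ of $M=L\otimes A^{m}$ and sets $D=\operatorname{div}(\sigma)$. Because the fibers are integral, a section that is nonzero on every fiber cuts out a relative effective Cartier divisor flat over $S$; this is where the integrality hypothesis is implicitly used. So $D\subset X\subset\mathbb{P}^N_S$ is a flat family of flags classified by the flag Hilbert scheme, and $L'=\mathcal{O}_{X'}(D')\otimes\mathcal{O}_{X'/S'}(-m)$ pulls back to $\mathcal{O}_X(D)\otimes A^{-m}=L$ exactly.

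You instead add a second embedding given by $L_k=L\otimes A^{k}$ and use the ordinary Hilbert scheme of $\mathbb{P}^N\times\mathbb{P}^M$, with $L'=\mathcal{O}(-k,1)$. Your route avoids flag Hilbert schemes and does not use integrality of the fibers for the lemma itself. The cost is that you need enough sections to generate $L_k$ relatively, rather than one section nonvanishing on each fiber. You also spell out the identification of the Douady space with the analytified Hilbert scheme, which the paper's appeal to the algebraic representability theorem leaves implicit.

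Two small remarks. First, once your second map is defined by actual sections of $L_k$, $\mathcal{O}(a,b)$ restricts to $A^{a}\otimes L_k^{b}$ exactly. So the $\mathrm{Pic}(S)$ ambiguity and the contractibility assumption disappear, and relative compactness of $S$ in a Stein space suffices, by Cartan's Theorem A. That is precisely the setting in which the paper's global embedding $X\hookrightarrow\mathbb{P}^N_S$ and its section $\sigma$ actually exist, so your localization costs nothing relative to the paper's proof. Over an arbitrary base the literal global statement can fail: take a compact torus $S$ of algebraic dimension $0$, $X=\mathbb{P}^1\times S$, and $L=f^*N$ with $N\in\mathrm{Pic}^0(S)$ nontrivial. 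Any map from $S$ to a projective scheme is constant, so $L$ would have to be trivial on $\{x\}\times S$, but it restricts there to $N$.

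Second, if $S$ is non-reduced you should take the scheme-theoretic image of $\phi$, as the paper does, rather than the reduced Zariski closure.
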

\begin{proof}
Since $f: X \to S$ is a projective morphism, there exists a closed embedding 
$X \hookrightarrow \mathbb{P}_S^N / S$, where $N$ is some fixed positive integer. 

Let $A$ be a sufficiently $f$-ample line bundle on $f \colon X \to S$. 
If $m \gg 0$, then $M := L \otimes A^{\otimes m}$
is sufficiently $f$-ample, relatively globally generated, and $f_*(M)$ is locally free. 
In particular, a general section $\sigma \in H^0(S, f_*M) = H^0(X, M)$ cuts out a relatively effective divisor $D$ such that 
\[
M|_{X_s} \cong \mathcal{O}_{X_s}(D|_{X_s}).
\]
Since $f$ is flat, the fibers of $f$ define a flat family $
\{X_s\}_{s \in S}$ of projective varieties in $\mathbb{P}^N$. 
Note that the Hilbert polynomial 
\[
\Phi_{X_s}(k) = \chi\!\left(\mathcal{O}_{X_s} \otimes A^{\otimes k}\right)
\]
with respect to the ample line bundle $A$ is constant under flat deformations, denoted by $\Phi(k)$.

Therefore, by the representability of the flag Hilbert scheme (cf. \cite[Theorem 4.5.1]{Sernesi}), there exists a projective scheme $H$ and a classification map 
\[
\phi: S \to H, \quad s \mapsto [X_s].
\]
The family $(X,D) \to S$ is the pullback of the projective universal family $f': (X', D')\to H$ via $\phi:S\to H$, and the canonical line bundle $\mathcal{O}_{X'/H}(1)$ on $X'$ pull back to the ample line bundle $A$ on $X$. 

If we denote by $S' = \phi(S)$ the scheme theoretical image, then $\phi:S \to S'$ is a dominant morphism. 
We now consider the line bundle
\[
L' = \mathcal{O}_{X'}({D}') \otimes \mathcal{O}_{X'/S'}(-m).
\]
Clearly, the pair $(X', L') \to H$ pulls back to the pair $(X, L) \to S$ via the morphism $\phi : S \to S'.$ 
\end{proof}

\subsection{Global generation and semi-ampleness of line bundles in families}
Next, we discuss the global generation of line bundles in families.
\begin{lemma}\label{ResBasechange}
Let $f: X \to S$ be a proper surjective morphism of normal complex analytic spaces, and $L$ a line bundle on $X$. Assume that $S$ is Stein.
Then for any $s\in S$, the base change map $$\varphi^0(s): (f_* L)_s \otimes_{\mathcal{O}_{S,s}} \kappa(s) \;\longrightarrow\; H^0(X_s, L|_{X_s})$$ is surjective if and only if the restriction map $$r: H^0(X,L) \to H^0(X_s, L|_{X_s})$$ is surjective. 

\end{lemma}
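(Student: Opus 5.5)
The plan is to factor the restriction map $r$ through the base change map, using Cartan's Theorem B on the Stein base $S$. The map $r$ decomposes as
\[
H^0(X,L) = H^0(S, f_*L) \xrightarrow{\ \mathrm{ev}_s\ } (f_*L)_s \otimes_{\mathcal{O}_{S,s}} \kappa(s) \xrightarrow{\ \varphi^0(s)\ } H^0(X_s, L|_{X_s}).
\]
The first identification is simply the definition of the direct image sheaf. The second map sends a global section to its germ at $s$ modulo $\mathfrak{m}_s$.

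\textbf{Step 1.} By Grauert's direct image theorem, $f$ is proper, so $f_*L$ is a coherent $\mathcal{O}_S$-module. Since $S$ is Stein, Cartan's Theorem A says that $f_*L$ is generated by its global sections at every point. Hence the map $H^0(S,f_*L) \to (f_*L)_s$ has image generating the stalk as an $\mathcal{O}_{S,s}$-module. Tensoring with $\kappa(s)$, the evaluation map
\[
\mathrm{ev}_s : H^0(S,f_*L) \to (f_*L)_s \otimes \kappa(s)
\]
is surjective. If I want to be more careful, I can also get this from Theorem B: apply it to
\[
0 \to \mathfrak{m}_s f_*L \to f_*L \to f_*L \otimes \kappa(s) \to 0,
\]
where the right-hand term is a skyscraper sheaf at $s$. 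Then $H^1(S, \mathfrak{m}_s f_*L) = 0$, which gives surjectivity on global sections.

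\textbf{Step 2.} I check that the composition $\varphi^0(s)\circ \mathrm{ev}_s$ is exactly $r$. This follows from the definition of the base change map, which is induced by restricting sections on a neighbourhood $f^{-1}(V)$ of $X_s$ to $X_s$. Compatibility of restrictions then gives the identity.

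\textbf{Step 3.} I conclude both directions. If $\varphi^0(s)$ is surjective, then $r = \varphi^0(s)\circ\mathrm{ev}_s$ is a composition of surjections, so $r$ is surjective. Conversely, if $r$ is surjective, then the image of $\varphi^0(s)$ contains the image of $r$, which is all of $H^0(X_s,L|_{X_s})$; hence $\varphi^0(s)$ is surjective. Note that this direction does not use the Stein hypothesis.

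The only substantive point is Step 1, the surjectivity of $\mathrm{ev}_s$, which rests on coherence of $f_*L$ (Grauert) together with Theorem A/B on the Stein space $S$. The remaining steps are formal. The normality assumptions are not needed beyond ensuring that we are working with reasonable complex spaces.
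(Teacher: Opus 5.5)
Your proposal is correct and follows the paper's proof: both factor $r$ as $\varphi^0(s)\circ \mathrm{ev}_s$ through $H^0(X,L)=H^0(S,f_*L)$ and use Cartan's Theorem A on the Stein base to make $\mathrm{ev}_s\colon H^0(S,f_*L)\to (f_*L)_s\otimes\kappa(s)$ surjective, from which the equivalence is immediate. Your Theorem B variant for Step 1 and your remark that the implication ``$r$ surjective $\Rightarrow$ $\varphi^0(s)$ surjective'' does not need the Stein hypothesis are both valid additions.
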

\begin{proof}
By Cartan's Theorem A, the direct image sheaf $f_*L$ is globally generated on $S$. Therefore the evaluation map $\alpha_s$ in the diagram is surjective
\[
\begin{tikzcd}[ampersand replacement=\&]
	{H^0(S,f_* L)} \& {(f_*L)(s)} \\
	{H^0(X,L)} \& {H^0(X_s,L|_{X_s})}.
	\arrow["{\alpha_s}", from=1-1, to=1-2]
	\arrow["{=}"', from=1-1, to=2-1]
	\arrow["{\varphi^0(s)}", from=1-2, to=2-2]
	\arrow["r", from=2-1, to=2-2]
\end{tikzcd}
\]
Thus, $\varphi^0(s)$ is surjective if and only if $r$ is surjective.
\end{proof}
\begin{lemma}\label{BsoverStein}
With the same setting as in Lemma \ref{ResBasechange}, we have
\[
\operatorname{coker}\!\bigl(f^* f_* L \to L\bigr) \;=\; \operatorname{coker}\!\bigl(H^0(X,L)\otimes \mathcal{O}_X \to L\bigr).
\]
In particular,
\[
\operatorname{Bs}(L/S) \;=\; \operatorname{Bs}(L) \;=\; \bigcap_{\sigma \in H^0(X,L)} V(\sigma).
\]
Here the \emph{base loci} $\operatorname{Bs}(L/S)$ and $\operatorname{Bs}(L)$ of $L/S$ and $L$ are defined as $$\operatorname{Bs}(L/S) : = \mathrm{supp}\operatorname{coker}\!\bigl(f^* f_* L \to L\bigr)$$ and $$\operatorname{Bs}(L):=\mathrm{supp}\operatorname{coker}\!\bigl(H^0(X,L)\otimes \mathcal{O}_X \to L\bigr),$$ respectively. 
\end{lemma}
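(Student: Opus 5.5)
The plan is to compare the two evaluation maps directly on the total space $X$. Since $S$ is Stein, Cartan's Theorem A gives a surjection $H^0(S,f_*L)\otimes\mathcal{O}_S\to f_*L$. We also have $H^0(S,f_*L)=H^0(X,L)$, by the definition of direct image. Pulling back by $f$, which is right exact on the level of sheaves, we obtain a surjection
\[
H^0(X,L)\otimes\mathcal{O}_X = f^*\bigl(H^0(S,f_*L)\otimes\mathcal{O}_S\bigr)\longrightarrow f^*f_*L .
\]

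Next I will check that this surjection is compatible with the two evaluation maps. Concretely, the global evaluation map $H^0(X,L)\otimes\mathcal{O}_X\to L$ should factor as the composite of the surjection above with the adjunction map $f^*f_*L\to L$. This is a local computation. A global section $\sigma$ maps first to the local section $\sigma|_{f^{-1}(V)}$ of $(f_*L)(V)$, and then by adjunction back to $\sigma$ as a section of $L$ on $f^{-1}(V)$. So the two compositions agree.

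Given a factorization $A\twoheadrightarrow B\to L$ with the first map surjective, the images of $A$ and of $B$ in $L$ coincide. Hence the cokernels agree:
\[
\operatorname{coker}\bigl(f^*f_*L\to L\bigr)=\operatorname{coker}\bigl(H^0(X,L)\otimes\mathcal{O}_X\to L\bigr).
\]
Taking supports gives $\operatorname{Bs}(L/S)=\operatorname{Bs}(L)$.

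Finally, the support of $\operatorname{coker}\bigl(H^0(X,L)\otimes\mathcal{O}_X\to L\bigr)$ is the set of points $x$ at which no global section generates the stalk $L_x$. By Nakayama's lemma, this is the same as the set of points where every global section vanishes in the fiber $L\otimes\kappa(x)$, namely $\bigcap_{\sigma}V(\sigma)$.

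The only point that needs care is the identification of the composite with the global evaluation map, that is, the naturality of the adjunction. It is routine, so I expect no real obstacle. Properness and normality are not needed beyond ensuring that $f_*L$ is coherent, which is what allows Theorem A to be applied.
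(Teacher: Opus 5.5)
Your proposal is correct and follows the paper's proof: Cartan's Theorem A gives surjectivity of $H^0(S,f_*L)\otimes\mathcal{O}_S\to f_*L$, and pulling back preserves surjectivity, so the two cokernels coincide. You also make explicit two points the paper leaves implicit: that the global evaluation map factors through $f^*f_*L\to L$, and that the support of the cokernel is $\bigcap_{\sigma}V(\sigma)$.
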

\begin{proof}
By Cartan's Theorem~A, the coherent sheaf $f_*L$ is globally generated on $S$. That is, the natural map 
\[
H^0(X,L)\otimes \mathcal{O}_S \;=\; H^0(S,f_*L)\otimes \mathcal{O}_S \;\longrightarrow\; f_*L
\]
is surjective. Since pullback via $f^*$ preserves surjectivity, we obtain the surjective morphism
\[
H^0(X,L)\otimes \mathcal{O}_X \;\longrightarrow\; f^* f_* L.
\]
So $
\operatorname{coker}\!\bigl(f^* f_* L \to L\bigr) \;=\; 
\operatorname{coker}\!\bigl(H^0(X,L)\otimes \mathcal{O}_X \to L\bigr)$. 
\end{proof}

Based on Lemmata \ref{ResBasechange} and \ref{BsoverStein}, we obtain the equivalence between relative global generation and fiberwise global generation of line bundles under the base change assumption.
\begin{proposition}\label{Bslocus}

Let $f: X \to S$ be a proper surjective morphism of normal complex analytic spaces, and $L$ a line bundle on $X$. 
Assume that for every $s \in S$, the base change map
\[
\varphi^0(s): (f_* L)_s \otimes_{\mathcal{O}_{S,s}} \kappa(s) \;\longrightarrow\; H^0(X_s, L|_{X_s})
\]
is an isomorphism, where $X_s = f^{-1}(s)$ denotes the fiber over $s$.

Then the natural morphism
$f^{*} f_{*} L \;\longrightarrow\; L$ is surjective if and only if $L|_{X_s}$ is globally generated for every $s \in S$.
\end{proposition}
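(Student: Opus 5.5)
The plan is to check surjectivity of $f^*f_*L \to L$ stalkwise, using Nakayama's lemma on fibers, and to compare the two sides through the base change isomorphism $\varphi^0(s)$. The statement is local on $S$, so I would first replace $S$ by a Stein open neighborhood of an arbitrary point $s$. This puts us in the setting of Lemmata \ref{ResBasechange} and \ref{BsoverStein}. By Lemma \ref{BsoverStein}, surjectivity of $f^*f_*L\to L$ over this neighborhood is equivalent to $\operatorname{Bs}(L)=\emptyset$ there. That is, for every $x\in X_s$ some global section $\sigma\in H^0(X,L)$ must satisfy $\sigma(x)\neq 0$.

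For the direction ``$\Leftarrow$'', fix $x\in X_s$. Since $L|_{X_s}$ is globally generated, there is a section $\tau\in H^0(X_s,L|_{X_s})$ with $\tau(x)\ne 0$. Because $\varphi^0(s)$ is an isomorphism, in particular surjective, Lemma \ref{ResBasechange} shows that the restriction map $r:H^0(X,L)\to H^0(X_s,L|_{X_s})$ is surjective. We can therefore lift $\tau$ to some $\sigma\in H^0(X,L)$. Then $\sigma(x)=\tau(x)\neq 0$, so $x\notin \operatorname{Bs}(L)$. Since $x$ and $s$ were arbitrary, the map $f^*f_*L\to L$ is surjective everywhere.

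For the direction ``$\Rightarrow$'', surjectivity of $f^*f_*L\to L$ means that for each $x\in X_s$ the images of local sections of $f_*L$ near $s$ generate $L_x$. On a Stein neighborhood these come from $H^0(X,L)$ by Lemma \ref{BsoverStein}, so some $\sigma\in H^0(X,L)$ has $\sigma(x)\ne0$. Then $\sigma|_{X_s}\in H^0(X_s,L|_{X_s})$ is nonvanishing at $x$, so $L|_{X_s}$ is globally generated. This direction does not even need the base change hypothesis. Equivalently, one can argue by restriction: tensoring the surjection $f^*f_*L\to L$ with $\mathcal{O}_{X_s}$ stays surjective, and the resulting map factors through $f_s^*\bigl((f_*L)_s\otimes\kappa(s)\bigr)\to f_s^*H^0(X_s,L|_{X_s})\to L|_{X_s}$.

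The only delicate point is the identification of the fiber $(f^*f_*L)\otimes \kappa(x)$ with $(f_*L)_s\otimes\kappa(s)\otimes\kappa(x)$. On this identification, the map to $L\otimes\kappa(x)$ is evaluation of sections through $\varphi^0(s)$, and Nakayama's lemma converts pointwise surjectivity into stalk surjectivity. Everything else is bookkeeping with Lemmata \ref{ResBasechange} and \ref{BsoverStein}.
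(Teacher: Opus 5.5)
Your proposal is correct and follows essentially the same route as the paper. You reduce to a Stein base, use Lemma~\ref{BsoverStein} to identify the relative base locus with the common zero locus of sections in $H^0(X,L)$, and use Lemma~\ref{ResBasechange} (where only surjectivity of $\varphi^0(s)$ is needed) to lift a section of $L|_{X_s}$ that is nonvanishing at $x$. The paper packages both directions at once by noting that surjectivity of $r$ makes the cokernels of the evaluation maps $H^0(X,L)\to L(x)$ and $H^0(X_s,L|_{X_s})\to (L|_{X_s})(x)$ coincide. You instead split the two directions and correctly observe that the implication ``$\Rightarrow$'' holds without the base change hypothesis.
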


\begin{proof}
Since surjectivity of a sheaf homomorphism can be checked on an open cover, we may without loss of generality, assume that the base $S$ is Stein. Let $x\in X_s$ be a fixed point on the fiber $X_s$ for any $s\in S$. 
Denote by $L(x)$ and $(L|_{X_s})(x)$ the fibers of $L$ and the restriction $L|_{X_s}$ at $x \in X$, respectively. Clearly, both fibers are isomorphic to $\mathbb{C}$. 
Consider the commutative diagram (in the category of $\mathbb{C}$-vector spaces)
\[
\begin{tikzcd}
H^0(X, L)  \arrow[r, "\widetilde{\alpha}_x"] \arrow[d,"r"] & L(x) \cong \mathbb{C} \arrow[d,"\cong"] \\
H^0(X_s, L|_{X_s})  \arrow[r, "{\alpha}_x"] & {(L|_{X_s})}(x) \cong \mathbb{C},
\end{tikzcd}
\]
where $\widetilde{\alpha}_x$ (resp. ${\alpha}_x$) denotes the evaluation map 
\[
\widetilde{\alpha}_x (\widetilde{\sigma}) = \widetilde{\sigma}(x), \qquad \text{(resp. ${\alpha}_x (\sigma) = \sigma(x)$)},
\]
for $\widetilde{\sigma} \in H^0(X,L)$ (resp. $\sigma\in H^0(X_s,L|_{X_s})$), while $r$ is the natural restriction. 
By assumption, the base-change map 
\[
\varphi^0(y): (f_*L)\otimes \kappa(s) \longrightarrow H^0(X_s, L|_{X_s})
\]
is an isomorphism for every $s \in S$. Thus, by Lemma~\ref{ResBasechange}, the restriction map $r$ is surjective, and hence the cokernels of $\widetilde{\alpha}_x$ and ${\alpha}_x$ coincide.  

Note that $\widetilde{\alpha}_x$ is surjective precisely when there exists a section $\widetilde{\sigma} \in H^0(X,L)$ such that $\widetilde{\alpha}_x(\widetilde{\sigma}) \neq 0$. Similarly, ${\alpha}_x$ is surjective if and only if there exists a section $\sigma \in H^0(X_s, L|_{X_s})$ with ${\alpha}_x(\sigma) \neq 0$.
Moreover, 
$\operatorname{Bs}(L|_{X_s}) \;=\; \bigcap_{\sigma\in H^0(X_s, L|_{X_s})} V(\sigma)$ and by Lemma~\ref{BsoverStein},  
$$\operatorname{Bs}(L/S) \;=\; \bigcap_{\widetilde{\sigma}\in H^0(X,L)} V(\tilde{\sigma}).
$$
Therefore, ${\alpha}_x$ is surjective if and only if $x \notin \operatorname{Bs}(L|_{X_s})$, and $\widetilde{\alpha}_x$ is surjective if and only if $x \notin \operatorname{Bs}(L/S)|_{X_s}$. It follows that the restriction $L|_{X_s}$ is globally generated at $x \in X_s$ if and only if the map 
\[
f^*f_*L \longrightarrow L
\]
is surjective at $x \in X_s$.
\end{proof}
\begin{remark}\label{SteinBslocus}
Let $f: X \to S$ be a proper surjective morphism of normal complex analytic spaces with Stein $S$ and $L$ a line bundle on $X$. If we assume that at some $s \in S$, the base change map $$\varphi^0(s): (f_* L)_s \otimes_{\mathcal{O}_{S,s}} \kappa(s) \;\longrightarrow\; H^0(X_s, L|_{X_s})$$ is surjective, then
Proposition \ref{Bslocus} shows that for any $x\in X_s$, the following are equivalent:\\
(a) The natural morphism $(f^* f_* L)_x \to L_x$ is surjective;\\
(b) The natural morphism $H^0(X_s,L|_{X_s}) \otimes \mathcal{O}_{X_s ,x} \to (L|_{X_s})_x$ is surjective;\\
(c) The evaluation map $H^0(X_s, L|_{X_s}) \to (L|_{X_s})(x)$ is surjective. \\
Moreover, we have $x \in \mathrm{Bs}(L|_{X_s})$ if and only if $x \in \mathrm{Bs}(L/S)\cap X_s$.

\end{remark}
\begin{remark}\label{Grauertbasechange}
Let $f:X\to S$ be a flat proper morphism of normal complex analytic spaces, and $L$ a line bundle on $X$. Then \cite[Corollary~III.3.7]{BS} gives the equivalent conditions for the base change maps to be surjective: \\
(1) The line bundle $L$ satisfies the \emph{base change property in degree $0$}, i.e., $$ \varphi^0(s):  f_*(L) \otimes_{\mathcal{O}_{S,s}} \kappa(s) \rightarrow H^0\left(X_s, L|_{X_s}\right)$$is surjective for any $s\in S$.\\
(2) The $0$-cohomological dimension function $$y \mapsto h^0(X_s, L|_{X_s})$$ is locally constant over $S$.\\
(3) The line bundle $L$ is \emph{cohomologically flat in dimension 0}, i.e., the functor $$F: \text{Coh}_X \to \text{Coh}_S,\quad  \mathcal{M} \mapsto f_*(L \otimes_{\mathcal{O}_S}\mathcal{M})$$is exact. In what follows, we will use the equivalent characterization  implicitly.
\end{remark}

As direct corollaries of Proposition \ref{Bslocus}, we first give several equivalent characterizations of relative global generation of line bundles. 
\begin{corollary}\label{equi-rgg}
Let $f:X\to \Delta$ be a flat proper morphism from a normal complex analytic variety onto the unit disk in $\mathbb{C}$, and $L$ a line bundle on $X$. Then the following are equivalent: \\
\emph{(a)} The natural morphism $f^* f_* L \to L$ is surjective over $f^{-1}(U)$, for some non-empty Zariski open subset $U \subset \Delta$;\\
\emph{(b)} $L|_{X_t}$ is globally generated for any $t \in V$, where $V \subset \Delta$ is a non-empty Zariski open subset;\\
\emph{(c)} There exists a subset $E\subset \Delta$ with an accumulation point contained in $\Delta$, such that $L|_{X_t}$ is globally generated for $t\in E$;\\
\emph{(d)} There exists a point $s_0\in \Delta$, such that the base change map $$\varphi^0(s_0): (f_* L)(s_0)\to H^0(X_s, L|_{X_{s_0}})$$is surjective, and $L|_{X_{s_0}}$ is globally generated. 
\end{corollary}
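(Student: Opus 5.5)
We prove the cycle of implications (a) $\Rightarrow$ (b) $\Rightarrow$ (c) $\Rightarrow$ (d) $\Rightarrow$ (a). The main inputs are Remark \ref{SteinBslocus}, which relates relative and fiberwise base loci over the Stein base $\Delta$, and Remark \ref{Grauertbasechange}, which turns local constancy of $h^0$ into surjectivity of base change. Throughout we use that a proper analytic subset of $\Delta$ is a closed discrete set, so its complement is a non-empty Zariski open set and vice versa.

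\emph{(a) $\Rightarrow$ (b).} For $t\in U$, restrict the surjection $f^*f_*L\to L$ to $X_t$. The restricted map $(f_*L)(t)\otimes\mathcal{O}_{X_t}\to L|_{X_t}$ factors as
\[
(f_*L)(t)\otimes\mathcal{O}_{X_t}\xrightarrow{\ \varphi^0(t)\otimes 1\ } H^0(X_t,L|_{X_t})\otimes\mathcal{O}_{X_t}\longrightarrow L|_{X_t}.
\]
Hence the second arrow is surjective, so $L|_{X_t}$ is globally generated for all $t\in U$. We may take $V=U$.

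\emph{(b) $\Rightarrow$ (c).} This is immediate. $V=\Delta\setminus Z$ with $Z$ closed and discrete, so $V$ is open and non-empty, and every point of $V$ is an accumulation point of $V$ lying in $\Delta$. Take $E=V$.

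\emph{(c) $\Rightarrow$ (d).} By Grauert's semicontinuity, since $f$ is flat and proper, $t\mapsto h^0(X_t,L|_{X_t})$ is upper semicontinuous. It is constant, equal to its minimum, off a proper analytic subset $Z\subset\Delta$, which is closed and discrete. Let $p\in\Delta$ be an accumulation point of $E$. Only finitely many points of $Z$ lie in a small compact neighborhood of $p$, while $E$ has infinitely many points there. So we can pick $s_0\in E\setminus Z$. Near $s_0$ the function $h^0$ is locally constant, so Remark \ref{Grauertbasechange}, applied over a small disc around $s_0$, shows that $\varphi^0(s_0)$ is surjective. Global generation of $L|_{X_{s_0}}$ holds because $s_0\in E$.

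\emph{(d) $\Rightarrow$ (a).} This is the step with actual content. $\Delta$ is Stein and $\varphi^0(s_0)$ is surjective, so Remark \ref{SteinBslocus} gives
\[
\mathrm{Bs}(L/\Delta)\cap X_{s_0}=\mathrm{Bs}(L|_{X_{s_0}})=\emptyset.
\]
Now $\mathrm{Bs}(L/\Delta)$ is the support of the coherent sheaf $\operatorname{coker}(f^*f_*L\to L)$, using coherence of $f_*L$ by Grauert's direct image theorem, so it is a closed analytic subset of $X$. Since $f$ is proper, Remmert's theorem shows that $f(\mathrm{Bs}(L/\Delta))$ is a closed analytic subset of $\Delta$. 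It does not contain $s_0$, so it is a proper, hence discrete, subset. Then $U:=\Delta\setminus f(\mathrm{Bs}(L/\Delta))$ is a non-empty Zariski open set, and $f^*f_*L\to L$ is surjective over $f^{-1}(U)$.

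The only delicate point is in (c) $\Rightarrow$ (d): the accumulation-point hypothesis is exactly what guarantees that $E$ meets the locus where base change holds. The rest is formal from the earlier remarks.
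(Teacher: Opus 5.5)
Your proof is correct and follows the same cycle (a)$\Rightarrow$(b)$\Rightarrow$(c)$\Rightarrow$(d)$\Rightarrow$(a) as the paper, with the same inputs: Grauert semicontinuity and base change for (c)$\Rightarrow$(d), and Remark~\ref{SteinBslocus} plus the properness of $f$ for (d)$\Rightarrow$(a). Your versions of the other two steps are slightly more streamlined. In (a)$\Rightarrow$(b) the paper restricts to the locus where base change holds and invokes Proposition~\ref{Bslocus}, whereas you just factor the restricted evaluation map through $\varphi^0(t)$, which needs no base change. In (d)$\Rightarrow$(a) you correctly see that intersecting with the base-change locus $W$, as the paper does, is unnecessary.
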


\begin{proof}
$(a) \Rightarrow (b)$. Since Grauert's base change theorem holds over some Zariski open subset $W\subset \Delta$, $W\cap U = V $ is a non-empty Zariski open subset of $\Delta$ over which $f^*f_* L \to L$ is surjective and the base change property holds. Thus, by Proposition \ref{Bslocus}, $L|_{X_t}$ is globally generated for any $t\in V$. \\
$(b) \Rightarrow (c)$ is clear, since given any point $t_0\in V$ we can always find a countable sequence of points $E = \{t_i\}\subset V$ that converge to $t_0$. \\
$(c) \Rightarrow (d)$. There exists some non-empty Zariski open subset $W\subset \Delta$, such that Grauert's base change holds. Since $E\cap W \ne \emptyset$ (by the identity principle), there exists a point $s\in E\cap W$ such that the base change map $\varphi^0 (s)$ is surjective and $L|_{X_s}$ is globally generated.\\
$(d) \Rightarrow (a)$. By \cite[Theorem III.12.11]{Hartshorne},  
$$W = \{s\in \Delta \mid \varphi^0(s): (f_* L)(s)\to H^0(X_s, L|_{X_s}) \text{ is surjective}\}$$ 
is a non-empty Zariski open subset of $\Delta$, which is Stein. By assumption (d), at the point $s_0\in W\subset \Delta$, $\varphi^0(s_0)$ is surjective and $L|_{X_{s_0}}$ is globally generated. Therefore, by Remark \ref{SteinBslocus} 
$$\mathrm{Bs}(L/\Delta)|_{X_{s_0}} = \mathrm{Bs}(L|_{X_{s_0}}) = \emptyset.$$

In particular, $U = W \setminus f(\mathrm{Bs}(L/\Delta))$ is a non-empty Zariski open subset of $\Delta$, such that $f^*f_* L \to L$ is surjective on $U$.
\end{proof}

Furthermore, as a more special corollary of Proposition \ref{Bslocus}, one establishes the Zariski openness of global generation for line bundles under the assumption that the dimension of their global sections is deformation invariant.
\begin{corollary}\label{opensemiample}
Let $ f: X \to S $ be a flat proper morphism from a normal complex analytic variety $ X $ onto a smooth, connected, and relatively compact curve $S$, and  $ L $ a line bundle on $ X $. Assume that $ L|_{X_0} $ is globally generated and that the dimension $ h^0(X_s, L|_{X_s}) $ is constant for every $ s \in S $. Then there exists a dense Zariski open subset $ U \subset S $ such that $ L|_{X_s} $ is globally generated for any $ s \in U $.
\end{corollary}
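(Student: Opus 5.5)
The plan is to reduce the statement to Corollary \ref{equi-rgg}, and more precisely to its implication (d) $\Rightarrow$ (a) $\Rightarrow$ (b). The extra hypothesis that $h^0(X_s, L|_{X_s})$ is constant in $s$ supplies exactly the base change surjectivity at the point $0$ that condition (d) requires.

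\textbf{Step 1: base change at $0$.} Since $f$ is flat and proper and the function $s \mapsto h^0(X_s, L|_{X_s})$ is constant on the connected curve $S$, Grauert's theorem in the form recalled in Remark \ref{Grauertbasechange} (the equivalence of (1) and (2)) gives the following. The sheaf $f_*L$ is locally free, and the base change map
\[
\varphi^0(s)\colon (f_*L)_s\otimes\kappa(s)\to H^0(X_s,L|_{X_s})
\]
is an isomorphism for every $s\in S$, in particular for $s=0$.

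\textbf{Step 2: reduce to the disc case.} Choose a small coordinate disc $\Delta\subset S$ around $0$. Then Corollary \ref{equi-rgg}(d) holds at $s_0=0$: the map $\varphi^0(0)$ is surjective by Step 1, and $L|_{X_0}$ is globally generated by hypothesis. The corollary then says that $f^*f_*L\to L$ is surjective over $f^{-1}(U')$ for some nonempty Zariski open $U'\subset\Delta$. By Proposition \ref{Bslocus}, applicable because base change holds at every point, $L|_{X_s}$ is globally generated for all $s\in U'$.

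\textbf{Step 3: globalize to a dense Zariski open subset of $S$.} This is the step I expect to need the most care, since a Zariski open subset of the disc need not come from a Zariski open subset of $S$. I would avoid the corollary and argue directly on $S$. Base change holds at every point of $S$ by Step 1, so Remark \ref{SteinBslocus} together with Proposition \ref{Bslocus}, applied on Stein open sets covering $S$, gives
\[
\mathrm{Bs}(L|_{X_s})=\mathrm{Bs}(L/S)\cap X_s .
\]
Here $\mathrm{Bs}(L/S)=\mathrm{supp}\,\mathrm{coker}(f^*f_*L\to L)$ is a closed analytic subset of $X$, because the cokernel is coherent. Its image $Z=f(\mathrm{Bs}(L/S))$ is a closed analytic subset of $S$ by properness, so $Z$ is either all of $S$ or a discrete set. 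Since $S$ is relatively compact, a discrete $Z$ is finite after shrinking $S$ slightly, which the convention of a relatively compact base allows.

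Global generation of $L|_{X_0}$ gives $0\notin Z$, so $Z\neq S$. Hence $U=S\setminus Z$ is a dense Zariski open subset of $S$, and for every $s\in U$ the restriction $L|_{X_s}$ is globally generated. The only delicate points are the coherence of the cokernel and the applicability of Proposition \ref{Bslocus} without a Stein hypothesis on $S$. The latter follows because surjectivity of $f^*f_*L\to L$ is a local property on $S$, as already observed at the start of that proposition's proof.
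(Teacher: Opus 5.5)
Your argument is correct, and Step 3 is essentially the paper's own direct proof. Both rest on Grauert base change from the constancy of $h^0$, the identity $\mathrm{Bs}(L|_{X_s})=\mathrm{Bs}(L/S)\cap X_s$ from Remark~\ref{SteinBslocus}, and the fact that $f(\mathrm{Bs}(L/S))$ is a closed analytic subset of $S$ not containing $0$. Step 2 is a superfluous detour; the paper likewise notes the argument is implicit in Corollary~\ref{equi-rgg}. The finiteness remark is unnecessary, since the complement of a proper closed analytic subset of a connected curve is already a dense Zariski open set. Working on a Stein cover, rather than shrinking $S$ to a Stein neighborhood of $0$ as the paper does, has the small benefit of producing $U$ inside all of $S$.
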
 
\begin{proof} Although the proof is implicit in that of Corollary \ref{equi-rgg}, we still include a direct proof here. 

Without loss of generality, we may shrink the base around $0$ and assume that $S$ is Stein. Since $L|_{X_0}$ is globally generated, $\text{Bs}(L|_{X_0}) = \emptyset$. By Remark \ref{SteinBslocus}  $$\mathrm{Bs}(L/S)|_{X_0} = \mathrm{Bs}(L|_{X_0}) = \emptyset.$$

Since $\mathrm{Bs}(L/S)$ is an analytic closed subset of $X$, $f(\mathrm{Bs}(L/S)) = W\subset S$ is an analytic closed subset that does not contain $0$. Hence, there exists a Zariski open subset $U = S \setminus W$ such that $$\mathrm{Bs}(L /S)|_{X_s} = \mathrm{Bs}(L|_{X_s}) = \emptyset ,\quad \text{for any } s \in U.$$In particular, $L|_{X_s}$ is globally generated for any $s\in U$.
\end{proof}
It is worth noting that the global generation criterion for line bundles developed above (Proposition~\ref{Bslocus}) also applies to relatively flat coherent analytic sheaves of rank $1$.
At the end of this subsection, we apply the Proposition~\ref{Bslocus} to the semi-ampleness of canonical sheaves.
\begin{definition}[Semi-ample coherent sheaves, semi-ample (1,1)-classes]\label{DefinitionSemiample}
Let $X$ be a normal complex analytic variety. Then:\\
\emph{(1)} A coherent analytic sheaf $\mathcal{S}$ on $X$ is \emph{semi-ample} if there exists some sufficiently large $m \gg 0$ such that $\mathcal{S}^{\otimes m}$ is globally generated. \\
\emph{(2)} Let $f: X \rightarrow S$ be a proper surjective morphism of normal complex analytic spaces, and $\mathcal{S}$ a coherent analytic sheaf on $X$. We say that $\mathcal{S}$ is \emph{$f$-semi-ample} (or \emph{relatively semi-ample}) if there exists an integer $m>0$ such that the natural evaluation map
$$
f^* f_*\left(\mathcal{S}^{\otimes m}\right) \longrightarrow \mathcal{S}^{\otimes m}
$$
is surjective as a morphism of sheaves on $X$.\\
\emph{(3)} A class $\alpha \in H^{1,1}_{\mathrm{BC}}(X,\mathbb{R})$ is a \emph{semi-ample $(1,1)$-class} if there exists a contraction morphism  $f : X \to Z$ 
together with a K\"ahler form $\omega_Z$ on the K\"ahler variety  $Z$ such that  
\[
f^*([\omega_Z]) = \alpha.
\] 
\end{definition}

Now let us strengthen \cite[Corollaries 1.2, 1.12]{LRW25} via a different approach. 
\begin{proposition}\label{defsemiample}
Let $f:X\to S$ be a smooth family of projective manifolds (or a smooth K\"ahler family of threefolds, a flat family of varieties with only canonical singularities and uncountably many fibers of general type). Assume that $0\in S$ such that $K_{X_{0}}$ is semi-ample. Then $K_{X_t}$ is semi-ample on the
 general fiber $X_t$ of $f$. 

In particular, if $f$ is a smooth projective family or a smooth K\"ahler family of threefolds and the canonical divisor of one fiber of $f$ is semi-ample, then the canonical divisor $K_X$ is relatively semi-ample and the canonical divisor of any fiber of $f$ is semi-ample. 
\end{proposition}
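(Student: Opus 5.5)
The plan is to reduce semi-ampleness of $K_{X_t}$ on the general fiber to relative global generation of a fixed pluricanonical bundle. The tools are Corollary \ref{opensemiample} and, for the second assertion, Proposition \ref{Bslocus}. The essential input is deformation invariance of plurigenera in each of the three settings:
\begin{itemize}
\item Siu's theorem for smooth projective families;
\item for flat families with canonical singularities and uncountably many fibers of general type, the Rao--Tsai result \cite{RT2};
\item for smooth K\"ahler families of threefolds, Theorem \ref{0DefVol3fold}, proved later in the paper. We take its plurigenus part as given; since it is established independently via the K\"ahler threefold MMP, there is no circularity.
\end{itemize}
In all three cases $K_X$ is $\mathbb{Q}$-Cartier. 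In the singular case, Proposition \ref{DefCanSing} gives $X$ canonical after shrinking $S$, and adjunction gives $\omega_X^{[m]}|_{X_t}\cong\omega_{X_t}^{[m]}$ for $m$ divisible by the Cartier index, because $X_t$ is a Cartier divisor with $K_{X_t}=(K_X+X_t)|_{X_t}$ and $X_t\sim 0$ locally over $S$.

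\textbf{Step 1 (fixing $m$).} Since $K_{X_0}$ is semi-ample, pick $m$, divisible by the Cartier index, such that $L:=\mathcal{O}_X(mK_X)$ is a line bundle and $L|_{X_0}=\mathcal{O}_{X_0}(mK_{X_0})$ is globally generated.

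\textbf{Step 2 (constant sections).} Invariance of plurigenera gives $h^0(X_t,L|_{X_t})=P_m(X_t)$ constant in $t$. In the Rao--Tsai case this may hold only after shrinking $S$ around $0$, which is enough here.

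\textbf{Step 3 (openness of generation).} Corollary \ref{opensemiample} now applies: $f$ is flat and proper, $L|_{X_0}$ is globally generated, and $h^0$ is constant. It produces a dense Zariski open $U\subset S$ such that $L|_{X_t}=mK_{X_t}$ is globally generated for $t\in U$. Hence $K_{X_t}$ is semi-ample on the general fiber.

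\textbf{Step 4 (the ``in particular'' statement).} Now let $f$ be a smooth projective family or a smooth K\"ahler threefold family. Constancy of $h^0(X_t,mK_{X_t})$ on all of $S$ means, by Remark \ref{Grauertbasechange}, that base change in degree $0$ holds for every $s$. Fix any $s\in S$ and consider the proper analytic subset $\mathrm{Bs}(L/S)$, whose image $W=f(\mathrm{Bs}(L/S))$ is closed in $S$.

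The obstacle is showing $W=\emptyset$, since a priori it may contain isolated special points. To handle this I would not insist on the single exponent $m$. Semi-ampleness is a property of the section ring, so I would vary the multiple and use that the fibers form a connected family. For each point $s\in S$ where some $K_{X_s}$ is known to be semi-ample, Steps 1--3 applied with $s$ as the base point give a neighborhood where $K$ is semi-ample. The set of such $s$ is then nonempty and open.

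For closedness I would argue as follows.
\begin{itemize}
\item On the open set, the Iitaka fibrations $X_t\to \mathrm{Proj}\,R(K_{X_t})$ glue to a relative Iitaka map.
\item Invariance of all plurigenera and base change make $\bigoplus_k f_*\mathcal{O}(kmK_X)$ a locally free graded algebra, finitely generated because generation holds at $0$.
\item Hence $K_X$ is $f$-semi-ample over a neighborhood of each limit point, since Proposition \ref{Bslocus} identifies fiberwise and relative base loci once base change holds.
\end{itemize}
Connectedness of $S$ then gives semi-ampleness on every fiber. In the threefold case an alternative is abundance \cite{CHP16,DO24}. $K_{X_t}$ is nef for all $t$, by the openness of nefness of adjoint classes (Section \ref{Section: ExtMMP}) together with invariance of volumes and plurigenera, so abundance directly yields semi-ampleness of every fiber; Step 3 then gives relative semi-ampleness.
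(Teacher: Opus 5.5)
Your Steps 1--3 are exactly the paper's argument for the first assertion: invariance of plurigenera in each setting, followed by Corollary~\ref{opensemiample}. You also correctly note that the Zariski open set produced there contains the base point, which gives openness of the semi-ample locus.

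The gap is in the closedness step of the ``in particular'' part. Proposition~\ref{Bslocus}, with base change (which you do have everywhere), only identifies $\mathrm{Bs}(kmK_X/S)\cap X_s$ with $\mathrm{Bs}(kmK_{X_s})$. It says nothing about either set being empty at a limit point $s_\infty$ of the semi-ample locus. Concretely, each $W_k:=f(\mathrm{Bs}(kmK_X/S))$ is a closed discrete subset of $S$. Nothing in your argument prevents some $s_\infty$ from lying in every $W_k$ while all nearby points lie in none.

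Finite generation does not help here, for two reasons:
\begin{itemize}
\item You obtain it near $0$, not near $s_\infty$.
\item Even finite generation of $R(X_{s_\infty},K_{X_{s_\infty}})$, which holds for every smooth projective variety, does not imply semi-ampleness, since semi-ampleness forces nefness.
\end{itemize}
Gluing Iitaka fibrations over the open set likewise gives no information at $s_\infty$.

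The missing ingredient is nefness of $K_{X_s}$ for \emph{every} $s$, i.e.\ deformation closedness of nefness. The paper imports this from \cite[Theorems 1.1, 1.10]{LRW25}. Once $K_X$ is $f$-nef and semi-ample on the general fiber, Kawamata's relative freeness theorem \cite{Kawa85} gives $f$-semi-ampleness of $K_X$. Proposition~\ref{Bslocus} then yields semi-ampleness of every fiber.

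Your threefold alternative has the right structure: nefness everywhere plus abundance for K\"ahler threefolds would also finish. But the nefness claim is unsupported.
\begin{itemize}
\item Proposition~\ref{Extbigandnef} and Remark~\ref{Extbigandnef3fold} give \emph{openness} of nef-and-bigness around a fiber where the adjoint class is already nef and big. You need the converse direction: a limit of fibers with nef $K$ has nef $K$. Moreover $K$ need not be big here.
\item Constancy of volumes and plurigenera does not imply nefness, since both are bimeromorphic invariants.
\item If you tried to prove closedness yourself by extending a $K_{X_{s_\infty}}$-negative contraction via Remark~\ref{Remark:ExtMMP}, you would still need the extended contraction to be nontrivial on nearby fibers. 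This is not automatic for divisorial or flipping contractions.
\end{itemize}

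Finally, Corollary~\ref{opensemiample} only produces a dense Zariski open set. To reach all of $S$ you must apply it at every base point, and that is possible only once every fiber is already known to be semi-ample.
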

\begin{proof}
By \cite[Theorem 1.2]{RT2} and \cite[Corollary 1.12]{LRW25} (or Theorem \ref{DefVol3fold} here for the K\"ahler case), the deformation invariance of plurigenera holds for each family in the assumption. Then Corollary \ref{opensemiample} implies that the canonical sheaf of a general fiber of $f$ is semi-ample (and hence nef).

In particular, when $f$ is a smooth projective family or a smooth K\"ahler family of threefolds, \cite[Theorems 1.1, 1.10]{LRW25} gives the deformation closedness of nefness. Thus, the canonical divisors of all fibers of $f$ are nef. Then Kawamata's relative freeness theorem \cite{Kawa85} (also \cite[Theorem 5.8]{Naka85} and \cite[Theorem 1]{FujinoKawa}) yields that $K_X$ is $f$-semi-ample. By Proposition \ref{Bslocus}, this is equivalent to say that the canonical divisor of any fiber of $f$ is semi-ample.  
\end{proof}

\section{Extension of the analytic MMP and stability of nefness and bigness}\label{Section: ExtMMP}
The relative minimal model program is a powerful tool in the study of deformation problems. The purpose of this section is to study the behavior of the minimal model program under deformations. The definitions below follow \cite{DHY23}. 
\begin{definition}[Divisorial contractions]\label{DefinitionDivisorial}
Let $(X,B+\boldsymbol{\beta})$ be a generalized pair.
A $(K_X+B+\boldsymbol{\beta}_X)$-\emph{divisorial contraction} $f:X\to Z$ is a bimeromorphic morphism with the relative Picard number $\rho(X/Z) = 1$ and exceptional locus being an (irreducible) divisor, such that $-(K_X+B+ \boldsymbol{\beta}_X)$ is K\"ahler over $Z$.
\end{definition}

\begin{definition}[Small bimeromorphic maps]
Let $X$ and $Y$ be complex analytic varieties. A bimeromorphic map $\phi:X \dashrightarrow Y$ is a \emph{bimeromorphic contraction} if there exists a resolution $p: W \longrightarrow X$ and $q: W \longrightarrow Y$ of $\phi$ such that $p$ and $q$ are contraction morphisms and every $p$-exceptional divisor is $q$-exceptional. We say that $\phi$ is a \emph{small bimeromorphic map} if both $\phi$ and $\phi^{-1}$ are bimeromorphic contractions. 
\end{definition}

\begin{definition}[Flipping contractions, flips]\label{DefinitionFlip} Let $(X,B+\boldsymbol{\beta})$ be a generalized pair.
A $(K_X+B+\boldsymbol{\beta}_X)$-\emph{flipping contraction} $f: X \rightarrow Z$ is a small bimeromorphic morphism such that $\rho(X / Z)=1$ and $-\left(K_X+B+ { \boldsymbol{\beta}}_X\right)$ is K\"ahler over $Z$. The corresponding \emph{flip} (if it exists) is a small bimeromorphic morphism $f^{+}: X^{+} \rightarrow$ $Z$ such that $\rho\left(X^{+} / Z\right)=1$ and $K_{X^{+}}+B^{+}+ \boldsymbol{\beta}_{X^{+}}$ is  K\"ahler over $Z$, where $B^{+}$ is the strict transform of $B$.
\end{definition}
\begin{remark}
By \cite[Theorem 5.12]{DH24}, the flipping contraction is a locally projective morphism. When the transcendental class satisfies $ { \boldsymbol{\beta}}_X = 0 $ in Definitions \ref{DefinitionDivisorial} and \ref{DefinitionFlip}, the divisorial and flipping contractions are projective morphisms.
\end{remark}
\begin{definition}[{Mori fiber spaces of a generalized K\"ahler pair}]\label{MFSKahler}
If $(X / S, B+ { \boldsymbol{\beta}})$ is a generalized dlt pair over a relatively compact analytic variety $S$, then we say that $f:(X/S,B + { \boldsymbol{\beta}})\to Z/S$ is a \emph{Mori fiber space of $(X,B+  { \boldsymbol{\beta}})$} over $S$ if $f$ is a contraction morphism associated to a $(K_X+B+ { {\bbeta}_X})$-negative extremal face such that $-(K_X+B+ { {\bbeta}_X})$ is $f$-K\"ahler and $\dim Z < \dim X$.
\end{definition}

\begin{remark}
Some references require the additional assumption that relative Picard number satisfies $ \rho(X / S) = \rho(Z / S) + 1 $ (e.g., \cite{DHY23}). For simplicity and without causing any problem, we omit this condition in our discussion.
\end{remark}
\subsection{Extension of the analytic MMP to nearby fibers} By \cite{KM92}, we have the crucial extension result: 
\begin{theorem}[{Extension of a contraction morphism, \cite[Proposition (11.4)]{KM92}}]\label{ExtendCont}
Let $\phi_0: X_0 \to X_{0}'$ be a proper surjective morphism of normal compact complex analytic spaces. Assume that $(\phi_0)_* \mathcal{O}_{X_0} = \mathcal{O}_{X_0'}$ and $R^1(\phi_0)_* \mathcal{O}_{X_0} = 0$. If $f:X\to S$ is a small deformation of $X_0$, then $\phi_0$ can be extended to a contraction morphism $\phi: X\to X'$ over $S$:
\[\begin{tikzcd}
	{X_0} && X \\
	& {X'_0} & {} & {X'} \\
	{\{0\}} && {S.}
	\arrow[hook, from=1-1, to=1-3]
	\arrow["{\phi_0}", from=1-1, to=2-2]
	\arrow[from=1-1, to=3-1]
	\arrow["\phi", from=1-3, to=2-4]
	\arrow[from=1-3, to=3-3]
	\arrow[hook, no head, from=2-2, to=2-3]
	\arrow[from=2-2, to=3-1]
	\arrow[from=2-3, to=2-4]
	\arrow[from=2-4, to=3-3]
	\arrow[hook, from=3-1, to=3-3]
\end{tikzcd}\]
\end{theorem}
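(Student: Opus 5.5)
The plan is to follow the classical argument for the relative Stein factorization, adapted to the analytic setting. First we make the target deformation explicit. Let $\mathcal{O}_X$ be the structure sheaf of the total space and $f:X\to S$ the small deformation with $X_0=f^{-1}(0)$. The natural candidate for $X'$ is a space over $S$ whose fibre over $0$ is $X_0'$. We obtain it by deforming $X_0'$ as a ringed space, taking the structure sheaf to be pushed forward from $X$.

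Concretely, I would work on the topological space $|X_0'|$ and consider the sheaves of rings $\mathcal{A}_n:=(\phi_0)_*\bigl(\mathcal{O}_X/\mathfrak{m}_0^{n+1}\mathcal{O}_X\bigr)$, defined using the homeomorphism of $|X_n|:=|X_0|$. Here $X_n$ is the $n$-th infinitesimal neighbourhood of $X_0$ in $X$. The first step is to show that $(|X_0'|,\mathcal{A}_n)$ is a flat infinitesimal deformation of $X_0'$ over $\mathcal{O}_{S,0}/\mathfrak{m}_0^{n+1}$, and that these deformations are compatible in $n$. For this we use the exact sequences
\[
0\to \mathcal{O}_{X_0}\otimes \mathfrak{m}_0^{n}/\mathfrak{m}_0^{n+1}\to \mathcal{O}_{X_n}\to \mathcal{O}_{X_{n-1}}\to 0,
\]
which come from flatness of $f$, and then push them forward by $\phi_0$. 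The hypothesis $R^1(\phi_0)_*\mathcal{O}_{X_0}=0$ gives surjectivity $\mathcal{A}_n\to\mathcal{A}_{n-1}$, with kernel $(\phi_0)_*\mathcal{O}_{X_0}\otimes \mathfrak{m}_0^n/\mathfrak{m}_0^{n+1}=\mathcal{O}_{X_0'}\otimes\mathfrak{m}_0^n/\mathfrak{m}_0^{n+1}$. The local flatness criterion then shows that $\mathcal{A}_n$ is flat with $\mathcal{A}_0=\mathcal{O}_{X_0'}$. By induction, and again using $R^1(\phi_0)_*\mathcal{O}_{X_0}=0$, we also get $R^1(\phi_0)_*\mathcal{O}_{X_n}=0$, so the induction continues.

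The second step passes from this formal deformation to an actual analytic space $X'\to S$. I would invoke the existence of a semi-universal (Kuranishi/Grauert) deformation of the compact space $X_0'$. It produces a compatible system of morphisms from $\operatorname{Spec}\mathcal{O}_{S,0}/\mathfrak{m}^{n+1}$ into the base $\mathrm{Def}(X_0')$. By Artin approximation, applied in its analytic form due to Artin, this formal curve can be approximated to arbitrary order by a convergent map from the germ $(S,0)$. We then pull back the semi-universal family to get $X'\to S$.

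The third step constructs the map $\phi$. Here the key tool is the relative analytic version of Grothendieck's existence and comparison theorems, in Bingener's form. Using it, morphisms $X\to X'$ over $S$ inducing given morphisms on infinitesimal neighbourhoods exist. The maps $\phi_n:X_n\to X_n'$ are given by $\mathcal{A}_n\to(\phi_0)_*\mathcal{O}_{X_n}$. Alternatively, I would avoid approximation altogether by defining $X'$ directly as the relative Stein-type space $\mathrm{Specan}$. That is, I would use Grauert's direct image theorem on $f_*$, applied locally over small Stein opens of $X_0'$ whose preimages are holomorphically convex, and glue.

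I expect the main obstacle to be this convergence/algebraization step. The formal data must be matched with actual analytic families, and the resulting $\phi$ must be shown to be compatible with the $\phi_n$. This compatibility is exactly where the comparison theorem for proper maps is needed.

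Finally, we check that $\phi_*\mathcal{O}_X=\mathcal{O}_{X'}$ near $X_0'$. This follows from the isomorphisms $\varprojlim\mathcal{A}_n\cong\varprojlim(\phi_0)_*\mathcal{O}_{X_n}$ together with the theorem on formal functions. Shrinking $S$ afterwards, $\phi$ is a contraction extending $\phi_0$.
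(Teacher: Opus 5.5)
The paper gives no proof of this statement; it only cites Koll\'ar--Mori \cite[Proposition (11.4)]{KM92}. So I judge your argument on its own. Your first step is correct and is the real content. Because $R^1(\phi_0)_*\mathcal{O}_{X_0}=0$, the sheaves $\mathcal{A}_n=(\phi_0)_*\mathcal{O}_{X_n}$ form a compatible system of flat deformations of $X_0'$ over $\mathcal{O}_{S,0}/\mathfrak{m}_0^{n+1}$.

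The gap is the passage from formal to convergent, where your Steps 2 and 3 do not fit together.
\begin{itemize}
\item Artin approximation only gives a convergent $\delta\colon (S,0)\to \mathrm{Def}(X_0')$ that agrees with the formal classifying map modulo $\mathfrak{m}_0^{N+1}$ for some finite $N$. For $n>N$, the $n$-th infinitesimal neighbourhood of $X_0'$ in $X'=\delta^*\mathcal{Y}$ need not be $(|X_0'|,\mathcal{A}_n)$. Then the $\phi_n$ do not give a compatible system $X_n\to X_n'$, so there is no formal morphism to feed into an existence theorem.
\item No such existence theorem holds analytically anyway. The comparison theorems of Grauert and Bingener identify completions of direct images with inverse limits. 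But a compatible family of morphisms over $\mathcal{O}_{S,0}/\mathfrak{m}_0^{n+1}$ need not converge: a divergent series $g(t)$ gives compatible automorphisms $z\mapsto z+g(t)$ of $\mathbb{P}^1\times S_n$ that come from no holomorphic map.
\item Your final check via $\varprojlim\mathcal{A}_n$ assumes the same all-order agreement, so it has the same defect.
\item The ``Specan'' alternative is not an argument. The sheaf $f_*$ lives on $S$, and before $\phi$ exists there is no proper map over which to localize on $X_0'$. Holomorphic convexity of neighbourhoods of $\phi_0^{-1}(V)$ in $X$ is also not known in this generality.
\end{itemize}

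The fix is to approximate the deformation and the morphism at the same time; afterwards only order-zero agreement is needed.
\begin{itemize}
\item Let $\mathcal{Y}\to D$ be the semi-universal deformation of $X_0'$. Let $H\to S\times D$ be the open part of the relative Douady space of $X\times\mathcal{Y}$ over $S\times D$ that parametrizes graphs of morphisms.
\item Your Step 1, together with the relative versality of $\mathcal{Y}$, gives a formal section of $H\to S$ through $[\Gamma_{\phi_0}]$. Versality is what lets you extend both the classifying map and the identification $\mathcal{Y}\times_D S_n\cong(|X_0'|,\mathcal{A}_n)$ from level $n$ to level $n+1$.
\item Artin's theorem then gives a convergent section. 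This is a flat $X'\to S$ together with an $S$-morphism $\phi\colon X\to X'$ such that $\phi|_{X_0}=\phi_0$.
\item Alternatively, Step 1 says the forgetful map from a semi-universal deformation of the map $\phi_0$ to $\mathrm{Def}(X_0)$ is formally smooth, hence smooth. So the classifying map of $X\to S$ lifts.
\item Now push forward $0\to\mathcal{O}_X\xrightarrow{t}\mathcal{O}_X\to\mathcal{O}_{X_0}\to 0$. Since $R^1(\phi_0)_*\mathcal{O}_{X_0}=0$, you get $R^1\phi_*\mathcal{O}_X=t\,R^1\phi_*\mathcal{O}_X$. By Nakayama, $R^1\phi_*\mathcal{O}_X=0$ near $X_0'$, hence $\phi_*\mathcal{O}_X/t\,\phi_*\mathcal{O}_X\cong\mathcal{O}_{X_0'}$.
\item So $\mathcal{O}_{X'}\to\phi_*\mathcal{O}_X$ is an isomorphism modulo $t$ between $t$-torsion-free sheaves, hence an isomorphism near $X_0'$. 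This uses only $\phi|_{X_0}=\phi_0$, so the finite-order nature of Artin approximation does no harm. Shrinking $S$ then gives the required contraction.
\end{itemize}
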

\begin{remark}
The result does not hold without the condition  `$R^1 (\phi_0)_* \mathcal{O}_{X_0} = 0$'. Consider the product of abelian varieties $ \pi_2 : A_1 \times A_2 \to A_2 $; note that
\[
R^1 (\pi_2)_* \mathcal{O}_{A_1 \times A_2} \neq 0,
\]
since $\dim H^1(A_1,\mathcal{O}_{A_1}) = \dim A_1  >0$. In this case, a general deformation of $ A_1 \times A_2 $ is a simple abelian variety $ \mathcal{A} $, and there cannot exist any non-trivial morphism to a lower-dimensional abelian variety $ \mathcal{A}\to \mathcal{A}' $.
\end{remark}


We now apply Theorem \ref{ExtendCont} to the study of the minimal model program under deformations. Our first result concerns one of its building blocks: Mori fiber spaces under deformations.

\begin{proposition}[Extension of Mori fiber spaces]\label{extFano}
Let $f:X\to S$ be a flat proper morphism from a generalized pair $(X,B+ { \boldsymbol{\beta}})$  onto a smooth, connected, and relatively compact curve $S$. Fix a point $0\in S$ and assume that the boundary divisor $B$ does not contain the fiber $X_0$, and that $(X_0,B_0 + { \boldsymbol{\beta}}_0)$ be a generalized klt pair, where $(K_X+X_0+B+\bbeta _X)|_{X_0}=K_{X_0}+B_0+\bbeta _0$. 

If there is a $(K_{X_0}+B_0+ \bbeta_0)$-Mori fiber space $\phi_0:X_0 \to Z_0$ (cf. Definition \ref{MFSKahler}), then the family admits a relative Mori fiber space $\phi: X\to Z /U$ over a neighborhood $U \subset S$ of $0$.
\end{proposition}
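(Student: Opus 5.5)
The plan is to use Theorem \ref{ExtendCont} to extend $\phi_0$, and then show that the extended morphism is a Mori fiber space over a neighborhood of $0$.

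\emph{Vanishing for $\phi_0$.} Since $\phi_0$ is a contraction, $(\phi_0)_*\mathcal{O}_{X_0}=\mathcal{O}_{Z_0}$. For the higher direct images, observe that $-(K_{X_0}+B_0+\bbeta_0)$ is $\phi_0$-K\"ahler, hence $\phi_0$-nef and $\phi_0$-big; here $\phi_0$-bigness is automatic because the class is K\"ahler on every fiber. Therefore the divisor $D=0$ satisfies the condition that $D-(K_{X_0}+B_0+\bbeta_0)$ is relatively nef and big over $Z_0$. Proposition \ref{KVVanishing}, applied to the gklt pair $(X_0,B_0+\bbeta_0)$ and the morphism $\phi_0:X_0\to Z_0$ (working locally over relatively compact opens of $Z_0$), gives $R^i(\phi_0)_*\mathcal{O}_{X_0}=0$ for all $i>0$, in particular for $i=1$.

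\emph{Extension and its properties.} Since $f$ is flat, $X\to S$ is a small deformation of $X_0$. Theorem \ref{ExtendCont} therefore produces a contraction $\phi:X\to Z$ over $S$ (after shrinking $S$) with $\phi|_{X_0}=\phi_0$ and $Z_0$ the fiber of $Z$ over $0$. I would then check the following, shrinking $S$ to a neighborhood $U$ of $0$ as needed.
\begin{enumerate}
\item[(i)] $\dim Z<\dim X$. By upper semicontinuity of fiber dimension, the fibers of $\phi$ have dimension at least $\dim X_0-\dim Z_0>0$ near $X_0$. Hence $\dim Z_t<\dim X_t$, and $\dim Z=\dim Z_0+1<\dim X$.
\item[(ii)] Relative positivity. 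By Proposition \ref{DefKLT}, $(X,B+\bbeta)$ is gklt near $X_0$. Since $X_0$ is a Cartier fiber, $-(K_X+B+\bbeta_X)|_{X_0}=-(K_{X_0}+B_0+\bbeta_0)$, which is $\phi_0$-K\"ahler. Proposition \ref{DefAmple}(2) then shows that $-(K_X+B+\bbeta_X)|_{X_t}$ is $\phi_t$-K\"ahler for $t$ near $0$. To upgrade this fiberwise statement to $-(K_X+B+\bbeta_X)$ being $\phi$-K\"ahler over a neighborhood of $X_0$, I would use openness of the K\"ahler condition: a form that is positive on the fibers of $\phi$ near the compact set $X_0$ becomes K\"ahler after adding $\phi^*$ of a K\"ahler form on $Z$.
\item[(iii)] Extremality. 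I would show $\phi$ contracts a $(K_X+B+\bbeta_X)$-negative extremal face relative to $S$. The curves contracted by $\phi$ lie in fibers of $f$, and over $0$ they span the face contracted by $\phi_0$. Negativity follows from (ii). That the contracted classes form a face is witnessed by the supporting class $\phi^*\omega_Z$, which vanishes exactly on $\phi$-contracted curves.
\end{enumerate}

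\emph{Main obstacle.} I expect step (ii) to be the hardest: passing from fiberwise $\phi_t$-K\"ahlerness to a genuine relatively K\"ahler class over $Z$, and making sure $Z$ is K\"ahler, i.e. that the extended contraction stays in the K\"ahler category. My first approach would be to note that $X_0$ is projective and, when $\bbeta_0$ is a divisor class, the contraction $\phi_0$ is projective. In that case a $\phi_0$-ample line bundle would deform, and Proposition \ref{DefAmple}(1) would give $\phi$-ampleness directly. Failing that, I would fall back on the openness argument for relative K\"ahler classes near a compact fiber.
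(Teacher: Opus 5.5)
Your proposal matches the paper's proof: Proposition~\ref{KVVanishing} gives $R^1(\phi_0)_*\mathcal{O}_{X_0}=0$, Theorem~\ref{ExtendCont} extends $\phi_0$ to $\phi\colon X\to Z/U$, and a dimension count shows $\phi$ is of fiber type. (The paper phrases this as $\dim Z_t\le\dim Z_0$ by upper semicontinuity for $Z\to S$. Note that your lower bound on the fibers of $\phi$ really comes from $\dim X-\dim Z=\dim X_0-\dim Z_0$, not from semicontinuity of the fibers of $\phi$, which points the other way.) Finally, Proposition~\ref{DefAmple}(2) gives that $-(K_{X_t}+B_t+\bbeta_t)$ is $\phi_t$-K\"ahler for $t$ near $0$.

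The paper stops at this fiberwise statement. Your extra steps (upgrading to a genuinely $\phi$-K\"ahler class with $Z$ K\"ahler, and checking extremality) therefore go beyond it, and the obstacle you flag is not addressed there either. Note also that your projective fallback is unavailable in general, since $X_0$ is not assumed projective in this proposition.
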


\begin{proof}
Since $\phi_0 : X_0\to Z_0$ is a $(K_{X_0} +B_0+ \bbeta_0)$-Mori fiber space, $-(K_{X_0}+ B_0 + { {\bbeta}}_0)$ is $\phi_0$-K\"ahler. 
Therefore, by Kawamata--Viehweg vanishing theorem (Proposition \ref{KVVanishing}), $R^1(\phi_0)_*(\mathcal{O}_{X_0}) = 0$. 
Then, Theorem \ref{ExtendCont} implies that the contraction morphism extends to $\phi:X\to Z/U$ for some $U \subset S$. 
We claim that,  for $t$ near 0, $\phi_t:X_t \to Z_t$ are also  contractions of fiber type. Indeed, by Chevalley's upper semi-continuouity theorem,  $\dim Z_t \le \dim Z_0$. 
Moreover, since $f:X\to S$ is flat, $\dim X_0 = \dim X_t$. Therefore, $$ \dim X_t  - \dim Z_t \ge \dim X_0 - \dim Z_0 >0.$$
By Proposition \ref{DefAmple}, $-(K_{X_t}+B_t +  { {\bbeta}}_t )$ is $\phi_t$-K\"ahler for $t$ near $0$. 
\end{proof}

\begin{remark}
Note that in Proposition \ref{extFano}, the existence of the generalized pair $(X,B+\bbeta )$ is crucial; without it, the deformation stability of Fano type structures remains an open problem (cf. \cite[Question 1.1]{CLZ25}).
\end{remark}

The next result concerns two more building blocks of the MMP: divisorial and flipping contractions under deformations.

\begin{proposition}[Extension of divisorial and flipping contractions]\label{ContInMMP}
Let $f:X\to S$ be a flat proper morphism from a generalized pair $(X,B+ { \boldsymbol{\beta}})$ onto a smooth, connected, and relatively compact curve $S$. Fix a point $0\in S$ and assume that the boundary divisor $B$ does not contain the fiber $X_0$, and that $(X_0,B_0 + { \boldsymbol{\beta}}_0)$ is a generalized klt pair, where $(K_X+X_0+B+\bbeta _X)|_{X_0}=K_{X_0}+B_0+\bbeta _0$. 

If there exists a divisorial/flipping contraction $\phi_0:X_0 \to Z_0$ (cf. Definitions \ref{DefinitionDivisorial} and \ref{DefinitionFlip}), then the divisorial/flipping contraction can be extended to a bimeromorphic contraction $\phi:X\to Z/U$ over a neighborhood $U \subset S$ of $0$. If $\phi _0$ is a flipping contraction, then $\phi$ is small.
\end{proposition}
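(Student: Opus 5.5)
The plan mirrors the proof of Proposition \ref{extFano}. Let $\phi_0:X_0\to Z_0$ be the divisorial or flipping contraction. By definition it is bimeromorphic and $-(K_{X_0}+B_0+\bbeta_0)$ is $\phi_0$-K\"ahler. Since $Z_0$ is normal and $\phi_0$ is bimeromorphic with connected fibers, $(\phi_0)_*\mathcal{O}_{X_0}=\mathcal{O}_{Z_0}$.

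Next apply Proposition \ref{KVVanishing} to the generalized klt pair $(X_0,B_0+\bbeta_0)$ over the base $Z_0$, with $D=0$. Then $D-(K_{X_0}+B_0+\bbeta_0)=-(K_{X_0}+B_0+\bbeta_0)$ is $\phi_0$-K\"ahler, hence relatively nef and big over $Z_0$. Since $\phi_0$ is bimeromorphic, bigness over $Z_0$ is automatic. This gives $R^1(\phi_0)_*\mathcal{O}_{X_0}=0$. If necessary, restrict to a relatively compact neighborhood of the exceptional locus; this is harmless because the vanishing is local on $Z_0$. Theorem \ref{ExtendCont} then extends $\phi_0$ to a contraction $\phi:X\to Z$ over a neighborhood $U$ of $0$, with $\phi_*\mathcal{O}_X=\mathcal{O}_Z$ and $\phi|_{X_0}=\phi_0$. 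The pair $(X,B+\bbeta)$ being generalized klt near $X_0$ after shrinking, by Proposition \ref{DefKLT}, is not needed for this step.

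It remains to see that $\phi$ is bimeromorphic, and small in the flipping case.
\begin{itemize}
\item \textbf{Bimeromorphic.} Since $\dim Z_0=\dim X_0$ and $Z_0\subset Z$ is a Cartier fiber over $S$, we get $\dim Z\ge\dim X$. A contraction with connected fibers is therefore generically finite with connected fibers, hence bimeromorphic.
\item \textbf{Small in the flipping case.} Let $\mathrm{Exc}(\phi)$ be the exceptional locus, a closed analytic subset of $X$, and let $F=\{x\in X:\dim_x\phi^{-1}(\phi(x))\ge1\}$, which is closed by upper semicontinuity of fiber dimension. Suppose $\phi$ contracted a divisor $E\subset X$. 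Either $E$ is vertical, so that $E$ is a component of some fiber $X_t$ and $X_t$ is irreducible for general $t$, or $E$ dominates $U$. If $E=X_0$ this is impossible, since $\phi_0$ is bimeromorphic. If $E$ dominates $U$, then $E\cap X_0$ has pure dimension $\dim X_0-1$ and lies in $\mathrm{Exc}(\phi)\cap X_0\subset \mathrm{Exc}(\phi_0)$. The last containment holds because fibers of $\phi_0$ are fibers of $\phi$ restricted to $X_0$, so a point where $\phi$ is not locally an isomorphism has a positive-dimensional fiber of $\phi_0$ through it, using Zariski's main theorem on the normal $Z$. This contradicts $\mathrm{codim}\,\mathrm{Exc}(\phi_0)\ge2$.
\end{itemize}

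The main obstacle I expect is making this last containment precise. One must show that the exceptional locus of $\phi$ meets $X_0$ only inside $\mathrm{Exc}(\phi_0)$, rather than $\phi$ failing to be an isomorphism along $X_0$ for a non-finiteness reason. My approach is to use Zariski's main theorem: away from positive-dimensional fibers, $\phi$ is finite and bimeromorphic onto a normal space, hence an isomorphism. Normality of $Z$ follows from $\phi_*\mathcal{O}_X=\mathcal{O}_Z$ with $X$ normal. After this, shrinking $U$ finishes the proof.
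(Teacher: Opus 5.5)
Your proposal is correct and follows essentially the paper's route. Kawamata--Viehweg vanishing (Proposition \ref{KVVanishing}) gives $R^1(\phi_0)_*\mathcal{O}_{X_0}=0$, Theorem \ref{ExtendCont} extends $\phi_0$, and smallness follows from a dimension count using that $\mathrm{Ex}(\phi)\cap X_0=\mathrm{Ex}(\phi_0)$ has codimension at least $2$. The differences are minor: the paper bounds $\dim \mathrm{Ex}(\phi_t)$ fiberwise via Chevalley's upper semicontinuity, whereas you rule out a contracted divisor $E$ because $E\cap X_0$ would have pure dimension $\dim X_0-1$, and you spell out the bimeromorphicity that the paper leaves to ``similar arguments''.
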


\begin{proof}
Since $\phi_0 :X_0 \to Z_0$ is a divisorial or flipping contraction, 
$- (K_{X_0} + B_0 +  {{\bbeta}}_0)$ is $\phi_0$-K\"ahler. It follows from Kawamata--Viehweg vanishing (Proposition \ref{KVVanishing}) that 
$R^1 (\phi_0)_*(\mathcal{O}_{X_0}) = 0$. By Theorem \ref{ExtendCont}, after shrinking $S$ to some open neighborhood $U\subset S$, the flipping or divisorial contraction extends to a contraction 
$\phi : X \to Z$ over $U$.

Next, we show that if the central fiber admits a flipping contraction $\phi_0:X_0 \to Z_0$, then it extends to small bimeromorphic morphisms for the nearby fibers. Set
\[
\mathrm{Ex}(\phi_t) := \{ x \in X_t \mid \dim \phi_t^{-1}(\phi_t(x)) > 0 \}.
\]
Since $\phi_0$ is an isomorphism in codimension $2$, 
$\mathrm{codim}_{X_0}(\mathrm{Ex}(\phi_0)) \ge 2$. Consider the restriction of the morphism 
$\mathrm{Ex}(\phi) \to S$. By Chevalley’s upper semi-continuity theorem,
\[
\dim \mathrm{Ex}(\phi)|_{X_t} \le \dim \mathrm{Ex}(\phi)|_{X_0}.
\]
Thus,
\[
\dim \mathrm{Ex}(\phi_t) \le \dim \mathrm{Ex}(\phi_0) \le \dim X_0 - 2 = \dim X_t - 2.
\]
Therefore on the fibers near $0$, $\phi_t : X_t \to Z_t$ are small morphisms and $-(K_{X_t}+B_t+\boldsymbol{\beta} _t)$ is relatively K\"ahler. 

By similar arguments, we can show that if the central fiber is a divisorial contraction, then the nearby fibers are also bimeromorphic contractions (which need not be divisorial contractions, as the example below indicates).
\end{proof}
\begin{example}
In \cite[Remark 4.5]{FH11}, de Fernex--Hacon construct a family 
$ f: X \to \mathbb{A}^1 $ of quadric surfaces degenerating to the Hirzebruch surface 
$ X_0 = \mathbb{F}_2 $. A line in one of the two rulings on the general fiber sweeps out a divisor 
$ R $ on $ X $, which restricts on the central fiber as  
\[
R|_{X_0} = E + F,
\]
where $ F $ is a fiber of $ \mathbb{F}_2 \to \mathbb{P}^1 $ and $ E $ is the $(-2)$-curve on $ \mathbb{F}_2 $. They then construct a $(K_X + \varepsilon R)$-negative contraction $ X \to Z $, which restricts to a divisorial contraction $ X_0 \to Z_0 $ but is an isomorphism 
$ X_t \to Z_t $ for $ t \neq 0 $. This example shows that, in some situations, the deformation of a divisorial contraction need not remain a divisorial contraction on nearby fibers. However, under some reasonable assumptions, \cite[Theorem 5.7]{FH11} shows that a divisorial contraction deforms to a family of divisorial contractions. 
\end{example}

The divisors contracted by a negative contraction are contained in the support of the negative part of the Boucksom--Zariski decomposition.
\begin{lemma}[{\cite[Theorem A.11]{DHY23}}]\label{ZariskiCont}
Let $\phi: X \dashrightarrow X^{\prime}$ be a bimeromorphic contraction of normal compact K\"ahler varieties. Let $\left(X, B+\boldsymbol{\beta}\right)$ and $\left(X^{\prime}, B^{\prime}+\boldsymbol{\beta}'\right)$ be generalized dlt pairs such that $K_X+B+\boldsymbol{\beta}$ is pseudo-effective and $B^{\prime}+\boldsymbol{\beta}'=\phi_*\left(B+\boldsymbol{\beta}\right)$.

If $\phi$ is $(K_X+B+\boldsymbol{\beta})$-negative (which holds for steps of the MMP), then the divisors contracted by $\phi$ are contained in the support of negative part of the Boucksom--Zariski decomposition $N\left(K_X+B+\boldsymbol{\beta}\right)$, that is $\mathrm{Ex}(\phi)\subset \operatorname{Supp} N(K_X+B+ \boldsymbol{\beta})$, and moreover, $$\phi_*(N(K_X+B+ \bbeta)) = N(K_{X'}+ B' + \bbeta ').$$
\end{lemma}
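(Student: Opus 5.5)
The plan is to reduce everything to a common resolution of $\phi$ and use the negativity lemma together with the characterization of the negative part via minimal multiplicities. Let $p\colon W\to X$ and $q\colon W\to X'$ be a common resolution. Since $\phi$ is $(K_X+B+\boldsymbol{\beta})$-negative, we can write
\[
p^*(K_X+B+\boldsymbol{\beta}_X)=q^*(K_{X'}+B'+\boldsymbol{\beta}_{X'})+E,
\]
where $E\ge 0$ is $q$-exceptional. Moreover, $\operatorname{Supp}E$ contains the strict transform of every $\phi$-exceptional divisor; this is the usual strict negativity, coming from the negativity lemma applied to the K\"ahler-over-$Z$ steps. The class $K_X+B+\boldsymbol{\beta}$ is pseudo-effective, so by Proposition~\ref{PushPSEF} the class $\alpha':=K_{X'}+B'+\boldsymbol{\beta}_{X'}$ is pseudo-effective as well, and both negative parts are defined by Remark~\ref{NegZariski}.

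The key step is the identity
\[
N\bigl(q^*\alpha'+E\bigr)=N(q^*\alpha')+E ,
\]
which should hold on $W$ for any effective $q$-exceptional $E$. For ``$\le$'' we take a current $T$ with minimal singularities in $q^*\alpha'$ (after the $\varepsilon\omega$ perturbation) and add $[E]$. This gives the inequality $\nu(q^*\alpha'+E,D)\le\nu(q^*\alpha',D)+\operatorname{mult}_D E$. For ``$\ge$'' we use Boucksom's result that $Z(\gamma)$ is modified nef and that $N$ is the minimal effective divisor with $\gamma-N$ modified nef. Then $Z(q^*\alpha'+E)-q^*\alpha'$ is a $q$-exceptional class, and it lies in $-(\text{effective}) + \text{modified nef}$. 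Applying the negativity lemma for modified nef classes over $X'$, as in \cite[Appendix A]{DHY23}, should force $N(q^*\alpha'+E)\ge E$. I expect this step to be the main obstacle, since the negativity lemma for transcendental classes requires care. I would model the argument on \cite[Lemma A.7]{DHY23} and Boucksom's Proposition 3.20 in Proposition~\ref{VolZariski}.

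Granting the identity, we have $N(p^*(K_X+B+\boldsymbol{\beta}_X))=N(q^*\alpha')+E$, and its support contains $\operatorname{Supp}E$. In particular it contains every divisor contracted by $\phi$. Pushing forward by $p$ and using Remark~\ref{NegZariski}, every $\phi$-exceptional divisor on $X$ lies in $\operatorname{Supp}N(K_X+B+\boldsymbol{\beta})$. To compare the negative parts, we push forward by $q$. Since $q_*E=0$, we get
\[
\phi_*N(K_X+B+\boldsymbol{\beta})=q_*N(p^*(K_X+B+\boldsymbol{\beta}_X))=q_*N(q^*\alpha')=N(\alpha').
\]
The first equality uses $\phi_*p_*=q_*$ on divisors that are not $p$-exceptional. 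For the $p$-exceptional components we use that $N(p^*\gamma)$ and $p^*N(\gamma)$ agree after $p_*$, by the independence statement in Remark~\ref{NegZariski}. This yields both claims.
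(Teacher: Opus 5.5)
Your overall plan is the standard one, and the ``$\le$'' half of the key identity is fine: take a common resolution, prove $N(q^*\alpha'+E)=N(q^*\alpha')+E$ on $W$, then push forward along $p$ and $q$. (The paper gives no proof of its own; it only cites \cite[Theorem A.11]{DHY23}.)

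The gap is the ``$\ge$'' half, which is the only substantive step and which you end up granting. Minimality of $N$ does reduce it to $N(\gamma)\ge E$ for $\gamma:=q^*\alpha'+E$. Indeed, then $D:=N(\gamma)-E\ge 0$ and $q^*\alpha'-D=Z(\gamma)$ is modified nef, so $D\ge N(q^*\alpha')$. But your argument for $N(\gamma)\ge E$ rests on the claim that $Z(\gamma)-q^*\alpha'=E-N(\gamma)$ is $q$-exceptional, and this is false in general. The divisor $N(\gamma)$ contains the strict transforms of the components of $N(\alpha')$. For example, blow up a point off a curve $C$ with $C^2<0$ on a surface and take $\alpha'=[C]$; then $E-N(\gamma)=-\widetilde{C}$. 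So the negativity lemma does not apply as you set it up.

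A correct version runs as follows.
\begin{itemize}
\item Write $N(\gamma)=N_{\mathrm{exc}}+N_{\mathrm{non}}$ (the $q$-exceptional and non-exceptional parts) and $D:=N_{\mathrm{exc}}-E=D^+-D^-$, so that $q^*\alpha'-D=Z(\gamma)+N_{\mathrm{non}}$.
\item If $D^-\neq 0$, the negativity lemma gives a component $F$ of $D^-$ swept out by $q$-contracted curves $C$ with $D^-\cdot C<0$.
\item Since $F\not\subset\operatorname{Supp}(D^++N_{\mathrm{non}})$, and $Z(\gamma)|_F$ is pseudo-effective (because $\nu(Z(\gamma),F)=0$), you get $0\le (Z(\gamma)+N_{\mathrm{non}})\cdot C=-D\cdot C<0$, a contradiction.
\end{itemize}
Alternatively, reduce to $\alpha'$ big and show that every closed positive current $T$ in $q^*\alpha'+[E]$ equals $q^*(q_*T)+[E]$. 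To do this, extend the potential of $T|_{W\setminus \mathrm{Exc}(q)}$ across $q(\mathrm{Exc}(q))$, which has codimension $\ge 2$ since $X'$ is normal. The difference is then a closed current supported on $\mathrm{Exc}(q)$, hence an exceptional divisor cohomologous to zero, hence zero. Lelong numbers then add.

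There is also a misstep in the final computation. The equality $\phi_*N(K_X+B+\boldsymbol{\beta})=q_*N\bigl(p^*(K_X+B+\boldsymbol{\beta}_X)\bigr)$ needs $\phi_*p_*=q_*$ on \emph{all} divisors of $W$. This holds precisely because $\phi$ is a bimeromorphic contraction: every $p$-exceptional divisor is $q$-exceptional, so both sides kill it. Your appeal to Remark~\ref{NegZariski}, comparing $N(p^*(\cdot))$ with $p^*N(\cdot)$, is beside the point. The latter need not even be defined, since $X$ need not be $\mathbb{Q}$-factorial. Without the contraction hypothesis, $q_*N(p^*(K_X+B+\boldsymbol{\beta}_X))$ could contain divisors extracted by $\phi$.

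Likewise, the fact that $\operatorname{Supp}E$ contains the strict transforms of all $\phi$-exceptional divisors is part of the definition of a $(K_X+B+\boldsymbol{\beta})$-negative map. It is not a consequence of analysing MMP steps, since $\phi$ in the statement is arbitrary.
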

Before proceeding, we recall that if $D=\sum_i d_iD_i$ and $D'=\sum_i d'_iD_i$, for distinct prime divisors $D_i$, then 
$$D \wedge D^{\prime}:=\sum_i \min \left\{d_i, d_i^{\prime}\right\} D_i.$$
\begin{lemma}\label{singular}
In the same setting as Lemma~\ref{ZariskiCont}, if $(X,B+\bbeta)$ is a generalized pair with canonical singularities and 
$B \wedge N(K_X+B+\bbeta)=0$, then $(X',B'+\bbeta')$ also admits canonical singularities and $B' \wedge N(K_{X'}+B'+\bbeta')=0$. 
\end{lemma}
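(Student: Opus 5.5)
Take a common resolution $p\colon W\to X$, $q\colon W\to X'$ of $\phi$ and compare the two pairs on $W$. Because $\phi$ is a $(K_X+B+\bbeta)$-negative bimeromorphic contraction,
$$p^*(K_X+B+\bbeta_X)=q^*(K_{X'}+B'+\bbeta_{X'})+E,$$
where $E\ge 0$ is $q$-exceptional. Its support contains every $\phi$-exceptional divisor, i.e.\ every divisor on $X$ contracted by $\phi$. Since $\bbeta$ is a b-current, its traces on $W$ agree when computed from either side once we pass to a model where it descends. The generalized discrepancies therefore satisfy
$$a(F,X',B'+\bbeta')=a(F,X,B+\bbeta)+\operatorname{mult}_F E\ \ge\ a(F,X,B+\bbeta)$$
for every prime divisor $F$ over $X$. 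This is the standard monotonicity of discrepancies under negative maps; it only requires writing both sides via the definition of generalized discrepancy on a sufficiently high log resolution.

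The canonical property is the main obstacle, and we split by the type of $F$. Let $F$ be exceptional over $X'$.

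\emph{Case 1: $F$ is exceptional over $X$.} Then $a(F,X,B+\bbeta)\ge 0$ because $(X,B+\bbeta)$ is canonical, and monotonicity gives $a(F,X',B'+\bbeta')\ge 0$.

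\emph{Case 2: $F$ is a divisor on $X$ contracted by $\phi$.} Here the generalized discrepancy of $F$ on $X$ is $-\operatorname{mult}_F B\le 0$, so monotonicity alone is not enough. By Lemma \ref{ZariskiCont}, $F\subset \operatorname{Supp}N(K_X+B+\bbeta)$. The hypothesis $B\wedge N(K_X+B+\bbeta)=0$ then forces $\operatorname{mult}_F B=0$. Hence $a(F,X,B+\bbeta)=0$, and monotonicity gives $a(F,X',B'+\bbeta')\ge 0$; in fact the inequality is strict, since $\operatorname{mult}_F E>0$ for a contracted divisor.

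Together the two cases give that $(X',B'+\bbeta')$ is canonical.

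For the second assertion, use $B'=\phi_*B$ together with the equality $\phi_*N(K_X+B+\bbeta)=N(K_{X'}+B'+\bbeta')$ from Lemma \ref{ZariskiCont}. Every prime divisor $D'$ on $X'$ is the strict transform of a prime divisor $D$ on $X$ that is not contracted. For such $D$ we have $\operatorname{mult}_{D'}B'=\operatorname{mult}_D B$ and $\operatorname{mult}_{D'}N(K_{X'}+B'+\bbeta')=\operatorname{mult}_D N(K_X+B+\bbeta)$. The minimum of these two coefficients is zero on $X$ by hypothesis, so it is zero on $X'$ as well. This gives $B'\wedge N(K_{X'}+B'+\bbeta')=0$.

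The only delicate points are that pseudo-effectivity and the dlt hypothesis are needed to invoke Lemma \ref{ZariskiCont}, and the bookkeeping of $\bbeta$ when comparing discrepancies. For the latter, pass to a model $W$ on which $\bbeta$ descends, so that $\bbeta$ contributes identically to both sides.
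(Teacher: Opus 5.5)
Your proposal is correct and follows essentially the same route as the paper's proof. Both arguments take a common resolution and use $E\ge 0$ for monotonicity of generalized discrepancies. Both split divisors exceptional over $X'$ into those exceptional over $X$ and those that are divisors on $X$ contracted by $\phi$; the latter avoid $\operatorname{Supp}B$ by Lemma~\ref{ZariskiCont} together with $B\wedge N(K_X+B+\bbeta)=0$. Both then deduce the second claim from $B'=\phi_*B$ and $\phi_*N(K_X+B+\bbeta)=N(K_{X'}+B'+\bbeta')$.
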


\begin{proof}
Take a common resolution $p:Y \to X, \quad q:Y \to X'$
of the birational map $\phi: X \dashrightarrow X'$. 
Since the contraction $\phi$ is $(K_X+B+\bbeta)$-negative, \[
p^*(K_X+B+\bbeta) - q^*(K_{X'}+B'+\bbeta') = E \ge 0
\]
for some effective $q$-exceptional divisor $E$. Since $\phi$ only contracts divisors and does not extract any divisor, every $p$-exceptional divisor is also $q$-exceptional. 

Therefore, by $E \ge 0$, for any  $p$-exceptional divisor $E'$ we obtain  
\[
a(X',B'+\bbeta',E') \;\ge\; a(X,B+\bbeta,E') \;\ge\; 0.
\]
Moreover, since $B \wedge N(K_X+B+\bbeta)=0$, 
Lemma~\ref{ZariskiCont} implies that $\phi$ does not contract any component of $B$. 

For the remaining $q$-exceptional divisors $E''$, we have that $p_*(E'') \neq 0$ is a divisor on $X$. Since $p_*(E'')$ is not contained in $\mathrm{supp}(B)$, it follows that  
\[
a(X',B'+\bbeta',E'') > a(X,B+\bbeta,E'') = 0.
\]
Thus $(X',B'+\bbeta')$ is generalized canonical.  Finally, since  
\[
\phi_* N(K_X+B+\bbeta) = N(K_{X'}+B'+\bbeta'),
\]
and $B' = \phi_*B$, it follows immediately that $B' \wedge N(K_{X'}+B'+\bbeta')=0$. 
\end{proof}

As for the minimal model program in the complex analytic setting, the following Property $(P)$ is useful.
\begin{definition}[{\cite{FujinoBCHM}}]
Let $\pi: X \rightarrow Y$ be a projective morphism of complex analytic spaces and $W$ a compact subset of $Y$ with the properties:\\
\emph{(P1)} $X$ is a normal complex variety,\\
\emph{(P2)} $Y$ is a Stein space,\\
\emph{(P3)} $W$ is a Stein compact subset of $Y$, and\\
\emph{(P4)} $W \cap Z$ has only finitely many connected components for any analytic subset $Z$ which is defined over an open neighborhood of $W$.\\
We say that $\pi : X \to Y$ and $W$ satisfy \emph{Property $(P)$} if the four conditions above hold.
\end{definition}

\begin{proposition}[{Extension of MMP to nearby fibers, \cite[Theorem 2]{DefGeneralType}}]\label{ExtendMMP}
Let $g: X \rightarrow S$ be a flat proper morphism with connected fibers from a generalized pair $(X,B+ \bbeta)$ to a smooth, connected, and relatively compact curve. Fix a point $0\in S$ and assume that the support of the boundary divisor $B$ does not contain the fiber $X_0$. Suppose that $(X_0,B_0 + { \boldsymbol{\beta}}_0)$ is projective, with canonical singularities, and $\lfloor B_0 \rfloor = 0$, where $$(K_X+X_0+B+\bbeta _X)|_{X_0}=K_{X_0}+B_0+\bbeta _0,$$ and the negative part of Boucksom--Zariski decomposition satisfies the relation $$N(K_{X_0}+ B_0 + \bbeta_0) \wedge B_0 = 0.$$ Then every sequence of transcendental MMP-steps \[(X_0, B_0  +\bbeta_0) \dashrightarrow (X_0^{(1)}, B_0^{(1) }+  \bbeta_0^{(1)}) \dashrightarrow (X_0^{(2)},B_0^{(2)}+ \bbeta_0^{(2)}) \dashrightarrow \cdots\]  extends to a sequence of $(K_X+B+\bbeta _X)$-negative proper meromorphic maps 
$$ (X, B+ \bbeta)/U \dashrightarrow (X^{(1)},B^{(1)}+ \bbeta^{(1)})/U \dashrightarrow (X^{(2)},B^{(2)}+ \bbeta^{(2)}) /U\dashrightarrow \cdots,$$over some open neighborhood $U\subset S$ of $0$.
\end{proposition}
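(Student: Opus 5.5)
We argue by induction on the number of MMP steps, so it suffices to treat a single step and then check that the hypotheses reproduce themselves. The plan mirrors \cite[Theorem 2]{DefGeneralType}, with Propositions \ref{ContInMMP}, \ref{DefCanSing} and Lemma \ref{singular} replacing the algebraic inputs.

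\textbf{Setting up one step.} By Proposition \ref{DefCanSing}, after shrinking $S$ the pair $(X,B+\bbeta)$ has canonical singularities, and so does every fiber pair $(X_t,B_t+\bbeta_t)$. Let $\phi_0\colon X_0\to Z_0$ be the first contraction of the MMP on $X_0$; it is divisorial or flipping. Since $-(K_{X_0}+B_0+\bbeta_0)$ is $\phi_0$-K\"ahler, Proposition \ref{KVVanishing} gives $R^1(\phi_0)_*\mathcal{O}_{X_0}=0$. Theorem \ref{ExtendCont}, in the form of Proposition \ref{ContInMMP}, then yields a contraction $\phi\colon X\to Z$ over a neighborhood $U$ of $0$. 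Proposition \ref{DefAmple}(2) shows that $-(K_X+B+\bbeta_X)$ is $\phi$-K\"ahler near $X_0$, hence over all of $Z$ after shrinking $U$, since $\phi$ is proper over $U$.

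\textbf{Divisorial case.} Here we set $X^{(1)}:=Z$ and $B^{(1)}:=\phi_*B$, and we need $Z_0$ to be the fiber of $Z\to U$ over $0$. Because $X_0$ is Cartier and $\phi_*\mathcal{O}_X=\mathcal{O}_Z$, the fiber $Z_0$ is the image of $X_0$, and its normalization agrees with $X_0^{(1)}$ by Theorem \ref{ExtendCont}.

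\textbf{Flipping case.} This is the main obstacle. By Proposition \ref{ContInMMP}, $\phi$ is small, and we must construct the flip
\[
X^{+}:=\operatorname{Projan}_Z\bigoplus_m \phi_*\mathcal{O}_X\bigl(m(K_X+B)\bigr).
\]
The difficulty is that $\bbeta$ is transcendental, so this algebra need not compute the $(K+B+\bbeta)$-flip. I would use that the flip on $X_0$ is locally projective over $Z_0$ \cite[Theorem 5.12]{DH24}. Over a Stein neighborhood of $\phi_0(\mathrm{Ex}\,\phi_0)$, I would choose an $\R$-divisor $\Delta_0$ with $K_{X_0}+B_0+\bbeta_0\equiv_{Z_0}K_{X_0}+B_0+\Delta_0$ and $(X_0,B_0+\Delta_0)$ klt. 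Then I would lift $\Delta_0$ to a divisor $\Delta$ on $X$ over $Z$, using the $\phi$-triviality of relative classes and the fact that $R^1$ vanishes on the nearby fibers. This reduces the problem to a divisorial klt flip over $Z$. Its existence, together with the statement that its restriction to $0$ is $X_0^{(1)}$, should follow from finite generation in the analytic setting \cite{FujinoBCHM} (Property (P) over a Stein compact), combined with Koll\'ar's argument that the restriction $H^0$ maps are surjective by Kawamata--Viehweg vanishing (Proposition \ref{KVVanishing}). The canonical hypothesis and the condition $N\wedge B_0=0$ are used exactly here, to make the restricted algebra on $X_0$ the flip algebra. I expect the passage from the transcendental class $\bbeta$ to an honest divisor to be the most delicate point, and I would first try to handle it this way.

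\textbf{Induction.} The resulting map $X\dashrightarrow X^{(1)}$ is $(K_X+B+\bbeta_X)$-negative by construction. Lemma \ref{singular} shows that $(X_0^{(1)},B_0^{(1)}+\bbeta_0^{(1)})$ is again canonical with $N\wedge B_0^{(1)}=0$. Also $\lfloor B_0^{(1)}\rfloor=0$, and $X_0^{(1)}$ is projective, since it is obtained from the projective variety $X_0$ by a flip that is projective over $Z_0$. So all the hypotheses are restored, and we iterate. If the sequence on $X_0$ is infinite, the neighborhoods $U$ may shrink at each step; the statement only asks for the extension of each finite stage, so this causes no problem.
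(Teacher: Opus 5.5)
Your skeleton is the paper's: Kawamata--Viehweg vanishing and \cite[Proposition (11.4)]{KM92} extend $\phi_0$, the flip is built locally over Stein subsets of $Z$ via \cite{FujinoBCHM} after replacing $\bbeta$ by a divisor, and the induction goes through Lemma~\ref{singular}. However, the divisorial case has a genuine gap. Setting $X^{(1)}:=Z$ is not enough. You must show that $K_Z+B_Z+\bbeta_Z$ is a class on $Z$, so that $(Z,B_Z+\bbeta)$ is a generalized pair, $X\to Z$ is $(K_X+B+\bbeta_X)$-negative, and Proposition~\ref{DefCanSing} can be reapplied at the next step. Your ``by construction'' skips exactly this.

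Your divisorial paragraph also uses none of the special hypotheses, so it cannot be right as written. In the de~Fernex--Hacon example after Proposition~\ref{ContInMMP}, the $(K_X+\varepsilon R)$-negative contraction restricts on $X_0$ to the divisorial contraction of the $(-2)$-curve $E\subset\operatorname{Supp}(R|_{X_0})$. It is an isomorphism on $X_t$ for $t\neq 0$, so it is \emph{small} on the threefold $X$ and $K_Z+\varepsilon R_Z$ is not $\R$-Cartier. Yet each sentence of your argument applies verbatim. What excludes this is that no component of $B_0$ is contracted, via $N\wedge B_0=0$ and Lemmas~\ref{ZariskiCont} and~\ref{singular}. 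The paper turns this into a proof as follows:
\begin{enumerate}
\item $Z_0$ is klt by the \cite{DH24MMP} description of $\phi_0$, so $Z$ has rational singularities near $Z_0$ by \cite{Elkikj}.
\item $(Z_0,B_{Z_0}+\bbeta_{Z_0})$ is canonical, hence $(Z,B_Z)$ is canonical by \cite[Proposition 3.5]{FH11}.
\item One builds the log canonical model $Z^c$ of $(X,B+\bbeta)$ over $Z$, as in the flipping case.
\item Its adjoint class restricts to a class K\"ahler over $Z_0$, while $(Z_0,B_{Z_0}+\bbeta_{Z_0})$ is already canonical. Hence $Z^c_0\to Z_0$, and therefore $Z^c\to Z$, is an isomorphism near $Z_0$.
\end{enumerate}

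In the flipping case, the paper passes from $\bbeta$ to a divisor directly on $X_W$, for a Stein $W\subset Z$ with Property (P), using the argument of \cite[Theorem 5.12]{DH24}. That argument needs the rational singularities of $Z$, which you never establish. It also needs $K_X+B$ to be $\R$-Cartier near $X_0$, so that $(X_W,B_W+D_W)$ is an honest klt pair.

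More seriously, identifying $X^+_0$ with the flip of $\phi_0$ cannot come from Kawamata--Viehweg. Surjectivity of the restriction maps for $\bigoplus_m\phi_*\mathcal{O}_X(m(K_X+B+D))$ needs $R^1\phi_*\mathcal{O}_X(m(K_X+B+D)-X_0)=0$. But $X_0\sim_Z 0$ and $(m-1)(K_X+B+D)$ is $\phi$-anti-ample, so Proposition~\ref{KVVanishing} gives nothing; you would need an extension theorem of Siu--Kawamata type.

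The paper uses canonicity instead. If $X_{W_0}\dashrightarrow X^+_{W_0}$ extracted a divisor $F$, the monotonicity lemma would give $0\ge a(F;X^+_{W_0},B^{+}+D^{+})>a(F;X_{W_0},B+D)\ge 0$, which is impossible. So the map is small. By adjunction $K_{X^+_{W_0}}+B_{X^+_{W_0}}+D_{X^+_{W_0}}$ is ample over $W_0$, i.e.\ the map is the flip.
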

\begin{proof}
The proof is based on \cite[Theorem 2]{DefGeneralType} and \cite[Theorem 5.12]{DH24}. Note that $({X_0},B_0)$ is also canonical and so $K_X+B$ is $\R$-Cartier and canonical on a neighborhood of $X_0$ by \cite[Proposition 3.5]{FH11}. Assume that $\phi_0:X_0 \to Z_0$ is a $(K_{X_0}+ B_0 + \bbeta_0)$-negative extremal contraction. Then, by Kawamata--Viehweg vanishing (Proposition \ref{KVVanishing}), $R^1 (\phi_0)_* \mathcal{O}_{X_0} = 0$. By Theorem \ref{ExtendCont}, $\phi_0$ extends to a contraction $\phi: X \to Z$. Note that by \cite{DH24MMP}, there exists a klt pair $(X_0,\Delta _0)$ such that $K_{X_0}+\Delta  _0\equiv K_{X_0}+B_0+\bbeta _0$ and an ample divisor $H_0$ such that $K_{X_0}+\Delta  _0+H_0$ is semi-ample and defines the contraction $X_0\to Z_0$. It follows that $$K_{Z_0}+\Delta  _{Z_0}+H_{Z_0}:=\phi _{0,*}(K_{X_0}+\Delta  _0+H_0)$$ is klt and hence $Z_0$ has rational singularities. But then $Z$ also has rational singularities on a neighborhood of $Z_0$ by \cite{Elkikj}. 

Suppose that $\phi _0$ is of flipping type. 
We will construct  $X^+\to Z$ locally over $Z$. Pick a point $z\in Z_0$ and fix a neighborhood $z\in W\subset Z$ such that $W$ is relatively compact, Stein and satisfies Property (P). 
Let $\nu :X'_W\to X_W$ be a resolution, projective over $W$ such that $X'_z:=X'_W\times _W\{z\}$ is a divisor with simple normal crossings and $\bbeta _{X'_W}$ is nef over $W$. Arguing as in the proof of \cite[Theorem 5.12]{DH24} we may assume that there is a divisor $D'_W\equiv_W \bbeta _{X'_W}$ and hence $D'_W$ is nef and big over $W$. We may then assume that $(X'_W,B'_W+D'_W)$ is sub-klt and hence $(X_W, B_W+D_W\equiv \nu _*( B'_W+D'_W))$ is klt and in fact canonical.
By \cite[Theorem C]{FujinoBCHM},  $({X_W},B_W+D_W)$ has a log canonical model over $W$ say $X^+_W$. Note that these log canonical models are defined locally and uniquely determined by the class of the $\R$-divisor $$K_{X_W}+B_W+D_W\equiv _W K_{X_W}+B_W+\bbeta _{X_W}.$$ It follows that the $X^+_W$ glue together to give $\phi ^+:X^+\to Z$.
    
Finally, we observe that $\phi ^+_0=X^+_0\to Z_0$ is the flip of $\phi _0:X_0\to Z_0$. This can be checked locally over $Z_0$. Let $W_0=Z_0\cap W$, then \[K_{X_{W_0}}+B_{X_{W_0}}+\bbeta |_{X_{W_0}}\equiv K_{X_{W_0}}+B_{X_{W_0}}+D_{X_{W_0}}\equiv  (K_{X_{W}}+B_{X_{W}}+X_{W}+D_{X_W})|_{X_{W_0}}.\] 
Note that since $({X_{W_0}},B_{X_{W_0}}+D_{X_{W_0}})$ is canonical, then $X_{W_0}\dasharrow X^+_{W_0}$ is a small birational map since otherwise there is a divisor $F$ on $X_{W_0}^+$ that is exceptional over $X_{W_0}$. But then, letting $$K_{X_{W_0}^+}+B_{X^+_{W_0}}+D_{X^+_{W_0}}=(K_{X_{W}^+}+B_{X^+_{W}}+D_{X^+_{W}})|_{X^+_{W_0}}$$ we have the following inequalities between discrepancies \[0\ge a(X_{W_0}^+,B_{X^+_{W_0}}+D_{X^+_{W_0}};F)>a(X_{W_0},B_{X_{W_0}}+D_{X_{W_0}};F)\ge 0\] which is impossible. 
Note that the first inequality is an equality unless $F$ is contained in the support of $B_{X^+_{W_0}}+D_{X^+_{W_0}}$, the second strict inequality follows by the monotonicity lemma as in \cite[Lemma 3.38]{KM98}, and the last inequality is immediate as $(X_{W_0},B_{X_{W_0}}+D_{X_{W_0}})$ is canonical. Thus $X_{W_0}\dasharrow X_{W_0}^+$ is a small birational map and $$K_{X_{W_0}^+}+B_{X^+_{W_0}}+D_{X^+_{W_0}}=(K_{X_{W}^+}+B_{X^+_{W}}+X^+_{W}+D_{X^+_{W}})|_{X^+_{W_0}}$$ is ample over $W_0$ so that $X_{W_0}\dasharrow X_{W_0}^+$ is the $(K_{X_{W_0}}+B_{X_{W_0}}+D_{X_{W_0}})$-flip.
    
One easily checks that $(X_0^+, B_0^+ + \bbeta _{X^+_0})$ is projective with  canonical singularities, and the negative part of the divisorial Zariski decomposition satisfies the relation $$N (K_{X_0^+} + B_0^+ + \bbeta _{X^+_0}) \wedge B_0^+  = 0.$$

Suppose now that $\phi _0$ is a divisorial contraction and so $K_{Z_0}+B_{Z_0}+\bbeta _{Z_0}$ is canonical and $K_{Z_0}+B_{Z_0}$ is $\R$-Cartier. As observed above $Z_0$ has rational singularities and hence we may assume that $Z$ has rational singularities (after shrinking in a neighborhood of $Z_0$). Note that it also follows that $(Z_0,B_{Z_0})$ has canonical singularities and hence $(Z,B_Z)$ has canonical singularities \cite[Proposition 3.5]{FH11}.
Arguing as above we may construct $X\dasharrow Z^c$ the log canonical model for $(X,B+\bbeta _X)$ over $Z$.

It follows that $$K_{Z_0^c}+B_{Z^c_0}+\bbeta _{Z^c_0}=(K_{Z^c}+B_{Z^c}+Z^c_0+\bbeta _{Z^c})|_{Z^c_0}$$ is K\"ahler over $Z$ and hence $Z^c_0\to Z_0$ is an isomorphism. Therefore, $Z^c\to Z$ is also an isomorphism in a neighborhood of $Z_0$, and so $(Z,B_Z+\bbeta)$ is a generalized canonical pair and $-(K_X+B+\bbeta _X)$ is K\"ahler over $Z$. 

If we let $X^+:=Z$, then one easily checks that $(X_0^+, B_0^+ + \bbeta _{X^+_0})$ is projective with 
canonical singularities, and the negative part of divisorial Zariski
decomposition satisfies the relation $N (K_{X_0^+} + B_0^+ + \bbeta _{X^+_0}) \wedge B_0^+  = 0$.  

Replacing $(X,B+\bbeta)$ by $(X^+,B^++\bbeta^+)$ and proceeding by induction, we conclude the proof. Note that $X^+$ is K\"ahler by Lemma \ref{ContKahler}.
\end{proof}

\begin{remark}\label{Remark:ExtMMP}
Thanks to the contraction theorem for K\"ahler threefolds (cf. \cite[Theorem 1.2]{DH24}), the same argument shows that Proposition~\ref{ExtendMMP} also holds for families of K\"ahler threefolds, without assuming that the central fiber is projective.
\end{remark}

\begin{lemma}[{Contractions in the K\"ahler MMP preserve the K\"ahler condition, \cite[Lemma 2.24]{DHY23}}]\label{ContKahler}
Let $\pi: X \rightarrow S$ be a proper surjective morphism of compact complex analytic varieties such that $X$ is K\"ahler. If $(X, B+ { \boldsymbol{\beta}})$ is a generalized dlt pair and $\phi$ : $X \dashrightarrow X^{\prime}/S$ is a $(K_X+B+{ \boldsymbol{\beta}})$-flip, flipping contraction or divisorial contraction over $S$, then $X^{\prime}$ is K\"ahler over $S$. 
\end{lemma}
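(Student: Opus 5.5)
The plan is to construct a Kähler class on $X'$ over $S$ by hand, using the negativity of the adjoint class on the contracted ray and the Kähler class upstairs. I treat the three cases separately: divisorial contraction, flipping contraction, and flip. The common input is that $X$ carries a Kähler class $\omega$, and that $\phi$ is associated to a $(K_X+B+\boldsymbol{\beta})$-negative extremal ray $R$ of the relative cone of curves (the Kähler cone theorem of \cite{HP24}, \cite{CH20}, \cite{DHY23}), with $\rho(X/X')=1$.

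\emph{Divisorial or flipping contraction $\phi\colon X\to X'$.}
\begin{enumerate}
\item By the contraction theorem, the contraction is given by a class $\alpha$ that is nef over $S$, vanishes exactly on $R$, and descends to $X'$. Concretely, take $\alpha=K_X+B+\boldsymbol{\beta}_X+\lambda\omega$, where $\lambda$ is the scaling threshold; this class is zero on $R$.
\item Since $\rho(X/X')=1$, the class is $\alpha=\phi^*\alpha'$ for some $\alpha'\in H^{1,1}_{\rm BC}(X',\mathbb R)$.
\item I would argue that $\alpha'$ is Kähler over $S$. It is positive on every curve of $X'$, since curves not contracted by $\phi$ lie outside $R$. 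The contraction theorem in \cite{DHY23}, \cite{DH24} in fact constructs $X'$ together with a relatively Kähler class $\alpha'$ whose pullback lies on the face cut out by $R$. So the claim is essentially a repackaging of the output of the contraction theorem.
\item If one only has the numerical statement, one should invoke a Kähler criterion for normal spaces. This says that a class which is nef with a Kähler representative away from the exceptional locus, and positive on curves, is Kähler; I would use Păun's results on nef classes and the Demailly--Păun characterization applied on a resolution. I expect this step to be the main obstacle.
\end{enumerate}

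\emph{Flip $X^+\to Z$.}
\begin{enumerate}
\item By the previous case, $Z$ is Kähler over $S$ with a Kähler class $\omega_Z$.
\item The flipped adjoint class $K_{X^+}+B^++\boldsymbol{\beta}_{X^+}$ is Kähler over $Z$ by Definition \ref{DefinitionFlip}.
\item Then $\phi^{+*}\omega_Z+\varepsilon(K_{X^+}+B^++\boldsymbol{\beta}_{X^+})$ is Kähler over $S$ for $0<\varepsilon\ll 1$. This is the standard fact that if $g\colon X^+\to Z$ is proper, $\gamma$ is $g$-Kähler and $\omega_Z$ is Kähler, then $g^*\omega_Z+\varepsilon\gamma$ is Kähler for small $\varepsilon$ over a relatively compact base.
\item To prove that fact, cover $Z$ by finitely many open sets (using relative compactness) on which $\gamma+i\partial\bar\partial u_j$ is strictly positive on the fibers. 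Glue the potentials with a partition of unity, and absorb the resulting error terms by a large multiple of $g^*\omega_Z$ plus a strictly psh local potential from $Z$.
\end{enumerate}

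The step I expect to be hardest is justifying that the descended class on $X'$ is genuinely Kähler rather than merely nef and strictly positive on curves, in the divisorial and flipping contraction case. I would first try to read this directly off the construction of the contraction in \cite{DHY23}. Failing that, I would run the Demailly--Păun argument on a resolution, noting that it needs the $\mathbb Q$-factorial, dlt structure to control the exceptional locus.
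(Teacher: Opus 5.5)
The paper gives no argument for this lemma; it imports it from \cite[Lemma 2.24]{DHY23}. Your flip step is the standard one and is fine once $Z$ is known to carry a K\"ahler class. The gap is the divisorial/flipping-contraction case. That case is the whole content of the lemma, and your flip step presupposes it: you never show that the target $X'$ of $\phi$ is K\"ahler. (Under the paper's literal definition of ``K\"ahler over $Z$'', which requires a K\"ahler form on $Z$, K\"ahlerness of $Z$ is built into Definitions \ref{DefinitionDivisorial} and \ref{DefinitionFlip}, and the lemma is nearly tautological. Under the intended local notion of relative K\"ahlerness it is not, and that is the case your argument must handle.)

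\textbf{Step 3 (reading K\"ahlerness off the contraction theorem).} Such a theorem is available only in special cases.
\begin{itemize}
\item For threefolds it is \cite{DH24,DHY23}. There you must also note that a contraction is determined by the curves it contracts, so that the given $\phi$ coincides with the constructed one.
\item For projective $X$ it is Theorem \ref{DHTransbpf}.
\end{itemize}
In general, $\alpha-(K_X+B+\bbeta_X)=\lambda\omega$ is K\"ahler. So the assertion that $\alpha$ is the pullback of a K\"ahler class on the target is exactly the conclusion of Conjecture \ref{Transbpf}, which is open in general. Yet the lemma is stated in arbitrary dimension, and it is used in Proposition \ref{ExtendMMP} for contractions obtained by Koll\'ar--Mori extension rather than by a K\"ahler contraction theorem.

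\textbf{Step 4 (your fallback criterion).} This does not close the gap, for two reasons.
\begin{itemize}
\item Positivity on curves is not the right numerical condition. Take $T\times\mathbb P^1$ with $T$ a $2$-torus containing no curves, and $a$ a nonzero nef class on $T$ with $a^2=0$. Then $p_1^*a+p_2^*c_1(\mathcal O(1))$ is nef and positive on every curve, but it is not K\"ahler.
\item More seriously, Demailly--P\u{a}un on a resolution $\nu$ cannot produce a K\"ahler form on the singular space $X'$. The class $\nu^*\alpha'$ vanishes on $\nu$-exceptional curves, so it is never K\"ahler upstairs, and pushing forward K\"ahler forms does not give smooth forms.
\end{itemize}
What is actually needed is strictly psh local potentials for $\alpha'$ near $\phi(\operatorname{Ex}\phi)$. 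Pushing forward a K\"ahler current smooth off $\operatorname{Ex}\phi$ and taking a regularized maximum with a local strictly psh function does this when $\phi(\operatorname{Ex}\phi)$ is finite. It does not work when $\phi(\operatorname{Ex}\phi)$ is positive-dimensional, e.g.\ a divisor contracted onto a curve. That is where a K\"ahler contraction theorem has to do genuine work.

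\textbf{Two smaller points.}
\begin{itemize}
\item For a fixed $\omega$, taking $\lambda$ to be the nef threshold, $K_X+B+\bbeta_X+\lambda\omega$ need not vanish on the ray contracted by the given $\phi$. You need a supporting class for $R$ from the cone theorem.
\item The descent $\alpha=\phi^*\alpha'$ does not follow from $\rho(X/X')=1$ alone. It comes from $\alpha\equiv_{X'}0$, together with rational singularities of $X'$ and a lemma such as \cite[Lemma 2.6]{DH24MMP}.
\end{itemize}
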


For a projective generalized klt pair $(X, B + \bbeta)$, \cite{DH24MMP} proved the following transcendental version of \cite{BirkarCHM}, which will be repeatedly used in what follows. 
\begin{theorem}[{\cite[Theorem 1.2]{DH24MMP}}]\label{Theorem: TransBCHM}
Let $(X, B+\boldsymbol{\beta})$ be a compact generalized klt pair, and $f:X\to S$ a morphism of projective varieties such that $B+\boldsymbol{\beta}_X$ is big over $S$. 
Then the following hold:\\
(1) if $K_X+B+\boldsymbol{\beta}_X$ is pseudo-effective over $S$, then $(X, B+\boldsymbol{\beta})$ has a good minimal model over $S$, and\\
(2) if $K_X+B+\boldsymbol{\beta}_X$ is not pseudo-effective over $S$, then $(X, B+\boldsymbol{\beta})$ has a Mori fiber space over $S$.
\end{theorem}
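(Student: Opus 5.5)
The plan is to reduce the transcendental statement to the classical one of \cite{BirkarCHM} (in the relative form of \cite{FujinoBCHM}) by replacing $\boldsymbol{\beta}$ with an $\mathbb{R}$-divisor that has the same intersection numbers with all curves. The MMP only sees classes through their intersection numbers with curves. Since $X$ is projective, $N_1(X/S)$ is spanned by algebraic curves and its dual is $N^1(X/S)_{\mathbb{R}}$. Hence, for any class $\gamma\in H^{1,1}_{\rm BC}(X,\mathbb{R})$, the linear functional $C\mapsto \gamma\cdot C$ on curves is represented by an $\mathbb{R}$-Cartier divisor class. I would fix a log resolution $\nu\colon X'\to X$ on which $\boldsymbol{\beta}$ descends with $\boldsymbol{\beta}_{X'}$ nef over $S$. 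I would then choose an $\mathbb{R}$-divisor $D'$ with $D'\cdot C=\boldsymbol{\beta}_{X'}\cdot C$ for every curve $C$ contracted to a point of $S$, and set $D=\nu_*D'$.

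\textbf{Step 1.} Check that $D'$ is nef over $S$, which is immediate since it is numerically equivalent to $\boldsymbol{\beta}_{X'}$ on curves. Then $(X,B+\overline{D'})$ is a generalized klt pair with the same generalized discrepancies as $(X,B+\boldsymbol{\beta})$, since the defining relation $K_{X'}+B'+\boldsymbol{\beta}_{X'}=\nu^*(K_X+B+\boldsymbol{\beta}_X)$ persists numerically over $X$ and hence, by the negativity lemma, on the nose for divisors.

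\textbf{Step 2.} Show that pseudo-effectivity and bigness over $S$ transfer between $\boldsymbol{\beta}$ and $D$. For pseudo-effectivity I would use BDPP duality on the general fiber: a class is pseudo-effective iff it is non-negative on all movable curves, and movable curves are algebraic. Thus $K_X+B+\boldsymbol{\beta}_X$ is pseudo-effective over $S$ iff $K_X+B+D$ is. Bigness of $B+D$ follows because bigness is the interior of the pseudo-effective cone, and the projection $H^{1,1}\to N^1$ is compatible with this duality. Next I would perturb using the standard trick: write $B+D\equiv_S A+E$ with $A$ ample and $E\geq 0$, and replace by a genuine klt pair $(X,\Delta)$ with $\Delta$ big over $S$ and $K_X+\Delta\equiv_S K_X+B+\boldsymbol{\beta}_X$. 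This uses that $D'$ is nef and big on $X'$, so it can be moved to a general effective representative with small coefficients.

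\textbf{Step 3.} Apply \cite{BirkarCHM,FujinoBCHM} to $(X,\Delta)$. This gives either a Mori fiber space or a good minimal model $X\dashrightarrow X_{\min}\to Z$ over $S$. Each step is determined by curve intersection numbers, so it is simultaneously a step of the $(K_X+B+\boldsymbol{\beta}_X)$-MMP. This proves (2) directly, since $-(K+B+\boldsymbol{\beta})$ is relatively ample, hence K\"ahler, on a projective contraction.

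For (1), the remaining issue, which I expect to be the main obstacle, is "goodness" in the transcendental sense. The class $K_{X_{\min}}+B_{\min}+\boldsymbol{\beta}_{\min}$ is numerically trivial over $Z$, and we must show it is the pullback of a K\"ahler class on $Z$. My approach would be the following. Because $Z$ has rational (indeed klt-type) singularities and $R^1 g_*\mathcal{O}=0$ by Kawamata--Viehweg vanishing for the klt pair $(X_{\min},\Delta_{\min})$, a $g$-numerically trivial $(1,1)$-class descends to $H^{1,1}_{\rm BC}(Z,\mathbb{R})$. This would follow from the exact sequence argument in \cite{HP16} / \cite[Lemma 3.3]{CHP16}. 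Then the descended class is positive on all curves of the projective variety $Z$ and lies in the interior of the nef cone on the relevant part, so it is K\"ahler by the Demailly--P\u{a}un/Nakai type criterion on projective varieties. I anticipate the delicate points to be the descent when $Z$ is only analytically locally projective over $S$, and ensuring strict positivity rather than mere nefness on $Z$.
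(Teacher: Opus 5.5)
There is a genuine gap in your last step, which is where the transcendental content of \cite{DH24MMP} actually lies. Your argument controls $K_X+B+\bbeta_X$ only through its intersection numbers with algebraic curves. On a projective variety this does not control transcendental positivity.

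A concrete example: let $A$ be an abelian surface with $NS(A)_\R=\R L$, and pick $0\neq\eta\in H^{1,1}(A,\R)$ with $\eta\cdot L=0$, so that $\eta^2<0$ by the Hodge index theorem. Every curve has class in $\R_{>0}L$, so $\gamma_s:=L+s\eta$ satisfies $\gamma_s\cdot C=L\cdot C>0$ for all curves $C$. Yet $\gamma_s^2=L^2+s^2\eta^2$ vanishes for one value $s_0$, where $\gamma_{s_0}$ is nef but not K\"ahler. It is negative for $s>s_0$, where $\gamma_s$ is not even pseudo-effective, since on a torus pseudo-effective classes are nef.

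This breaks your final step. Knowing $K_{X_{\min}}+B_{\min}+\bbeta_{\min}=g^*\gamma_Z$ with $\gamma_Z$ equal on curves to the ample class $K_Z+\Delta_Z$ gives no reason for $\gamma_Z$ to be K\"ahler. The Demailly--P\u{a}un criterion needs $\int_V\gamma_Z^{\dim V}>0$ for every subvariety $V$, and curves do not see this. Even metric nefness of $K_{X_{\min}}+B_{\min}+\bbeta_{\min}$ does not follow from BCHM; it requires the cone theorem for generalized K\"ahler pairs (Proposition \ref{nef}).

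The same example shows that your Step 2 justification ("movable curves are algebraic") is false for transcendental classes. It does give one direction: $K_X+B+\bbeta_X$ pseudo-effective implies $K_X+B+D$ pseudo-effective, since pseudo-effective classes are non-negative on movable curves and algebraic BDPP then applies to $D$. The converse, which is what you need for (2), fails in general. For adjoint classes it can only be recovered afterwards: run the classical MMP, upgrade curve-nefness to nefness via Proposition \ref{nef}, and pull back.

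What is missing for goodness is exactly the transcendental base point free theorem (Theorem \ref{DHTransbpf}, i.e.\ \cite[Corollary 3.7]{DH24MMP}). You would apply it to $\alpha=K_{X_{\min}}+B_{\min}+\bbeta_{\min}$, after using bigness of $B+\bbeta_X$ to make $\alpha-(K+B^\flat+\bbeta^\flat)$ nef and big for a suitable generalized klt pair $(X_{\min},B^\flat+\bbeta^\flat)$.

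For comparison, the paper does not reduce to \cite{BirkarCHM} at all. It quotes \cite[Theorem 1.2]{DH24MMP} for $\dim S=0$ and $X$ $\Q$-factorial, which is where these issues are handled. It reduces the relative case to the absolute one by replacing $B$ with $B+f^*A$, for a general $A\in|(2\dim X+1)H|$ with $H$ ample on $S$. Then every $(K_X+B+\bbeta_X)$-negative extremal ray is a ray over $S$, and the absolute MMP is an MMP over $S$. Non-$\Q$-factorial $X$ is handled by a small $\Q$-factorialization, which would also fix the unaddressed $\R$-Cartier issue for $K_X+B+D$ in your Step 1.

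Your idea of matching individual MMP steps with classical ones can be saved. Along each projective klt contraction $g$ the curve-trivial difference class descends, by the vanishing of $R^ig_*\mathcal{O}$ for $i=1,2$ together with \cite[Lemma 3.3]{HP16}, so negativity over the base becomes relative K\"ahlerness. But this recovers only the part of the theorem that curves can detect. A minor point: in Step 2 it is $B+D$, not $D'$, that is big, so the perturbation to a klt boundary $\Delta$ must start from $B+D\sim_{\R,S}A+E$.
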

\begin{proof}
The above result holds when $\dim S=0$ and $X$ is $\Q$-factorial by \cite[Theorem 1.2]{DH24MMP}. Assume now that $X$ is $\Q$-factorial and $\dim S>0$. Let $H$ be an ample divisor on $S$ and $A\in |(2\dim X+1)H|$ a general member. Replacing $B$ by $B+f^*A$, one sees that every $(K_X+B+\bbeta _X)$-negative extremal ray is a $(K_X+B+\bbeta _X)$-negative extremal ray over $S$ (cf. \cite[Section 3]{HP24}). It follows that $K_X+B+\bbeta _X$ is nef if and only if it is nef over $S$ and every step of the $(K_X+B+\bbeta _X)$-MMP is a step of the $(K_X+B+\bbeta _X)$-MMP over $S$.
Thus the claim holds in this case.
    
 If $X$ is not $\Q$-factorial, then the $\Q$-factorialization $\nu:X'\to X$ is a small birational morphism such that $X'$ is $\Q$-factorial.  (The existence of the $\Q$-factorialization $\nu$ follows by standard arguments using the $\Q$-factorial case and following the proof of \cite[Theorem 1.5]{HP24}).  By {\cite[Theorem 1.2]{DH24MMP}}, (1) and (2) hold for $K_{X'}+B'+\bbeta _{X'}:=\nu ^*(K_X+B+\bbeta _X)$ and hence (1) and (2) also hold for $K_X+B+\bbeta _X$.
\end{proof}

\subsection{Stability of nefness and bigness of adjoint classes}
Das--Hacon (cf. \cite[Conjecture 1.6]{DH24} and also \cite{TosatiKAWA}) made the following transcendental base point free conjecture. 
\begin{conjecture}[{Transcendental base point free conjecture}]\label{Transbpf}
Let $(X, B+\bbeta)$ be a generalized klt K\"ahler pair, where $X$ is a normal compact K\"ahler variety, and $\alpha \in H_{\mathrm{BC}}^{1,1}(X,\mathbb{R})$ a nef class on $X$. If $\alpha-\left(K_X+B+ \bbeta_X\right)$ is nef and big, then there exists a proper surjective morphism with connected fibers $f: X \rightarrow Y$ to a normal compact K\"ahler variety $Y$ with rational singularities and a K\"ahler class $\omega_Y \in H_{\mathrm{BC}}^{1,1}(Y,\mathbb{R})$ such that $\alpha=f^* \omega_Y$. 
\end{conjecture}

Thanks to \cite{DH24MMP}, the transcendental base point free conjecture holds when $X$ is projective.

\begin{theorem}[{\cite[Corollary~3.7]{DH24MMP}}]\label{DHTransbpf}
Let $(X, B+ \bbeta)$ be a projective generalized klt pair and $\alpha \in H_{\mathrm{BC}}^{1,1}(X,\mathbb{R})$ a nef class. If $\alpha-\left(K_X+B+ \bbeta_X\right)$ is nef and big, then there is a proper surjective morphism $f: X \rightarrow Z$ with connected fibers to a normal projective variety $Z$ with rational singularities such that $\alpha=f^* \omega_Z$ for some K\"ahler class $\omega_Z$ on $Z$.
\end{theorem}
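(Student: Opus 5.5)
This statement is cited from \cite[Corollary~3.7]{DH24MMP}, so the plan is to reduce it to the projective $\mathbb{R}$-divisor base point free theorem. The idea is to replace the transcendental data $\bbeta$ and $\alpha$ by honest $\mathbb{R}$-divisors, without changing the relevant numerical classes.

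\textbf{Step 1: turn $\bbeta$ into a divisor.} On a projective variety, $H^{1,1}_{\rm BC}(X,\mathbb{R})$ contains the Néron--Severi space, but $\alpha$ and $\bbeta_X$ need not be algebraic. Write $\gamma:=\alpha-(K_X+B+\bbeta_X)$, which is nef and big. Then
\[
\alpha-(K_X+B)=\gamma+\bbeta_X .
\]
Pass to a log resolution $\nu:X'\to X$ on which $\bbeta$ descends with $\bbeta_{X'}$ nef. The class $\nu^*\gamma+\bbeta_{X'}$ is nef and big. Use a Kähler current in it with analytic singularities, together with the bigness, to find a klt boundary $\Delta$ such that $K_X+\Delta\equiv K_X+B+\bbeta_X+\epsilon(\text{small Kähler})$ up to a nef-and-big remainder. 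This is the perturbation device used in the proof of Proposition~\ref{ExtendMMP}, after \cite[Theorem 5.12]{DH24}.

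\textbf{Step 2: show $\alpha$ is algebraic, or work around it.} This step is the main obstacle. Since $\alpha-(K_X+B+\bbeta_X)$ is nef and big, the cone theorem for generalized pairs (\cite{HP24}, \cite{CH20}) shows that $\alpha^\perp\cap\overline{NE}(X)$ is a face spanned by finitely many $(K_X+B+\bbeta_X)$-negative extremal rays, and that this face is rational polyhedral. Contract that face using the projective contraction theorem for the klt pair $(X,\Delta)$ coming from Step 1. This gives $f:X\to Z$ with $Z$ projective and with rational singularities, the latter because $Z$ is the image of a klt pair under a $(K+\Delta)$-negative contraction.

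\textbf{Step 3: descend $\alpha$ and check it is Kählerian on $Z$.} The class $\alpha$ is $f$-numerically trivial. Because $R^1f_*\mathcal{O}_X=0$ by Proposition~\ref{KVVanishing}, Bott--Chern classes that are trivial on the fibers descend, so $\alpha=f^*\omega_Z$ for some class $\omega_Z$. The class $\omega_Z$ is positive on every curve of $Z$. It is also a limit of classes of the form (ample plus a nef-and-big perturbation) via the Kähler current from Step 1. By the Demailly--Păun numerical characterization on the projective variety $Z$ with rational singularities, $\omega_Z$ is a Kähler class.

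**The main difficulty.** The hard point is really Step 2. Knowing that $\alpha$ is positive on the complement of a rational polyhedral face does not immediately make $\alpha$ a pullback. The argument needs the transcendental MMP with scaling from Theorem~\ref{Theorem: TransBCHM}, applied to $K_X+B+\bbeta_X+t\gamma$, to show that $\alpha$ is a positive combination of an ample class and finitely many classes that are already semi-ample.
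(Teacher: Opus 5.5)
The paper gives no proof of this statement; it just quotes \cite[Corollary~3.7]{DH24MMP}. Your skeleton is sensible: replace the transcendental data by $\mathbb{R}$-divisors up to numerical equivalence on curves, get $f$ from the algebraic theory, and descend $\alpha$ using $R^1f_*\mathcal{O}_X=0$. However, there is a genuine gap exactly where the statement is transcendental, namely the K\"ahlerness of $\omega_Z$ in Step~3.

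Strict positivity on curves does not make a transcendental class K\"ahler. Being a limit of K\"ahler classes only gives nefness, so that does not close the gap either. For a counterexample, let $Y=E\times E$ with $E$ an elliptic curve without complex multiplication, so $\rho(Y)=3<4=h^{1,1}(Y)$. A nonzero $t\in H^{1,1}(Y,\mathbb{R})$ orthogonal to $\mathrm{NS}(Y)$ has $t^2<0$ and $t\cdot C=0$ for every curve $C$. Hence for $h$ ample, $h+\lambda t$ is positive on $\overline{NE}(Y)\setminus\{0\}$ for every $\lambda$. Yet it is nef but not big when $(h+\lambda t)^2=0$, and not even nef for larger $\lambda$. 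The Demailly--P\u{a}un criterion needs $\int_V\omega_Z^{\dim V}>0$ for every subvariety $V\subseteq Z$, which you never check. What you need is a uniform gain of positivity, and it has to come from the adjoint structure.

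Here is one way to get it. Write $K_{X'}+B'+\bbeta_{X'}=\nu^*(K_X+B+\bbeta_X)$ on a log resolution. After a further blow-up, write $\bbeta_{X'}+\nu^*\gamma=\kappa+F$ with $\kappa$ K\"ahler and $F\ge 0$. Then for $0<\delta\ll 1$,
$\nu^*\alpha=K_{X'}+B'+\delta F+\bigl((1-\delta)(\bbeta_{X'}+\nu^*\gamma)+\delta\kappa\bigr)$.
Replacing the K\"ahler bracket by a numerically equivalent ample $\mathbb{R}$-divisor gives a klt pair $(X,\Delta)$ with $\Delta$ big and $D:=K_X+\Delta\equiv\alpha$. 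The $\mathbb{R}$-divisor base point free theorem then gives $f$ with $D=f^*A_Z$, $A_Z$ ample, and descent gives $\alpha=f^*\omega_Z$. For $0<\epsilon\ll\delta$ the class $\delta\kappa-\epsilon\nu^*D$ is still K\"ahler. So $\alpha-\epsilon D$ is again the adjoint class of a generalized klt pair. It is non-negative on every curve, since $(\alpha-\epsilon D)\cdot C=(1-\epsilon)D\cdot C$, hence nef by Proposition~\ref{nef}. Therefore $\omega_Z-\epsilon A_Z$ is nef (test after pulling back by $f$, \cite{Paunnef}), and $\omega_Z=(\omega_Z-\epsilon A_Z)+\epsilon A_Z$ is K\"ahler.

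Step~2 is also wrong as stated. With $\gamma$ merely nef and big, the face $\alpha^{\perp}\cap\overline{NE}(X)$ need not be $(K_X+B+\bbeta_X)$-negative. On a K3 surface with a $(-2)$-curve $C$, take $B=0$, $\bbeta=0$, and $\alpha$ nef and big with $\alpha\cdot C=0$. Then $[C]$ lies in the face but $K_X\cdot C=0$. Negativity appears only after perturbing. Write $\alpha=H+N$ with $H$ ample and $N\ge 0$; then $\alpha=K_X+\delta N+((1-\delta)\alpha+\delta H)$ and $N\cdot C<0$. At that point it is simpler to apply the base point free theorem to $K_X+\Delta$ directly than to argue through the cone theorem.

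Step~1 should also say explicitly that $\equiv$ means numerical equivalence on curves. The point is that on a projective variety, the curve pairing of any class in $H^{1,1}_{\mathrm{BC}}(X,\mathbb{R})$ coincides with that of an element of $\mathrm{NS}(X)_{\mathbb{R}}$, and a K\"ahler class pairs like an ample $\mathbb{R}$-divisor.

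Finally, the remedy in your last paragraph does not work. The class $\alpha$ usually does not lie in $\mathrm{NS}(X)_{\mathbb{R}}$, so it cannot be a positive combination of an ample class and semi-ample divisor classes. If ``semi-ample'' is meant in the transcendental sense, that is what you are trying to prove. Invoking Theorem~\ref{Theorem: TransBCHM} also risks circularity, since its goodness assertion is a base point free statement of exactly this kind. The real difficulty is Step~3, not Step~2.
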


Theorem \ref{DHTransbpf} allows us to prove the deformation stability of bigness-and-nefness of adjoint classes in K\"ahler families with projective central fiber (Proposition \ref{Extbigandnef}). Before that, let us present several lemmas.
\begin{lemma}\label{Lemma: cohomologicaltrivial}
Let $X$ be a proper Moishezon variety with rational singularities, with $L$ a holomorphic $\mathbb{R}$-line bundle on $X$, and a class $\alpha \in H^{1,1}_{\rm{BC}}(X, \mathbb{R})$. If $L\equiv 0$ (resp. $\alpha \equiv 0$), then $c_1(L) = 0 \in H^2(X ,\mathbb{R})$ (resp. $\alpha  =0 \in H^2(X, \mathbb{R})$) . 
\end{lemma}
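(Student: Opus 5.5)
The plan is to reduce the statement about classes to the one about line bundles, and then prove the line bundle statement on a projective resolution. The input from rational singularities is $R^1\pi_*\mathcal O_Y=0$ together with $\pi_*\mathcal O_Y=\mathcal O_X$. The Moishezon hypothesis enters twice: it provides a projective resolution, and it forces $(1,1)$-classes to be algebraic. Throughout I read $\equiv 0$ as ``zero intersection with every curve''.

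\emph{Step 1: from classes to $\mathbb R$-line bundles.} Consider the exact sequence $0\to\mathbb R\to\mathcal O_X\to \mathcal H_X\to 0$, using $\mathcal H_X\cong \mathcal O_X/\mathbb R$ via $u\mapsto \sqrt{-1}\,\mathrm{Im}$. Its connecting map $H^{1,1}_{\rm BC}(X,\mathbb R)=H^1(X,\mathcal H_X)\to H^2(X,\mathbb R)$ has image in $\ker\bigl(H^2(X,\mathbb R)\to H^2(X,\mathcal O_X)\bigr)$. On the other hand, the exponential sequence identifies $\ker\bigl(H^2(X,\mathbb Z)\to H^2(X,\mathcal O_X)\bigr)$ with $c_1(\mathrm{Pic}\,X)$. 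I would then show that, after tensoring with $\mathbb R$, this kernel is still spanned by $c_1(\mathrm{Pic}\,X)$; this is where $X$ Moishezon with rational singularities is used. To do so, I pull back to a projective resolution $\pi:Y\to X$. Since $H^2(Y,\mathcal O_Y)=H^2(X,\mathcal O_X)$ by rationality, the statement reduces to the Lefschetz $(1,1)$-theorem on $Y$ combined with the descent in Step 3. It follows that the image of $\alpha$ in $H^2(X,\mathbb R)$ equals $c_1(M)$ for some $\mathbb R$-line bundle $M$, and $M\equiv 0$ because $\alpha\equiv 0$. Hence it suffices to treat line bundles, and by linearity integral ones: write $M=\sum a_iL_i$ and use that the numerically trivial part of $\mathrm{Pic}(X)\otimes\mathbb R$ is spanned by numerically trivial integral classes.

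\emph{Step 2: the resolution.} Let $\pi:Y\to X$ be a resolution with $Y$ a projective manifold, which exists since $X$ is Moishezon. For a numerically trivial line bundle $L$ on $X$, the projection formula $\pi^*L\cdot C=L\cdot\pi_*C$ shows that $\pi^*L$ is numerically trivial on $Y$. By Matsusaka's theorem on projective manifolds, $\pi^*L^{\otimes m}\in \mathrm{Pic}^0(Y)$ for some $m>0$.

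\emph{Step 3: descent, the main point.} Rational singularities give $R^1\pi_*\mathcal O_Y=0$ and $\pi_*\mathcal O_Y=\mathcal O_X$. The Leray spectral sequence then yields $H^1(X,\mathcal O_X)\cong H^1(Y,\mathcal O_Y)$ and $H^1(X,\mathbb Z)\cong H^1(Y,\mathbb Z)$, so $\pi^*:\mathrm{Pic}^0(X)\to \mathrm{Pic}^0(Y)$ is surjective. Moreover, $\pi^*$ is injective on $\mathrm{Pic}$, since $\pi_*\pi^*N\cong N$. Therefore $L^{\otimes m}\in\mathrm{Pic}^0(X)$, the image of $H^1(X,\mathcal O_X)$ under the exponential map, and so $m\,c_1(L)=0$ in $H^2(X,\mathbb Z)$. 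Consequently $c_1(L)=0$ in $H^2(X,\mathbb R)$.

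I expect the main obstacle to be Step 1, namely showing that a Bott--Chern $(1,1)$-class on a Moishezon variety with rational singularities is, in $H^2(X,\mathbb R)$, a real combination of Chern classes of line bundles. If the direct argument is awkward, I would fall back on citing the known fact, used in \cite{DH24MMP,HP24}, that $H^{1,1}_{\rm BC}(X,\mathbb R)\cong NS(X)_{\mathbb R}$ for such $X$.
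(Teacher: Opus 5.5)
Your Step 1 has a genuine gap, and it cannot be closed. If $\equiv 0$ means ``zero on every curve'' (your reading, and the one the paper's proof uses), then the Bott--Chern half of the statement is false.

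The map $H^2(X,\mathbb Z)\to H^2(X,\mathcal O_X)$ takes values in a complex vector space, so its kernel does not commute with $\otimes\mathbb R$. Compare $\mathbb Z^2\to\mathbb R$, $(a,b)\mapsto a+b\sqrt 2$, which is injective although its $\mathbb R$-linear extension is not. The Lefschetz $(1,1)$-theorem only identifies the \emph{integral} $(1,1)$-classes with $c_1(\operatorname{Pic})$.

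Here is a concrete counterexample. Let $A$ be an abelian surface of Picard number one with ample class $h$. Then $\ker\bigl(H^2(A,\mathbb R)\to H^2(A,\mathcal O_A)\bigr)=H^{1,1}(A,\mathbb R)=H^{1,1}_{\rm BC}(A,\mathbb R)$ is $4$-dimensional, whereas the real span of $c_1(\operatorname{Pic}A)$ is $\mathbb R h$. The linear form $\alpha\mapsto\alpha\cdot h$ on this $4$-dimensional space has a $3$-dimensional kernel, so pick $0\neq\alpha$ in it. Every curve class lies in $\mathbb Q h$, so $\alpha\cdot C=0$ for all curves $C$. Yet $\alpha\neq 0$ in $H^2(A,\mathbb R)$.

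The same example refutes your fallback $H^{1,1}_{\rm BC}(X,\mathbb R)\cong NS(X)_{\mathbb R}$; transcendental classes on projective varieties are exactly what \cite{DH24MMP} is about. The paper's proof breaks at the same point. It asserts that, by Lefschetz $(1,1)$, $\mu^*\alpha$ is the class of an $\mathbb R$-divisor on $X^p$, and that curve classes span $H^{n-1,n-1}(X^p,\mathbb R)$. Both claims fail as soon as $\rho(X^p)<h^{1,1}(X^p)$.

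What your Step 1 does prove, essentially verbatim, is the class statement under an extra hypothesis such as $H^2(X,\mathcal O_X)=0$. Then $c_1\colon\operatorname{Pic}(X)\to H^2(X,\mathbb Z)$ is onto, so the image of $\alpha$ in $H^2(X,\mathbb R)$ is $c_1(M)$ for an $\mathbb R$-line bundle $M$. Moreover $M\equiv 0$, because $\alpha\cdot C$ depends only on that image.

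For line bundles, your Steps 2--3 are correct and take a different route from the paper. The paper pairs $c_1(\mu^*L)$ against $H^{n-1,n-1}(X^p,\mathbb R)$ and then uses the injectivity of $H^2(X,\mathbb R)\to H^2(X^p,\mathbb R)$ from \cite[Lemma 2.6]{DH24MMP}. Since its spanning claim is false over $\mathbb R$, the vanishing of $c_1(\mu^*L)$ has to come from the algebraicity of that class, via Hodge index or Matsusaka. That is what your Step 2 supplies.

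You then descend through $\operatorname{Pic}^0$, using $H^1(X,\mathcal O_X)\cong H^1(Y,\mathcal O_Y)$ and the injectivity of $\pi^*$ on $\operatorname{Pic}$. This gives the stronger conclusion that $c_1(L)$ is torsion in $H^2(X,\mathbb Z)$. Alternatively, once $c_1(\pi^*L)$ is torsion, you could quote the same injectivity on $H^2(\,\cdot\,,\mathbb R)$ as the paper.

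The isomorphism $H^1(X,\mathbb Z)\cong H^1(Y,\mathbb Z)$ you state is not needed. If you keep it, justify $R^1\pi_*\mathbb Z_Y=0$. This follows from the pushed-forward exponential sequence, because $\pi_*\mathcal O_Y^*=\mathcal O_X^*$ and $R^1\pi_*\mathcal O_Y=0$.
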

\begin{proof}
First we treat the case of a line bundle. Since $X$ is Moishezon, there exists a bimeromorphic modification $\mu : X^{p} \to X$ with $X^{p}$ a projective manifold. The pull-back line bundle $\mu^{*}L$ remains numerically trivial, i.e., $\mu^{*}L \equiv 0$. By Poincar\'e duality and Hodge--Riemann bilinear relationship the pairing $${\langle -,-\rangle}:H^{1,1}(X^p,\R)\times H^{n-1,n-1}(X^p,\R) \to H^{2 n}(X^p, \mathbb{R})\cong\R,\quad (\alpha, \beta)\mapsto \int_{X^p} \alpha \cup \beta$$is a perfect pairing. Since $\mu^* L \equiv 0$, we have $$\mu^* L \cdot C = {\langle c_1(\mu^* L), [ C]\rangle}= 0$$for any irreducible curve $C$, where $[C]\in H^{n-1,n-1}(X^p, \R)$ represents its fundamental class. By the Lefschetz (1,1)-theorem, $H^{n-1,n-1}(X^p,\R)$ is generated by these classes $[C]$, therefore using the perfect pairing condition, we have $\mu^* c_1(L) = c_1(\mu^* L) = 0$. Since $X$ has rational singularities, the pullback map
\[
\mu^{*} : H^{2}(X,\mathbb{R}) \to H^{2}(X^{p},\mathbb{R})
\]
is injective by \cite[Lemma~2.6]{DH24MMP}. Hence $c_{1}(L)=0 \in H^{2}(X,\mathbb{R})$ as well.

Next we handle the case of a Bott--Chern class $\alpha \in H^{1,1}_{\mathrm{BC}}(X,\mathbb{R})$. Since $\mu^{*}\alpha \in H^{1,1}_{\mathrm{BC}}(X^{p},\mathbb{R})$, the Lefschetz $(1,1)$-theorem implies that $\mu^{*}\alpha$ is the first Chern class of some $\mathbb R$-divisor on $X^{p}$. Hence the same argument as above applies, and the conclusion follows.
\end{proof}
\begin{lemma}\label{l-bir}
Let $f:U\to V$ be a proper Moishezon morphism of analytic varieties, and $\alpha$ a locally exact $(1,1)$-form on $U$. If $\alpha |_{f^{-1}(v)}\equiv 0$ for some point $v\in V$, then there exists a neighborhood $V'\subset V$ around $v$, such that $\alpha |_{f^{-1}(w)}\equiv 0$ for all $w\in V'$.

If furthermore $f:U\to V$ is bimeromorphic, and $\alpha \equiv K_U+B+\bbeta _U$ for some generalized klt pair $(U,B+\bbeta )$, then replacing $V$ by $V'$ and $U$ by $f^{-1}(V')$, we have that $
K_V+B_V+\bbeta _V:=f_*(K_U+B+\bbeta _U)$
is generalized klt and $K_U+B+\bbeta _U=f^*(K_V+B_V+\bbeta _V)$.
\end{lemma}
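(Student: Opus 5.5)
The first claim is local on $V$, so I would shrink $V$ to a small relatively compact Stein neighborhood of $v$. The condition "$\alpha|_{f^{-1}(w)}\equiv 0$" should mean that $\alpha\cdot C=0$ for every curve $C$ contracted by $f$ to $w$. This is equivalent to saying that the class of $\alpha$ vanishes in the relative space $N^1(U/V)$, or dually, that $\alpha$ kills the relative cone of curves $\overline{NE}(U/V)$ near $v$. I would argue as follows.

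Since $f$ is Moishezon, after shrinking $V$ there is a projective bimeromorphic modification $\mu:U'\to U$ such that $f\circ\mu$ is projective over $V$. Because $\alpha$ is locally exact, $\mu^*\alpha$ is relatively numerically trivial over $v$ if and only if $\alpha$ is. On $U'$, which is projective over the Stein space $V$, I would use the relative Lefschetz $(1,1)$ statement, as in Lemma~\ref{Lemma: cohomologicaltrivial}. This identifies the relative class of $\mu^*\alpha$ with the class of an $\mathbb R$-divisor $D$ modulo pullbacks from $V$.

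A divisor that is numerically trivial on the fiber over $v$ is numerically trivial on nearby fibers. One way to see this: $D\cdot C$ is locally constant on the components of the relative Douady/Hilbert space of curves in fibers, and only finitely many components meet the fiber over $v$ after shrinking (Property (P)). Then $D\cdot C=0$ for curves over $v$ forces $D\cdot C=0$ for curves in each such component. Any curve in a fiber over a nearby $w$ lies in a component that meets $f'^{-1}(v)$, by properness of the Douady space over $V$ after shrinking, and its limit is a (possibly reducible) curve over $v$ with the same intersection number, namely $0$. Pushing forward via $\mu$ gives the claim for $\alpha$. The main obstacle is this properness/limit step. I would handle it by noting that curves contracted over $V$ with bounded degree with respect to a relatively ample divisor form a proper family over $V$. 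Since $N_1(U'/V)$ is finite dimensional and generated by curves over finitely many components, bounded degree suffices.

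For the second claim, assume now that $f$ is bimeromorphic and $\alpha\equiv K_U+B+\bbeta_U$ with $(U,B+\bbeta)$ gklt. By the first part, $K_U+B+\bbeta_U\equiv_{V'}0$. Set $K_V+B_V+\bbeta_V:=f_*(K_U+B+\bbeta_U)$. I would show that $K_V+B_V+\bbeta_V$ is $\mathbb R$-Cartier (a class on $V$) with $f^*(K_V+B_V+\bbeta_V)=K_U+B+\bbeta_U$. The negativity lemma suffices: $E:=K_U+B+\bbeta_U-f^*(\cdot)$ would be $f$-exceptional and $f$-numerically trivial, hence $E=0$. To make sense of the pullback, I would use that $V$ (after shrinking) has rational singularities. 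This follows as in Proposition~\ref{ExtendMMP}: a numerically trivial adjoint class over $V$, together with relative Kawamata--Viehweg vanishing (Proposition~\ref{KVVanishing}), gives $R^if_*\mathcal O_U=0$. Then, arguing as in Lemma~\ref{Lemma: cohomologicaltrivial} with \cite[Lemma 2.6]{DH24MMP}, a relatively numerically trivial class on $U$ descends to a Bott--Chern class on $V$.

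Finally, $(V,B_V+\bbeta)$ is gklt because its discrepancies coincide with those of $(U,B+\bbeta)$, since $K_U+B+\bbeta_U$ is the crepant pullback of $K_V+B_V+\bbeta_V$.
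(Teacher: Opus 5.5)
\textbf{Main gap: propagating from $v$ to nearby fibres.} The first part has a genuine gap at exactly the step you flag. It is not true that, after shrinking $V$, every curve in a fibre over a nearby $w$ lies in a Douady component meeting $f'^{-1}(v)$. The components whose image misses $v$ are countably many, each with closed image, but these images can accumulate at $v$, so no neighbourhood avoids all of them. For instance, take a family of polarized K3 surfaces over a small ball in the period domain with $\mathrm{NS}(X_v)=\mathbb{Z}H$. The Noether--Lefschetz points $w_k$ carrying extra curve classes accumulate at $v$, and those curves do not deform to $X_v$.

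Bounding the degree does not rescue the argument. It lets you shrink $V$ so that curves of degree $\le d$ over the smaller base specialize into $f'^{-1}(v)$. You would then need every curve over the smaller base to be numerically a combination of such curves, and finite dimensionality of $N_1(U'/V)$ does not give this. Its generating curves may lie over points, and in components, that you have just discarded. Choosing new generators over the smaller base raises $d$ and forces further shrinking, and nothing makes this process stop.

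The paper sidesteps accumulation altogether. It uses countability of the Douady components only to get $\alpha|_{f^{-1}(w)}\equiv 0$ for $w$ in the very general set $W$ obtained by removing the images of the components that miss $v$. This set meets every stratum $V_j$ with $v\in\overline{V_j}$ of a stratification over which $f$ is smooth. The paper then passes from one point of $V_j$ to all of $V_j$ topologically, via local constancy of $R^2f_*\mathbb{R}$ on $V_j$ and the fact that numerical triviality on a fibre forces vanishing in its $H^2$ (Lemma~\ref{Lemma: cohomologicaltrivial}). This topological propagation is the idea your proposal is missing.

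\textbf{The Lefschetz reduction.} Your reduction of $\mu^*\alpha$ to an $\mathbb{R}$-divisor via a relative Lefschetz $(1,1)$ statement is not available for an arbitrary locally exact $(1,1)$-form. On a projective K3 surface of Picard number one, $H^{1,1}(X,\mathbb{R})$ is $20$-dimensional while $\mathrm{NS}(X)_{\mathbb{R}}$ is a line. Your specialization argument does not need this reduction, since $\int_C\alpha$ is constant along connected families of cycles for any closed $\alpha$.

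The statement, however, does need some such hypothesis. In the K3 setting, take the family over the locus where the periods stay orthogonal to a general real class $a\perp H$ with $a^2<0$. Then $a$ is represented by a closed $(1,1)$-form on $U$. This form is numerically trivial on a very general fibre $X_v$ (where $\mathrm{NS}(X_v)=\mathbb{Z}H$). It is not numerically trivial on Noether--Lefschetz fibres $X_{w_k}$, $w_k\to v$, carrying a class $c_k$ with $a\cdot c_k\neq 0$. So the first assertion requires $\alpha$ to be an $\mathbb{R}$-divisor class, for which numerical triviality on a smooth projective fibre does imply vanishing in $H^2$. The Bott--Chern case of Lemma~\ref{Lemma: cohomologicaltrivial}, used in the paper's topological step, needs the same restriction.

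\textbf{The second part.} Here your route differs from the paper's. You get $R^if_*\mathcal{O}_U=0$ from Proposition~\ref{KVVanishing} with $D=0$. This is legitimate, since $-(K_U+B+\bbeta_U)\equiv_V 0$ is nef, and big over $V$ because $f$ is bimeromorphic. You then descend with \cite[Lemma 2.6]{DH24MMP}. The paper instead perturbs $\bbeta_{U'}$ on a resolution projective over $V$ into a klt boundary, runs an MMP over $U$ to a small model $U''$, and applies base point freeness to produce a klt pair $(V,\Delta_V)$, whence rational singularities.

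Your route is shorter, but it has a gap. The Leray spectral sequence yields rational singularities of $V$ from $R^if_*\mathcal{O}_U=0$ only once you know $U$ has rational singularities (and $V$ is normal). Generalized klt does not give this for free. Proving it, e.g.\ by perturbing the nef part on a resolution projective over $U$ to see that $U$ is locally of klt type, is essentially the first step of the paper's argument.

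Finally, the negativity-lemma step is circular as phrased, since $f^*(K_V+B_V+\bbeta_V)$ only makes sense after descent. It is also superfluous: once $\alpha=f^*\alpha_V$, one automatically has $\alpha_V=f_*\alpha$.
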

\begin{proof}
Replacing $U$ by a resolution, we may assume that $U$ is smooth and $f$ is projective. Since there are countably many families of $f$-vertical curves, $\alpha |_{f^{-1}(w)}\equiv 0$ for very general $w\in W\subset V$ (here $W$ is a complement of countably many closed subsets not containing $v$). Let $V_j$ be a stratification of $V$ by locally closed subsets such that $f$ is smooth over each $V_j$ and $U_j=f^{-1}(V_j)$, then $R^2f_*\mathbb R _{U_j}$ is a locally constant sheaf whose fibers are $(R^2f_*\mathbb R _{U_j})_p=H^2(f^{-1} (p),\mathbb R )$ for $p\in V_j$. Suppose that $v\in \bar V_j$. Since each $V_j$ is locally closed, we have a non-empty intersection $W\cap V_j \ne \emptyset$. For any $w\in W \cap V_j$ we have $\alpha|_{f^{-1}(w)} \equiv 0$. Since $R^2 f_* \mathbb{R}_{U_j}$ is locally constant, this means $\alpha|_{f^{-1}(p)} =0 $ for all $p\in V_j$ by Lemma \ref{Lemma: cohomologicaltrivial}. 
Let $V'=\cup _{v\in \overline V_j}V_j$, then $\alpha |_{f^{-1}(w)}\equiv 0$ for all $w\in V'$.

Suppose now that  $\alpha \equiv K_U+B+\bbeta _U$ for some generalized klt pair $(U,B+\bbeta )$. Replacing $V$ by $V'$ and $U$ by $f^{-1}(V')$, we may assume that $\alpha |_{f^{-1}(w)}\equiv 0$ for all $w\in V$.
Let $\nu :U'\to U$ be a resolution such that $f':U'\to V$ is projective and $\bbeta _{U'}$ is nef. 
We write $K_{U'}+B_{U'}+\bbeta _{U'}=\nu ^*(K_U+B+\bbeta _U)$. 
Working locally over $V$, we may assume that $V$ is Stein and $\bbeta _{U'}\equiv -(K_{U'}+B_{U'})$ is nef and big over $V$. But then \[B_{U'}^{\ge 0}+\varepsilon {\rm Ex}(\nu )+\bbeta _{U'}\equiv \Delta _{U'},\] where $K_{U'}+\Delta _{U'}\equiv _V B_{U'}^{\leq 0}+\varepsilon {\rm Ex}(\nu )=:F$ is klt. By the mmp for projective morphisms (\cite{FujinoBCHM} and \cite{DHP24}), $K_{U'}+\Delta _{U'}$ has a good minimal model $\phi:U'\dasharrow U''$ over $U$, as in the commutative diagram:
\begin{center}
\begin{tikzcd}
	{U'} && {U''} \\
	& U \\
	& V.
	\arrow["\phi", dashed, from=1-1, to=1-3]
	\arrow["\nu", from=1-1, to=2-2]
	\arrow["{f'}"', from=1-1, to=3-2]
	\arrow["\mu"', from=1-3, to=2-2]
	\arrow["{f''}", from=1-3, to=3-2]
	\arrow["f"{description}, from=2-2, to=3-2]
\end{tikzcd}
\end{center}
Since $K_{U'}+\Delta  _{U'}\equiv _{U}F$ where $F$ is an effective $\mathbb R$-divisor with support equal to ${\rm Ex}(\nu)$, the negativity lemma gives $\phi _*F=0$, and so $\mu:U''\to U$ is a small birational morphism. Thus $K_{U''}+\Delta _{U''}$ is a klt pair, numerically trivial over $V$. By the base point free theorem, $K_{U''}+\Delta _{U''}$ is semi-ample over $V$, and hence $K_{U''}+\Delta _{U''}=(f'')^*(K_V+\Delta _V)$ where $(V,\Delta _V)$ is klt and in particular $V$ has rational singularities. By \cite[Lemma 3.3]{HP16} or \cite[Lemma 2.6]{DH24MMP}, $\alpha =f^*\alpha _V$ for some locally exact $(1,1)$-form $\alpha _V$ on $V$. But then $K_V+B_V+\bbeta _V:=f_*(K_U+B+\bbeta _U)$ is generalized klt. 
\end{proof}
Now we can prove:
\begin{proposition}\label{Extbigandnef}
Let $f:X\to S$ be a proper surjective morphism from a 
generalized K\"ahler pair $(X,B +  { \boldsymbol{\beta}})$ onto a smooth, connected, relatively compact curve $S$. Fix a point $0\in S$ and assume that the support of the boundary divisor $B$ does not contain the fiber $X_0$. 

Assume that the restriction to the central fiber $(X_0,B_0 +  { \boldsymbol{\beta}}_0)$ is a projective  generalized klt pair, such that $K_{X_0}+B_0 +  { {\bbeta}}_0$ is nef and big. Then $K_{X}  + B +  { {\bbeta}_X}$ is nef and big over $U$ for some Euclidean open neighborhood $U\subset S$ of $0$. 
\end{proposition}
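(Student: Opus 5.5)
Apply the transcendental base point free theorem on the central fiber, extend the resulting contraction to the family, and then read off nefness and bigness from relative K\"ahlerness over the extended base.

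First, Proposition \ref{DefKLT} lets us shrink $S$ so that $(X,B+\bbeta)$ and every $(X_t,B_t+\bbeta_t)$ are generalized klt. Set $\alpha_0:=K_{X_0}+B_0+\bbeta_0$. It is nef, and $\alpha_0-(K_{X_0}+B_0+\bbeta_0)=0$ is trivially nef; bigness fails there, so instead take $\alpha=2\alpha_0$. Then $\alpha-(K_{X_0}+B_0+\bbeta_0)=\alpha_0$ is nef and big. Theorem \ref{DHTransbpf} now gives a contraction $\phi_0:X_0\to Z_0$ onto a normal projective $Z_0$ with rational singularities and a K\"ahler class $\omega_{Z_0}$ with $\alpha_0=\phi_0^*\omega_{Z_0}$. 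Since $\alpha_0$ is big, $\phi_0$ is bimeromorphic.

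Next, extend $\phi_0$. Proposition \ref{KVVanishing} applied with $D=0$ (note $-\alpha_0\equiv_{Z_0}0$, and $0-\alpha_0$ is $\phi_0$-nef and $\phi_0$-big because $\phi_0$ is bimeromorphic) gives $R^1(\phi_0)_*\mathcal O_{X_0}=0$. Hence Theorem \ref{ExtendCont} extends $\phi_0$ to a contraction $\phi:X\to Z$ over a neighborhood $U$ of $0$. On $X$ the class $K_X+B+\bbeta_X$ restricts to $X_0$ as $K_{X_0}+B_0+\bbeta_0$, since $X_0$ is Cartier and trivial near $X_0$. It is therefore numerically trivial on the fibers of $\phi$ over $Z_0$. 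Lemma \ref{l-bir}, applied to the Moishezon bimeromorphic morphism $\phi$ near each point of $Z_0$, shows after shrinking that $K_X+B+\bbeta_X=\phi^*(K_Z+B_Z+\bbeta_Z)$ with $(Z,B_Z+\bbeta_Z)$ generalized klt. Here $\phi$ is Moishezon because $\phi_0$ is projective and bimeromorphic, and bimeromorphic morphisms are Moishezon.

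Finally, $(K_Z+B_Z+\bbeta_Z)|_{Z_0}=\omega_{Z_0}$ is K\"ahler. By Proposition \ref{DefAmple}(2), applied to $Z\to U$ with the identity contraction, after shrinking $U$ the class $K_Z+B_Z+\bbeta_Z$ is K\"ahler on each $Z_t$. An openness argument for K\"ahler classes in a neighborhood of a compact fiber also gives that it is K\"ahler over $U$. Pulling back along the bimeromorphic $\phi_t$, $K_{X_t}+B_t+\bbeta_t=\phi_t^*(\text{K\"ahler})$ is nef and big for all $t\in U$, using Proposition \ref{Prop: PullBig} and the fact that pullbacks of nef classes are nef.

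The main obstacle is the middle step: showing that the class actually descends to $Z$ near $Z_0$, i.e.\ that numerical triviality on the fibers of $\phi_0$ propagates to the fibers of $\phi$ over nearby points. This is exactly what Lemma \ref{l-bir} is designed for, so the work there is checking its hypotheses. In particular one must verify that $\phi$ is Moishezon and that $\phi_t$ is still bimeromorphic. Bimeromorphy of $\phi_t$ follows from upper semicontinuity of fiber dimension, as in Proposition \ref{ContInMMP}.
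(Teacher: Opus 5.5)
Your proposal is correct and follows essentially the same route as the paper. Both arguments use base point freeness on $X_0$ via Theorem \ref{DHTransbpf}, then Kawamata--Viehweg vanishing and Theorem \ref{ExtendCont} to extend the bimeromorphic contraction, then Lemma \ref{l-bir} to propagate numerical triviality and descend the adjoint class to $Z$ near $Z_0$, and finally openness of the K\"ahler condition near $Z_0$. The differences are cosmetic: you make explicit the choice $\alpha=2(K_{X_0}+B_0+\bbeta_0)$ needed to apply Theorem \ref{DHTransbpf}, and you fold the global descent into Lemma \ref{l-bir}, whereas the paper cites \cite[Lemma 2.6]{DH24MMP} for it separately (the proof of Lemma \ref{l-bir} does end with that citation).
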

\begin{proof}
Since $(X_0, B_0 + \bbeta_0)$ is a generalized klt pair, by Proposition \ref{DefKLT}, after shrinking the base, $(X,B + \bbeta)$ also has generalized klt singularities. 
By the transcendental base point free Theorem \ref{DHTransbpf} and semiample fibration, we know that $K_{X_0} + B_0 +  { {\bbeta}}_0$ is a semi-ample class and there exists a bimeromorphic contraction morphism  $$g_0: X_0 \to Y_0,$$ with a generalized klt pair $(Y_0, B_{Y_0} +  { \boldsymbol{\beta}}_0)$, such that $$g_0^* (\omega_0) = K_{X_0 } +B_0+  { {\bbeta}}_0,$$ where $\omega_0$ is a K\"ahler class on $Y_0$. As in the proof of Lemma \ref{l-bir}, there exists a klt pair $(Y_0, \Delta_{Y_0})$ such that $K_{Y_0} + \Delta_{Y_0}$ is ample, and some class $\delta_{Y_0} \in H^{1,1}_{\rm{BC}}(Y_0 , \mathbb{R})$ such that $(K_{Y_0}, \Delta_{Y_0} + {\delta}_{Y_0})$ is a generalized klt pair with $\omega_0 = K_{Y_0}+ \Delta_{Y_0} +\delta_{Y_0}$. 

By Kawamata--Viehweg vanishing (Proposition \ref{KVVanishing}), $R^i (g_0)_* \mathcal{O}_{X_0} = 0$, for $i>0$. Then, by Theorem \ref{ExtendCont}, we know that the contraction extends to a relative contraction $$g:X\to Y/S.$$
We denote $B_Y = g_* B$ and $ { {\bbeta}}_Y = g_*  { {\bbeta}_X}$. By a similar upper semi-continuity argument as in Proposition \ref{ContInMMP}, we know that the extension $g:X\to Y/S$ is a bimeromorphic contraction, which is fiberwise bimeromorphic by \cite[Corollary 3.2]{CRC25}. 
Since $g_0^* \omega_0 = K_{X_0}+ B_0 +  { {\bbeta}}_0$, $g_0$ is a $(K_{X_0}+ B_0+  { {\bbeta}}_0)$-trivial contraction morphism. By Lemma \ref{l-bir}, after shrinking the base,  $g$ is a $(K_X+ B+ { {\bbeta}})$-numerically trivial contraction morphism. 
By \cite[Lemma 2.6]{DH24MMP}, it follows that there exists a class $\omega\in H^{1,1}_{\rm{BC}}(Y,\R)$ such that  $$(K_X+B + { {\bbeta}_X} ) = g^* \omega.$$
Since $\omega_{0}=\omega|_{Y_0}$ is K\"ahler, $\omega$ is K\"ahler near $Y_0$. So $K_X+B+ \bbeta_X$ is nef and big over $U$ for some Euclidean open neighborhood $U\subset S$ of $0$. 
\end{proof}

We propose the following more general statement. 
\begin{conjecture}\label{conj: bignef}
Let $f:X\to S$ be a proper surjective morphism from 
generalized K\"ahler pair $(X,B +  { \boldsymbol{\beta}})$ onto a smooth, connected, relatively compact curve $S$. Fix a point $0\in S$ and assume that the support of the boundary divisor $B$ does not contain the fiber $X_0$. 

Assume that the restriction to the central fiber $(X_0,B_0 +  { \boldsymbol{\beta}}_0)$ is a generalized klt pair, such that $K_{X_0}+B_0 +  { {\bbeta}}_0$ is nef and big. Then $K_{X}  + B +  { {\bbeta}_X}$ is nef and big over $U$ for some open neighborhood $U\subset S$ of $0$. 
\end{conjecture}

\begin{remark}\label{Extbigandnef3fold}
Following the same proof as in Proposition  \ref{Extbigandnef}, one can show that the conjecture above follows from the transcendental base point free Conjecture \ref{Transbpf}. 

Since the transcendental base point free conjecture has been proved for K\"ahler threefolds (cf.~\cite{DH24}) and for cases where the numerical Kodaira dimension $\mathrm{nd}(K_X + \alpha) \le 3$ (cf.~\cite{JunSheng}), Conjecture \ref{conj: bignef} holds in these two cases.
\end{remark}

\section{Pseudo-effectivity of canonical divisors under deformations}\label{Section: Deformation pseff}
The goal of this section is to study the global stability of the pseudo-effectivity of canonical divisors in K\"ahler families, under the assumption that the central fiber is projective with canonical singularities. Our first result involves the deformation closedness of pseudo-effectivity of adjoint classes.


\begin{proposition}\label{Def-Close-Pseff}
Let $f: X \to S$ be a family from a generalized K\"ahler pair $(X, B+  { \boldsymbol{\beta}})$ onto a smooth, connected, and relatively compact curve $S$. Fix a point $0\in S$ and assume that the boundary divisor $B$ does not contain the central fiber $X_0$. 
Suppose that $(X_0,B_0 + { \boldsymbol{\beta}}_0)$ is projective, with canonical singularities, where $(K_X+X_0+B+\bbeta _X)|_{X_0}=K_{X_0}+B_0+\bbeta _0$, and $\lfloor B_0 \rfloor = 0$ with $N(K_{X_0 }+B_0 + \bbeta_0)\wedge B_0  = 0$. 

If there exists a sequence $\{t_i\}_{i\in \mathbb N}\subset S$ converging to $0$ such that $K_{X_{t_i}}+B_{t_i}+\bbeta _{t_i}$  is pseudo-effective for each $i$, then $K_{X_0} + B_0+  { \bbeta}_0$ is pseudo-effective.
\end{proposition}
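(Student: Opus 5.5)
We argue by contradiction: assume $K_{X_0}+B_0+\bbeta_0$ is not pseudo-effective, and use a Mori fiber space on the central fiber, extended to the family, to force non-pseudo-effectivity on the nearby fibers $X_{t_i}$.

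First, since $(X_0,B_0+\bbeta_0)$ is projective and canonical (hence generalized klt, as $\lfloor B_0\rfloor=0$), we fix an ample class $\omega_0$ on $X_0$ and perturb the boundary. Replace $\bbeta$ by $\bbeta+\varepsilon\,\overline{\omega}$, where $\omega$ is a K\"ahler class on $X$ and $0<\varepsilon$ is small enough that $K_{X_0}+B_0+\bbeta_0+\varepsilon\omega_0$ is still not pseudo-effective. Non-pseudo-effectivity is an open condition, since the pseudo-effective cone is closed. After this perturbation $B_0+\bbeta_0$ is big, so Theorem~\ref{Theorem: TransBCHM}(2) applies. Running the $(K_{X_0}+B_0+\bbeta_0)$-MMP with scaling then terminates with a Mori fiber space
\[
(X_0,B_0+\bbeta_0)\dashrightarrow (X_0^{(N)},B_0^{(N)}+\bbeta_0^{(N)})\to Z_0 .
\]
The hypotheses $N(K_{X_0}+B_0+\bbeta_0)\wedge B_0=0$, canonical singularities and $\lfloor B_0\rfloor=0$ are those of Proposition~\ref{ExtendMMP}. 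Adding a K\"ahler perturbation does not enlarge the negative part in a way that meets $B_0$; we need to check this, or else choose the perturbation so that the hypotheses are kept, using Lemma~\ref{singular} to propagate them along the MMP. With these hypotheses, Proposition~\ref{ExtendMMP} extends the MMP steps to $(K_X+B+\bbeta_X)$-negative bimeromorphic maps $X\dashrightarrow X^{(N)}$ over a neighborhood $U$ of $0$. Lemma~\ref{singular} also ensures that the final model $(X_0^{(N)},B_0^{(N)}+\bbeta_0^{(N)})$ is still canonical, hence generalized klt. Proposition~\ref{extFano} then extends the Mori fiber space to a relative one, $X^{(N)}\to Z$ over $U$, on which $-(K_{X^{(N)}}+B^{(N)}+\bbeta_{X^{(N)}})$ is relatively K\"ahler over $Z$ and whose fibers near $0$ have positive dimension.

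Next we transport non-pseudo-effectivity back to the fibers. On each $X^{(N)}_t$ with $t\in U$ near $0$, the restriction $X^{(N)}_t\to Z_t$ is a fiber-type contraction on which $-(K+B+\bbeta)$ is relatively K\"ahler. Take a general positive-dimensional fiber $F$ of $X^{(N)}_t\to Z_t$ and an ample (K\"ahler) class $\gamma$ on $Z_t$. A movable curve class in $F$, obtained by intersecting K\"ahler classes on $F$, pairs negatively with the adjoint class; equivalently, apply Proposition~\ref{NonPSEFFMFS} relatively over $Z_t$. Either way $K_{X^{(N)}_t}+B^{(N)}_t+\bbeta_t^{(N)}$ is not pseudo-effective. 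Since each extended step is $(K+B+\bbeta)$-negative, restricting to a fiber $t$ still gives a non-positive contraction, as in Proposition~\ref{PushPSEF} applied fiberwise via \cite[Corollary 3.2]{CRC25}. Hence $K_{X_t}+B_t+\bbeta_t$, including the perturbation, is not pseudo-effective for all $t\in U$. The perturbed class is the original class plus $\varepsilon\omega_t$. If the original class were pseudo-effective at $t_i$, the perturbed class would be pseudo-effective there too. For $i\gg0$ we have $t_i\in U$, so this contradicts the hypothesis.

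The main obstacle is the fiberwise restriction step. Non-pseudo-effectivity is established on the total space relative to $Z$ or $S$, but it must be transported to each individual fiber $X_t$, and the extended maps need not be fiberwise MMP steps (divisorial contractions may become isomorphisms, cf. the de~Fernex--Hacon example). I would handle this using only the fiberwise non-positivity of the extended maps, which follows from the $(K+B+\bbeta)$-negativity over $S$ by restricting the common resolution to the general, hence every nearby, fiber. For the final model I would argue directly with covering curves in the fibers of $X^{(N)}_t\to Z_t$ rather than with relative notions. The secondary difficulty is keeping the hypothesis $N\wedge B_0=0$ under the K\"ahler perturbation. If this causes trouble, I would instead use the MMP with scaling of $\omega_0$ and Lemma~\ref{Lemma: PSEFandMFS}: the pseudo-effective threshold $\tau>0$ makes the final step a $(K+B+\bbeta+\tau\omega)$-trivial Mori fiber space, and the same extension argument gives the contradiction.
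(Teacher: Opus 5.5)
Your proof is essentially the paper's: argue by contradiction, run the MMP on $X_0$ via Theorem~\ref{Theorem: TransBCHM} to a Mori fiber space, extend the steps by Proposition~\ref{ExtendMMP} and the Mori fiber space by Proposition~\ref{extFano}, get non-pseudo-effectivity on $X'_{t_i}$ from Proposition~\ref{NonPSEFFMFS}, and contradict the hypothesis at $t_i$ via Proposition~\ref{PushPSEF}. The $\varepsilon\omega$ perturbation, which makes $B_0+\bbeta_0$ big as Theorem~\ref{Theorem: TransBCHM} formally requires, and your fiberwise treatment of the extended non-positive maps are refinements the paper leaves implicit; the perturbation does not make the $N\wedge B_0=0$ hypothesis any worse than in the paper's own argument, since adding a K\"ahler class only shrinks the negative part (the same monotonicity the paper invokes in Theorem~\ref{InvarVol}).
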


\begin{proof}
We will argue by contradiction. Assume that $K_{X_0}+B_0+ { \bbeta}_0$ is not pseudo-effective. Theorem \ref{Theorem: TransBCHM} shows that the $(K_{X_0}+ B_0 + { \bbeta}_0)$-MMP with scaling will terminate with some Mori fiber space $X_0' \to Z_0$. By Proposition \ref{ExtendMMP}, there exists a neighborhood $U \subset S$ of $0$ such that that steps $(K_{X_0}+ B_0 + \bbeta_0)$-MMP will extend to a sequence of $(K_X+B+\bbeta)$-negative proper meromorphic maps over $U$
$$(X,B+ \bbeta)/U \dashrightarrow (X^{(1)}, B^{(1)} + \bbeta^{(1)})/U \dashrightarrow \cdots \dashrightarrow (X', B' + \bbeta')/U \dashrightarrow \cdots.$$
By Proposition \ref{extFano}, after shrinking $U$ the Mori fiber space $X'_0 \to Z_0$ extends to a relative Mori fiber space $X'\to Z/U$ (and thus the relative $(K_{X}+ B + \bbeta)$-MMP over $U$ terminates at $X'$). 
As $X'_{t_i} \to Z_{t_i}$ ($t_i \to 0$) is a Mori fiber space, Proposition \ref{NonPSEFFMFS} yields that the divisor
$K_{X'_{t_i}} + B'_{t_i} + \bbeta'_{t_i}$
is not pseudo-effective for $i \gg 1$. 
Since $K_{X_{t_i}}+B_{t_i} + {\bbeta}_{t_i}$ is pseudo-effective for a sequence of $t_i \to 0$ by assumption, Proposition \ref{PushPSEF} implies that $K_{X'_{t_i}}+B'_{t_i} +  { \bbeta}_{t_i}'$ is pseudo-effective as well, which is a contradiction.
\end{proof}

We next prove the deformation openness of the pseudo-effectivity of canonical divisors. At present, we can establish this result only for canonical divisors, rather than for adjoint classes, due to the absence of an analogue of Proposition~\ref{pseffthreshold} for adjoint classes.

\begin{theorem}\label{Deformation invariance of pseudo-effectivity}
Let $f: X \to S$ be a family from a  K\"ahler variety $X$ onto a smooth, connected, and relatively compact curve $S$. Fix a point $0\in S$ and assume that $X_0$ is projective with canonical singularities. Then the canonical divisor $K_{X_0}$ of $X_0$ is pseudo-effective if and only if there exists a Euclidean open neighborhood $U \subset S$ of $0$ such that $K_{X}$ is relatively pseudo-effective over $U$.
\end{theorem}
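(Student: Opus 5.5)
The ``if'' direction follows from Proposition~\ref{Def-Close-Pseff} with $B=0$, $\bbeta=0$. If $K_X$ is relatively pseudo-effective over $U$, then by Proposition~\ref{genericpseff} the class $K_{X_t}$ is pseudo-effective for all $t$ in a dense Zariski open subset of $U$. Choose a sequence $t_i\to 0$ inside this subset. The central fiber $X_0$ is projective with canonical singularities. The hypotheses $\lfloor B_0\rfloor=0$ and $N(K_{X_0})\wedge B_0=0$ hold trivially, since $B_0=0$. Proposition~\ref{Def-Close-Pseff} therefore gives that $K_{X_0}$ is pseudo-effective.

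For the ``only if'' direction, I would argue by contradiction. Suppose $K_{X_0}$ is pseudo-effective but $K_X$ fails to be relatively pseudo-effective over every neighborhood $U$ of $0$. By Proposition~\ref{DefCanSing}, after shrinking $S$ all fibers have canonical singularities, so Proposition~\ref{pseffthreshold} applies. It produces a constant $\delta>0$, independent of $U$, such that $K_X+\delta\omega$ is not relatively pseudo-effective over any neighborhood of $0$. Concretely, it gives an uncountable set $B'$ accumulating at $0$ on which $K_{X_t}+\delta\omega_t$ is not pseudo-effective. This uniformity is exactly why the argument is restricted to canonical divisors.

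Next I would exploit the pseudo-effectivity of $K_{X_0}$ on the central fiber. Then $K_{X_0}+\delta\omega_0$ is big. Since $X_0$ is projective, I would treat $\delta\omega$ as a nef (indeed K\"ahler) b-class: set $\bbeta=\overline{\delta\omega}$, so that $(X,\bbeta)$ is a generalized pair whose central fiber $(X_0,\delta\omega_0)$ is projective, canonical, and satisfies $B_0=0$. By Theorem~\ref{Theorem: TransBCHM}, the $(K_{X_0}+\delta\omega_0)$-MMP with scaling terminates with a good minimal model $X_0'$. By Proposition~\ref{ExtendMMP}, this MMP extends over a neighborhood $U$ to $(K_X+\delta\omega)$-negative maps $X\dashrightarrow X'$. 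On $X'_0$ the class $K_{X'_0}+\delta\omega'_0$ is nef and big. Proposition~\ref{Extbigandnef}, which rests on the transcendental base point free Theorem~\ref{DHTransbpf}, then shows that $K_{X'}+\delta\omega'$ is nef and big over a smaller neighborhood $U$. In particular, $K_{X'_t}+\delta\omega'_t$ is big for every $t\in U$. Proposition~\ref{PushPSEF} transfers this bigness back along the non-positive contraction, so $K_{X_t}+\delta\omega_t$ is big, hence pseudo-effective, for all $t\in U$. This contradicts the choice of $\delta$ on $B'\cap U\neq\emptyset$.

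The main obstacle is the middle step, running and extending the MMP with the transcendental boundary $\delta\omega$. One must check that Proposition~\ref{ExtendMMP} applies, i.e.\ that $(X_0,\delta\omega_0)$ with $\bbeta$ descending to $X$ is a generalized pair with canonical singularities. This holds because a descending nef $\bbeta$ does not affect singularities. The second requirement is that the MMP terminates on nearby fibers. Termination follows because the extended process stops when the central one does, and at that point Proposition~\ref{Extbigandnef} makes the adjoint class nef and big nearby. A further check is that $\delta$ does not depend on the neighborhood, so that shrinking $U$ repeatedly during the MMP extension does not break the contradiction; this is exactly what Proposition~\ref{pseffthreshold} provides.
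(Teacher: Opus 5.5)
Your proposal is correct and follows essentially the same route as the paper. The ``if'' direction goes through Proposition~\ref{Def-Close-Pseff}; the ``only if'' direction argues by contradiction, using Proposition~\ref{pseffthreshold} for a uniform $\delta>0$, running the $(K_{X_0}+\delta\omega_{X_0})$-MMP on the projective central fiber via Theorem~\ref{Theorem: TransBCHM}, extending it by Proposition~\ref{ExtendMMP}, and concluding with Propositions~\ref{Extbigandnef} and~\ref{PushPSEF}.

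The differences are cosmetic. The paper phrases the contradiction through the threshold $\tau'$ and works with some $0<\tau<\tau'$ rather than with $\delta$ itself. Also, Proposition~\ref{PushPSEF} only gives relative bigness over $U$, i.e.\ bigness of $K_{X_t}+\delta\omega_t$ for general rather than every $t\in U$, but that is all the contradiction needs.
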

\begin{proof}By Proposition \ref{DefCanSing}, we may assume that ${X_t}$ and $X$ admit canonical singularities. Suppose that $K_{X_0}$ is pseudo-effective, then we must prove that $K_X$ is relatively pseudo-effective over some open neighborhood $U\subset S$. By contradiction, suppose that $K_X $ is not $f$-pseudo-effective over any (sufficiently small) open neighborhood $U\subset S$ of $0$. For a K\"ahler class $\omega$ on $X$, define the pseudo-effective threshold  $$\tau':= \inf\{t>0 \mid K_X+ t \omega \text{ is }f\text{-pseudo-effective over some neighborhood }U\subset S\}.$$ 
Since $K_X$ is not $f$-pseudo-effective over any neighborhood $U \subset S$, $\tau '>0$ by Proposition \ref{pseffthreshold}. 

Let $0< \tau < \tau '$, and consider the pair $(X_0,  \tau \boldsymbol{\omega}_{X_0})$ where $\boldsymbol{\omega}_{X}=\overline \omega$. By assumption $K_{X_0} $ is pseudo-effective, so that $K_{X_0} + \tau {{\omega}}_{X_0}$ is big. Since $X_0$ is projective, by Theorem \ref{Theorem: TransBCHM}, we can run the transcendental MMP on the central fiber. This MMP will terminate in $(X_0^{(n)} ,   \tau \boldsymbol{\omega}_{X_0^{(n)}})$ with nef $K_{X_0^{(n)} }+  \tau {\omega}_{X^{(n)}_0}$ 
$$(X_0,   \tau \boldsymbol{\omega}_{X_0}) \dashrightarrow (X^{(1)}_0 ,  \tau \boldsymbol{\omega}_{X_0^{(1)}}) \dashrightarrow \cdots \dashrightarrow (X^{(n)}_0,  \tau \boldsymbol{\omega}_{X_0^{(n)}}).$$ 
By Proposition \ref{ExtendMMP}, there exists a neighborhood $0\in U\subset S$, such that the $(K_{X_0}+ \tau {\omega}_{X_0})$-MMP extends to a proper $(K_X  +\tau{{\omega}}_X)$-negative map over $U$
$$(X, \tau \boldsymbol{\omega}_X) /U\dashrightarrow (X^{(1)},  \tau\boldsymbol{ \omega}_{X^{(1)}}) /U\dashrightarrow \cdots \dashrightarrow (X^{(n)}, \tau \boldsymbol{\omega}_{X^{(n)}})/U\dashrightarrow \cdots.$$
By Proposition \ref{PushPSEF}, $K_{X_0^{(n)}}  + \tau {\omega}_{X_0^{(n)}}$ is big and nef. By Proposition \ref{Extbigandnef}, we know that after further shrinking $U$, $K_{X^{(n)}}  + \tau {\omega}_{X^{(n)}}$ is nef (and hence also relatively pseudo-effective) over $U$. By Proposition \ref{PushPSEF}, $K_{X} + \tau {\omega}_X$ is relatively pseudo-effective over $U$, and hence $\tau' \le \tau < \tau'$, which is a contradiction. 

Finally, suppose that $K_X$ is relatively pseudo-effective over some open neighborhood $U\subset S$ of $0$, then it follows from Proposition \ref{Def-Close-Pseff} that $K_{X_0}$ is pseudo-effective.
\end{proof}

\begin{proposition}\label{pseflocalglobal}
Let $f:X\to S$ be a family from a K\"ahler variety onto a smooth, connected, and relatively compact curve $S$. Assume that $X_t$ admits canonical singularities for any $t\in S$ and there exists a Euclidean open subset $U\subset S $ such that $K_X$ is relatively pseudo-effective over $U$. Then $K_X$ is relatively pseudo-effective over $S$.
\end{proposition}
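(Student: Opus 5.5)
We argue by contradiction, using a relative Douady space argument modelled on the proof of Proposition \ref{pseffthreshold}. Suppose $K_X$ is not relatively pseudo-effective over $S$. By Definition \ref{pseffclass} and Proposition \ref{genericpseff}, there is no non-empty Zariski open subset of $S$ over which the fibers have pseudo-effective canonical class. Since $S$ is a curve, the set
\[
B:=\{t\in S \mid K_{X_t} \text{ is not pseudo-effective}\}
\]
is then uncountable; otherwise its complement would contain the complement of a countable set, and Proposition \ref{genericpseff} would give relative pseudo-effectivity. For each $t\in B$, Lemma \ref{BDPPCanonical} shows that $X_t$ is uniruled, because $X_t$ has canonical singularities. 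Lemma \ref{Lemma: NegCurve} then shows that $X_t$ is covered by a family of rational curves $\{C_{t,s}\}_{s\in S_t}$ with $K_{X_t}\cdot C_{t,s}<0$.

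Next we pass to the relative Douady (Barlet) space $\mathcal D(X/S)$ of $X$ over $S$, which has only countably many irreducible components. Each covering family over a point $t\in B$ lies in some component. Since $B$ is uncountable, one component $D$ contains covering families for all $t$ in an uncountable set $B'\subset B$. The universal family $\mathcal C\to D$ comes with the evaluation map $\mathcal C\to X$, and the composite $D\to S$ is a proper holomorphic map on each component. We would like $D$ to be proper over $S$; if it is not, we restrict to the closure of the relevant locus in the cycle space, which is proper over $S$ because the $\omega$-degree $\omega\cdot C$ is constant on $D$ (Bishop's theorem). The image of $D$ in $S$ is then a closed analytic subset of the curve $S$ containing the uncountable set $B'$, so it equals $S$. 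Likewise, the image of the evaluation map $\mathcal C\to X$ is a closed analytic subset containing the fibers $X_t$ for $t\in B'$. Uncountably many such fibers cannot fit in a proper analytic subset, so the evaluation map is surjective onto $X$. It follows that for every $t\in S$, and in particular for $t\in U$, the fiber $X_t$ is covered by curves parametrized by $D$; these are possibly reducible limit cycles.

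To reach a contradiction, we show that the intersection number $K_X\cdot C$ is constant on $D$. It is a locally constant function of the cycle, because $K_X$ is $\mathbb{Q}$-Cartier by Proposition \ref{DefCanSing} and the cycles vary in a flat family of homology classes, and $D$ is connected. Hence $K_{X_t}\cdot C<0$ for every cycle $C$ in $D$, for every $t\in S$. Take $t\in U$ lying outside the countable union of proper analytic subsets on which pseudo-effectivity might fail. Then $K_{X_t}$ is pseudo-effective, while $X_t$ is covered by curves (components of the cycles) on which $K_{X_t}$ is negative: since each whole cycle has $K_{X_t}\cdot C<0$, some component has negative degree, and the components still cover $X_t$.

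This contradicts pseudo-effectivity. A closed positive current $T$ in $K_{X_t}$ has nonnegative intersection with a curve through a very general point, namely one not contained in the polar set of $T$. To make this rigorous we pass to a resolution $\mu:\widetilde{X}_t\to X_t$. There $\mu^*K_{X_t}$ is pseudo-effective, and the strict transforms of a covering family form a movable class, so the pairing is $\geq 0$. Equivalently, the pseudo-effective class pairs nonnegatively with the movable cone. The main obstacle is the Douady-space step: ensuring the chosen component is proper over $S$ and that the covering property passes to all fibers, including those over $U$. We handle this by the bounded-degree properness argument together with the closedness of the image of the evaluation map.
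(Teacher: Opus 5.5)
Your route is the paper's: pigeonhole over the countably many components of the relative Douady/cycle space, surjectivity of the evaluation map because its closed image contains uncountably many fibers, constancy of $K_X\cdot C$ on the chosen component, and a contradiction on a fiber over $U$. Your justification of properness via the constant $\omega$-degree (Bishop--Fujiki) is more careful than the paper's.

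The gap is in the last step. From $K_{X_t}\cdot C<0$ for every cycle $C$ of the family, you infer that $X_t$ is covered by $K_{X_t}$-negative curves because ``some component has negative degree, and the components still cover $X_t$''. This does not follow: the components that sweep out $X_t$ need not be the ones of negative degree. For example, let $F$ be the exceptional curve on the blow-up of a surface of general type, and let $\{C'_\lambda\}$ be a covering family with $K\cdot C'_\lambda=a$. The cycles $C'_\lambda+(a+1)F$ cover the surface, have total degree $-1$ and contain a negative component, yet $K$ is pseudo-effective. The negative components can be fixed curves inside the polar set of every positive current in $c_1(K_{X_t})$, and the class of such a cycle is not movable. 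So neither your current argument nor the movable-cone pairing applies. You chose $t\in U$ only so that $K_{X_t}$ is pseudo-effective, not generically with respect to the family. Hence nothing rules out that the cycles over that particular $t$ degenerate in exactly this way.

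To close the gap, proceed as follows. Apply the pigeonhole once more to get an irreducible component $E$ of the closure of $\bigcup_{t\in B'}\mathcal C_t$ (in the bounded-volume part of the cycle space) that contains $\mathcal C_t$ for uncountably many $t$. Let $Z\subset E$ be the locus of cycles that are not irreducible and reduced. It is a closed analytic subset: it is the image of the addition map on pairs of nonzero cycles of total $\omega$-degree $\omega\cdot C$, which is proper by the same Bishop--Fujiki compactness. Moreover $Z\neq E$, since a general member of $\mathcal C_t$ is an irreducible rational curve. The closure of the universal family over $E\setminus Z$ still maps onto $X$, because its closed image contains every $X_t$ with $\mathcal C_t\subset E$. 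A fiber-dimension count then yields a proper analytic subset $W\subsetneq X$ such that every point of $X\setminus W$ lies on an irreducible member of $E$, and such a member has $K_X\cdot C<0$. Only countably many fibers lie in $W$. So you may take $t\in U$ outside these and outside the countable set where $K_{X_t}$ may fail to be pseudo-effective. Then an irreducible $K_{X_t}$-negative curve passes through a very general point of $X_t$ (which is irreducible, being connected and normal), and your current or movable-class argument gives the contradiction. The paper's choice of ``a general curve that lies in $X_s$'', used with the bigness of $K_{X_s}+\varepsilon\omega_s$, tacitly needs this same genericity.
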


\begin{proof}
Since all the fibers admit canonical singularities, the restriction of the canonical divisor satisfies $K_X|_{X_{t}} \cong K_{X_{t}}$ for any $t\in S$. 

By contradiction, suppose that $K_X$ is not relatively pseudo-effective over $S$, then there is an uncountable set $T\subset  S$ such that $K_{X_t}$ is not pseudo-effective for any $t\in T$. By Proposition \ref{BDPPCanonical}, $X_t$ is uniruled for any $t\in T$. Therefore, by Lemma \ref{Lemma: NegCurve}, there is a family of $K_{X_t}$-negative rational curves dominating $X_t$ for each $t\in T$. Denote by $\mathcal{C}_t\subset \mathcal{D}(X/S)$ the corresponding representative of this family in the relative Douady space $\mathcal{D}(X/S)$. Since the relative Douady space has only countably many components, there is a component $\mathcal{D}\subset \mathcal{D}(X/S)$ and an uncountable subset $T' \subset T$ of $S$, such 
that $$\mathcal{C}_t\subset \mathcal{D}\ \text{for any } t\in T'.$$
Since $S$ is a relatively compact curve, the identity principle implies $\overline{T'} = S$ (where $\overline{T'}$ is the Zariski closure of $T'$). Therefore, $\mathcal{D} \to S$ is proper and surjective. 

Let $
\mathcal{C} = \bigcup_{s \in T'} \mathcal{C}_s \subset \mathcal{D}$, 
and $\overline{\mathcal{C}}$ the Zariski closure of $\mathcal{C}$ in $\mathcal{D}$. Since $T'$ is Zariski dense in $S$, $\overline{\mathcal{C}} \to S$ is proper surjective. Let $\mathcal{X}\to \overline{\mathcal{C}}$ be the restriction of the universal family over $\mathcal{D} \to S$ onto $\overline{\mathcal{C}}\to S$. We then have the commutative diagram
\begin{center}
\begin{tikzcd}[ampersand replacement=\&]
	{\mathcal{X}} \& X \\
	{\overline{\mathcal{C}}} \& {S.}
	\arrow["\Phi", from=1-1, to=1-2]
	\arrow[from=1-1, to=2-1]
	\arrow["f", from=1-2, to=2-2]
	\arrow["\pi"', from=2-1, to=2-2]
\end{tikzcd}
\end{center}

Since
$\Phi \colon \mathcal{X} \to X$ is proper, $Y := \Phi(\mathcal{X}) \subset X$ is closed. For any $t \in T'$, $\mathcal{C}_t$ gives a covering family of the fiber $X_t$, and therefore
$$
\bigcup_{t \in T'} X_t \subset Y.
$$
Since $T'$ is Zariski dense in $S$,
it follows that $\Phi \colon \mathcal{X} \to X$ is surjective, i.e.,  the image $Y = X$.

Since $K_X$ is relatively pseudo-effective over $U$, there exists some $s \in U$, such that $K_{X_s}$ is pseudo-effective. Then $K_{X_s} + \varepsilon \omega_s$ is big for any $\varepsilon > 0$. Let $[C] \in \overline{\mathcal{C}}$ be a general curve that lies in $X_s$, whose existence follows from the surjectivity of $\Phi :\mathcal{X} \to X$. Then we have
\[
0 < (K_{X_s} + \varepsilon \omega_s) \cdot C.
\]
On the other hand, for $t \in T'$
\[
K_{X_t} \cdot C' < 0, \quad \text{for } [C'] \in \mathcal{C}_t.
\]
Therefore, for $0 < \varepsilon \ll 1$, we deduce that
\[
0 < (K_{X_s} + \varepsilon \omega_s) \cdot C = (K_X + \varepsilon \omega) \cdot C = (K_X + \varepsilon \omega) \cdot C' = (K_{X_t} + \varepsilon \omega_t) \cdot C' < 0,
\]
where the second equality holds, since $[C], [C'] \in \overline{\mathcal{C}} \subset \mathcal{D}$. This leads to a contradiction. 
\end{proof}
Combining Theorem \ref{Deformation invariance of pseudo-effectivity} and Proposition \ref{pseflocalglobal}, we obtain the global deformation invariance of the pseudo-effectivity of canonical divisors. 
\begin{theorem}\label{GlobalPseff}
Let $f: X \to S$ be a family from a K\"ahler variety $X$ onto a smooth, connected, and relatively compact curve $S$. Assume that $X_0$ is projective, and $X_t$ have canonical singularities for all $t\in S$. Then $K_{X_0}$ is pseudo-effective if and only if $K_{X_t} $ are pseudo-effective for all $t \in S \setminus \{0\}$.
\end{theorem}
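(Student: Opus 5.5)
We combine the local statement Theorem \ref{Deformation invariance of pseudo-effectivity} with the local-to-global statement Proposition \ref{pseflocalglobal}, treating the two implications separately. Throughout, all fibers have canonical singularities. Hence $K_X$ is $\mathbb{Q}$-Cartier and $K_X|_{X_t}\cong K_{X_t}$ for every $t\in S$, as observed at the start of the proof of Proposition \ref{pseflocalglobal}. Relative pseudo-effectivity of $K_X$ therefore says something about the classes $K_{X_t}$ themselves.

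\emph{($\Rightarrow$).} Suppose $K_{X_0}$ is pseudo-effective. Since $X_0$ is projective with canonical singularities, Theorem \ref{Deformation invariance of pseudo-effectivity} gives a Euclidean open neighborhood $U\ni 0$ over which $K_X$ is relatively pseudo-effective. Proposition \ref{pseflocalglobal} then shows that $K_X$ is relatively pseudo-effective over all of $S$. By Proposition \ref{genericpseff}, there is a dense Zariski open $V\subset S$ such that $K_{X_t}$ is pseudo-effective for every $t\in V$.

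To reach every $t\ne 0$, I would argue by contradiction and reuse the Douady-space argument. Suppose some $t_1\in S\setminus\{0\}$ has $K_{X_{t_1}}$ not pseudo-effective. By Lemma \ref{BDPPCanonical}(1), $X_{t_1}$ is uniruled, and Lemma \ref{Lemma: NegCurve} gives a covering family of $K_{X_{t_1}}$-negative rational curves. By the upper semicontinuity of the space of rational curves in a flat family (deformations of free rational curves are unobstructed in the smooth locus, which the covering curves meet), this family deforms to all nearby fibers. That would make $K_{X_t}$ non-pseudo-effective on an open set, contradicting Zariski density of $V$.

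This is the main obstacle. The step is cleanest if we can deform free curves through general points of $X_{t_1}$ inside the smooth locus of $X$. Alternatively, one could invoke Fujiki--Levine stability of uniruledness applied to a resolution of the family near $t_1$. So at the problematic step I would first try this: take a simultaneous resolution over a small disc around $t_1$, apply Fujiki--Levine to get that nearby fibers are uniruled, and then use Lemma \ref{BDPPCanonical}(2) to conclude that their canonical classes are not pseudo-effective. This contradicts the density of $V$.

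\emph{($\Leftarrow$).} Suppose $K_{X_t}$ is pseudo-effective for all $t\ne0$. Choose a sequence $t_i\to 0$ with $t_i\ne0$. Apply Proposition \ref{Def-Close-Pseff} with $B=0$ and $\bbeta=0$. Its hypotheses hold: $X_0$ is projective and canonical, $\lfloor B_0\rfloor=0$, and the condition $N\wedge B_0=0$ is trivially true. The conclusion is that $K_{X_0}$ is pseudo-effective.

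Equivalently, $K_X$ is relatively pseudo-effective over a punctured neighborhood, hence over a neighborhood $U$ of $0$. Theorem \ref{Deformation invariance of pseudo-effectivity} then applies directly.
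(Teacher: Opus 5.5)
Your skeleton is exactly the paper's. The ``if'' direction is Proposition~\ref{Def-Close-Pseff} with $B=0$, $\bbeta=0$. The ``only if'' direction is Theorem~\ref{Deformation invariance of pseudo-effectivity} followed by Proposition~\ref{pseflocalglobal}, and the paper's proof stops at that point. You are right that, by Definition~\ref{pseffclass}, this only gives pseudo-effectivity of $K_{X_t}$ off a countable subset of $S$ (a discrete one, by Proposition~\ref{genericpseff}). You are also right that reaching a given $t_1\neq 0$ requires openness of uniruledness at $X_{t_1}$, equivalently closedness of pseudo-effectivity of $K$ at $t_1$. The paper's proof does not treat these remaining fibers separately, and neither of your arguments fills this step.

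\emph{The deformation argument.} It needs a free rational curve $C$ \emph{contained} in $X_{t_1}^{\rm sm}$. Then $X$ is smooth along $C$, the sequence $0\to N_{C/X_{t_1}}\to N_{C/X}\to\mathcal O_C\to 0$ gives $H^1(C,N_{C/X})=0$, and $C$ deforms to nearby fibers as a free curve. A curve that merely meets the smooth locus gives no such control, and nothing you cite produces a curve inside $X_{t_1}^{\rm sm}$. A covering family on $X_{t_1}$ (e.g.\ from Lemma~\ref{Lemma: NegCurve}) may have all its members passing through $\mathrm{Sing}(X_{t_1})$. On a resolution, general free curves avoid codimension-two subsets but not exceptional divisors.

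\emph{The fallback.} Simultaneous resolutions do not exist in general, even after base change; the smoothing $xy-zw=t$ of a threefold ordinary double point is an example. Resolving only the total space does not make the fiber over $t_1$ smooth: it may stay singular or acquire exceptional components. So Fujiki's theorem for smooth families does not apply.

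Openness of uniruledness genuinely fails for flat K\"ahler families without the canonical hypothesis. A quartic surface with an ordinary triple point is rational, yet it is the central fiber of a pencil of quartics whose nearby members are K3 surfaces. Hence any valid argument must use canonicity of $X_{t_1}$ in an essential way. In the paper this is done only through the MMP extension (Propositions~\ref{ExtendMMP} and \ref{extFano}, as in the first part of the proof of Corollary~\ref{DefUniruled}). That requires $X_{t_1}$ to be projective, or three-dimensional (Remark~\ref{Remark: Psef3fold}). For a non-projective $X_{t_1}$ you would need a genuinely new input, for instance a closedness statement for pseudo-effectivity in families with singular canonical fibers. As written, the step from ``relatively pseudo-effective over $S$'' to ``every $t\neq 0$'' is not established.
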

\begin{proof}
The `if part' follows from Proposition \ref{Def-Close-Pseff}, while the converse follows from Theorem \ref{Deformation invariance of pseudo-effectivity} and Proposition \ref{pseflocalglobal}.
\end{proof}

Based on the results above, we proved the global stability of uniruled structures.

\begin{corollary}\label{DefUniruled}
Let $f: X \to S$ be a family from a K\"ahler variety $X$ onto a smooth, connected, and relatively compact curve $S$. Assume that $X_0$ is projective, and $X_t$ have canonical singularities for all $t\in S$. Then $X_0$ is uniruled if and only if $X_t$ are uniruled for all $t \in S \setminus \{0\}$.
\end{corollary}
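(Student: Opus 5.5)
The plan is to reduce the corollary to Theorem \ref{GlobalPseff} via the BDPP-type characterization in Lemma \ref{BDPPCanonical}. Each fiber $X_t$ is a normal compact K\"ahler variety with canonical singularities, hence $\mathbb{Q}$-Gorenstein. Lemma \ref{BDPPCanonical} then gives, for every $t\in S$,
\[
X_t \text{ is uniruled} \iff K_{X_t} \text{ is not pseudo-effective}.
\]
Part (1) of that lemma gives ``not uniruled $\Rightarrow$ $K_{X_t}$ pseudo-effective''. Part (2), which uses the canonical singularities, gives ``$K_{X_t}$ pseudo-effective $\Rightarrow$ not uniruled''. The fibers are K\"ahler as closed subvarieties of the K\"ahler variety $X$. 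Irreducibility and normality of the fibers follow from canonical singularities together with the standing convention that fibers are connected.

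With this equivalence, the corollary is the contrapositive of Theorem \ref{GlobalPseff}. That theorem says $K_{X_0}$ is pseudo-effective if and only if $K_{X_t}$ is pseudo-effective for all $t\neq 0$. Negating both sides gives: $K_{X_0}$ is not pseudo-effective if and only if $K_{X_t}$ fails to be pseudo-effective for some $t\neq 0$. I then need the dichotomy on $S\setminus\{0\}$: either all $K_{X_t}$ are pseudo-effective or none are. This also follows from Theorem \ref{GlobalPseff}, since both sides of its equivalence are determined by $K_{X_0}$.

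\textbf{Direction 1.} If $X_0$ is uniruled, then $K_{X_0}$ is not pseudo-effective. Theorem \ref{GlobalPseff} then says some $K_{X_{t_1}}$ is not pseudo-effective. To upgrade ``some'' to ``all'', I would argue that if $K_{X_{t}}$ were pseudo-effective for one $t\ne 0$, then Proposition \ref{pseflocalglobal} forces $K_X$ to be relatively pseudo-effective over $S$, at least on a neighborhood. Proposition \ref{Def-Close-Pseff} or the closedness argument then gives $K_{X_0}$ pseudo-effective, a contradiction. Hence all $K_{X_t}$ are non-pseudo-effective, so all $X_t$ are uniruled.

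\textbf{Direction 2.} If every $X_t$ with $t\neq 0$ is uniruled, then no such $K_{X_t}$ is pseudo-effective. By Proposition \ref{Def-Close-Pseff} (contrapositive, with the openness from Theorem \ref{Deformation invariance of pseudo-effectivity}), $K_{X_0}$ pseudo-effective would force $K_{X_t}$ pseudo-effective for $t$ near $0$, which is absurd. Hence $X_0$ is uniruled.

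The main obstacle is Direction 1, namely turning ``one pseudo-effective nearby fiber'' into relative pseudo-effectivity over an open set. A single fiber does not give this directly. I would first try to rerun the Douady-space argument of Proposition \ref{pseflocalglobal}. If $K_{X_0}$ is not pseudo-effective, $X_0$ is covered by $K$-negative rational curves (Lemma \ref{Lemma: NegCurve}). These curves deform, being free in a resolution, to cover every fiber over $S$. A component of the relative Douady space dominating $X$ then yields $K$-negative curves through a general point of every $X_t$, so every $X_t$ is uniruled. This is the usual Fujiki--Levine-type argument, and I would cite \cite{DefGeneralType} as a fallback.
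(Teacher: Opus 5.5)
Direction 2 is fine and agrees with the paper (Lemma \ref{BDPPCanonical} plus Theorem \ref{GlobalPseff}). The input you actually use there is the openness statement, Theorem \ref{Deformation invariance of pseudo-effectivity}, not Proposition \ref{Def-Close-Pseff}.

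\textbf{The gap is in Direction 1.} First, your claim that the all-or-nothing dichotomy on $S\setminus\{0\}$ follows from Theorem \ref{GlobalPseff} is false. When $K_{X_0}$ is not pseudo-effective, that theorem only produces \emph{some} $t\neq 0$ with $K_{X_t}$ not pseudo-effective. You cannot re-center it at another point, because only $X_0$ is assumed projective.

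Second, the mechanism you finally rely on, that the $K_{X_0}$-negative rational curves ``deform, being free in a resolution'', does not hold. The Fujiki--Levine argument needs $H^1(\mathbb{P}^1,g^*T_X)=0$ for $g\colon\mathbb{P}^1\to X$. This follows from freeness in $X_0$ only for curves lying in the smooth locus of $X_0$. A curve free on a resolution $\widetilde X_0$ may pass through $\operatorname{Sing}(X_0)$, and $\widetilde X_0$ does not deform with the family. For instance, degenerate quartic surfaces to the cone over a smooth plane quartic. The rulings are free on the minimal resolution of the central fiber, yet the nearby K3 fibers are not uniruled. So canonicity must enter in an essential way.

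The paper brings it in through the MMP:
\begin{itemize}
\item Since $K_{X_0}$ is not pseudo-effective, the $K_{X_0}$-MMP terminates with a Mori fiber space (Theorem \ref{Theorem: TransBCHM}).
\item By Propositions \ref{ExtendMMP} and \ref{extFano}, this MMP and Mori fiber space extend over a neighborhood $U$ of $0$, so $X_t$ is uniruled for every $t\in U$.
\item The Douady-space argument of Proposition \ref{pseflocalglobal}, applied to the uncountably many uniruled fibers over $U$, then gives uniruledness of every fiber.
\end{itemize}
The local step is just the contrapositive of Proposition \ref{Def-Close-Pseff}. Your fallback citation of \cite{DefGeneralType} for it is legitimate (cf.\ Remark \ref{Remark: Psef3fold}) and amounts to the same route.

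\textbf{Your first attempt actually works.} The obstacle you flagged there is not real. The proof of Proposition \ref{pseflocalglobal} uses its hypothesis only to produce a single $s$ with $K_{X_s}$ pseudo-effective. So suppose $K_{X_{t_1}}$ is pseudo-effective for one $t_1\neq 0$. Then $K_X$ is relatively pseudo-effective over $S$, since otherwise that argument, run with $s=t_1$, gives a contradiction verbatim. Hence $K_{X_t}$ is pseudo-effective for all $t$ outside a countable subset of $S$, in particular along a sequence $t_i\to 0$. Proposition \ref{Def-Close-Pseff} then makes $K_{X_0}$ pseudo-effective, contradicting the uniruledness of $X_0$ via Lemma \ref{BDPPCanonical}. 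This uses the same two ingredients as the paper (MMP extension near $0$, which is how Proposition \ref{Def-Close-Pseff} is proved, and Douady globalization), arranged by contradiction. It would have given you a complete proof.
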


\begin{proof}
Suppose that $X_0$ is uniruled, then $K_{X_0}$ is not pseudo-effective by Lemma \ref{BDPPCanonical}  and hence the $K_{X_0}$-MMP ends with a Mori fiber space. By Proposition \ref{ExtendMMP}  this MMP can be extended over a neighborhood $U\subset S$ of $0$. Therefore $X_t$ are uniruled for all $t\in U$.
Arguing as in the proof of Proposition \ref{pseflocalglobal}, $X_t$ are uniruled  for all $t\in S$.

The converse follows directly by Lemma \ref{BDPPCanonical} and Theorem  \ref{GlobalPseff}.
\end{proof}

\begin{remark}\label{Remark: Psef3fold}
Note that \cite[Theorem 2]{DefGeneralType} already implies that if $f:X\to S$ is a flat, proper morphism of reduced, complex analytic
spaces such that the special projective fiber $X_0$ has canonical singularities and is uniruled, then the same holds over some open neighborhood $0\in U\subset S$. It would be interesting to see to what extent Corollary \ref{DefUniruled} holds without the K\"ahler assumption.

Note that, if the MMP and the transcendental base point free conjecture hold on the central fiber, the same argument 
shows that Theorem \ref{GlobalPseff} and Corollary \ref{DefUniruled} also hold.

Moreover, the MMP exists and the transcendental base point free theorem holds for K\"ahler threefolds (cf. \cite{DH24}). Therefore, Theorem \ref{GlobalPseff} and Corollary \ref{DefUniruled} also hold for K\"ahler families of threefolds, without assuming that $X_0$ is projective.
\end{remark}

\section{Invariance of the volumes of adjoint classes for K\"ahler families}\label{Section: DeformationVol}
In the final section, we study the deformation behavior of volume and plurigenera in K\"ahler families.

\subsection{Invariance of volume when central fiber has big adjoint class}
We first prove one of the main results of the paper: the deformation invariance of the volume when the central fiber is projective and the adjoint class is big. 
\begin{theorem}\label{InvarVol}
Let $f: X \to S$ be a smooth family from a K\"ahler manifold $X$ onto a smooth, connected, and relatively compact curve $S$. For a point $0\in S$, assume that $(X,B + \bbeta)$ is a generalized K\"ahler pair such that the boundary divisor $B$ does not contain the fiber $X_0$ and let $\omega$ be a K\"ahler class on $X$ with $\omega_{X_t}:=\omega|_{X_t}$ its restriction to the fiber $X_t$. 

Assume that there exists $\delta>0$ such that $K_{X_0} + B_0 +  \boldsymbol{\beta}_{X_0}+\delta {\omega}_{X_0}$ is big and $(X_0, B_0+  { \boldsymbol{\beta}}_{X_0}+ \delta\boldsymbol{\omega}_{X_0})$ is projective, with canonical singularities, $\lfloor{ B_0\rfloor} = 0$ and $N(K_{X_0 }+B_0 + \bbeta_{X_0})\wedge B_0  = 0$. Then the volume  function 
$$t \longmapsto \mathrm{vol}(K_{X_t}+B_t +  \boldsymbol{\beta}_{X_t} +\delta{\omega}_{X_t})$$ 
is constant on a Euclidean open neighborhood $U \subset S$ of $0$.
\end{theorem}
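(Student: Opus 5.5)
Our plan is to reduce to the nef case by running an MMP on the central fiber, extending it to nearby fibers, and then computing the volume as a top self-intersection that is constant in the family. Set $\boldsymbol{\gamma}:=\bbeta+\delta\overline{\omega}$, so that $(X,B+\boldsymbol{\gamma})$ is again a generalized K\"ahler pair, since $\omega$ is K\"ahler and hence nef. Write $\alpha_t:=K_{X_t}+B_t+\boldsymbol{\gamma}_{X_t}$. By hypothesis $(X_0,B_0+\boldsymbol{\gamma}_{X_0})$ is projective and canonical with $\lfloor B_0\rfloor=0$. Because $B_0+\boldsymbol{\gamma}_{X_0}$ is big (it contains $\delta\omega_{X_0}$), Theorem \ref{Theorem: TransBCHM} applies: the $\alpha_0$-MMP with scaling terminates in a good minimal model $X_0\dashrightarrow X_0^{(n)}$, on which $\alpha_0^{(n)}$ is nef and, by Proposition \ref{PushPSEF}, big.

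Before invoking Proposition \ref{ExtendMMP} we need the hypothesis $N(\alpha_0)\wedge B_0=0$. We would deduce it from the assumed $N(K_{X_0}+B_0+\bbeta_{X_0})\wedge B_0=0$, using that adding the K\"ahler class $\delta\omega_{X_0}$ can only decrease the minimal multiplicities: $\nu(\alpha+\delta\omega,D)\le\nu(\alpha,D)$, hence $N(\alpha_0)\le N(K_{X_0}+B_0+\bbeta_{X_0})$. We then apply Proposition \ref{ExtendMMP} to extend the MMP to $(X,B+\boldsymbol{\gamma})\dashrightarrow (X^{(n)},B^{(n)}+\boldsymbol{\gamma}^{(n)})$ over a neighborhood $U$ of $0$. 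Proposition \ref{Extbigandnef} then shows that, after shrinking $U$, the class $\alpha^{(n)}:=K_{X^{(n)}}+B^{(n)}+\boldsymbol{\gamma}_{X^{(n)}}$ is nef and big over $U$. Since each step is $\alpha$-negative, on a common resolution $p,q$ of $X\dashrightarrow X^{(n)}$ we have $p^*\alpha=q^*\alpha^{(n)}+E$ with $E\ge 0$ and $q$-exceptional. Restricting to the fibers, which are bimeromorphic by \cite[Corollary 3.2]{CRC25}, the divisor $E_t$ has support in $N(p_t^*\alpha_t)$. Proposition \ref{VolZariski} therefore gives $\vol(\alpha_t)=\vol(\alpha^{(n)}_t)$, and by Lemma \ref{BigVol}(b) this equals $(\alpha^{(n)}_t)^d$ with $d=\dim X_t$.

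It remains to show that $t\mapsto(\alpha^{(n)}_t)^d$ is constant on $U$. This is the main obstacle, because $X^{(n)}\to U$ is no longer smooth, so Lemma \ref{constIntersection} does not apply directly. Two routes are available. The first is to pull back by $q$: $\int_{X^{(n)}_t}(\alpha_t^{(n)})^d=\int_{W_t}(q^*\alpha^{(n)})^d$, where $W\to U$ is a resolution. However, $W\to U$ may fail to be smooth over $0$, since its central fiber can acquire extra components, so we would rather not rely on this. The second route, which we would pursue first, is a singular Stokes argument. The steps of the MMP are isomorphisms away from the exceptional loci of flips and divisorial contractions, so the total space $X^{(n)}$ is smooth, and smooth over $U$, outside an analytic subset $\Sigma$ of codimension at least $2$ in $X^{(n)}$. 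We would represent $\alpha^{(n)}$ by a smooth closed form $\theta$ with local potentials, take a real path $\gamma$ between $t_1$ and $t_2$, and apply Stokes' formula on $M=f^{-1}(\gamma)\setminus\Sigma$, cutting off near $\Sigma$. The boundary contribution near $\Sigma$ tends to zero because $\Sigma\cap M$ has real codimension at least $3$ in the real $(2d+1)$-manifold $M$ and $\theta^d$ is bounded; the precise justification is the singular Stokes theorem for forms with local potentials on normal spaces. Since $d\theta^d=0$, this yields $\int_{X^{(n)}_{t_1}}\theta^d=\int_{X^{(n)}_{t_2}}\theta^d$. Alternatively, $t\mapsto\int_{X^{(n)}_t}\theta^d$ is the pushforward of the smooth form $\theta^d$ under a flat proper map with pure-dimensional reduced fibers, hence a closed current of degree $0$ on $U$, that is, a locally constant function. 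Either way the volume is constant on $U$.
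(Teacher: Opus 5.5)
Your proposal is correct and follows essentially the paper's own proof. The shared steps are: Theorem \ref{Theorem: TransBCHM} on $X_0$; the inequality $N(K_{X_0}+B_0+\bbeta_{X_0}+\delta\omega_{X_0})\le N(K_{X_0}+B_0+\bbeta_{X_0})$ to meet the hypotheses of Proposition \ref{ExtendMMP}; relative nefness from Proposition \ref{Extbigandnef}; the singular Stokes formula (Proposition \ref{stokes}) away from the codimension $\ge 2$ locus where $X^{(n)}\to U$ is not a submersion; and invariance of the volume along the fiberwise bimeromorphic maps \cite{CRC25}.

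One small point in your volume comparison: $\operatorname{Supp}E_t\subset\operatorname{Supp}N(p_t^*\alpha_t)$ together with Proposition \ref{VolZariski} does not by itself give $\mathrm{vol}(\alpha_t)=\mathrm{vol}(\alpha_t^{(n)})$. What you actually need is $N(p_t^*\alpha_t)=E_t$, i.e. $Z(p_t^*\alpha_t)=q_t^*\alpha_t^{(n)}$, which holds because $q_t^*\alpha_t^{(n)}$ is nef and $E_t\ge 0$ is $q_t$-exceptional.
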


\begin{proof}
By Theorem \ref{Theorem: TransBCHM}, we can run the generalized $(K_{X_0}+B_0 + { \bbeta}_{X_0} +  \delta {\omega}_{X_0})$-MMP with scaling of $\omega_{X_0}$ on the central fiber $X_0$, and it will terminate at some minimal model
$$X_0 \dashrightarrow X_0^{(1)} \dashrightarrow X_0^{(2)} \dashrightarrow \cdots \dashrightarrow X_0^{(n)} \dashrightarrow X_0'.$$
Since $\omega _{X_0}$ is K\"ahler, $N(K_{X_0} + B_0 + \bbeta_{X_0}+ \delta{\omega}_{X_0}) \le  N(K_{X_0} + B_0 + \bbeta_{X_0})$. Therefore $N(K_{X_0} + B_0 + \bbeta_{X_0}+ \delta{\omega}_{X_0})\wedge B_0 = 0$. By Proposition \ref{ExtendMMP}, after shrinking the base to some (connected) open neighborhood $U \subset S$ of $0$, we can extend the steps of the MMP on $X_0$ to a sequence of $(K_X+B+\bbeta_X+ \delta{\omega}_X)$-negative proper bimeromorphic maps over $U$, $$X /U \dashrightarrow X^{(1)}/U \dashrightarrow \cdots \dashrightarrow X'/U\dashrightarrow \cdots.$$
Since $K_{X_0'}  + B_0' +  { \bbeta}_{X_0'}+ \delta {\omega}_{X_0'}$ is big and nef, by Proposition \ref{Extbigandnef}, after shrinking $U$, $K_{X'} +B' +  { \bbeta}_{X'} + \delta {\omega}_{X'}$ is relatively nef over $U$. By Lemma \ref{BigVol}, $$\mathrm{vol}(K_{X'_t}+ B_t' +  { \bbeta}_{X_t'}+\delta {\omega}_{X_t'}) = (K_{X'_t}+ B_t' +  { \bbeta}_{X_t'} + \delta {\omega}_{X_t'})^m,$$where  $m = \dim_{\mathbb{C}}(X/S)$. 
We claim that $t \longmapsto(K_{X'_t} + B_t' +  { \bbeta}_{X_t'}+ \delta {\omega}_{X_t'})^m$ is constant for $t\in U$. Pick an arbitrary point $s\in U$. There exists a real smooth connected curve $\gamma: [0,1] \to U$ with  $\gamma(0) = 0,\gamma(1) = s$, so that the inverse images $V = (f')^{-1}(\gamma(0,1))$ of $f':X'\to U$ and  $\overline{V} = (f')^{-1}(\gamma([0,1]))$ are analytic subvarieties, with topological boundary $\partial V = X_0 '\cup X_s'$. Set $$Z = \{x\in X' \mid f' \text{ is submersion at } x\in X' \},$$ and $M_V : = {V} - {V} \cap Z$ with $\partial   :=  \partial \overline{V} - \partial \overline{V} \cap Z$. By the constant rank theorem, the inverse image of the smooth connected curve under a submersion is a smooth manifold. Hence $M_V = (f'|_{X'- Z})^{-1}(\gamma (0,1))$
is smooth, such that $(M_V ,\partial )$ is a manifold with boundary. Since $X_0'$ and $X_s'$ are compact, we know that the volume of $\partial \overline{V}$ (and hence the volume of $\partial$) is finite. 
    
Since $f:X\to S$ is a smooth surjective morphism, $f$ is submersive at every point $x\in X$. Since $X'\dasharrow X$ is an isomorphism outside of a (complex) codimension 2 subset, it follows that
$\mathrm{codim}_{X'}Z\ge 2 $. By the singular Stokes' formula in Proposition \ref{stokes} below, $$(K_{X_s'}+ B'_s  + \bbeta_{X_s'}  + \delta{\omega}_{X_s'})^m-(K_{X_0'}+ B'_0 + \bbeta_{X_0'} + \delta{\omega}_{X_0'})^m  = \int_{M_V}d (K_{X'}+B'+\bbeta_{X'}+ \delta {\omega}_{X'})^m =  0.$$By the proof of Proposition \ref{ContInMMP} and \cite[Corollary 3.2]{CRC25}, 
$$\vol(K_{X_t}+ B_t + \bbeta_{X_t}+ \delta {\omega}_{X_t}) = \vol (K_{X_t'}+ B_t' + \bbeta_{X_t'} + \delta {\omega}_{X_t'}),\quad \text{ for }t\in U.$$ And therefore
$$t \longmapsto \mathrm{vol}(K_{X_t}+B_t +  \boldsymbol{\beta}_{X_t} +\delta{\omega}_{X_t})$$ is constant in the neighborhood $0\in U$. 
\end{proof}

\begin{proposition}[{\cite[p.10]{Stolzenberg}}]\label{stokes}
Let $M$ be a smooth, relatively compact, $n$-dimensional real manifold with finite $n$-volume, and  $\overline{M}$ its compactification. 
Assume that $\overline{M} \setminus M$ can be decomposed as $\partial \cup S$, where\\
\emph{(1)} $(M, \partial)$ is a manifold with boundary,\\
\emph{(2)} $\partial$ has finite $(n-1)$-volume,\\
\emph{(3)} $S$ is compact with $\operatorname{codim}_{\mathbb{R}}(S, M) \ge 2$.\\
Then for any smooth $(n-1)$-form $\alpha$,
\[
\int_{\partial} \alpha = \int_M d\alpha.
\]
\end{proposition}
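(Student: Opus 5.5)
This is a version of Stokes' theorem with a small exceptional set, so I would prove it by cutting out a neighborhood of $S$ and letting the neighborhood shrink. For each $\varepsilon>0$, let $S_\varepsilon$ be the $\varepsilon$-neighborhood of $S$ in $\overline{M}$ for an ambient Riemannian metric. Choose a cutoff function $\chi_\varepsilon\in C^\infty$ with $0\le\chi_\varepsilon\le 1$, $\chi_\varepsilon\equiv 0$ on $S_{\varepsilon}$, $\chi_\varepsilon\equiv 1$ outside $S_{2\varepsilon}$, and $|d\chi_\varepsilon|\le C/\varepsilon$. The form $\chi_\varepsilon\alpha$ then vanishes near $S$. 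Its support in $M\cup\partial$ is compact, since the only remaining non-compactness of $\overline M$ is $S$. The ordinary Stokes theorem for manifolds with boundary therefore gives
\[
\int_{\partial}\chi_\varepsilon\alpha=\int_M d(\chi_\varepsilon\alpha)=\int_M\chi_\varepsilon\,d\alpha+\int_M d\chi_\varepsilon\wedge\alpha .
\]

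Next I pass to the limit $\varepsilon\to 0$. Because $\alpha$ is smooth on the compact set $\overline M$, the forms $\alpha$ and $d\alpha$ are bounded in sup norm. Combined with the finite $n$-volume of $M$ and finite $(n-1)$-volume of $\partial$, dominated convergence gives $\int_\partial\chi_\varepsilon\alpha\to\int_\partial\alpha$ and $\int_M\chi_\varepsilon d\alpha\to\int_M d\alpha$.

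The remaining error term, which is the main obstacle, satisfies
\[
\Bigl|\int_M d\chi_\varepsilon\wedge\alpha\Bigr|\le \frac{C\|\alpha\|_\infty}{\varepsilon}\,\mathrm{vol}_n\bigl(M\cap S_{2\varepsilon}\bigr).
\]
I must show the right-hand side tends to $0$, and this is where $\operatorname{codim}_{\mathbb R}(S,M)\ge 2$ enters. I would interpret the hypothesis as a Minkowski-content bound $\mathrm{vol}_n(M\cap S_{r})=O(r^{2})$, or at least $o(r)$. This holds in the intended application, where $S$ is a compact real-analytic, indeed complex-analytic, subset of real codimension $\ge 2$, by a Łojasiewicz-type volume estimate for tubes around subanalytic sets. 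Under that bound the error is $O(\varepsilon)\to 0$, which proves the formula.

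If one only knew Hausdorff codimension $\ge 2$, I would instead cover $S$ by finitely many balls $B(x_i,r_i)$ with $\sum r_i^{n-1}$ arbitrarily small, which is possible because $\mathcal H^{n-1}(S)=0$. I would then use cutoffs equal to $\max_i$ of radial bump functions adapted to these balls, so that the error is bounded by $C\sum r_i^{-1}r_i^{n}\to 0$. One further check is needed at boundary points of $\partial$ adjacent to $S$: the cutoff must kill $\alpha$ there as well, so that no boundary contribution from $\partial\cap S_{2\varepsilon}$ escapes. Since $\partial$ has finite $(n-1)$-volume, that contribution also vanishes in the limit.
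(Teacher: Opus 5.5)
Your reduction to the classical Stokes theorem via $\chi_\varepsilon\alpha$ is fine, and so is the convergence of $\int_\partial\chi_\varepsilon\alpha$ and $\int_M\chi_\varepsilon\,d\alpha$. The paper gives no argument of its own here; it only quotes Stolzenberg.

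\textbf{The gap is the error term.} In both versions you bound $\bigl|\int_M d\chi\wedge\alpha\bigr|$ by $\sup|d\chi|\cdot\|\alpha\|_\infty\cdot\mathrm{vol}_n(M\cap\operatorname{supp} d\chi)$. This only closes if you control how fast the volume of $M$ near $S$ decays:
\begin{itemize}
\item in the first version you need $\mathrm{vol}_n(M\cap S_r)=o(r)$;
\item in the second you need a density bound $\mathrm{vol}_n(M\cap B(x_i,2r_i))\le Cr_i^n$, which is exactly what the step ``error $\le C\sum r_i^{-1}r_i^n$'' silently uses.
\end{itemize}
Neither follows from the hypotheses. The condition $\mathrm{vol}_n(M)<\infty$ gives $\mathrm{vol}_n(M\cap S_r)\to 0$ with no rate. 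The condition $\operatorname{codim}_{\mathbb{R}}(S,M)\ge 2$ constrains $S$, not the way $M$ accumulates at $S$.

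For example, let $M\subset\mathbb{R}^3$ be the disjoint union, over $j\ge 1$, of the $2^j$ concentric spheres of radii $2^{-j}(1+i2^{-j})$, $1\le i\le 2^j$. Then $\mathrm{vol}_2(M)\le 16\pi$, $\overline{M}\setminus M=\{0\}=S$ and $\partial=\emptyset$. But $B(0,2^{2-j})$ contains the $2^j$ spheres of level $j$, of total area at least $4\pi 2^{-j}$. So $\varepsilon^{-1}\mathrm{vol}_2(M\cap S_{2\varepsilon})\ge 2\pi$ for $\varepsilon=2^{1-j}$, and $\mathrm{vol}_2(M\cap B(0,r))/r^2$ is unbounded. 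The formula does hold here, but because of cancellation on closed spheres, not because of your estimate.

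So ``interpreting'' the codimension hypothesis as a Minkowski bound for $M$ assumes away the actual difficulty. Also, a tube estimate for the subanalytic set $S$ is not what is needed: you need the $n$-volume of $M$ inside the tube, which is a property of $M$.

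\textbf{What you have actually proved, and how to repair it.} You have proved the statement under the extra hypothesis $\mathrm{vol}_n(M\cap B(x,r))\le Cr^n$ for $x\in S$ and $r$ small, together with $\mathcal{H}^{n-1}(S)=0$.

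That is enough for the paper's application once the density bound is verified for $M_V$. Subanalytic sets have uniformly bounded density, by Cauchy--Crofton plus uniform finiteness of generic linear sections. This requires $M_V$ to be subanalytic, e.g.\ with $\gamma$ chosen real-analytic.

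For the proposition as stated, you must use cancellation instead of absolute values.
\begin{itemize}
\item When $S$ is finite this is easy. Near $x_i\in S$ write $\alpha=\alpha(x_i)+\beta_i$ with $|\beta_i|=O(r)$ on $B(x_i,2r)$, and $\alpha(x_i)=d\eta_i$ with $\eta_i=\frac{1}{n-1}\iota_{x-x_i}\alpha(x_i)$ (this is where $n\ge 2$ is used). The factor $r$ in $\beta_i$ cancels $|d\chi|\le C/r$, giving a bound $C\,\mathrm{vol}_n(M\cap B(x_i,2r))$. The exact part contributes only $\int_\partial d\bigl((1-\chi)\eta_i\bigr)=O\bigl(\mathrm{vol}_{n-1}(\partial\cap B(x_i,2r))\bigr)$. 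No density assumption is needed.
\item For a general compact $S$ with $\mathcal{H}^{n-1}(S)=0$, the clean route is geometric measure theory. The current $d[M]-[\partial]$ is a flat $(n-1)$-chain supported in $S$, so it vanishes by Federer's support theorem for flat chains (\emph{Geometric Measure Theory}, 4.1.20).
\end{itemize}
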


Based on Theorem \ref{InvarVol}, we also obtain the following variant under the assumption that $(X,B)$ is log smooth over $S$, without assuming that $N(K_{X_0} + B_0 + \bbeta_{X_0})\wedge B_0  = 0$. Let $f : X \to S$ be a proper surjective morphism from a normal complex variety $X$ to a relatively compact complex variety $S$. We say that a log pair $(X,B)$ is \emph{log smooth over $S$} if $f : X \to S$ is a smooth morphism and, for each stratum $B_I$ of $B$, the restriction $B_I \to S$ is also a smooth morphism.
 
\begin{theorem}\label{InvarVol2}
Let $f:X\to S$ be a smooth family from a K\"ahler manifold $X$ onto a smooth, connected, relatively compact curve $S$. Assume that $(X,B+\bbeta)$ is a generalized klt pair such that $(X,B)$ is log smooth over $S$ and $\bbeta =\overline \beta $ where $\beta$ is nef over $S$, and $X_0$ is projective. If $K_{X_0}+B_0+\bbeta _{X_0}$ is big, then the volume function
\[t\longmapsto {\rm vol}(K_{X_t}+B_t+\bbeta _{X_t})\]
is constant on some Euclidean neighborhood $U\subset S$ of $0$.
\end{theorem}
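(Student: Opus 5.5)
The plan is to reduce Theorem \ref{InvarVol2} to Theorem \ref{InvarVol}. The hypothesis we must remove there is $N(K_{X_0}+B_0+\bbeta_{X_0})\wedge B_0=0$. Log smoothness gives the freedom to remove the components of $B$ that lie in the negative part without changing volumes. The $\delta\omega$ perturbation is handled by continuity.

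\textbf{Step 1: trimming $B_0$.} Write $N_0:=N(K_{X_0}+B_0+\bbeta_{X_0})$ and set $B_0^\flat:=B_0-B_0\wedge N_0\ge 0$. Because $(X,B)$ is log smooth over $S$, each component $B^i$ of $B$ is smooth over $S$. Hence $B^i|_{X_t}$ is a smooth divisor, possibly disconnected, and its components are permuted by monodromy. Working over a small disc, every component of $B_0$ extends uniquely to a component of $B_U:=B|_{f^{-1}(U)}$.

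Define $B^\flat$ on $f^{-1}(U)$ by giving each extended component the coefficient it has in $B_0^\flat$. Then $B^\flat\le B$, the pair $(X,B^\flat)$ is still log smooth over $U$, and $(X,B^\flat+\bbeta)$ is generalized klt with canonical fibers, since $\lfloor B^\flat\rfloor=0$ and the pair is log smooth with nef $\beta$. By Proposition \ref{VolZariski},
\[
\mathrm{vol}(K_{X_0}+B_0^\flat+\bbeta_{X_0})=\mathrm{vol}(K_{X_0}+B_0+\bbeta_{X_0}),
\]
since $B_0-B_0^\flat\le N_0$ has support in $\mathrm{Supp}\,N_0$. Applying \cite[Proposition 3.20]{Boucksom04} together with subadditivity of $N$ then gives $N(K_{X_0}+B^\flat_0+\bbeta_{X_0})=N_0-(B_0\wedge N_0)$. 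It follows that $N(K_{X_0}+B_0^\flat+\bbeta_{X_0})\wedge B_0^\flat=0$, using that the coefficients of $B_0^\flat$ are positive only on components where the coefficient of $N_0$ is smaller than that of $B_0$.

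\textbf{Step 2: applying Theorem \ref{InvarVol}.} For every small $\delta>0$, apply Theorem \ref{InvarVol} to $(X,B^\flat+\bbeta)$. The class $K_{X_0}+B_0^\flat+\bbeta_{X_0}+\delta\omega_{X_0}$ is big. The boundary $\delta\omega$ descends, so the pair is canonical, and $N(\cdot+\delta\omega)\le N(\cdot)$ keeps the disjointness condition. Since $X_0$ is projective, perturbing $\omega$ we may take it rational on $X_0$, or simply note that Theorem \ref{InvarVol} only needs $X_0$ projective. The conclusion is that $t\mapsto \mathrm{vol}(K_{X_t}+B^\flat_t+\bbeta_{X_t}+\delta\omega_{X_t})$ is constant on a neighborhood $U_\delta$.

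The main obstacle is that $U_\delta$ might shrink as $\delta\to0$. I would avoid this by applying Theorem \ref{InvarVol} directly with the MMP for $K+B^\flat+\bbeta$ itself. Its proof only uses bigness to ensure termination of the MMP on $X_0$, via Theorem \ref{Theorem: TransBCHM}, and this is available since $K_{X_0}+B_0^\flat+\bbeta_{X_0}$ is big and $\bbeta$ is nef. Equivalently, we take $\delta=0$ in that proof, obtaining a single neighborhood $U$ on which $\mathrm{vol}(K_{X_t}+B^\flat_t+\bbeta_{X_t})$ is constant.

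\textbf{Step 3: comparing back to $B$.} Upper semicontinuity (Proposition \ref{uscvolume}) and monotonicity ($B^\flat\le B$) give, for $t\in U$,
\[
\mathrm{vol}(K_{X_0}+B_0+\bbeta_{X_0})\ \ge\ \mathrm{vol}(K_{X_t}+B_t+\bbeta_{X_t})\ \ge\ \mathrm{vol}(K_{X_t}+B_t^\flat+\bbeta_{X_t}).
\]
The first inequality holds after shrinking $U$ and needs care: upper semicontinuity is only a limsup statement. To make it pointwise, we replace $0$ by arbitrary $s\in U$ in the whole argument. Bigness is open, so $X_s$ can play the role of the central fiber, except that projectivity of $X_s$ is unavailable. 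So I would instead get the upper bound from the extended MMP of Step 2. On the model $X'$, the class $K_{X'}+B'^\flat+\bbeta'$ is relatively nef over $U$, and the difference $B-B^\flat$ pushes forward into the relative negative part, because it is contracted or lies in $N$ fiberwise by Lemma \ref{ZariskiCont} on $X_0$ and extension. Proposition \ref{VolZariski} on each $X_t$ then yields equality.

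The rightmost term equals $\mathrm{vol}(K_{X_0}+B_0^\flat+\bbeta_{X_0})=\mathrm{vol}(K_{X_0}+B_0+\bbeta_{X_0})$ by Steps 1 and 2, so all three terms agree and the volume is constant on $U$. The step I expect to be hardest is justifying that $B_t-B_t^\flat$ lies in $\mathrm{Supp}\,N(K_{X_t}+B_t+\bbeta_{X_t})$ for nearby $t$. I would approach it by following the extended MMP of Proposition \ref{ExtendMMP} and checking that those components are exceptional or remain in the negative part fiberwise.
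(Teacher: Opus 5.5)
Your Steps 1 and 2 match the paper. It also passes to the divisor $B^\sharp$ (your $B^\flat$) with $B^\sharp_0=B_0-B_0\wedge N(K_{X_0}+B_0+\bbeta_{X_0})$, runs and extends the $(K_{X_0}+B^\sharp_0+\bbeta_{X_0})$-MMP as in Theorem~\ref{InvarVol} with $\delta=0$, and gets $\vol(K_{X_t}+B_t+\bbeta_{X_t})\ge\vol(K_{X_t}+B^\sharp_t+\bbeta_{X_t})=\vol(K_{X_0}+B_0+\bbeta_{X_0})$ on $U$.

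The gap is the reverse inequality in Step 3. Your claim that $B_t-B^\flat_t\subset\operatorname{Supp}N(K_{X_t}+B_t+\bbeta_{X_t})$ ``by Lemma~\ref{ZariskiCont} on $X_0$ and extension'' is not justified.
\begin{itemize}
\item Lemma~\ref{ZariskiCont} only concerns divisors contracted by the MMP. By your own Step 1 computation, every component $D$ of $B_0\wedge N_0$ with $\operatorname{coeff}_DN_0\le\operatorname{coeff}_DB_0$ has coefficient $0$ in $N(K_{X_0}+B^\flat_0+\bbeta_{X_0})$, so it is never contracted. For example, take a line $D$ on a smooth quintic surface $X_0$ and $B_0=bD$ with $\tfrac13<b<1$. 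Then $N_0=(b-\tfrac13)D$ and $B^\flat_0=\tfrac13D$, so $K_{X_0}+B^\flat_0$ is already nef and the MMP is empty.
\item Even for contracted components, it is not automatic that the extended map contracts the corresponding divisors of $X_t$; compare the de Fernex--Hacon example after Proposition~\ref{ContInMMP}.
\item More fundamentally, nothing propagates membership in the negative part from $X_0$ to nearby fibers. Information on $N(K_{X_t}+B_t+\bbeta_{X_t})$ for $t\neq0$ is precisely what the theorem asserts, so Step 3 as written assumes its conclusion.
\end{itemize}

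The missing idea is the paper's interpolation trick, which never needs to describe $N$ on $X_t$. Set $B^\varepsilon:=B^\sharp+\varepsilon(B-B^\sharp)$. The extended map $X\dashrightarrow X'$ is still $(K_X+B^\varepsilon+\bbeta_X)$-negative for $0\le\varepsilon\ll1$. The paper deduces $\vol(K_{X_t}+B^\varepsilon_t+\bbeta_{X_t})=\vol(K_{X_0}+B^\varepsilon_0+\bbeta_{X_0})=\vol(K_{X_0}+B_0+\bbeta_{X_0})$ for $t\in U$. Now suppose $\vol(K_{X_t}+B_t+\bbeta_{X_t})>\vol(K_{X_t}+B^\sharp_t+\bbeta_{X_t})$ for some $t\in U$. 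Apply log-concavity (Proposition~\ref{r-logc}) to $K+B^\varepsilon+\bbeta=(1-\varepsilon)(K+B^\sharp+\bbeta)+\varepsilon(K+B+\bbeta)$. This gives $\vol(K_{X_t}+B^\varepsilon_t+\bbeta_{X_t})>\vol(K_{X_0}+B^\varepsilon_0+\bbeta_{X_0})$ for small $\varepsilon>0$, a contradiction.

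A smaller point concerns Step 1. Log smoothness with $\lfloor B^\flat\rfloor=0$ does not make $(X_0,B^\flat_0)$ canonical: two meeting components with $b_1+b_2>1$ produce discrepancy $1-b_1-b_2<0$. Proposition~\ref{ExtendMMP} needs canonical singularities. The paper first blows up strata of $B$, so that the components of $B$ are disjoint and $(X,B)$ is terminal. This preserves log smoothness and volumes, since $K_{X'}+B'=\nu^*(K_X+B)+E$ with $E\ge0$ exceptional.
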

\begin{proof}
After an \'etale base change, we may assume that $f|_Z$ has connected fibers for any stratum $Z$ of the support of $B$. After blowing up strata of $B$ via a morphism $\nu :X'\to X$ and writing $K_{X'}+B'=\nu^*(K_X+B)+E$ where $B',E\ge 0$ and $B'\wedge E=0$, we may replace $(X,B)$ by $(X',B')$. In particular, we may assume that the components of $B$ are disjoint and so $(X,B)$ is terminal. Let $0\leq B^\sharp\leq B$ be the unique $\R$-divisor such that  $$B^\sharp _0=B_0-B_0\wedge N(K_{X_0}+B_0+\bbeta_{X_0}).$$ Arguing as in the proof of Theorem \ref{InvarVol}, there is a $(K_{X_0}+B^\sharp_0+\bbeta _{X_0})$-minimal model $X_0\dasharrow X_0'$ which extends to a $(K_X+B^\sharp+\bbeta)$-negative proper bimeromorphic map $X\dasharrow X'/U$ over a neighborhood $0\in U\subset S$. Note that moreover, if $B^\varepsilon =B^\sharp+\varepsilon(B-B^\sharp)$, then $X\dasharrow X'$ is $(K_X+B^\varepsilon+\bbeta_X)$-negative for $0\leq \varepsilon \ll 1$. We then have \[{\rm vol}(K_{X_t}+B_t^\varepsilon+\bbeta_{X_t})={\rm vol}(K_{X'_t}+{B'_t}^\varepsilon+\bbeta_{X'_t})={\rm vol}(K_{X'_0}+{B'_0}^\varepsilon+\bbeta_{X'_0})={\rm vol}(K_{X_0}+B_0^\varepsilon+\bbeta_{X_0})\]
for $t\in U$ and $0\leq \varepsilon \ll 1$.
This immediately implies that 
\[{\rm vol}(K_{X_0}+B_0+\bbeta_{X_0})={\rm vol}(K_{X_0}+B^\sharp _0+\bbeta_{X_0})={\rm vol}(K_{X_t}+B^\sharp _t+\bbeta_{X_t})\leq {\rm vol}(K_{X_t}+B _t+\bbeta_{X_t}),\]where the first equality follows from Proposition \ref{VolZariski}.
Suppose that the above inequality is sharp for some $0\ne t_i\to 0$, then we also have 
\[
\begin{aligned}
{\rm vol}\!\bigl(K_{X_{t_i}} + B^\varepsilon_{t_i} + \bbeta_{X_{t_i}}\bigr)^{\frac{1}{m}}
&\ge (1 - \varepsilon)\, {\rm vol}\!\bigl(K_{X_{t_i}} + B^\sharp_{t_i} + \bbeta_{X_{t_i}}\bigr)^{\frac{1}{m}}
+ \varepsilon\, {\rm vol}\!\bigl(K_{X_{t_i}} + B_{t_i} + \bbeta_{X_{t_i}}\bigr)^{\frac{1}{m}} \\
&> {\rm vol}\!\bigl(K_{X_0} + B^\varepsilon_0 + \bbeta_{X_0}\bigr)^{\frac{1}{m}},
\end{aligned}
\]
where $m = \dim_{\mathbb{C}}(X/S)$, and the first inequality is implied by the log-concavity of the volume (cf. Proposition \ref{r-logc}). But then ${\rm vol}(K_{X_{t_i}}+B^\varepsilon _{t_i}+\bbeta_{X_{t_i}})>{\rm vol}(K_{X_0}+B^\varepsilon _0+\bbeta_{X_0})$ for $0<\varepsilon \ll 1$, contradicting what we have shown above.
\end{proof}

\begin{remark}
In \cite[Remark 4.10]{FH11}, de Fernex--Hacon showed that the deformation invariance of log plurigenera does not hold when the family is not log smooth. Consider the family of smooth quadric degenerating to the Hirzebruch surface $\mathbb{F}_2$. A line in one of the two rulings on the general fiber $X_t$ sweeps out under deformation over $T$, a divisor $R$ on $X$ with the following properties\\
(a) $\left.R\right|_{X_0}=E+F$, where $F$ is a fiber of the projection $\mathbb{F}^2 \rightarrow \mathbb{P}^1$, and $E$ is the tautological section on $\mathbb{F}_2$ whose intersection number is $E^2 = -2$. \\
(b) $R|_{X_t} = F_t$ for $t
 \ne 0$, where $F_t$ is one of the ruling of the quadrics.

For any $b \ge 1$, consider a divisor $L$ on $X$ such that $\left.L\right|_{X_0} \sim(b+1) E+2 b F$ and on the general fiber $L|_{X_t} = (b+1)F_0 + (b-1)F_1$ (where $F_0$ and $F_1$ are the rulings of the quadrics). Moreover, if we define $D:=\frac{1}{m}(H+2 R)$, for a general $H \in\left|-\frac{b-1+2 m}{2} K_X\right|$ with $m\gg 1$, then $$L \sim m\left(K_X+D\right).$$
The pair $(X,D)$ is klt but not log smooth since the divisor $R$ splits on the central fiber. In this case, the log plurigenera are not constant since on the general fiber $$ P_m (K_{X_t}+D_t)=h^0\left(\mathcal{O}_{X_t}(L)\right)=h^0\left(\mathcal{O}_{\mathbb{P}^1 \times \mathbb{P}^1}(b+1, b-1)\right)=b^2+2 b,$$while on the central fiber $|L|_{X_0}|=| b(E+2 F)+E|=| b(E+2 F) \mid+E$ and therefore $$P_m (K_{X_0} + D_0) = h^0\left(\mathcal{O}_{X_0}(L)\right)=\binom{b+3}{3}-\binom{b+1}{3}=b^2+2 b+1.$$The plurigenera jump on the central fiber, however the volume is constant in this family.
\end{remark}

\subsection{Invariance of volume for families of K\"ahler threefolds}
In this subsection, we prove the deformation invariance of the plurigenera and the volume of an adjoint class for smooth families of K\"ahler threefolds.

\begin{theorem}\label{DefVol3fold}
Let $f: X \to S$ be a smooth family of compact K\"ahler threefolds over a smooth, connected, and relatively compact curve $S$. Then for any integer $m\ge 1$, the $m$-genus $P_m(X_t)$ is constant for any $t\in S$. Furthermore, assume that there exists a K\"ahler class $\omega_X$ on $X$ such that $\omega_{X_t} :  =  \omega_X |_{X_t}$. Then for every $\delta\ge 0$, the function $$t \longmapsto \operatorname{vol}(K_{X_t}+\delta\,\omega_{X_t})$$is constant for any $t\in S$.
\end{theorem}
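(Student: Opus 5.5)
We split according to whether $K_{X_0}$ is pseudo-effective. We work locally near an arbitrary point $0\in S$, prove that each function is constant on a neighborhood of $0$, and conclude global constancy from the connectedness of $S$. Since $f$ is smooth, every fiber $X_t$ is a smooth (hence terminal) Kähler threefold. By Remark \ref{Remark: Psef3fold}, Theorem \ref{GlobalPseff} holds for threefold families without any projectivity hypothesis, so either $K_{X_t}$ is pseudo-effective for all $t$ or for none.

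\emph{Uniruled case.} Suppose $K_{X_t}$ is not pseudo-effective for any $t$. Then every $X_t$ is uniruled by Lemma \ref{BDPPCanonical}, so $P_m(X_t)=0$ for all $m$ and all $t$.

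For the volume, let $\tau_t$ be the pseudo-effective threshold of $K_{X_t}$ with respect to $\omega_{X_t}$. Run the $K_{X_0}$-MMP with scaling of $\omega_{X_0}$; it ends in a Mori fiber space $X_0'\to Z_0$, which by Lemma \ref{Lemma: PSEFandMFS}(a) is $(K+\tau_0\omega)$-trivial. By Remark \ref{Remark:ExtMMP} and Proposition \ref{extFano}, this MMP and the Mori fiber space extend over a neighborhood $U$. On $U$ we have $(K+\tau_0\omega)\cdot R=0$ for the contracted ray, since this intersection number is constant in the flat family of contracted curves. Lemma \ref{Lemma: PSEFandMFS}(b) then gives $\tau_t=\tau_0$, so the threshold is locally constant. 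Consequently:
\begin{itemize}
\item for $\delta<\tau_0$ the class $K_{X_t}+\delta\omega_{X_t}$ is not pseudo-effective, so the volume is $0$;
\item for $\delta=\tau_0$ the class is pseudo-effective but not big, so again the volume is $0$;
\item for $\delta>\tau_0$ we are in the big case treated below.
\end{itemize}

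\emph{Big case, $\delta>0$ with $K_{X_0}+\delta\omega_{X_0}$ big.} We repeat the proof of Theorem \ref{InvarVol} with the threefold inputs replacing projectivity:
\begin{enumerate}
\item Run the $(K_{X_0}+\delta\omega_{X_0})$-MMP with scaling, which exists and terminates for Kähler threefolds, ending at a minimal model $X_0'$ on which $K+\delta\omega$ is nef and big.
\item Extend it over $U$ using Remark \ref{Remark:ExtMMP}. The hypothesis $N\wedge B_0=0$ is vacuous because $B=0$.
\item Use Remark \ref{Extbigandnef3fold}, i.e.\ the threefold transcendental base point free theorem, to get $K_{X'}+\delta\omega_{X'}$ nef over $U$.
\item Compute the volume as the top self-intersection $(K_{X'_t}+\delta\omega_{X'_t})^3$, and show this is constant by the singular Stokes formula, Proposition \ref{stokes}. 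This applies because $X\dashrightarrow X'$ is an isomorphism outside codimension $2$.
\end{enumerate}
The remaining case $\delta=0$ with $K_{X_0}$ pseudo-effective follows from continuity of the volume (Proposition \ref{ContVol}): for each fixed $t$, $\vol(K_{X_t})=\lim_{\delta\to0^+}\vol(K_{X_t}+\delta\omega_{X_t})$, and the functions on the right are constant in $t$. Note that the big case and this limit step do not actually need the extra Kähler class assumption on $\omega_X$ beyond what is stated.

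\emph{Plurigenera, pseudo-effective case.} This is the main obstacle. The plan is to run the \emph{relative} $K_X$-MMP over a Stein neighborhood $U$ of $0$, using the relative MMP for Kähler threefolds \cite{DHP24,DHY23}. It produces a relative minimal model $X'\to U$ with $K_{X'}$ nef over $U$. Since $X\dashrightarrow X'$ is $K$-negative, $P_m(X_t)=P_m(X'_t)$ for $t$ in a neighborhood of $0$; concretely, we use the fiberwise bimeromorphy of \cite[Corollary 3.2]{CRC25} and the fact that the fibers $X'_t$ remain terminal by Proposition \ref{DefCanSing}.

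Relative abundance for Kähler threefolds \cite{DO24,DO24II} makes $K_{X'}$ semi-ample over $U$. This gives a contraction $g\colon X'\to Z$ over $U$ with $K_{X'}\sim_{\mathbb Q} g^*A$, where $A$ is ample over $U$. Then
\[
mK_{X'}=K_{X'}+(m-1)g^*A ,
\]
so Kollár-type torsion-freeness and vanishing for $g$ and $Z\to U$, applied via a projective resolution as in Proposition \ref{KVVanishing}, should give that $R^if'_*\mathcal O_{X'}(mK_{X'})$ is torsion free, hence locally free over the curve $U$. Grauert's base change (Remark \ref{Grauertbasechange}) then yields constancy of $h^0(X'_t,mK_{X'_t})$. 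Verifying these torsion-freeness statements in the Kähler, non-projective-morphism setting is the step I expect to be hardest. If it fails, a fallback is to combine the upper semicontinuity of $h^0$ with the extension of sections from $X_0'$ along the semi-ample fibration extended by Theorem \ref{ExtendCont}.
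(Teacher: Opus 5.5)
\textbf{Volumes.} Your treatment of the volumes follows the paper. Your big case is its Case 1. Your uniruled analysis (MMP with scaling ending in a $(K+\tau_0\omega)$-trivial Mori fibre space, extension by Proposition \ref{extFano}, then Lemma \ref{Lemma: PSEFandMFS}(b)) is its Case 3, and it also covers Case 2 through the local constancy of the threshold. One precision is needed here. To apply Lemma \ref{Lemma: PSEFandMFS}(b) at nearby $t$, you need $K_{X'_t}+\tau_0\omega_{X'_t}$ to be numerically trivial on \emph{every} curve in the fibres of $X'_t\to Z'_t$, and these nearby Mori fibre spaces need not have relative Picard number one. The paper gets this from Lemma \ref{l-bir} applied to the projective morphism $X'\to Z'$, not from deforming the single contracted ray. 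For $\delta=0$ you use continuity of the volume (Proposition \ref{ContVol}), whereas the paper deduces this case from invariance of plurigenera. Your route is valid and has the advantage of making the volume statement independent of the plurigenera statement.

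\textbf{Plurigenera: the gap.} The genuine gap is the invariance of $P_m$ in the pseudo-effective case, where the decisive step is left as an expectation. Concretely:
\begin{enumerate}
\item \cite{DO24,DO24II} give abundance for compact K\"ahler threefolds. They do not give semi-ampleness of $K_{X'}$ over $U$ for the four-dimensional total space $X'$. Passing from fibrewise to relative semi-ampleness via Corollary \ref{opensemiample}, as in Proposition \ref{defsemiample}, presupposes the very constancy of $h^0$ you are trying to prove.
\item The torsion-freeness of $R^if'_*\mathcal O_{X'}(mK_{X'})$ for a non-projective K\"ahler morphism is asserted, not proved.
\item The equality $P_m(X_0)=P_m(X'_0)$ needs $X'_0$ to have canonical singularities. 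Proposition \ref{DefCanSing} cannot supply this, because it assumes canonicity of the central fibre as a hypothesis. You could argue instead that $X\dashrightarrow X'$ is also a $(K_X+X_0)$-MMP over $U$, so $(X',X'_0)$ is canonical, and then use the easy direction of adjunction.
\end{enumerate}
The paper does not reprove any of this. Once the relative MMP for smooth families of K\"ahler threefolds is available \cite[Theorem 1.2]{DHP24} and abundance holds for K\"ahler threefolds, it invokes Nakayama's theorem \cite[Theorem 11]{Naka86}. That theorem is precisely the statement that relative minimal models together with abundance yield invariance of plurigenera in the analytic setting. Citing it closes the gap. Your sketch is in the spirit of that theorem, but as written it does not establish it.
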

\begin{proof}
We first consider the deformation invariance of plurigenera. Using the similar extension technique (cf. Proposition \ref{ExtendMMP}), \cite[Theorem 1.2]{DHP24} proved the existence of relative MMP for smooth family of K\"ahler threefolds. Then, the invariance of plurigenera follows from \cite[Theorem 11]{Naka86}, as the abundance conjecture is completely known for K\"ahler threefolds by \cite{CHP16, CHP23Erratum, DO24, DO24II}. 

We next consider the deformation invariance of the volume. It is easy to see that the case $\delta=0$ follows from the deformation invariance of plurigenera established above. Fix a point $t_0 \in S$, and let $
\tau := \inf\{ t \ge 0 \mid K_{X_{t_0}} + t\,\omega_{X_{t_0}} \text{ is pseudo-effective} \}$. We now assume that $\delta>0$ and divide the argument into three cases according to the relation between $\delta$ and $\tau$.

Case 1. If $\delta >  \tau$, then $K_{X_{t_0}}+ \delta{\omega}_{X_{t_0}}$ is big. Following a similar line of argument as in Theorem \ref{InvarVol}, and using \cite[Theorem 1.4]{DH24}, we can run the generalized $(K_{X_{t_0}} + \delta {\omega}_{X_{t_0}})$-MMP with scaling of $\omega_{X_{t_0}}$ on the central fiber, and it will terminate at some minimal model $$(X_{t_0},\delta \boldsymbol{\omega}_{X_{t_0}}) \dashrightarrow (X_{t_0}^{(1)} , \delta \boldsymbol{\omega}_{X_{t_0}^{(1)}})\dashrightarrow  \cdots \dashrightarrow (X_{t_0}^{(n)}, \delta \boldsymbol{\omega}_{X_{t_0}^{(n)}}) \dashrightarrow (X_{t_0}', \delta \boldsymbol{\omega}_{X_{t_0}'}).$$

By Remark \ref{Remark:ExtMMP} of Proposition \ref{ExtendMMP}, after shrinking the base to some (connected) open neighborhood $U \subset S$ of ${t_0}$, we can extend the steps of the MMP on $X_{t_0}$ to a sequence of $(K_X+ \delta{\omega}_X)$-negative proper bimeromorphic contractions over $U$, $$(X , \delta \boldsymbol{\omega}_X)/U \dashrightarrow (X^{(1)},\delta \boldsymbol{\omega}_{X^{(1)}})/U \dashrightarrow \cdots \dashrightarrow (X', \delta \boldsymbol{\omega}_{X'})/U\dashrightarrow \cdots.$$
Since $K_{X_{t_0}'} + \delta {\omega}_{X_{t_0}'}$ is big and nef, Remark \ref{Extbigandnef3fold} implies that after shrinking the base $K_{X'} + \delta {\omega}_{X'}$ is relatively nef over $U$. Then, by the singular Stokes' formula in Proposition \ref{stokes}, the volume function $$t \longmapsto \operatorname{vol}\!\left(K_{X_t}+\delta {\omega}_{X_t}\right) = (K_{X_t'}+ \delta{\omega}_{X_t'})^3$$
is constant for $t\in U$.

Case 2. If $0 < \delta < \tau$, then $K_{X_{t_0}}+\delta \omega_{X_{t_0}}$ is not pseudo-effective. We can run the generalized $(K_{X_{t_0}} + \delta {\omega}_{X_{t_0}})$-MMP with scaling of $\omega_{X_{t_0}}$ on the central fiber, and it will terminate at some Mori fiber space $X_{t_0}' \to Z_{t_0}'$ (with $-(K_{X_{t_0}'}+\delta \omega_{X'_{t_0}})$ relative K\"ahler over $Z_{t_0}'$). The same as in the Case 1, after shrinking the base to some (connected) open neighborhood $U \subset S$ of ${t_0}$, we can extend the steps of the MMP on $X_{t_0}$ to a sequence of $(K_X+ \delta{\omega}_X)$-negative proper bimeromorphic contractions over $U$, $$(X , \delta \boldsymbol{\omega}_X)/U \dashrightarrow (X^{(1)},\delta \boldsymbol{\omega}_{X^{(1)}})/U \dashrightarrow \cdots \dashrightarrow (X', \delta \boldsymbol{\omega}_{X'})/U\dashrightarrow \cdots.$$Then, by Proposition~\ref{extFano}, after shrinking $U$ again, one can extend the Mori fiber space $X'_{t_0} \to Z'_{t_0}$ to a relative Mori fiber space $X' \to Z'/U$ over $U$. It follows that $K_{X_t} + \delta \omega_{X_t}$ is not pseudo-effective for $t \in U$. Therefore
$$
\vol(K_{X_t} + \delta \omega_{X_t}) = \vol(K_{X_{t_0}} + \delta \omega_{X_{t_0}}) = 0,
$$
for $t \in U$.

Case 3. Suppose that $0<\delta = \tau$. Then $K_{X_{t_0}}$ is not pseudo-effective and thus, we can run a $K_{X_{t_0}}$-MMP with scaling of $\omega_{X_{t_0}}$ on $X_{t_0}$, which terminates with a $(K_{X_{t_0}'} + \tau \omega_{X_{t_0}'})$-trivial Mori fiber space $X_{t_0}' \to Z_{t_0}'$ (with $-K_{X_{t_0}'}$ relative ample over $Z_{t_0}'$). By Proposition~\ref{extFano}, this Mori fiber space on $X_{t_0}'$ extends to a relative Mori fiber space $X' \to Z'/U$ over some neighborhood $U$ of $t_0$. Since this Mori fiber space is projective, by Lemma~\ref{l-bir}, the induced Mori fiber spaces on the nearby fibers are $(K_{X_t'}+\tau \omega_{X_t'})$-trivial (and $K_{X_t'}$-negative). By Lemma \ref{Lemma: PSEFandMFS}, it follows that the pseudo-effective threshold is locally constant and equal to $\tau$ in a neighborhood of $t_0$. Consequently, the volume function $t \longmapsto \vol(K_{X_t} + \tau \omega_t)$ is equal to $0$ in a neighborhood of $t_0$ and locally constant.

In all three cases, the volume function $t\longmapsto \vol(K_{X_t} + \delta {\omega}_{X_t})$ is locally constant around any point $t_0 \in S$. Since $S$ is connected, the volume function is constant for any $t\in S$. 
\end{proof}

From the above results, it is natural to propose:
\begin{conjecture}
Let $f:X\to S$ be a smooth family from a K\"ahler manifold $X$ onto a smooth, connected, relatively compact curve $S$. Assume that $(X,B+\bbeta)$ is a generalized klt pair such that $(X,B)$ is log smooth over $S$ and $\bbeta =\overline \beta $ where $\beta$ is nef over $S$. Then the volume function
\[t\longmapsto {\rm vol}(K_{X_t}+B_t+\bbeta _{X_t})\]
is constant over $S$.
\end{conjecture}



\bibliographystyle{amsalpha}
\bibliography{mybib.bib}

@article{Boucksom02,
  key = {Bo02},
  title={On the volume of a line bundle},
  author={Boucksom, S.},
  journal={International Journal of Mathematics},
  volume={13},
  number={10},
  pages={1043--1063},
  year={2002},
  publisher={World Scientific}
}

@misc{DHY23,
      title={M{M}{P} for Generalized Pairs on {K}\"ahler 3-folds}, 
      author={O. Das and C. D. Hacon and J.-I. Yáñez},
      year={2023},
      eprint={2305.00524},
      archivePrefix={arXiv},
      primaryClass={math.AG},
    note = {\href{https://arxiv.org/abs/2305.00524}{arxiv:2305.00524}},
}

@article{DefGeneralType,
  key = {Kl21},
  title={Deformations of varieties of general type},
  author={Koll{\'a}r, J.},
  journal={Milan Journal of Mathematics},
  volume={89},
  pages={345--354},
  year={2021},
  publisher={Springer}
}

@article {KM92,
    AUTHOR = {Koll{\'a}r, J. and Mori, S.},
     TITLE = {Classification of three-dimensional flips},
   JOURNAL = {J. Amer. Math. Soc.},
  FJOURNAL = {Journal of the American Mathematical Society},
    VOLUME = {5},
      YEAR = {1992},
    NUMBER = {3},
     PAGES = {533--703}
}

@book {KM98,
    AUTHOR = {Koll{\'a}r, J. and Mori, S.},
     TITLE = {Birational geometry of algebraic varieties},
    SERIES = {Cambridge Tracts in Mathematics},
    VOLUME = {134},
      NOTE = {With the collaboration of C. H. Clemens and A. Corti,
              Translated from the 1998 Japanese original},
 PUBLISHER = {Cambridge University Press, Cambridge},
      YEAR = {1998},
     PAGES = {viii+254},
}

@article{BirkarCHM,
  title={Existence of minimal models for varieties of log general type},
  author={C. Birkar and Cascini, P. and Hacon, C. D. and J. M\textsuperscript{c}Kernan},
  journal={Journal of the American Mathematical Society},
  volume={23},
  number={2},
  pages={405--468},
  year={2010}
}

@article {BZ16,
    AUTHOR = {C. Birkar and D.-Q. Zhang},
     TITLE = {Effectivity of {I}itaka fibrations and pluricanonical systems
              of polarized pairs},
   JOURNAL = {Publ. Math. Inst. Hautes \'{E}tudes Sci.},
  FJOURNAL = {Publications Math\'{e}matiques. Institut de Hautes \'{E}tudes
              Scientifiques},
    VOLUME = {123},
      YEAR = {2016},
     PAGES = {283--331},
}

@misc{DH24,
      title={On the Minimal Model Program for {K}\"ahler 3-folds}, 
      author={O. Das and C. D. Hacon},
      year={2024},
      eprint={2306.11708},
      archivePrefix={arXiv},
      primaryClass={math.AG},
    note= {\href{https://arxiv.org/abs/2306.11708}{arxiv:2306.11708}},
}

@book {Kollar96,
    key = {Kl96},
    AUTHOR = {Koll{\'a}r, J.},
     TITLE = {Rational curves on algebraic varieties},
    VOLUME = {32},
 PUBLISHER = {Springer-Verlag, Berlin},
      YEAR = {1996},
     PAGES = {viii+320},
}

@article {BDPP,
    AUTHOR = {S. Boucksom and J.-P. Demailly and
              M. P\u{a}un and T. Peternell},
     TITLE = {The pseudo-effective cone of a compact {K}\"{a}hler manifold and varieties of negative {K}odaira dimension},
   JOURNAL = {J. Algebraic Geom.},
  FJOURNAL = {Journal of Algebraic Geometry},
    VOLUME = {22},
      YEAR = {2013},
    NUMBER = {2},
     PAGES = {201--248},
}

@article {FH11,
    AUTHOR = {de Fernex, T. and Hacon, C. D.},
     TITLE = {Deformations of canonical pairs and {F}ano varieties},
   JOURNAL = {J. Reine Angew. Math.},
  FJOURNAL = {Journal f\"{u}r die Reine und Angewandte Mathematik. [Crelle's
              Journal]},
    VOLUME = {651},
      YEAR = {2011},
     PAGES = {97--126},
}

@article {DH20,
    AUTHOR = {O. Das and C. D. Hacon},
     TITLE = {The log minimal model program for {K}\"ahler 3-folds},
   JOURNAL = {J. Differential Geom.},
  FJOURNAL = {Journal of Differential Geometry},
    VOLUME = {130},
      YEAR = {2025},
    NUMBER = {1},
     PAGES = {151--207},
}

@article{RT1,
  title={Deformation limit and bimeromorphic embedding of {M}oishezon manifolds},
  author={Rao, Sheng and I-Hsun Tsai},
  journal={Communications in Contemporary Mathematics},
  volume={23},
  number={08},
  pages={2050087},
  year={2021},
  publisher={World Scientific}
}

@article {RT2,
    AUTHOR = {Rao, Sheng and I-Hsun Tsai},
     TITLE = {Invariance of plurigenera and {C}how-type lemma},
   JOURNAL = {Asian J. Math.},
  FJOURNAL = {Asian Journal of Mathematics},
    VOLUME = {26},
      YEAR = {2022},
    NUMBER = {4},
     PAGES = {507--554},
}

@article {Levine,
    key = {Lv81},
    AUTHOR = {Levine, M.},
     TITLE = {Deformations of uniruled varieties},
   JOURNAL = {Duke Math. J.},
  FJOURNAL = {Duke Mathematical Journal},
    VOLUME = {48},
      YEAR = {1981},
    NUMBER = {2},
     PAGES = {467--473},

}

@article {Fujiki,
key = {Fj81},
    AUTHOR = {Fujiki, A.},
     TITLE = {Deformation of uniruled manifolds},
   JOURNAL = {Publ. Res. Inst. Math. Sci.},
  FJOURNAL = {Kyoto University. Research Institute for Mathematical
              Sciences. Publications},
    VOLUME = {17},
      YEAR = {1981},
    NUMBER = {2},
     PAGES = {687--702},
}

@article{Siu98,
key = {Si98},
  title={Invariance of plurigenera},
  author={Siu, Y.-T.},
  journal={Inventiones mathematicae},
  year={1998},
  pages= {661-673},
}

@article {Taka06,
key = {Tk07},
    AUTHOR = {Takayama, S.},
     TITLE = {On the invariance and  semi-continuity of plurigenera of algebraic varieties},
   JOURNAL = {J. Algebraic Geom.},
  FJOURNAL = {Journal of Algebraic Geometry},
    VOLUME = {16},
      YEAR = {2007},
    NUMBER = {1},
     PAGES = {1--18},
}

@misc{HP24,
      title={On the Canonical Bundle Formula and Adjunction for Generalized {K}aehler Pairs}, 
      author={C. D. Hacon and M. P\u{a}un},
      year={2024},
      eprint={2404.12007},
      archivePrefix={arXiv},
      primaryClass={math.AG},
    note= {\href{https://arxiv.org/abs/2404.12007}{arxiv:2404.12007}},
}

@book {Demailly12,
    AUTHOR = {Demailly, J.-P.},
    key ={De12}, 
     TITLE = {Analytic methods in algebraic geometry},
    SERIES = {Surveys of Modern Mathematics},
    VOLUME = {1},
 PUBLISHER = {Higher Education Press, Beijing},
      YEAR = {2012},
     PAGES = {viii+231},
}

@article {HP16,
    AUTHOR = {H\"{o}ring, A. and Peternell, T.},
     TITLE = {Minimal models for {K}\"{a}hler threefolds},
   JOURNAL = {Invent. Math.},
  FJOURNAL = {Inventiones Mathematicae},
    VOLUME = {203},
      YEAR = {2016},
    NUMBER = {1},
     PAGES = {217--264},
}

@article {Naka85,
key = {Ny86},
    AUTHOR = {Nakayama, N.},
     TITLE = {Invariance of the plurigenera of algebraic varieties under
              minimal model conjectures},
   JOURNAL = {Topology},
  FJOURNAL = {Topology. An International Journal of Mathematics},
    VOLUME = {25},
      YEAR = {1986},
    NUMBER = {2},
     PAGES = {237--251},
}

@article {Kawamata99,
    key = {Kw99a},
    AUTHOR = {Kawamata, Y.},
     TITLE = {Deformations of canonical singularities},
   JOURNAL = {J. Amer. Math. Soc.},
  FJOURNAL = {Journal of the American Mathematical Society},
    VOLUME = {12},
      YEAR = {1999},
    NUMBER = {1},
     PAGES = {85--92},
}

@misc{FujinoBCHM,
key ={Fu22},
title={Minimal model program for projective morphisms between complex analytic spaces}, 
author={O. Fujino},
year={2022},
eprint={2201.11315},
note= {\href{https://arxiv.org/abs/2201.11315}{arxiv:2201.11315}},
}

@book {Naka04,
key = {Ny04},
    AUTHOR = {Nakayama, N.},
     TITLE = {Zariski-decomposition and Abundance},
    SERIES = {MSJ Memoirs},
    VOLUME = {14},
 PUBLISHER = {Mathematical Society of Japan, Tokyo},
      YEAR = {2004},
     PAGES = {xiv+277},
}

@article {CHP16,
    AUTHOR = {Campana, F. and H\"{o}ring, A. and
              Peternell, T.},
     TITLE = {Abundance for {K}\"{a}hler threefolds},
   JOURNAL = {Ann. Sci. \'{E}c. Norm. Sup\'{e}r. (4)},
  FJOURNAL = {Annales Scientifiques de l'\'{E}cole Normale Sup\'{e}rieure.
              Quatri\`eme S\'{e}rie},
    VOLUME = {49},
      YEAR = {2016},
    NUMBER = {4},
     PAGES = {971--1025},
}

@article {DO24,
    AUTHOR = {O. Das and Wenhao. Ou},
     TITLE = {On the log Abundance for compact {K}\"{a}hler threefolds},
   JOURNAL = {Manuscripta Math},
  FJOURNAL = {Manuscripta Mathematica},
    VOLUME = {173},
      YEAR = {2024},
    NUMBER = {1-2},
     PAGES = {341--404},
}

@incollection {Siu02,
key = {Si02},
    AUTHOR = {Siu, Y.-T.},
     TITLE = {Extension of twisted pluricanonical sections with
              plurisubharmonic weight and invariance of semipositively
              twisted plurigenera for manifolds not necessarily of general
              type},
 BOOKTITLE = {Complex geometry ({G}\"{o}ttingen, 2000)},
     PAGES = {223--277},
 PUBLISHER = {Springer, Berlin},
      YEAR = {2002},
}

@incollection {Kawa99,
    AUTHOR = {Kawamata, Y.},
    key = {Kw99b},
     TITLE = {On the extension problem of pluricanonical forms},
 BOOKTITLE = {Algebraic geometry: {H}irzebruch 70 ({W}arsaw, 1998)},
    SERIES = {Contemp. Math.},
    VOLUME = {241},
     PAGES = {193--207},
 PUBLISHER = {Amer. Math. Soc., Providence, RI},
      YEAR = {1999},
}

@article {DHP24,
    AUTHOR = {O. Das and C. D. Hacon and P\u{a}un, M.},
     TITLE = {On the 4-dimensional minimal model program for {K}\"{a}hler
              varieties},
   JOURNAL = {Adv. Math.},
  FJOURNAL = {Advances in Mathematics},
    VOLUME = {443},
      YEAR = {2024},
     PAGES = {Paper No. 109615, 68},

}

@article {Tsuji02,
 key = {Ts02},
    AUTHOR = {Tsuji, H.},
     TITLE = {Deformation invariance of plurigenera},
   JOURNAL = {Nagoya Math. J.},
  FJOURNAL = {Nagoya Mathematical Journal},
    VOLUME = {166},
      YEAR = {2002},
     PAGES = {117--134},
}

@article {CH20,
    AUTHOR = {Cao, Junyan and H\"{o}ring, A.},
     TITLE = {Rational curves on compact {K}\"{a}hler manifolds},
   JOURNAL = {J. Differential Geom.},
  FJOURNAL = {Journal of Differential Geometry},
    VOLUME = {114},
      YEAR = {2020},
    NUMBER = {1},
     PAGES = {1--39},
}

@misc{DH24MMP,
    author = {O. Das and C. D. Hacon},
    title = {Transcendental minimal model program for projective varieties},
    year={2024},
    archivePrefix={arXiv},
    primaryClass={math.AG},
    note= {\href{https://arxiv.org/abs/2412.07650}{arxiv:2412.07650}},
}

@misc{CRC25,
      title={Characterization of fiberwise bimeromorphism and specialization of bimeromorphic types {I}: the non-negative {K}odaira dimension case},
      key = {CRT25},
      author={Jian Chen and Sheng Rao and  I-Hsun  Tsai},
      year={2025},
    note= {\href{https://arxiv.org/abs/2506.12670}{arxiv:2506.12670}}
}

@article {LM09,
    AUTHOR = {Lazarsfeld, R. and Mustat\u{a}, M.},
     TITLE = {Convex bodies associated to linear series},
   JOURNAL = {Ann. Sci. \'Ec. Norm. Sup\'er. (4)},
  FJOURNAL = {Annales Scientifiques de l'\'Ecole Normale Sup\'erieure.
              Quatri\`eme S\'erie},
    VOLUME = {42},
      YEAR = {2009},
    NUMBER = {5},
     PAGES = {783--835},
      ISSN = {0012-9593,1873-2151},
}

@book {Stolzenberg,
    AUTHOR = {Stolzenberg, G.},
    key = {St66},
     TITLE = {Volumes, limits, and extensions of analytic varieties},
    SERIES = {Lecture Notes in Mathematics},
    VOLUME = {No. 19},
 PUBLISHER = {Springer-Verlag, Berlin-New York},
      YEAR = {1966},
     PAGES = {ii+45},
}

@incollection {Naka86,
    AUTHOR = {Nakayama, N.},
    key = {Ny87},
     TITLE = {The lower semicontinuity of the plurigenera of complex
              varieties},
 BOOKTITLE = {Algebraic geometry, {S}endai, 1985},
    SERIES = {Adv. Stud. Pure Math.},
    VOLUME = {10},
     PAGES = {551--590},
 PUBLISHER = {North-Holland, Amsterdam},
      YEAR = {1987},
}

@misc {LRW25,
    AUTHOR = {Mu-Lin Li and Sheng Rao and Kai Wang },
     TITLE = {Deformation of nef adjoint canonical line bundles},
    YEAR = {2025},
    note = {\href{https://arxiv.org/abs/2510.23967}{arxiv:2510.23967}},
}

@incollection {Fujinokawa,
    key ={Fu11},
    AUTHOR = {O. Fujino},
     TITLE = {On {K}awamata's theorem},
 BOOKTITLE = {Classification of algebraic varieties},
    SERIES = {EMS Ser. Congr. Rep.},
     PAGES = {305--315},
 PUBLISHER = {Eur. Math. Soc., Z\"urich},
      YEAR = {2011},
}

@article {Paunnef,
    key = {Pa98},
    AUTHOR = {M. P\u{a}un},
     TITLE = {Sur l'effectivit\'e{} num\'erique des images inverses de
              fibr\'es en droites},
   JOURNAL = {Math. Ann.},
  FJOURNAL = {Mathematische Annalen},
    VOLUME = {310},
      YEAR = {1998},
    NUMBER = {3},
     PAGES = {411--421},
}

@misc{Ou25,
      title={A characterization of uniruled compact {K}\"ahler manifolds}, 
      author={Wenhao Ou},
      year={2025},
      eprint={2501.18088},
      archivePrefix={arXiv},
      primaryClass={math.AG},
      note={\href{https://arxiv.org/abs/2501.18088}{arxiv:2501.18088}}, 
}

@misc{CP25,
      title={Remarks on Relative Canonical Bundles and Algebraicity Criteria for Foliations in {K}\"ahler context}, 
      author={Junyan Cao and M. P\u{a}un},
      year={2025},
      eprint={2502.02183},
      archivePrefix={arXiv},
      primaryClass={math.CV},
      note= {\href{https://arxiv.org/abs/2502.02183}{arxiv:2502.02183}}, 
}

@misc{CLZ25,
      title={Variation of cones of divisors in a family of varieties - {F}ano type case}, 
      author={Sung Rak Choi and Zhan Li and Chuyu  Zhou},
      year={2025},
      eprint={2504.04109},
    note= {\href{https://arxiv.org/abs/2504.04109}{arxiv:2504.04109}},
      archivePrefix={arXiv},
      primaryClass={math.AG}, 
}

@article{BAB,
  title={Anti-pluricanonical systems on {F}ano varieties},
  author={Birkar, C.},
    key = {Bi19},
  journal={Annals of Mathematics},
  volume={190},
  number={2},
  pages={345--463},
  year={2019},
  publisher={Department of Mathematics, Princeton University Princeton, New Jersey, USA}
}

@book {BS,
    key = {BS76},
    AUTHOR = {C. B\u{a}nic\u{a} and O. St\u{a}n\u{a}sil\u{a}},
     TITLE = {Algebraic methods in the global theory of complex spaces},
      NOTE = {Translated from the Romanian},
 PUBLISHER = {Editura Academiei, Bucharest; John Wiley \& Sons, London-New
              York-Sydney},
      YEAR = {1976},
     PAGES = {296},

}

@book {Hartshorne,
    AUTHOR = {Hartshorne, R.},
     TITLE = {Algebraic geometry},
    SERIES = {Graduate Texts in Mathematics},
    key = {Ht77},
    VOLUME = {No. 52},
 PUBLISHER = {Springer-Verlag, New York-Heidelberg},
      YEAR = {1977},
     PAGES = {xvi+496},

}

@article {Kawa85,
    AUTHOR = {Kawamata, Y.},
     TITLE = {Pluricanonical systems on minimal algebraic varieties},
    key = {Kw85},
   JOURNAL = {Invent. Math.},
  FJOURNAL = {Inventiones Mathematicae},
    VOLUME = {79},
      YEAR = {1985},
    NUMBER = {3},
     PAGES = {567--588},

}

@misc{Jiao,
      title={The volume function is upper semicontinuous on families of divisors}, 
      author={Junpeng Jiao},
    key = {Ji25},
      year={2025},
      eprint={2504.16676},
      archivePrefix={arXiv},
      primaryClass={math.AG},
      note={\href{https://arxiv.org/abs/2504.16676}{arxiv:2504.16676}}, 
}

@book {Laza04,
    AUTHOR = {Lazarsfeld, R.},
    key = {Lz04},
     TITLE = {Positivity in algebraic geometry. {I}: {L}ine bundles and linear series},
    VOLUME = {48},
 PUBLISHER = {Springer-Verlag, Berlin},
      YEAR = {2004},
     PAGES = {xviii+387},
}

@misc{DO24II,
      title={On the Log Abundance for Compact {K{\"a}hler} threefolds {II}}, 
      author={O. Das and Wenhao. Ou},
      year={2025},
      eprint={2306.00671},
      archivePrefix={arXiv},
      primaryClass={math.AG},
      note={\href{https://arxiv.org/abs/2306.00671}{arxiv:2306.00671}}, 
}

@article {k-bmy,
 AUTHOR = {Koll{\'a}r, J.},
 TITLE = {Is there a topological {B}ogomolov-{M}iyaoka-{Y}au
inequality?},
key = {Kl08},
 JOURNAL = {Pure Appl. Math. Q.},
  FJOURNAL = {Pure and Applied Mathematics Quarterly},
  VOLUME = {4},
  YEAR = {2008},
  NUMBER = {2, part 1},
 PAGES = {203--236},
}

@article {Dem93,
    key = {De93},
    AUTHOR = {Demailly, J.-P.},
     TITLE = {A numerical criterion for very ample line bundles},
   JOURNAL = {J. Differential Geom.},
  FJOURNAL = {Journal of Differential Geometry},
    VOLUME = {37},
      YEAR = {1993},
    NUMBER = {2},
     PAGES = {323--374},

}

@article {Totaro10,
    AUTHOR = {Totaro, B.},
    key = {Tt10},
     TITLE = {The cone conjecture for {C}alabi-{Y}au pairs in dimension 2},
   JOURNAL = {Duke Math. J.},
  FJOURNAL = {Duke Mathematical Journal},
    VOLUME = {154},
      YEAR = {2010},
    NUMBER = {2},
     PAGES = {241--263},
}

@inproceedings{Boucksom04,
  title={Divisorial {Z}ariski decompositions on compact complex manifolds},
  author={Boucksom, S.},
    key = {Bo04},
  booktitle={Annales scientifiques de l'Ecole normale sup{\'e}rieure},
  volume={37},
  number={1},
  pages={45--76},
  year={2004}
}

@article{Paun07,
key = {Pa07},
  title={Siu’s invariance of plurigenera: a one-tower proof},
  author={M. P\u{a}un},
  journal={Journal of Differential Geometry},
  volume={76},
  number={3},
  pages={485--493},
  year={2007},
  publisher={Lehigh University}
}

@misc{CP23Pluri,
      title={Infinitesimal extension of pluricanonical forms}, 
      author={Junyan Cao and M. P\u{a}un},
      year={2023},
      eprint={2012.05063},
      archivePrefix={arXiv},
      primaryClass={math.AG},
      note={\href{https://arxiv.org/abs/2012.05063}{arxiv:2012.05063}}, 
}

@article {TosatiKAWA,
    AUTHOR = {Tosatti, V.},
    key = {Ts18},
     TITLE = {K{AWA} lecture notes on the {K}\"ahler-{R}icci flow},
   JOURNAL = {Ann. Fac. Sci. Toulouse Math. (6)},
  FJOURNAL = {Annales de la Facult\'e{} des Sciences de Toulouse.
              Math\'ematiques. S\'erie 6},
    VOLUME = {27},
      YEAR = {2018},
    NUMBER = {2},
     PAGES = {285--376},
}

@book {Sernesi,
    AUTHOR = {Sernesi, E.},
     TITLE = {Deformations of algebraic schemes},
    key = {Sn06},
    SERIES = {Grundlehren der mathematischen Wissenschaften [Fundamental
              Principles of Mathematical Sciences]},
    VOLUME = {334},
 PUBLISHER = {Springer-Verlag, Berlin},
      YEAR = {2006},
     PAGES = {xii+339},
}

@article {Elkikj,
    AUTHOR = {Elkik, R.},
    key = {Ek78},
     TITLE = {Singularit\'es rationnelles et d\'eformations},
   JOURNAL = {Invent. Math.},
  FJOURNAL = {Inventiones Mathematicae},
    VOLUME = {47},
      YEAR = {1978},
    NUMBER = {2},
     PAGES = {139--147},
}

@misc{CHP23Erratum,
      title={{E}rratum and addendum to the paper: {A}bundance for {K}\"ahler threefolds}, 
      author={Campana, F. and H\"{o}ring, A. and
              Peternell, T.},
      year={2023},
      eprint={2304.10161},
      archivePrefix={arXiv},
      primaryClass={math.AG},
      note={\href{https://arxiv.org/abs/2304.10161}{arxiv:2304.10161}}, 
}

@book {MP97,
    AUTHOR = {Miyaoka, Y. and Peternell, T.},
     TITLE = {Geometry of higher-dimensional algebraic varieties},
    SERIES = {DMV Seminar},
    VOLUME = {26},
 PUBLISHER = {Birkh\"auser Verlag, Basel},
      YEAR = {1997},
     PAGES = {vi+217},
}

@PHDTHESIS{Meo96,
url = "http://www.theses.fr/1996GRE10015",
key = {Me96},
title = "Transformations intégrales pour les courants positifs fermés et théorie de l'intersection",
author = "Meo, M.",
year = "1996",
pages = "1 volume (56 P.)",
note = "Thèse de doctorat dirigée par Demailly, Jean-Pierre Mathématiques Université Joseph Fourier (Grenoble, Isère, France ; 1971-2015) 1996",
note = "1996GRE10015",
}

@article{Demailly82,
     author = {Demailly, J.-P.},
    key = {De82},
     title = {Sur les nombres de {Lelong} associ\'es \`a l'image directe d'un courant positif ferm\'e},
     journal = {Annales de l'Institut Fourier},
     pages = {37--66},
     year = {1982},
     publisher = {Institut Fourier},
     address = {Grenoble},
     volume = {32},
     number = {2},
}

@misc{JunSheng,
      title={{W}eak transcendental base-point freeness and diameter lower bounds for the {K}\"ahler-{R}icci flow}, 
    key = {Zh25},
      author={Junsheng Zhang},
      year={2025},
      eprint={2511.14735},
      archivePrefix={arXiv},
      primaryClass={math.DG},
      note ={\href{https://arxiv.org/abs/2511.14735}{arXiv:2511.14735}}, 
}

@article {HMX13,
    AUTHOR = {Hacon, C. D. and M\textsuperscript{c}Kernan, J. and Xu, Chenyang},
     TITLE = {On the birational automorphisms of varieties of general type},
   JOURNAL = {Ann. of Math. (2)},
  FJOURNAL = {Annals of Mathematics. Second Series},
    VOLUME = {177},
      YEAR = {2013},
    NUMBER = {3},
     PAGES = {1077--1111},
}

@article {LW26,
    AUTHOR = {Li, Zhan and Wang, Zhiwei},
     TITLE = {Invariance of plurigenera for generalized polarized pairs with
              abundant nef parts},
   JOURNAL = {Complex Anal. Synerg.},
  FJOURNAL = {Complex Analysis and its Synergies},
    VOLUME = {12},
      YEAR = {2026},
    NUMBER = {1},
     PAGES = {Paper No. 2},
}

@article {BP12,
    AUTHOR = {Berndtsson, B. and P\u{a}un, M.},
     TITLE = {Quantitative extensions of pluricanonical forms and closed
              positive currents},
   JOURNAL = {Nagoya Math. J.},
  FJOURNAL = {Nagoya Mathematical Journal},
    VOLUME = {205},
      YEAR = {2012},
     PAGES = {25--65},
}

@article {FS20,
    key = {FS20},
    AUTHOR = {Filipazzi, S. and Svaldi, R.},
     TITLE = {Invariance of plurigenera and boundedness for generalized
              pairs},
   JOURNAL = {Mat. Contemp.},
  FJOURNAL = {Matem\'{a}tica Contempor\^{a}nea},
    VOLUME = {47},
      YEAR = {2020},
     PAGES = {114--150},
}
\end{document}